\documentclass[a4paper]{article}
\usepackage[T1]{fontenc}
\usepackage{times}
\usepackage{amsmath,amsfonts,amssymb,amsthm,epsfig,euscript,pifont,psfrag, color,wrapfig,float,dsfont}
\usepackage{mathrsfs}
\usepackage[english]{babel} 
\usepackage{hyperref}
\usepackage[table]{xcolor}
\usepackage{euscript,graphicx}
\usepackage{graphics}
\usepackage{makeidx}
\usepackage{soul}
\usepackage[normalem]{ulem}
\usepackage{tabularx}
\usepackage{subcaption}
\usepackage{fancyhdr}
\usepackage{xcolor}
\usepackage{float}
\usepackage{graphicx} 

\newcommand{\bRb}{\mathbb{R}}

\newcommand{\bNb}{\mathbb{N}}

\usepackage{titlesec}

\numberwithin{equation}{section}

\titleformat{\section}{\normalfont\Large\bfseries}{\thesection.}{1em}{}

\newtheorem{thm}{Theorem}[section]
\newtheorem{prop}[thm]{Proposition}

\newtheorem{lem}[thm]{Lemma}
\newtheorem{rem}[thm]{Remark}

\newcommand{\cCc}{\mathcal{C}}

\newcommand{\cGc}{\mathcal{G}}

\newcommand{\bal}{\begin{align}}
\newcommand{\eal}{\end{align}}
\newcommand{\beq}{\begin{equation}}
\newcommand{\eeq}{\end{equation}}

\newcommand{\ba}{\begin{align*}}
\newcommand{\be}{\begin{equation*}}
\newcommand{\ee}{\end{equation*}}

\newcommand{\EE}{\mathbb{E}}
\newcommand{\PP}{\mathbb{P}}

\newcommand{\dd}{{\rm d}}
\newcommand{\ddt}{{\rm d}t}
\newcommand{\dds}{{\rm d}s}
\newcommand{\ddu}{{\rm d}u}
\newcommand{\ddz}{{\rm d}z}
\newcommand{\ddx}{{\rm d}x}
\newcommand{\ddy}{{\rm d}y}

\newcommand{\U}{\mathbb{U}}
\newcommand{\F}{\mathbb{F}}
\newcommand{\W}{\mathbb{W}}
\newcommand{\m}{m}
\newcommand{\dist}{\mathrm{dist}}
\newcommand{\argmin}{\mathrm{arg min}}
\newcommand{\cl}{c_\ell} 
\newcommand{\cd}{c_d} 
\newcommand{\Rd}{R_d} 
\newcommand{\K}{K} 
\newcommand{\KD}{K_{\Gamma}} 
\newcommand{\h}{{h}}

\title{Freidlin-Wentzell collision-laws between self-stabilizing diffusions
	\footnote{The present paper features a completely revised and augmented version of the unpublished paper: \href{arXiv:2206.04542}{arXiv:2206.04542}.}}
\author{Jean-Fran\c{c}ois Jabir\footnote{HSE University, Department of Statistics and Data Analysis $\&$ Laboratory of Stochastic
Analysis and its Applications, Moscow, Russia. jjabir@hse.ru}
\and Julian Tugaut\footnote{Universit\'{e} Jean Monnet, Institut Camille Jordan, 23, rue du
docteur Paul Michelon, CS 82301, 42023 Saint-\'{E}tienne Cedex 2,France . julian.tugaut@univ-st-etienne.fr}}
\date{\today}
\begin{document}

\maketitle
\paragraph{Abstract.} The present work investigates the asymptotic behaviours at the zero-noise limit of the first near collision-time and first near collision-location between a pair of independent $d$-dimensional Brownian-driven self-stabilizing (McKean-Vlasov type) diffusions. These asymptotic are considered in a peculiar setting where the systems evolve in a bi-stable landscape and collisions are only triggered by the combined action of the Brownian noises.
As the Brownian perturbations fade away, we show that the near collision-time increases at an explicit exponential rate and that related collision-locations persist in specific regions of the space. These results are mainly derived
by tailoring 
classical Freidlin-Wentzell's exit-time (and exit-location) estimates into collision estimates. 
 Similar asymptotic are established for related mean-field interacting particle system approximation, and for the one-dimensional case (where true collisions can be examined).

\medskip
\paragraph{AMS Classification:}
\textbf{Primary: }{60H10}
; \textbf{Secondary}: {60J60}, {60K35}, {37A50}
\paragraph{Keywords:}
{Noise-induced collisions};
{Small-noise asymptotic of McKean-Vlasov diffusions}; {Freidlin-Wentzell theory of Gaussian perturbed dynamical systems}.
\tableofcontents
\section{Introduction}
In this paper, we are interested in estimating mere-approximations of the zero-noise limit of the first collision-time and first collision-location  (see \eqref{near-collision-time-SS} below for the precise definition) generated by two general non-reversible self-stabilizing dynamics, $X=(X_t)_{t\ge 0}$
and~$Y=(Y_t)_{t\ge 0}$, of the form:
\begin{subequations}\label{MV}
	\begin{equation}
		\label{MV1}
		X_t=x_1 -\int_0^t\Big(\mathbb{U}\left(X_s\right)+\int \F\left(X_s-x\right)\,\mu^X_s(\ddx)\Big)\dds+\sigma \int_0^t \Gamma(X_s)\,\dd B_s\,,\ 
		\mu^X_t=\text{Law}(X_t)\,,\,t\ge 0\,,
	\end{equation}
	and
	\begin{equation}
		\label{MV2}
		Y_t=x_2 -\int_0^t\Big(\U\left(Y_s\right)+\int
		\F\left(Y_s-y\right)\,\mu^Y_s(\ddy)\Big)\dds+\sigma  \int_0^t \Gamma(Y_s)\,\dd\widetilde B_s\,,\ \mu^Y_t=\text{Law}(Y_t)\,,\,t\ge 0\,.
	\end{equation}
\end{subequations}
Here and after, the driving noises are defined by a pair of independent $d$-dimensional Brownian motions $B$ and $\widetilde{B}$, a smooth non-degenerate dispersion matrix $\Gamma$, and a positive factor $\sigma$ parameterizing the noise intensity of the system - throughout the zero-noise limit in the paper will be characterized by the limit $\sigma\downarrow 0$. The vector fields $-\U:\mathbb R^d\rightarrow \mathbb R^d$ and $-\F:\mathbb R^d\rightarrow \mathbb R^d$, common to both dynamics, model respectively a confining force generating a bi-stable landscape and a smooth concentrating interaction force  regulating the dynamics at large time (we refer to~\cite{Dawson-83}, \cite{BRTV}, \cite{BRV}, \cite{HIP} for this characteristic feature of self-stabilizing diffusions and further phase transition phenomena as $\sigma$ vanishes).  

\medskip 

In addition to the effective set-up for $\F$ and the bistable force $\U$ (specified in the assumption set ${\bf (A)}$-$(i)$ to ${\bf (A)}$-$(iv)$ below), the initial conditions $x_1$ and $x_2$ will be assumed to be located in distinctive basins of attractions of $\U$ (${\bf (A)}$-$(v)$ below); this condition specifically induces a {\color{black}framework of noise-induced collisions} where the force field $-\U$ coerces $X$ and $Y$ to each wander around a particular attractor, at large-time and at the limit $\sigma\downarrow 0$. {\color{black}The interest for this framework is discussed shortly after, and t}he prototypical {\color{black}(reversible)} \textit{gradient form} $\U=\nabla u$ (especially the one-dimensional symmetric double-wells potential $u(x)=x^4/4-x^2/2$), the quadratic interaction force $\F=\alpha\nabla \|x\|^2/2$ and constant diffusion $\Gamma={\rm Id}$ will often serve to illustrate our results throughout.

As McKean-Vlasov type dynamics, the systems \eqref{MV1} and \eqref{MV2} further correspond to the mean-field limit of the interacting particle systems:
\begin{subequations}\label{particles}
	\begin{equation}
		\label{MFSP1}
		X_t^{i,N}=x_1 -\int_0^t\Big(\mathbb{U}(X_s^{i,N})+\frac{1}{N}\sum_{j=1}^N\F\big(X_s^{i,N}-X_s^{j,N}\big)\Big)\dds+\sigma\int_0^t
		\Gamma(X_s^{i,N})\dd B_s^i\\,,\ \ t\ge 0\,,1\le i\le N\,,
	\end{equation}
	\begin{equation}
		\label{MFSP2}
		Y_t^{i,N}=x_2 -\int_0^t\Big(\mathbb{U}(Y_s^{i,N})+\frac{1}{N}\sum_{j=1}^N\F\big(Y_s^{i,N}-Y_s^{j,N})\Big)\dds+\sigma\int_0^t
		\Gamma(Y_s^{i,N})\,\dd\widetilde{B}_s^i\,,\ \
		t\ge 0\,,1\le i\le N\,,
	\end{equation}
\end{subequations}
for $(B^1,\cdots,B^N)$ and $(\widetilde{B}^1,\cdots,\widetilde{B}^N)$ denoting mutually independent copies
of $B$ and $\widetilde{B}$.  The zero-noise limits of mere-approximations of first collision-time and near first collision-location for an isolated pair $(X^{i,N},Y^{i,N})$ for fixed large $N$ (see \eqref{near-collision-time-Particle}) will be also addressed below.

\noindent
  \paragraph{Near collision-time and near collision-location.} Outside the one-dimensional case (which will be specifically discussed in Appendix \ref{sec:1DCase}), tracking the first time where $X$ and $Y$ may collide is typically an ill-posed question: 
  in the case $d\ge 2$, as $B$ and
  $\tilde B$ almost surely  do not hit each others, provided
  that $\U$ and $\F$ allow a Girsanov's change of probability measure to apply, the same can be stated for the pair $(X,Y)$. Considering the continuously diffusive nature of $X$ and $Y$ for arbitrary $\sigma>0$, the first
  collision-time and the first collision-location between \eqref{MV1} and \eqref{MV2} are more naturally described by the family of {\it near collision-times} and {\it near collision-locations}:
  \begin{equation}
  	\label{near-collision-time-SS}
  	C_\varepsilon(\sigma):=\inf\left\{t\ge 0\,:\,\|X_t-Y_t\|\le 2\varepsilon\right\}\,,\qquad L_\varepsilon(\sigma):=\big(X_{C_\varepsilon(\sigma)},Y_{C_\varepsilon(\sigma)}\big)\,, 
  \end{equation}
  $\varepsilon$ defining a radius of collision aimed to be small. With the above, collisions between the point-materials $(X,Y)$ are widened into collisions between a pair of moving permeable balls $\mathbb B(X_t;\varepsilon)$ and $\mathbb B(Y_t;\varepsilon)$, with radius $\varepsilon$ and centers of mass located in $X_t$ and $Y_t$ at each time $t$. {\color{black}While our main result, Theorem \ref{thm:main1} below, focuses on the zero-noise asymptotic of $C_\varepsilon(\sigma)$ and $L_{\varepsilon}(\sigma)$ along radius limit $\epsilon\downarrow 0$, intermediate asymptotics for small enough $\epsilon>0$ are also established in Sections \ref{sec:SelfStabilizingCase} and \ref{sec:ParticleCase}. The finite $\varepsilon\downarrow 0$-limit of	$L_\varepsilon(\sigma)$ will be notably refer to as the {\it persistence} of the first collision-location.}
  	
  	The analogues for the particle systems \eqref{MFSP1}-\eqref{MFSP2} are considered with the family of \textit{(private/individual) near collision-times} and \textit{(private/individual) near collision-locations}:
  	\begin{equation}
  		\label{near-collision-time-Particle}
  		C^i_{\varepsilon,N}(\sigma):=\inf\left\{t\ge 0\,:\,\|X_t^{i,N}-Y_t^{i,N}\|\le 2\varepsilon\right\}\,,\quad L^i_{\varepsilon,N}(\sigma):=\big(X^{i,N}_{C_\varepsilon(\sigma)},Y^{i,N}_{C_\varepsilon(\sigma)}\big)\,,\qquad \varepsilon>0\,,\,1\leq i\le N\,.
  	\end{equation}   
  	{\color{black}For $N$ large enough, similar zero-noise asymptotics ($\varepsilon\downarrow 0$ or intermediate $\varepsilon$ small) are established in Theorem \ref{thm:main2} and Proposition \ref{ned}.}

Our particular interest when investigating $C_\varepsilon(\sigma)$ and $L_\varepsilon(\sigma)$ (or more generally the collision between \eqref{MV1} and \eqref{MV2}) lies with a noise-induced framework where, at the regime $\sigma=0$, the governing force field $-\U$ and initial points $x_1,x_2$ draw an energy landscape entrapping each dynamic in a collision-free slopes  and so collisions (and near collisions) are only triggered by the Brownian perturbations. More specifically, for $\sigma=0$ (and taking $\F(0)=0$), all sources of randomness of \eqref{MV1} and \eqref{MV2} disappear and the paths reduces to two orbits, $\phi(x_1)=\{\phi_t(x_1)\}_{t\ge 0}$ and~$\phi(x_2)=\{\phi_t(x_2)\}_{t\ge 0}$, satisfying
  	\[
  	\dot{\phi}_t(x_k)=-\mathbb{U}(\phi_t(x_k))\,,\quad \phi_0(x_k)=x_k\,,\qquad t\ge 0\,,\,k=1,2\,.
  	\]
  	Considering $\U$ admits two attractors (i.e. two asymptotically stable points), $\lambda_1$ and $\lambda_2$, and considering the initial states $x_1$ and $x_2$ are each in a distinct basin of attraction of $\U$ with non overlapping of the curves $\phi(x_1)$ and $\phi(x_2)$, naturally no collision occur. In the same way, for small enough $\varepsilon$, the balls $\mathbb B(\phi_t(x_1);\varepsilon)$ and $\mathbb B(\phi_t(x_2);\varepsilon)$ are collision-free at any time $t$.  
  	
  	Finite time near-collisions (i.e. $C_\varepsilon(\sigma)<\infty$ a.s.) so occur as long as the heating/diffusive effects of $B$ and $\tilde B$. As these stochastic forces vanish (with an evanescent intensity $\sigma$), the external force $-\U$ prevails and a (near) collision takes a longer time to occur as the driving Brownian motions have to increasingly force each diffusion $X$ and $Y$ to overcome the potential barrier draw by $-\U$. With this view, necessarily $C_\varepsilon(\sigma)$ steadily grows at a certain rate while the associated collision-location $L_\varepsilon(\sigma)$ may \textit{persist} in a {\color{black}energy-}balanced region between the attractors $\lambda_1$ and $\lambda_2$. Both phenomena are intuitively determined by the parameter $\sigma$, the radius $\varepsilon$ and the depth of the wells where lie the attractors $\lambda_1$ and $\lambda_2$. (We may point out here that no-post collision effect is considered here.)
  
   {\color{black}Noise-driven collision mechanisms naturally play a central role in a wide range of physical and biological systems, including Brownian coagulation of colloidal and aerosol particles, intracellular molecular encounters, and the collective dynamics of active matter. Our own interest for this framework was initially motivated by measuring how the noise-driven encounter may be measured and potentially affect population dynamics, particularly in swarming social or economical multi-agent systems (\cite{NaldiPareTosc-10,PareTosc-14}). The systems \eqref{MV1} and \eqref{MV2} (\eqref{MFSP1} and \eqref{MFSP2} when considering finite populations) can so be viewed as two instances of autonomous dynamical agents each governed by a different aggregative/self-propelled mechanism and environmental noise cause populations to deviate significantly from their deterministic predictions and engage between each others. The models \eqref{MV1} and \eqref{MV2} features simplified versions of the kinetic (Langevin) structure dynamics usually at play in such agents systems and no post-collision effects (and so not intersystems inteactions) is considered here; yet, despite this simplification, the models retains the main technical characteristics of these systems}. Noise-induced collisions and the zero-noise asymptotic of $C_\varepsilon(\sigma)$ (and of $C^{i}_{\varepsilon,N}(\sigma)$) are also inherently connected to chemical kinetics, particularly to the reaction rates of bimolecular systems under thermal activation. Such rates are generally defined by Arrhenius \cite{Arrhenius-1889} law (or Eyring-Kramers \cite{Eyring-35}, \cite{Kramers-40}, and \cite{Laider-84}), which quantifies the average frequency of a chemical reaction between two reactants with, up to an additive factor, as of order $\exp\{-E_a/RT\}$, for $R$ the ideal gas constant, $T$ the temperature of matter and $E_a$ a minimal activation energy for a reaction to occur.

  Mathematically the questions of estimating how fast
  the near collision-times between the pairs $(X,Y)$ and between the pairs $(X^{i,N},Y^{i,N})$ grow, and whether~-~and also where - the near collision-locations
  $L_\varepsilon(\sigma)=(X_{C_\varepsilon(\sigma)},Y_{C_\varepsilon(\sigma)})$ as well as
  $L^{i,N}_\varepsilon(\sigma)=(X^{i,N}_{C^i_{\varepsilon,N}(\sigma)},Y^{i,N}_{C^i_{\varepsilon,N}(\sigma)})$ may persist at the zero-noise limit, are intrinsically linked to Freidlin-Wentzell \cite{FreWen-12}'s LDP theory and exit-time estimate for small-noise perturbed dynamics. Essential results on this theory, which will be used throughout the paper, are briefly recollected in Section \ref{ssec:FW}, and the link is (first empirically) by observing that the near collision-time $C_\varepsilon(\sigma)$ (respectively $C^{i,N}_\varepsilon(\sigma)$) can be equivalently viewed as the first time when the diffusion $(X,Y)$ enters the domain \begin{equation}\label{NaiveCollisionDomain}
  	\triangle_\varepsilon:=\{(x,y)\in\mathbb
  	R^d\,:\,\|x-y\|\le 2\varepsilon\}\,,
  \end{equation} 
  or,  equivalently, the first time when $(X,Y)$ leaves~$\mathbb R^{2d}\setminus
  \triangle_\varepsilon$. Through our main results, Theorems \ref{thm:main1} and \ref{thm:main2} (again under the detailed Assumptions ${\bf (A)}$ below), we establish that, in probability, at the limit of negligible near collision-radius ($\varepsilon\downarrow 0$) and zero-noise limit ($\sigma\downarrow 0$), $C_\varepsilon(\sigma)$ grows at an exponential rate and $L_\varepsilon(\sigma)$ concentrates on a compact product set $\mathcal M_0\times\mathcal M_0$, with  
  \begin{equation}\label{MainAsymptoticSS}
  	\frac{\sigma^2}{2}\log\big(C_\varepsilon(\sigma)\big)\overset{\mathbb P}{\underset{\sigma\downarrow 0}{\asymp}}  \underline H_0\,,\qquad \dist\bigg(L_\varepsilon(\sigma),\mathcal M_0\times \mathcal M_0\bigg)\overset{\mathbb P}{\underset{\sigma\downarrow 0}{\asymp}}  0\,.
  \end{equation}
  The (positive) factor $\underline H_0$ models the minimal (kinetic) energy necessary for a collision to occur. The exact form of this energy is given by $\underline H_0=\inf_{\lambda}H_0(\lambda)$ for $H_0$ defined in \eqref{collisioncost}. The estimate \eqref{MainAsymptoticSS} for $C_\varepsilon(\sigma)$ equivalent to 
  $$
  \frac{1}{C_\varepsilon(\sigma)}\overset{\mathbb P}{\underset{\sigma\downarrow 0}{\asymp}}  \exp\Big[-\frac2{\sigma^2}\underline H_0\Big]
  $$
  is somehow consistent with the Arrhenius' law, considering the frequency of collision as the quotient of the collision time) and $\underline{H}_0$ as the energy measuring the work required to produce a reaction following a collision at a point $\lambda_0$ -- or region $\mathcal M_0\subset \mathbb R^d$ -- located between reactant.  
  The asymptotic concentration of $L_\varepsilon(\sigma)$ describes a {\it persistence} of the near collision and the quantity $\underline H_0$ models a {\it collision}-cost between $X$ and $Y$ at the vanishing noise limit. 
    In a ``simple'' one-dimensional picture, the collision-location $\lambda_0$ lies in an exact energy equilibrium balance between the two wells $\lambda_1$ and $\lambda_2$ (see Figure \ref{Fig1} below).
  \begin{figure}[h]
  	\centering
  	\begin{subfigure}{0.4\textwidth}
  		\centering
  		\includegraphics[width=\linewidth]{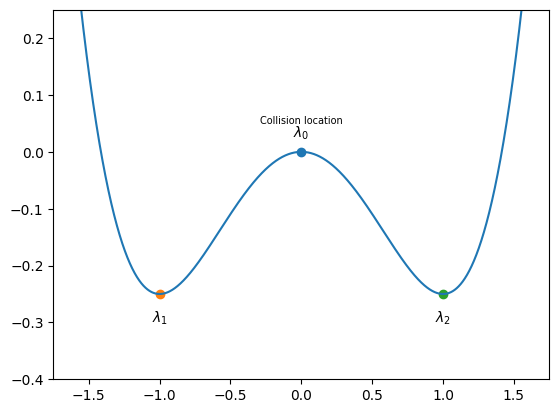}
  	\end{subfigure}
  	\vspace{0.5cm} 
  	\begin{subfigure}{0.4\textwidth}
  		\centering
  		\includegraphics[width=\linewidth]{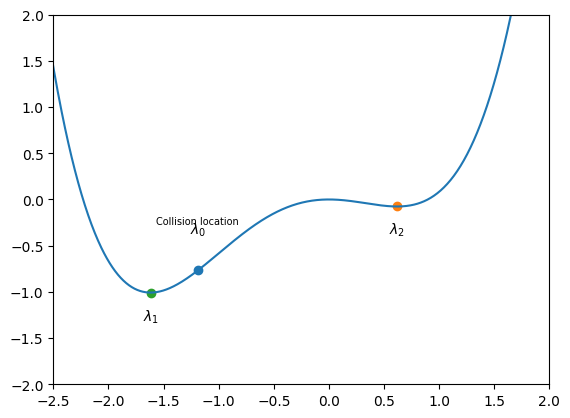}
  	\end{subfigure}
  	\caption{Collision-location $\lambda_0$ for symmetric (left) and asymmetric (right) bi-stable landscape}
  	\label{Fig1}
  \end{figure}

  For $N$ large enough, the collision-cost and persistence location are stated in Theorem \ref{thm:main1}, and the analogue for the particle systems \eqref{MFSP1}-\eqref{MFSP2}, again with a convergence in probability: 
  $$\frac{\sigma^2}{2}\log\big(C^{i}_{\varepsilon,N}(\sigma)\big)\overset{\mathbb P}{\underset{\sigma\downarrow 0}{\asymp}} \underline H_0\,,
  \qquad \dist\bigg(L^i_{\varepsilon,N}(\sigma),\mathcal M_0\times \mathcal M_0\bigg)\overset{\mathbb P}{\underset{\sigma\downarrow 0}{\asymp}} 0\,,$$
  in Theorem \ref{thm:main2}, below. 
  
  As it will be discussed below, one can \emph{a priori} infer the validity of  \eqref{MainAsymptoticSS} from the exit-laws for self-stabilizing diffusions obtained in \cite{HIP}. Nevertheless the rigorous demonstration of \eqref{MainAsymptoticSS} requires an intricate extension of Freidlin-Wentzell theory to our claimed  {\it Freidlin-Wentzell collision-laws}.
  While the connection between Freidlin-Wentzell theory and Arrhenius-Eyring-Kramers law, \cite{Berglund-13}) has been extensively studied, this connection through collision theory and the asymptotic form \eqref{MainAsymptoticSS} is, to the best of our knowledge, new.

\paragraph{Notation.} Throughout, the symbols  $\cdot$ and $\|.\|$ will denote the scalar product and the Euclidean norm on $\mathbb R^d$.
For any point $x\in\mathbb R^d$, $\mathbb B(x;\varepsilon)$ will denote the open ball centered in $x$ with radius $\varepsilon$ and, for any set $\mathcal O\subset \mathbb R^d$, $\dist(x,\mathcal O)$ will denote the minimal distance $\inf_{z\in \mathcal O}\|x-z\|$ between $x$ and $\mathcal O$. For all $x,z\in\mathbb R^d$ and $A(x)$ an invertible square matrix depending on $x$, $\|z\|^2_{A^{-1}(x)}$ will also denote the quadratic form $z \cdot A^{-1}(x) z$. Finally, given the vector fields $\U$ and $\F$ in \eqref{MV1}-\eqref{MV2}, and for $\mu$ a probability measure on $\mathbb R^d$, we will make the systematic use of the effective force  $\mathbb W_\mu$ common to \eqref{MV1}-\eqref{MV2} defined as
$$
\mathbb W_\mu(x)=\U(x)+\int \F(x-z)\mu(\ddz),\qquad x\in\mathbb R^d\,,
$$
We will also make use of the special convention: for $\lambda$ a point in $\mathbb R^d$, $\mathbb W_\lambda$ corresponds to $\mathbb W$ evaluated along the Dirac measure in a point $\lambda\in\mathbb R^d$, that is  
 $$
 \mathbb W_{\lambda}(x)=\U(x)+\F(x-\lambda)\,.
 $$

\subsection{Model assumptions}
\noindent
From here on, the pairs \eqref{MV1}-\eqref{MV2} and \eqref{MFSP1}-\eqref{MFSP2} will be considered under the following set of
assumptions $\mathbf{(A)}$.

\medskip

\noindent
\textit{$\mathbf{(A)}$-$(i)$ The vector field $\U:\mathbb{R}^d\rightarrow \mathbb R^d$ is of class $\mathcal C^1$, grows at a $2m$-polynomial rate at infinity (i.e. $\displaystyle\lim_{\|x\|\rightarrow \infty}\frac{\|\U(x)\|}{1+\|x\|^{2m}}<\infty$) and $\U$ is strongly convex far from the origin. Namely, for all $\|x\|>\Rd'$, there exists $C(\Rd')>0$ such that
\begin{equation}\label{Hyp-U3}
	\xi\cdot\nabla \U(x)\xi\le C(\Rd')\|\xi\|^2\,,\qquad \forall\,\xi\in\mathbb R^d\,.
\end{equation}}
\noindent
\textit{$(\mathbf{A})$-$(ii)$ {\bf (Bi-stable landscape)} The flows generated by $-\U$ admit  \emph{exactly} two distinct asymptotically stable equilibrium points, $\lambda_1$ and~$\lambda_2$ in the Lyapunov sense. That is: $\lambda_1$ and $\lambda_2$ are two stable fixed points for the zero-noise orbits}
\begin{equation}\label{eq:LimitDynamic}
\phi_t(z)=z-\int_0^t \U(\phi_s(z))\,\dds\,,\qquad t\ge 0\,,
\end{equation}
\textit{and, for $i=1,2$ and any $\epsilon>0$, there exits a non-empty neighbourhood $O_\epsilon$ of $\lambda_i$ such that, for all $t\ge 0$, $\phi_t(O_\varepsilon)\subset \mathbb B(\lambda_1;\epsilon)$ and $\lim_{t\rightarrow \infty} \phi_t(z)=\lambda_i$ for all $z$ in $O_\epsilon$.\footnote{We stress that the assumption concerning the existence of the neighbourhood $\mathcal O_\epsilon$corresponds to the right setting when the drift $-\U$ is \emph{not} of gradient form.}}

\noindent
\textit{$(\mathbf{A})$-$(iii)$ $\F$ is of the form}
\begin{equation}\label{RotInv}
\F(x)=\frac x{\|x\|}G(\|x\|)\,,\qquad x\in\mathbb R^d\,,
\end{equation}
\textit{for $G:\mathbb R\rightarrow [0,\infty)$ a polynomial function of order $2n$, for $n\ge 1$, such that $G(0)=0$.\\
\noindent
$(\mathbf{A})$-$(iv)$ {\bf (Synchronization condition)}  For all probability measure 
$\mu\in \mathcal P(\mathbb R^d)$, the effective force field
$$
\W_\mu(x)=\U(x)+\int \F(x-y)\,\mu(\ddy)\,,\qquad x\in\mathbb R^d\,,
$$
is (globally) one-sided contractive Lipschitz continuous: for some $\theta>0$, we have}
\begin{equation}\label{Hyp:Synchro}
	\Big(\W_\mu(x)-\W_\mu(y)\Big)\cdot \Big(x-y\Big)\ge \theta\|x-y\|^2\,,\qquad x,y\in \mathbb R^d\,,\mu\in\mathcal P(\mathbb R^d)\,.
\end{equation}

\noindent
\textit{$\mathbf{(A)}$-$(v)$ Each initial condition $x_1$ and $x_2$ lies in a distinctive basin of attraction of $-\U$, namely, for $i=1$ or $2$, $x_i$ belongs to the basin of attraction $\mathcal{G}(\lambda_i)$ of $\lambda_i$ given, with $(\phi_t(z))_{t\ge
	0}$ as in \eqref{eq:LimitDynamic}, by}
\[
\mathcal{G}(\lambda_i):=\left\{z\in\mathbb R^d\,:\,\,\lambda_i=\lim_{t\rightarrow \infty}\phi_t(z)\right\}\,.
\]

\noindent
\textit{$\mathbf{(A)}$-$(vi)$ $\Gamma$ is  bounded, Lipschitz continuous and} 
	$$
	x\mapsto A(x):=\Gamma (x)\Gamma^*(x)\,,
	$$ 
\textit{is uniformly elliptic with: for some $\Lambda_+, \Lambda_->0$,}
\[
\max_{x\in \mathbb R^d} \max_{i,j}|\Gamma_{i,j}(x)|\le \Lambda_+<\infty\,,\quad \inf_{x\in \mathbb R^d}\xi\cdot A(x)
\xi \ge \Lambda_-\|\xi\|^2\,,\qquad \forall \xi\in\mathbb R^d\,.
\]

\noindent
\textbf{Remarks}
\noindent
$\bullet$ Assumptions $\mathbf{(A)}$-$(i)$, $\mathbf{(A)}$-$(iii)$ and$\mathbf{(A)}$-$(vi)$ are enough to guarantee strong wellposedness and moment controls for Equations \eqref{MV1}-\eqref{MV2} (see Proposition \ref{prop:WellposedMV} below) and also for Equations \eqref{MFSP1}-\eqref{MFSP2},  provided $\sigma$ is sufficiently small. In the case $\Gamma=I_d$, similar results under an analog set-up were previously established in \cite{HIP} (see e.g. \cite[Theorem
2.13]{HIP}), and the extension of the proof arguments therein to inhomogeneous diffusion $\Gamma$ are addressed in the Appendix \ref{Appendix-A}. From \cite{HIP}, let us already recollect some key properties of $\U$ and $\F$ which will be used throughout the paper. First, Assumption $\mathbf{(A)}$-$(i)$ ensures (see \cite[Lemma~2.2]{HIP}) that $-\mathbb{U}$ is a one-sided contractive $\lambda$-Lipschitz ($\lambda>0$) function for large distanced points, namely: for some $R''>0$ large enough
\begin{equation}\label{Hyp-V1}
-\left( \mathbb{U}(x)-\mathbb{U}(y)\right)\cdot (x-y)\leq -\lambda \|x-y\|^2, \qquad x,y\in\mathbb R^d\,\mbox{such
	that}\,\|x\|,\|y\|\ge R''\,.
\end{equation}
Moreover $-\U$ is strongly confining far from the origin: there exists a radius $\Rd'>0$ and $\cd=\cd(\Rd')>0$ such that
\begin{equation}\label{Hyp-U3-bis}
-\U(x)\cdot x\le -\cd(\Rd')\|x\|^2\,,\qquad \forall\,\|x\|>\Rd'\,.
\end{equation}
With the $\mathcal C^1$-regularity of $\U$, it naturally follows that $-\U$ is globally one-sided Lipschitz: for $\gamma>0$,
\begin{equation}\label{Hyp-V2}
-\left( \mathbb{U}(x)-\mathbb{U}(y)\right)\cdot (x-y)\leq \gamma \|x-y\|^2, \qquad x,y\in\mathbb R^d\,.
\end{equation}
For notation convenience, we will denote the local Lipschitz
constant related to $\U$ by
\begin{equation}\label{LocalLip-U}
\forall R>0\,,\:\:\cl(R):=\sup_{\max(\|x\|,\|y\|)<R}\frac{\|\mathbb{U}(x)-\mathbb{U}(y)\|}{\|x-y\|}<\infty\,.
\end{equation}
Second, regarding the interaction kernel $\F$, Assumption $(\mathbf{A})$-$(iii)$ implies that $\F$ is the gradient field 
\begin{equation}\label{GradientInteractions}
\F(x)=\nabla\left(g(\|x\|)\right),\ \ g'=G,\qquad x\in\mathbb R^d,
\end{equation}
and is locally-Lipschitz continuous with at most polynomial growth satisfying, for some $K>0$: 
\begin{equation}\label{Hyp-F1}
\|\F(x)-\F(y)\|\le \K\|x-y\|\Big(1+\|x\|^{2n-1}+\|y\|^{2n-1}\Big), \qquad  x,y\in\mathbb R^d.
\end{equation}
Furthermore (\cite[Lemma 2.3-(d)]{HIP}), $\F$ satisfies the monotone condition: for any $p\ge 1$, 
\begin{equation}\label{Hyp-F2}
\Big(x\|x\|^p-y\|y\|^p\Big)\cdot \F(x-y)\ge 0\,.
\end{equation}
As $\F$ is anti-symmetric, the above is further equivalent to the extended monotone property: for any probability measure $\mu$ on $\mathbb R^d$,
\begin{equation}\label{Hyp-F2_bis}
2\int \|x\|^p x\cdot \F(x-y)\mu(\ddx)\mu(\ddy)=\iint \Big(x\|x\|^p-y\|y\|^p\Big)\cdot \F(x-y)\mu(\ddx)\mu(\ddy)\ge 0\,.
\end{equation}

\medskip 

\noindent
$\bullet$ Assumption $\mathbf{(A)}$-$(iv)$ sets the systems in a synchronized regime (see e.g. \cite{BerFerGen-07a}, \cite{BerFerGen-07b}) where the self-stabilizing force
compensates the lack of global attractivity of $-\U$ toward $\lambda_1$ and $\lambda_2$. Illustratively, in the prototypical gradient case $\U=\nabla u$ and $\F=\nabla f$ with $f(x):=\frac \alpha{2}\|x\|^2$, $\mathbf{(A)}$-$(i)$ only ensures strong convexity of $u$ for $\|x\|$ far from~$0$, and the synchronization condition \eqref{Hyp:Synchro} imposes that $\alpha$ is large enough so that the \emph{effective potential} of the system,  $w_\mu(x):= u(x)+\int f(x-z)\mu(\ddz) =u(x)+\frac{\alpha}{2}\int \|x-y\|^2\mu(\ddy)$,
is uniformly convex on $\mathbb R^d$, independently of the measure argument $\mu$. The convexity threshold is provided by $\theta:=\alpha+\inf_{x,\xi\in\mathbb R^d} \xi \cdot \nabla^2 u(x) \xi/\|\xi\|^2>0$. The synchronization naturally vanishes with $\sigma$ (and since $\F(0)=0$) leaving only $\U$ as the driving force for the zero-noise limits of \eqref{MV1}-\eqref{MV2}, but will remain active in the estimate of $C_\epsilon(\sigma)$ and $L_\epsilon(\sigma)$. 

\medskip 

\noindent
$\bullet$ Assumptions $\mathbf{(A)}$-$(ii)$ and $(\mathbf{A})$-$(v)$ generate the specific noise-induced collision regime aimed in this paper, where collisions result
from the sole combined actions of the driving Brownian motions. 
Specifically, the combination of these assumptions ensures a bi-stable regime
where all diffusions evolve in a landscape where $\lambda_1$ and $\lambda_2$ define two separate attractors, one for $X$ and the
other one for $Y$, and the zero-noise regimes $\phi(x_1)$ and
$\phi(x_2)$ are collision-free. In order to avoid any trivial situation with near collision, the collision-radius $\varepsilon$ is taken from now on smaller than the smallest distance between the
zero-noise limit $(\phi(x_1),\phi(x_2))$ of $(X,Y)$:
\begin{equation}\label{Threshold}
	\varepsilon_0:=
	2^{-1}\inf_{t\ge 0}\{\|\phi_t(x_1)-\phi_t(x_2) \|\}\,,
\end{equation} 
 $\mathbf{(A)}$-$(ii)$ and $(\mathbf{A})$-$(v)$ ensuring the right setting with $\varepsilon_0>0$. This phenomenon is also expected for \eqref{MFSP1} and \eqref{MFSP2}.
 \medskip 
 
 \noindent
 $\bullet$ The McKean-nonlinearity form $\mathbf{(A)}$-$(iii)$ is directly  borrowed from \cite{HIP} and this self-stabilizing component has been purposely chosen to leave aside some tricky degenerate situations, and to focus on the interpretation of noise-induced collisions with Freidlin-Wentzell theory. Possible extension of the present working assumptions will be discussed at the last Section \ref{sec:Extensions} of the paper.  

\subsection{Well-posedness of \texorpdfstring{\eqref{MV1}}{}-\texorpdfstring{\eqref{MV2}}{} and  \texorpdfstring{\eqref{MFSP1}}{}-\texorpdfstring{\eqref{MFSP2}}{} and moment estimates}

The smoothness of the diffusion $\Gamma$, the one-sided and local-Lipschitz continuity of $\U$ and $\F$, namely \eqref{Hyp-V2} and \eqref{Hyp-F1}, along with the polynomial form of $G$ in \eqref{RotInv} and the confining condition \eqref{Hyp-U3-bis} ensure strong well-posedness of the particle system \eqref{particles} (see e.g. \cite[Theorem 2.5, Chapter 5]{KarShr91}). For the self-stabilizing diffusions \eqref{MV1} and \eqref{MV2},  we have the following result, which is proved in Section~\ref{Appendix-A}.
\begin{prop}\label{prop:WellposedMV} If $\mathbf{(A)}$-$(i)$ to $(\mathbf{A})$-$(iii)$ and $\mathbf{(A)}$-$(vi)$ hold, then, for any $x\in\mathbb R^d$, the SDE
	$$
	X_t=x-\int_0^t \Big(\U(X_s)+\int \F(X_s-x)\mu^X_s(\ddx)\Big)\,\dds+\sigma\int_0^t\Gamma(X_s)\,\dd B_s, \qquad t\ge 0,
	$$
	admits a unique strong solution such that, whenever $\sigma<\Big(\sqrt{d}\Lambda_+\Big)^{-1}$ (for $\Lambda_+$ as in $\mathbf{(A)}$-$(vi)$), there exists a finite constant $C>1$, independent of $t$, such that, for all $p\ge 1$,
	\begin{equation}\label{Moment_SS}
	\sup_{t\ge 0}\mathbb E\Big[\|X_t\|^{2p}\Big]\le C^p (p-1)!\,.
	\end{equation}
\end{prop}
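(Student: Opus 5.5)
The plan follows the scheme of \cite{HIP} (Theorem 2.13 there), adapted to the inhomogeneous diffusion matrix $\Gamma$. The argument has three steps: an \emph{a priori} uniform moment bound, existence by a fixed point on a space of measure flows with controlled moments, and pathwise uniqueness by a coupling argument.

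\textbf{Step 1: moment estimate \eqref{Moment_SS}.} Suppose a solution exists with finite moments. Apply It\^o's formula to $\|X_t\|^{2p}$:
\[
\frac{\dd}{\ddt}\EE\|X_t\|^{2p}=-2p\,\EE\Big[\|X_t\|^{2p-2}X_t\cdot \U(X_t)\Big]-2p\,\EE\Big[\|X_t\|^{2p-2}X_t\cdot\!\int\!\F(X_t-y)\mu^X_t(\ddy)\Big]+\sigma^2\,\EE\big[\mathcal R_p(X_t)\big],
\]
where $\mathcal R_p$ collects the second-order terms. The interaction term is $\le 0$ by \eqref{Hyp-F2_bis}, using the symmetrization with an independent copy of $X_t$ with the same law. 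The confining term is handled with \eqref{Hyp-U3-bis}: $-x\cdot\U(x)\le -\cd\|x\|^2+C_0$ for all $x$, where $C_0$ bounds the contribution of the compact ball $\{\|x\|\le \Rd'\}$. The second-order term is bounded by $\sigma^2 p(2p-1)\,d\Lambda_+^2\|x\|^{2p-2}$, since $\mathrm{tr}A\le d\Lambda_+^2$ and $\|A\|\le d\Lambda_+^2$. The condition $\sigma<(\sqrt d\Lambda_+)^{-1}$ makes the noise constant $\sigma^2 d\Lambda_+^2<1$. This yields
\[
\frac{\dd}{\ddt}m_p\le -2p\,\cd\, m_p+p(2p-1+2C_0)\,m_{p-1},\qquad m_p(t):=\EE\|X_t\|^{2p}.
\]
Gronwall then gives $\sup_t m_p\le \|x\|^{2p}\vee \tfrac{(2p-1+2C_0)}{2\cd}\sup_t m_{p-1}$. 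Induction on $p$ gives a bound of the form $C^p p!$; the refinement to $C^p(p-1)!$ comes from tracking constants (absorbing the $2p$ factor into $C$). The tightness of these constants I leave as a bookkeeping issue.

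\textbf{Step 2: existence.} Fix $T>0$ and consider the set $\mathcal A_T$ of continuous flows $(\nu_t)_{t\le T}$ satisfying $\sup_t\int\|z\|^{2q}\nu_t(\ddz)\le C^q(q-1)!$ for all $q$ up to the needed order $q\ge 2n$. For $\nu\in\mathcal A_T$, the linear SDE with drift $-\W_{\nu_t}$ has locally Lipschitz, one-sided Lipschitz coefficients, by \eqref{Hyp-V2}, \eqref{Hyp-F1} and the moment bounds on $\nu$, and $\Gamma$ is Lipschitz. It is therefore strongly well-posed by \cite[Thm 2.5, Ch.~5]{KarShr91} with localization. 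Define $\Psi(\nu)_t=\mathrm{Law}(X^\nu_t)$. Step 1's computation (the monotonicity now being replaced by the moment bound on $\nu$) shows $\Psi$ maps $\mathcal A_T$ into itself.

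For contraction, couple $X^\nu$ and $X^{\nu'}$ with the same Brownian motion. By \eqref{Hyp-V2}, the Lipschitz property of $\Gamma$ and \eqref{Hyp-F1},
\[
\frac{\dd}{\ddt}\EE\|X^\nu_t-X^{\nu'}_t\|^2\le C\,\EE\|X^\nu_t-X^{\nu'}_t\|^2+C\,\mathcal W_2(\nu_t,\nu'_t)^2,
\]
where the polynomial weights are absorbed by Cauchy--Schwarz and the moment bounds. Iterating in Picard fashion on $[0,T]$ gives a unique fixed point, and patching over $T$ extends it globally.

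\textbf{Step 3: uniqueness.} Two solutions $X,X'$ have flows in $\mathcal A_T$ by Step 1. The same synchronous coupling estimate, with $\mathcal W_2(\mu_t,\mu'_t)^2\le \EE\|X_t-X'_t\|^2$, followed by Gronwall, gives $X=X'$.

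The main obstacle is the polynomial growth of $\F$ and $\U$. The coupling estimate needs higher moments controlled \emph{a priori}, so Step 1 must be carried out with a stopping-time localization, and without assuming finiteness of $\EE\|X_t\|^{2p}$. The $\Gamma$-dependent correction also enters here, through $\sigma^2\|\Gamma(x)-\Gamma(y)\|^2$ in the coupling and the trace terms in the moments.
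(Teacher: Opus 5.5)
Your Step 1 matches the paper's uniform-in-time argument: It\^o's formula, the sign of the interaction term from \eqref{Hyp-F2_bis}, the confinement \eqref{Hyp-U3-bis}, and a recursion closed by a comparison lemma. The factorial is not a problem: for $p\ge 2$ your coefficient $\frac{2p-1+2C_0}{2\cd}$ is at most $c(p-1)$, so the induction gives $c^{p-1}(p-1)!$ times the $p=1$ bound directly.

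\textbf{The gap in Step 2: invariance of $\mathcal A_T$.} The claim that $\Psi$ maps $\mathcal A_T$ into itself does not hold as justified. The sign you use in Step 1 comes from symmetrizing against an independent copy \emph{with the same law} $\mu^X_t$. For a frozen flow $\nu\neq\mathrm{Law}(X^\nu_t)$ that sign is lost, because $-\F$ drags $X^\nu$ towards the mass of $\nu$. The generic bound is then $-x\cdot\F\star\nu_t(x)\le c\int\|y\|\big(1+\|y\|^{2n}\big)\,\nu_t(\ddy)$, which is a source term of order $C^{n+1/2}$ in the constant $C$ defining $\mathcal A_T$. So $\Psi(\nu)$ only obeys a bound with a worse constant. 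With merely quadratic confinement against a superlinear $\F$, the loop does not close, let alone uniformly in time.

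The same point defeats the stopping-time localization you propose at the end. The stopped process no longer has law $\mu^X_t$, so the symmetrization cannot be applied to it.

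The paper avoids both problems as follows:
\begin{itemize}
\item It truncates $\U,\F$ into globally Lipschitz $\U_r,\F_r$ that keep the radial form (hence \eqref{Hyp-F2_bis}) and the confinement.
\item It solves the truncated McKean--Vlasov equation. That equation is self-consistent, so the sign argument applies and gives $r$-uniform moments on $[0,T]$.
\item It passes to the limit by tightness, Prohorov and Skorokhod.
\end{itemize}
If you want to keep a fixed point, run it on short intervals in a set with a looser, horizon-dependent moment bound $K$. There $m_p(t)\le\|x\|^{2p}+t\,F_p(K)$ does give invariance. Then use Step 1, which is valid at the fixed point, to restart with a step size independent of the restart time.

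\textbf{The gap in the contraction estimate and Step 3.} Neither follows from \eqref{Hyp-V2} and \eqref{Hyp-F1}. The spatial part
$-(X^\nu_t-X^{\nu'}_t)\cdot\big(\F\star\nu_t(X^\nu_t)-\F\star\nu_t(X^{\nu'}_t)\big)$
is only bounded by $K\|X^\nu_t-X^{\nu'}_t\|^2$ times a random polynomial weight in $\|X^\nu_t\|,\|X^{\nu'}_t\|$. Cauchy--Schwarz turns this into $\big(\EE\|X^\nu_t-X^{\nu'}_t\|^4\big)^{1/2}$ times moments, not $C\,\EE\|X^\nu_t-X^{\nu'}_t\|^2$. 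Cauchy--Schwarz only handles the measure-difference part $\F\star\nu_t-\F\star\nu'_t$ evaluated at a single point. For the same reason, \eqref{Hyp-F1} does not make $\W_{\nu_t}$ one-sided Lipschitz, as you assert.

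The paper removes the spatial part with the synchronization assumption $(\mathbf{A})$-$(iv)$, i.e. one-sided contractivity of $\W_\mu$ uniformly in $\mu$. It invokes this in its uniqueness proof even though the statement lists only $(i)$--$(iii)$ and $(vi)$. You need it, or monotonicity of $\F$, both for the contraction in Step 2 and for uniqueness in Step 3.
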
 

In connection with the estimates \eqref{Moment_SS}, let us highlight the following lemma which states a rather straightforward martingale concentration inequality. It will be intensively used throughout to handle the lack of \textit{homoscedasticity} of the diffusion matrices in \eqref{MV} and \eqref{particles}.
\begin{lem}\label{lem:MartingaleConcentration} Let $\mathbf{B}$ be a $m$-dimensional Brownian motion and $(\beta_t)_{t\ge0 }$ be a $\mathbb R^m$-valued $\mathcal F_t$-adapted stochastic process (for some filtration $\left(\mathcal F_t\right)_{t\geq0}$) such that, there exists $C>0$ satisfying, for all $p\ge 1$, $\sup_{t\ge 0}\mathbb E[\|\beta_t\|^{2p}]\le C^p(p-1)!$. Then, for $M=\big(\int_0^t \beta_s\,\dd\mathbf B_s\big)_{t\ge 0}$ and $\langle M\rangle$ the related quadratic variation process, it holds that: for any $c>0$, 
	$$
	\mathbb P\Big(\exists t\ge 0\,:\,M_t>\frac 12\langle M\rangle_t+c\Big)\le \exp\big[-c\big].
	$$
\end{lem}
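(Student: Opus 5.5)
This bound is the classical exponential supermartingale inequality. The moment hypothesis on $\beta$ only serves to make the exponential a genuine martingale, not merely a local one. The plan is to introduce
$$
\mathcal E_t:=\exp\Big[M_t-\frac12\langle M\rangle_t\Big]\,,\qquad t\ge 0\,,
$$
and observe that, by Itô's formula, $\dd\mathcal E_t=\mathcal E_t\,\beta_t\cdot\dd\mathbf B_t$ with $\mathcal E_0=1$. Hence $\mathcal E$ is a nonnegative continuous local martingale. Fatou's lemma applied along a localizing sequence then shows it is a supermartingale with $\mathbb E[\mathcal E_t]\le 1$ for every $t\ge 0$. This already suffices for the maximal inequality below. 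I would still remark that the moment bound $\sup_t\mathbb E[\|\beta_t\|^{2p}]\le C^p(p-1)!$ yields exponential moments of $\|\beta_t\|^2$ of small order, uniformly in $t$. Via a Novikov-type criterion applied on short time intervals, together with the Markov/tower property, this even gives $\mathbb E[\mathcal E_t]=1$, but that is not needed.

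The key step is to rewrite the event as
$$
\Big\{\exists t\ge 0\,:\,M_t>\tfrac12\langle M\rangle_t+c\Big\}=\Big\{\sup_{t\ge 0}\mathcal E_t> e^{c}\Big\}\,.
$$
I would then apply Doob's maximal inequality for nonnegative supermartingales (Ville's inequality) on each finite horizon $[0,T]$. This gives
$$
\mathbb P\Big(\sup_{t\le T}\mathcal E_t\ge e^{c}\Big)\le e^{-c}\,\mathbb E[\mathcal E_0]=e^{-c}\,.
$$
The events increase in $T$, so letting $T\uparrow\infty$ by monotone convergence yields the claim on $[0,\infty)$.

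The only point needing some care is that the strict inequality and the supremum over an unbounded horizon do not spoil the bound. Continuity of $\mathcal E$ lets the supremum be taken over rationals, and monotone convergence in $T$ handles the infinite horizon. I do not expect any real obstacle: the integrability assumption is only used to guarantee that $M$ is a well-defined Itô integral, i.e. $\int_0^t\|\beta_s\|^2\,\dds<\infty$ a.s. for all $t$, which follows from Fubini and the case $p=1$ of the moment hypothesis.
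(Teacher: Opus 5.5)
Your proof is correct, but it follows a different route from the paper's.

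The paper uses the moment hypothesis to check a Novikov condition on each interval of a fine deterministic partition. It expands $\mathbb E[\exp(\frac12\int_{t_n}^{t_{n+1}}\|\beta_s\|^2\,\mathrm{d}s)]$ as a power series and bounds each term with Jensen and $\sup_t\mathbb E[\|\beta_t\|^{2p}]\le C^p(p-1)!$. From this it concludes that $\exp[M_t-\frac12\langle M\rangle_t]$ is a true martingale. It then applies Doob's inequality on $[0,T]$ using $\mathbb E[\exp(M_T-\frac12\langle M\rangle_T)]=1$, and lets $T\uparrow\infty$.

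You use only that $\mathcal E$ is a nonnegative continuous local martingale, hence a supermartingale with $\mathcal E_0=1$. For such a process the maximal inequality $e^{c}\,\mathbb P(\sup_{t\le T}\mathcal E_t\ge e^{c})\le\mathbb E[\mathcal E_0]=1$ already holds. Equivalently, stop $\mathcal E$ at the first time it reaches $e^{c}$. The stopped process is bounded by $e^{c}$, hence a true martingale, and optional stopping gives the bound.

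Your route is more general and shorter. The estimate holds for any continuous local martingale $M$ with $M_0=0$. The moment assumption is needed only through $p=1$, to make the stochastic integral well defined. This is also where adaptedness should really be progressive measurability, a point the paper glosses over too.

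Your route also avoids the Novikov bookkeeping, which is slightly off in the paper. Jensen gives a factor $(t_{n+1}-t_n)^{p-1}$, not $(t_{n+1}-t_n)^{p}$, and the $p=0$ term involves $(-1)!$. The series does still converge once this is corrected.

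What the paper's route gives in addition is $\mathbb E[\mathcal E_t]=1$, the true martingale property. That fact is not needed for the inequality in the lemma.
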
\begin{proof} Owing to the moment control $\sup_{t\ge 0}\mathbb E[\|\beta_t\|^{2p}]\le C^p(p-1)!$, the following local Novikov condition holds for any $\{t_n\}_{n\ge 0}$ non-random increasing sequence of times such that $t_0=0$, $\sup_n \big(t_{n+1}-t_n\big)<\min\{1;2/C\}$:
	\begin{align*} 
		\mathbb E\bigg[\exp\Big[\frac1 2\int_{t_n}^{t_{n+1}}\|\beta_s\|^2\,\dds\Big]\bigg]&=\sum_{p\ge 0}\frac{1}{p!2^p}\mathbb E\bigg[\Big(\int_{t_n}^{t_{n+1}}\|\beta_s\|^2\,\dds\Big)^p\bigg]\nonumber\\
		&\le \sum_{p\ge 0}\frac{(t_{n+1}-t_n)^p}{p!2^p}\mathbb E\bigg[\int_{t_n}^{t_{n+1}}\|\beta_s\|^{2p}\,\dds\bigg]\nonumber\\
		&\le \sum_{p\ge 0}\frac{(p-1)!(t_{n+1}-t_n)^{p+1}C^p}{p!2^p}<\infty\,,
	\end{align*}
	the first inequality following from Jensen inequality.
	The process $\exp\left[M_t-\frac 12\langle M\rangle_t\right]$, $t\in [0,T]$, is thus a martingale, with value $1$ at time $t=0$, and Doob's inequality implies that
	\begin{align*}
		\mathbb P\Bigg(\exists t\ge 0\,:\,M_t\ge \frac 12\langle M\rangle_t+c\Bigg)
		&=\lim_{T\uparrow \infty}\mathbb P\Big(\exists t\in[0,T]\,:\,\exp\big[M_t-\frac12\langle M\rangle_t\big]\ge \exp\big[c\big]\Big)\\
		&\le\lim_{T\uparrow \infty} \mathbb E\left[\exp\big[M_{T}-\frac12\langle M\rangle_{T}\big]\right]\exp\big[-c\big]=\exp\big[-c\big].
		\end{align*}
\end{proof}
Owing to Lemma \ref{lem:MartingaleConcentration}, we have the following  uniform-in-time controls (the proof is also postponed to Appendix \ref{Appendix-A}).
\begin{lem}\label{lem:Moment_particle} Under the same assumptions than Proposition \ref{prop:WellposedMV}, we have 
	\begin{equation}\label{Moment_particle}
		\sup_{t\ge 0,N\ge 1}\mathbb E\left[\|X_t^{i,N}\|^{2p}+\|Y_t^{i,N}\|^{2p}\right]
		<C^p (p-1)!\,.
	\end{equation}
\end{lem}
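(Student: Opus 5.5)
The plan is to apply Itô's formula to $\|X^{i,N}_t\|^{2p}$ (and likewise to $Y^{i,N}$), to average over $i$ and use exchangeability, and then to close a differential inequality for $m_p(t):=\mathbb E[\|X^{1,N}_t\|^{2p}]$ that does not depend on $N$, inducting on $p$ to get the factorial growth. Since all particles start at $x_1$, the law of $(X^{1,N},\dots,X^{N,N})$ is exchangeable, so $m_p(t)=\frac1N\sum_i\mathbb E\|X^{i,N}_t\|^{2p}$. The $Y$-system is handled identically.

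\textbf{Itô expansion.} For $f(x)=\|x\|^{2p}$,
\[
\dd\|X^{i,N}_t\|^{2p}=2p\|X^{i,N}_t\|^{2p-2}\Big(-X^{i,N}_t\cdot\U(X^{i,N}_t)-\frac1N\sum_j X^{i,N}_t\cdot\F(X^{i,N}_t-X^{j,N}_t)\Big)\ddt+\sigma^2\mathcal L_p(X^{i,N}_t)\ddt+\dd M^i_t,
\]
where $\mathcal L_p(x)\le p(2p-1)\|x\|^{2p-2}\,\mathrm{tr}A(x)\le p(2p-1)\,d\Lambda_+^2\|x\|^{2p-2}$ by $\mathbf{(A)}$-$(vi)$. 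Averaging over $i$, the interaction term becomes
\[
-\frac{2p}{N^2}\sum_{i,j}\|X^i\|^{2p-2}X^i\cdot\F(X^i-X^j)=-\frac{p}{N^2}\sum_{i,j}\big(X^i\|X^i\|^{2p-2}-X^j\|X^j\|^{2p-2}\big)\cdot\F(X^i-X^j)\le 0
\]
by antisymmetry of $\F$ and the monotonicity \eqref{Hyp-F2}. This is the empirical-measure analogue of \eqref{Hyp-F2_bis}, and it removes $N$ entirely. The confining bound \eqref{Hyp-U3-bis}, together with continuity of $\U$ on $\|x\|\le R_d'$, gives $-x\cdot\U(x)\le -c_d\|x\|^2+K_0$. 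The martingale part must be shown to be a true martingale, or handled by localization with stopping times $\tau_R$ followed by Fatou. For the localization we use well-posedness and non-explosion of the particle system, which are cited above.

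\textbf{Closing the induction.} Taking expectations yields
\[
m_p'(t)\le -2pc_d\, m_p(t)+2pK_0\,m_{p-1}(t)+\sigma^2p(2p-1)d\Lambda_+^2\,m_{p-1}(t).
\]
The hypothesis $\sigma<(\sqrt d\Lambda_+)^{-1}$ makes $\sigma^2 d\Lambda_+^2<1$, so the last coefficient is at most $p(2p-1)$. Grönwall then gives
\[
\sup_t m_p(t)\le \|x_1\|^{2p}+\frac{2K_0+(2p-1)}{2c_d}\sup_t m_{p-1}(t).
\]
The recursion constant is $O(p)$, so by induction $\sup_t m_p\le C^p(p-1)!$ for a suitable $C$ independent of $N$. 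Here $C$ is chosen large enough to absorb $\|x_1\|^{2p}\le C^p(p-1)!$ and the linear-in-$p$ factor, using $(p-1)\cdot(p-2)!=(p-1)!$ and adjusting for $p=1$.

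\textbf{Main obstacle.} The delicate point is the recursion constant. It must grow at most linearly in $p$, since quadratic growth would produce $(p!)^2$-type bounds. This requires the diffusion term $p(2p-1)\sigma^2 d\Lambda_+^2\|x\|^{2p-2}$ to be compared against the drift dissipation $2pc_d\|x\|^{2p}$. The comparison is what yields the ratio $(2p-1)/(2c_d)$, and it is where the threshold on $\sigma$ enters. A cleaner route is Young's inequality: $p(2p-1)\|x\|^{2p-2}\le pc_d\|x\|^{2p}+(\text{const}\cdot p)^{p}c_d^{1-p}$. This gives a bound of order $p^p$, which is of the same order $C^p(p-1)!$ by Stirling.

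Lemma \ref{lem:MartingaleConcentration} is the tool the paper flags for this. Its role here is to guarantee that the local martingales $\int\|X^i\|^{2p-2}X^i\cdot\Gamma\,\dd B^i$ have zero expectation once finite moments are known on bounded intervals via localization. I would simply argue with stopping times and monotone convergence, which avoids any circularity.
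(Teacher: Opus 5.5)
Your proposal is correct and follows essentially the paper's route: Itô's formula for $\|X^{i,N}_t\|^{2p}$, exchangeability plus symmetrization of the interaction term via \eqref{Hyp-F2}, the confining bound \eqref{Hyp-U3-bis}, boundedness of $\Gamma$, and then an inductive differential inequality with a recursion constant linear in $p$, which is exactly \eqref{Moment_SS_Iteration}. The only differences are minor. The paper closes the recursion with the max-principle Lemma~\ref{lem:GrowthControl} rather than a Grönwall/comparison argument. The paper also silently drops the martingale term, whereas you handle it explicitly by localization.
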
 
Without loss of generality, we will assume from now on that the systems \eqref{MV1}, \eqref{MV2}, \eqref{MFSP1} and \eqref{MFSP2} are defined on a common filtered probability space $(\Omega,\mathcal F,\{\mathcal F_t\}_{t\ge 0},\mathbb P)$ - with $\{\mathcal F_t\}_{t\ge0}$ satisfying the usual conditions - and \textbf{that $\sigma$ is lower than the threshold $(\sqrt{d}\Lambda_+)^{-1}$} set in Proposition \ref{prop:WellposedMV} and Lemma \ref{lem:Moment_particle}.

\subsection{Freidlin-Wentzell's asymptotics theory for exit-times.}\label{ssec:FW}
Consider the diffusion process:
\begin{equation}\label{GenSGF}
z_t^\sigma=z_0 +\int_0^t b\left(z_s^\sigma\right)\dds+\sigma \int_0^t \gamma(z^\sigma_s)\,\dd\mathcal W_s,\qquad t\ge 0,
\end{equation}
where $b:\bRb^m\rightarrow \bRb^m$ is a smooth vector field, $\gamma$ is a smooth non-degenerate diffusion matrix and $\mathcal W$ is a $\mathbb R^m$-Brownian motion ($m\geq1$). As the noise intensity~$\sigma$ decreases,
the paths of $z^\sigma$ become closer to the dynamical system:
\begin{equation*}\label{GenSGFbis}
\Psi_t(z_0)=z_0+\int_0^t b\left(\Psi_s(z_0)\right)\dds,\qquad t\ge 0.
\end{equation*}
The  way $z^\sigma$ approaches $\Psi(z_0)$ is characterized by the following large deviations principle (see e.g. \cite[Theorem~2.3]{FreWen-12}, \cite[Section~5.6]{DemZei-10}): for $a^{-1}(x)=(\gamma\gamma^*)^{-1}(x)$ the inverse matrix of $a(x)=\gamma\gamma^*(x)$ and $\|\cdot\|_{a^{-1}(x)}$ the weighted semi-norm $\|z\|_{a^{-1}(x)}=\sqrt{z\cdot a^{-1}(x)z}$
for any finite
arbitrary time horizon $T$ and for any $\delta>0$, 
\begin{equation}\label{LDP}
\log\PP\left(\sup_{t\in[0,T]}\| z_t^\sigma-\Psi_t(z_0)\|>\delta\right)
\underset{\sigma\downarrow 0}{\asymp} -\frac{1}{2\sigma^2}\inf_{\Phi}\left\{\int_0^T\Big\| \dot{\Phi}_t(z_0)-b(\Phi_t(z_0))\Big\|_{a^{-1}(\Phi_t(z_0))}^2\,\ddt\right\}\,,
\end{equation}
 the infimum on the right-hand side being taken over the class of continuously differentiable functions  $\Phi:[0,T]\rightarrow \mathbb R^m$ starting from
$z_0$ at $t=0$, and such that $\max_{0\le t\le T}\| \Phi_t(z_0)-\Psi_t(z_0)\|>\delta$. 

In the case where the path $\Psi(z_0)=(\Psi_t(z_0))_{t\ge 0}$
has an asymptotically attracting point $z_{stable}$ (i.e., for any starting point $z_0$ close to $z_{stable}$,  $\lim_{t\rightarrow \infty}\Psi_t(z_0)=z_{stable}$),
the trajectories of $z^\sigma$ are naturally wandering around a neighbourhood of $z_{stable}$, as $\sigma$ decreases to $0$. The LDP \eqref{LDP} only captures the zero-noise behaviour of $z^\sigma$ over finite time-horizons, and is not enough to also capture large-time asymptotic. M. I. Freidlin and A. D. Wentzell established the exponential growth of the exit-time
of $z^\sigma$ from a stable domain $\cGc$ and the concentration point of $z^\sigma$ evaluated at this exit-time. Freidlin-Wentzell results  (\cite[Theorem 5.7.11]{DemZei-10}; \cite[Chapter 4, Theorems~2.1 and~4.2, and Chapter 5]{FreWen-12}) formulate as follows. 
\begin{thm}\label{thm:KramersDZ} Assume that $b:\mathbb R^m\rightarrow \mathbb R^m$ and $\gamma$ are globally Lipschitz continuous with $x\mapsto a(x)=\big(\gamma\gamma^*\big)(x)$ uniformly elliptic. Let $\cGc$ be an open bounded set of $\mathbb R^m$, stable by $b$ (i.e., for all $t\ge 0$, $\Psi_t(\cGc)\subset \cGc$), and such that $\cGc$ contains exactly one asymptotically stable point $z_{stable}$ for the flow generated by $b$. Moreover, assume that, for all $z_0$ in the boundary $\partial\mathcal{G}$, $\Psi_t(z_0)$ converges to $z_{stable}$ as $t\uparrow \infty$. Then, for any $z_0$ in $\mathcal{G}$, given $z^\sigma$ the solution to \eqref{GenSGF}, the first exit-time from $\overline{\cGc}$ of $z^\sigma$:
 		$$
 		\tau_{\cGc}(\sigma)=\inf\{t>0\,:\,z_t^\sigma \in\partial \cGc\},
 		$$
 		satisfies: for all $\delta>0$,
 		\begin{equation}\label{GenericKramers}
 			\lim_{\sigma\to0}\PP\left(\exp\left[\frac{2}{\sigma^2}\left(\underline{h}-\delta\right)\right]<\tau_{\cGc}(\sigma)<
 			\exp\left[\frac{2}{\sigma^2}\left(\underline{h}+\delta\right)\right]\right)=1\,,
 		\end{equation}
 		for $\underline{h}:=\inf_{z\in \partial\mathcal G}V(z)$ the {\it exit-cost} of escaping $\mathcal G$ and, $V$ the {\it quasi-potential}
 		\[ 
 		V(z):=\frac 1{4}\inf_{0\le T<\infty}\inf_\phi\left\{\int_{0}^{T} \big\|\dot{\phi}_t-b(\phi_t)\big\|^2_{a^{-1}(\phi_t)} 
 		\, \ddt\,:\, \phi\in\mathcal
 		A^{z_{stable},z}([0,T])
 		\right\}\,,
 		\]
 		the above infimum set $\mathcal A^{z_{stable},z}([0,T])$ corresponding to all continuous differentiable paths $\phi=(\phi_t)_{t\in [T_1,T_2]}$ starting from $z_{stable}$ at time $t=T_1$ and ending at the location $z$ at time $t=T_2$.

 		\noindent
 		Additionally, for any closed subset $\mathcal N\subset \partial \mathcal G$, if $\inf_{z\in\mathcal N}\mathcal V(z)> \inf_{z\in\partial\mathcal G}\mathcal V(z)$, then
 		
 		\begin{equation}\label{GenericKramers-bis}
 		\lim_{\sigma\to0}\mathbb P\left(z_{\tau_{\cGc}(\sigma)}^\sigma\in \mathcal N\right)=0.
 		\end{equation}
 \end{thm}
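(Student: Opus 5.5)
The plan is the classical Freidlin–Wentzell argument (as in \cite[Chapter 4]{FreWen-12} and \cite[Section 5.7]{DemZei-10}): reduce the exit problem to excursions between two small spheres around $z_{stable}$, and estimate the cost of each excursion by the LDP \eqref{LDP} together with continuity properties of the quasi-potential $V$. First I record the properties of $V$ that are needed: uniform ellipticity of $a$ gives $V$ continuous, $V(z_{stable})=0$, and $V(z)\le C\|z-z_{stable}\|$ nearby. This follows by joining two nearby points with a straight segment run at unit speed, whose cost is $O(\text{distance})$ because $b$ is bounded on compacts and $a^{-1}$ is bounded. Next, fix small $0<\rho_1<\rho_2$ with $\overline{\mathbb B(z_{stable};\rho_2)}\subset\cGc$, and write $S_i=\partial\mathbb B(z_{stable};\rho_i)$. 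I introduce the stopping times $\theta_0=0$, $\tau_k=\inf\{t\ge\theta_k: z^\sigma_t\in S_1\cup\partial\cGc\}$, and $\theta_{k+1}=\inf\{t>\tau_k: z^\sigma_t\in S_2\}$. The path from $S_1$ to $\partial\cGc$ then decomposes into a geometric number of excursions $S_1\to S_2\to S_1$, and the strong Markov property lets me handle them one at a time.

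For the upper bound on $\tau_{\cGc}$, I fix $\delta>0$ and build, for each $z\in S_1$, an explicit path that reaches a point outside $\overline\cGc$ within a bounded time $T_0$ at cost at most $2(\underline h+\delta/2)$. It has three pieces: a short segment from $z$ to $z_{stable}$, a near-optimal path realizing $V$ at a minimizer on $\partial\cGc$, and a short push outside. The LDP lower bound, applied on a $\delta$-tube around this path and made uniform in $z\in S_1$ by continuity of the rate function in the initial point (Lipschitz coefficients), gives $\inf_{z}\PP_z(\tau_{\cGc}\le T_0)\ge e^{-2(\underline h+\delta/2)/\sigma^2}$ for $\sigma$ small. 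Independently, the deterministic flow reaches $\mathbb B(z_{stable};\rho_1/2)$ in bounded time from any point of $\overline\cGc$; here I use stability of $\cGc$ and the hypothesis that boundary orbits converge to $z_{stable}$. Together with the finite-horizon LDP, this shows that from any point of $\overline\cGc$ the diffusion reaches $S_1\cup\partial\cGc$ within bounded time with probability tending to one. Iterating over blocks of length $T_0+T_1$ with the Markov property then gives $\PP(\tau_{\cGc}>e^{2(\underline h+\delta)/\sigma^2})\to0$.

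For the lower bound and the exit location, I need two estimates. The first is $\sup_{z\in S_2}\PP_z(z^\sigma\text{ hits }\partial\cGc\text{ before }S_1)\le e^{-2(\underline h-\delta/2)/\sigma^2}$, and more generally the same bound with $\partial\cGc$ replaced by $\mathcal N$ and $\underline h$ by $\inf_{\mathcal N}V$. To prove it I split by the duration of the excursion. Long excursions that avoid $\mathbb B(z_{stable};\rho_1)$ for time $T$ are exponentially unlikely at any prescribed rate once $T$ is large, by the LDP and compactness, since the flow enters $\mathbb B(z_{stable};\rho_1)$ uniformly fast. Short excursions that hit $\partial\cGc$ (resp.\ $\mathcal N$) within time $T$ cost at least $\inf V-O(\rho_2)$, since any path from $S_2$ can be preceded by a cheap path from $z_{stable}$; I then apply the LDP upper bound on the closed set of such paths. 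The second estimate is that every excursion lasts at least a fixed time with overwhelming probability, and it follows from the finite-horizon LDP. Summing over at most $e^{2(\underline h-\delta)/\sigma^2}$ excursions gives the lower bound in \eqref{GenericKramers}. For \eqref{GenericKramers-bis}, I compare the per-excursion probability of exiting through $\mathcal N$ with the per-excursion probability of exiting at all, which is at least $e^{-2(\underline h+\eta)/\sigma^2}$ by the path construction above. Their ratio is $e^{-2(\inf_{\mathcal N}V-\underline h-2\eta)/\sigma^2}\to0$, and summing over excursions gives $\PP(z^\sigma_{\tau}\in\mathcal N)\to0$.

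The main obstacle is the uniform upper large-deviation estimate over initial points of $S_2$ for excursions of bounded duration. It needs the LDP to be uniform in the starting point, which I would get from Lipschitz dependence on initial conditions via a Gronwall comparison, together with lower semicontinuity of the rate function on closed sets. Since the statement is the classical Freidlin–Wentzell theorem, I would carry out these steps only in outline and cite \cite[Theorem 5.7.11]{DemZei-10} and \cite[Chapter 4, Theorems 2.1, 4.2]{FreWen-12}, where the variable-diffusion weighting $\|\cdot\|_{a^{-1}}$ enters only through the rate function.
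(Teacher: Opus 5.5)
Your proposal is the classical Freidlin--Wentzell/Dembo--Zeitouni excursion argument, and that is exactly what the paper relies on: it gives no proof of its own and recalls the result from \cite[Theorem 5.7.11]{DemZei-10} and \cite[Chapter 4]{FreWen-12}. Two details need more care than you give them: the uniform entrance of the deterministic flow from all of $\overline{\cGc}$ into $\mathbb B(z_{stable};\rho_1/2)$ is an explicit hypothesis in Dembo--Zeitouni and follows from positive invariance plus the boundary condition only through a non-trivial topological argument, and for the per-excursion lower bound in the exit-location step the reference path must be taken after its last exit from $\overline{\mathbb B(z_{stable};\rho_1)}$, so that it does not fall back to $S_1$ after reaching $S_2$.
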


The asymptotic  \eqref{GenericKramers} specifically states that $\tau_{\mathcal G}(\sigma)$ grows (in probability) at an exponential rate proportional to the cost $\underline{h}$ corresponding to the minimal energy for exiting $\mathcal G$ from $\lambda_1$. In the case $b=-\nabla u$ and whenever $\gamma$ is the identity matrix $I_m$ of $\mathbb R^m$, the exit-cost in \eqref{GenericKramers} reduces to the closed-form:
\[
\underline{h}=\inf_{z\in \partial G}\big(u(z)-u(z_{stable})\big).
\]
The minimizing path for $\inf_{z\in \partial\mathcal G} V$ is further achieved, up to a time-change by the flow $
\dot{\phi}_t=\nabla u(\phi_t)$ connecting $z_{stable}$ and $\partial \mathcal{G}$. In the non-reversible situation (\cite[Chapter 5]{FreWen-12}), the exit-cost preserves the form $\underline{h}=\inf_{z\in \partial G}\big(V(z)-V(z_{stable})\big)$ provided that $V$ is solution to the Ricatti equation: $H(x,\nabla V(x))=0$, for the Hamiltonian
\[
H(x,p)=\sup_{\alpha \in\mathbb R^m}\left\{\alpha\cdot p-\|\alpha+b(x)\|^2_{a^{-1}(x)}\right\}\,.
\]
The corresponding minimizing curve is then $\dot{\phi}_t=\nabla V(\phi_t)$. 

 The asymptotic \eqref{GenericKramers-bis} states that the exit-location $z^\sigma_{\tau_{\mathcal G}(\sigma)}$ concentrates on the region where it is the least costly  to exit the domain starting from the attraction point $z_{stable}$.  If there exists a unique~$z^\star$ in $\partial\mathcal G$ such that
$u(z^\star)-u(z_{stable})=\inf_{z\in\partial\mathcal G}(u(z)-u(z_{stable}))$, then, for all $\delta>0$, $z_0\in\mathcal G$,
\begin{equation*}
	\lim_{\sigma\rightarrow 0}\mathbb P\left(\|z_{\tau_{\cGc}(\sigma)}^\sigma-z^\star\|<\delta\right)=1\,.
\end{equation*}

The asymptotic \eqref{GenericKramers} can be somehow strengthened (\cite{DemZei-10}) with the average asymptotic equivalence (up to a multiplicative factor):
\[
\log\mathbb E\big(\tau_{\cGc}(\sigma)\big)\underset{\sigma\downarrow 0}{\asymp} \frac{2} {\sigma^2}\underline{h},\qquad  z_0\in\mathcal G,
\]
recovering the Arrhenius \cite{Arrhenius-1889}' law (we refer to \cite{Berglund-13} and \cite{BouRey-16} for the refined Kramers-Eyring law).

Freidlin-Wentzell theory is at the foundations of the mathematics of metastability (\cite{OliVar-05},\cite{BovHol-15}) and has been found with powerful applications in e.g. statistical physics, genetics and stochastic algorithms (\cite{MorSaw-89}, \cite{Kushner-87}, \cite{FreKo-17}, \cite{AzIuMaMe-25}). Freidlin-Wentzell's exit-time estimates for self-stabilizing diffusions has been originally addressed by S.~Herrmann, P.~Imkeller and D. Peithmann in their work~\cite{HIP}. Therein, the authors establish large deviations principles for the class of self-stabilizing diffusions:
\begin{equation*}
Z^\sigma_t=z+\int_0^t\Big(b\left(Z^\sigma_s\right)-\int \Phi\left(Z^\sigma_s-z\right)\,\mu^{Z^\sigma}_s(\ddz)\Big)\dds+\sigma \mathcal W_t,\quad 
\mu^{Z^\sigma}_t=\text{Law}(Z^\sigma_t),\qquad t\ge 0,
\end{equation*}
as well as an analogue of
Theorem~\ref{thm:KramersDZ}, which the authors referred to as a Kramers' type law. Under their assumptions, $\Phi$ satisfies $\mathbf{(A)}$-$(iii)$,  $\mathcal G$ is a stable set by $b$ and $b$ admits a unique attracting point in $\mathcal G$. The law for the exit-time
$\tau_{\mathcal G}(\sigma):=\inf\{t\ge 0\,:\,Z_t\notin \mathcal G\}$ is then (\cite[Theorem~4.2 and Section 5]{HIP}) given by
\begin{equation}\label{KramersSelfStab}
\tau_{\mathcal G}(\sigma)\overset{\mathbb P}{\underset{\sigma\downarrow 0}{\asymp}}\exp\bigg[\frac {2}{\sigma^2}\underline H \bigg],
\end{equation}
for 
\[
\underline{H}:=\inf_{z\in \partial\mathcal G}\mathcal V(z),\quad\mathcal V(z)=\frac 1{4}\inf_{0\le T<\infty} \inf_{\phi}\left\{\int_{0}^{T} \Big\|\dot{\phi}_t-b(\phi_t)+\Phi(\phi_t-z_{stable})\Big\|^2
\, \ddt\,:\,\phi\in\mathcal
A^{z_{stable},z}([0,T])\right\}.
\]
Although the $\sigma\downarrow 0$-limit of $Z^\sigma$ and $z^\sigma$ remain unchanged, the nonlinear component $\Phi$ yields to a damping of the exit-time compared to \eqref{GenericKramers}. In a series of papers, the second author has extended these Kramers' type laws to the situation of a non-globally convex
potential $\U=\nabla u$, under  a synchronized regime or weaker assumptions (see~\cite{Tugaut2018,Alea} and references therein) as well as
for stochastic particle systems (\cite{Tugaut2020}). The key strategy displayed in these papers consists in a coupling technique asserting that, after a certain (deterministic) time, the self-stabilizing diffusion can be found arbitrarily close to its linear version set at the stable measure. Herein, this coupling technique will be extensively extended and adapted to our set-up. Let us point out that these strategies have been revisited and strongly extended in the recent paper \cite{AleVil-25}.
 
\subsection{Main results}\label{sec:mainresults}
For the self-stabilizing diffusions \eqref{MV1}-\eqref{MV2}, the zero-noise limit of  the near-collision-time $C_\varepsilon(\sigma)$ and near-collision-location $L_\varepsilon(\sigma)$ for a negligible collision-radius $\varepsilon$ are given as follows.
\begin{thm}
\label{thm:main1} Let $\underline{H}_0$ be the minimum of 
\begin{equation}\label{collisioncost}
	H_0:\lambda\in \mathbb R^d\rightarrow H_0(\lambda)=\inf_{-\infty<T_1\leq T_2<\infty}\inf_{\phi}\left\{I(T_1,T_2,\phi)\,:\,\phi\in \mathcal A^{(\lambda_1,\lambda_2),(\lambda,\lambda)}([T_1,T_2])\right\}\,,	
\end{equation}
where $I$ is the action functional:
\begin{align}\label{Main_SS_AF}
	I(T_1,T_2,\phi)=\frac 1{4}\int_{T_1}^{T_2}\left(\Big\|\dot{\phi}^1_t+\W_{\lambda_1}(\phi^1_t)\Big\|^2_{A^{-1}(\phi^1_t)}+\Big\|\dot{\phi}^2_t+\W_{\lambda_2}(\phi^2_t)\Big\|^2_{A^{-1}(\phi^2_t)}\right)
	\,\ddt,	
\end{align}
and $\mathcal A^{(\lambda_1,\lambda_2),(\lambda,\lambda)}([T_1,T_2])$ is the set of all $\mathbb R^{2d}$-valued $\mathcal C^1$-paths $\phi=\{(\phi^1_t,\phi^2_t)\}_{t\in[0,T]}$ starting at $(\lambda_1,\lambda_2)$ at time $T_1$ and ending at the common point $(\lambda,\lambda)$ at time $T_2$.

\medskip 
\noindent
Then, for any $\delta>0$, 
\begin{equation*}
\lim_{\varepsilon\to0}\lim_{\sigma\to0}\PP
\left(\exp\left[\frac{2}{\sigma^2}\left(\underline{H}_0-\delta\right)\right]
<{C}_\varepsilon(\sigma)
<\exp\left[\frac{2}{\sigma^2}\left(\underline{H}_0+\delta\right)\right]\right)=1.
\end{equation*}
Moreover, $L_\varepsilon(\sigma)=(X_{C_{\varepsilon}(\sigma)},Y_{C_\varepsilon(\sigma)})$ persists asymptotically in the vicinity of the minimizers set of $H_0$:
\begin{equation}\label{CollisionPersistence}
\mathcal M_0=\argmin H_0:=\{\lambda_0\in\mathbb R^d\,:\,H_0(\lambda_0)=\underline H_0\},
\end{equation}
with, for any $\delta>0$,  
\begin{align*}
&\lim_{\varepsilon\to0}\lim_{\sigma\to0}\PP\left(\dist\bigg( L_{{C}_\varepsilon(\sigma)},\mathcal M_0\times\mathcal M_0\bigg)\leq\delta\right)\\
&=\lim_{\varepsilon\to0}\lim_{\sigma\to0}\PP\left(\max\bigg(\dist( X_{{C}_\varepsilon(\sigma)},\mathcal M_0),\dist(
	Y_{{C}_\varepsilon(\sigma)},\mathcal M_0)\bigg)\leq\delta\right)=1.
\end{align*}
\end{thm}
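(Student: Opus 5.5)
The plan is to reduce the collision problem to a classical Freidlin--Wentzell exit problem for a $2d$-dimensional \emph{linear} (non-McKean) diffusion, and then apply Theorem~\ref{thm:KramersDZ}.

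\textbf{Step 1 (coupling).} Following the coupling strategy of \cite{Tugaut2018,Tugaut2020} recalled above, introduce the linear diffusions
\[
\widehat X_t=x_1-\int_0^t\W_{\lambda_1}(\widehat X_s)\,\dds+\sigma\int_0^t\Gamma(\widehat X_s)\,\dd B_s,
\qquad
\widehat Y_t=x_2-\int_0^t\W_{\lambda_2}(\widehat Y_s)\,\dds+\sigma\int_0^t\Gamma(\widehat Y_s)\,\dd\widetilde B_s,
\]
driven by the same Brownian motions. Using the synchronization condition \eqref{Hyp:Synchro} and Itô's formula on $\|X_t-\widehat X_t\|^2$, the Lipschitz bound \eqref{Hyp-F1} and the moment bounds \eqref{Moment_SS}, one shows that $\mu^X_t\to\delta_{\lambda_1}$ in Wasserstein distance up to an error $O(\sigma)$, after a deterministic time $T_0$. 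Hence $\sup_{t\ge T_0}\|X_t-\widehat X_t\|\le\kappa$, and likewise for $Y$, with probability at least $1-e^{-c/\sigma^2}$ on time windows of length $e^{2(\underline H_0+\delta)/\sigma^2}$. The non-constant diffusion coefficient is handled with Lemma~\ref{lem:MartingaleConcentration}, splitting the time window into exponentially many unit blocks and taking a union bound; we choose $\kappa\ll\varepsilon$. On $[0,T_0]$, the LDP \eqref{LDP} together with $\varepsilon<\varepsilon_0$ from \eqref{Threshold} shows that no near collision occurs, with overwhelming probability. Consequently $C_\varepsilon(\sigma)$ is sandwiched between $\widehat C_{\varepsilon\pm\kappa}(\sigma)$, the near-collision times of $(\widehat X,\widehat Y)$, and the corresponding locations differ by at most $\kappa$.

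\textbf{Step 2 (exit problem for the product).} The pair $Z=(\widehat X,\widehat Y)$ is a diffusion on $\mathbb R^{2d}$ with drift $b(x,y)=-(\W_{\lambda_1}(x),\W_{\lambda_2}(y))$ and block-diagonal uniformly elliptic diffusion matrix $\mathrm{diag}(A(x),A(y))$. The point $(\lambda_1,\lambda_2)$ is its unique attractor in a suitable domain, because $\W_{\lambda_i}$ is $\theta$-contractive by \eqref{Hyp:Synchro}; in fact every point of $\mathbb R^{2d}$ is attracted to $(\lambda_1,\lambda_2)$. The near-collision time $\widehat C_\varepsilon$ is then the exit time from $\mathcal G_\varepsilon:=\mathbb B(0;R)^2\setminus\triangle_\varepsilon$, with $R$ large; the outer boundary is irrelevant because its cost exceeds $\underline H_0$ by \eqref{Hyp-U3-bis}. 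The coefficients are only locally Lipschitz, so truncate them outside $\mathbb B(0;R)^2$; this is harmless since the exit occurs before the truncation matters. The quasi-potential of $Z$ is exactly the functional $I$ in \eqref{Main_SS_AF}, and it decouples into the two componentwise quasi-potentials, $V(x,y)=V_1(x)+V_2(y)$. Theorem~\ref{thm:KramersDZ} then gives $\frac{\sigma^2}{2}\log\widehat C_\varepsilon\to\underline H_\varepsilon:=\inf_{\|x-y\|=2\varepsilon}V(x,y)$, and shows that exit locations concentrate on $\operatorname{argmin}_{\partial\triangle_\varepsilon}V$.

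\textbf{Step 3 ($\varepsilon\downarrow0$).} Continuity of $V$, which follows from local controllability since $A$ is elliptic, implies $\underline H_{\varepsilon}\to\inf_\lambda V(\lambda,\lambda)=\underline H_0$. Moreover, minimizers on $\partial\triangle_\varepsilon$ accumulate on $\{(\lambda,\lambda):\lambda\in\mathcal M_0\}$; this uses the coercivity of $V$, which is itself a consequence of \eqref{Hyp-U3-bis}. Combining this with the $\delta$-slack in \eqref{GenericKramers}, \eqref{GenericKramers-bis} and the $\kappa$-sandwich from Step~1 yields both claims of the theorem, and the $\mathcal M_0\times\mathcal M_0$ statement follows since $\|X-Y\|\le 2\varepsilon$ at $C_\varepsilon$.

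\textbf{Main obstacle.} The hard part is Step~1: obtaining the coupling uniformly over exponentially long time windows. A finite-horizon comparison is not enough, so we need the $\theta$-contraction to absorb both the measure error $\int\F(\cdot-z)(\mu^X_t-\delta_{\lambda_1})(\ddz)$ and the martingale fluctuations coming from $\Gamma(X)-\Gamma(\widehat X)$. The approach we would try first is a Lyapunov bound $\frac{\dd}{\ddt}\EE\|X_t-\widehat X_t\|^2\le-\theta\,\EE\|X_t-\widehat X_t\|^2+O(\sigma^2)$ for the first moment. We would then upgrade it to a pathwise statement through exponential martingale estimates from Lemma~\ref{lem:MartingaleConcentration}, block by block, with the error kept at $\kappa$ rather than $O(\sigma)$ so that the union bound over exponentially many blocks still succeeds.
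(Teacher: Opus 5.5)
Your overall architecture (couple to the linearized pair, run a Freidlin--Wentzell argument for that pair, then let $\varepsilon\downarrow0$) matches the paper. However, Step~2 contains a genuine gap. Theorem~\ref{thm:KramersDZ} requires the domain to be \emph{stable}, i.e. positively invariant under the deterministic flow. The domain $\mathcal G_\varepsilon=\mathbb B(0;R)^2\setminus\triangle_\varepsilon$ is not invariant under $(x,y)\mapsto(-\W_{\lambda_1}(x),-\W_{\lambda_2}(y))$. Every orbit does converge to $(\lambda_1,\lambda_2)\notin\triangle_\varepsilon$, but that does not stop it from crossing $\triangle_\varepsilon$ on the way. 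Two examples:
\begin{itemize}
\item For $d=1$, the point $(\lambda_2,\lambda_1)$ lies in $\mathcal G_\varepsilon$, yet its two components must cross.
\item In any dimension, take $(\lambda_1,y)$ with $y$ on the backward $-\W_{\lambda_2}$-orbit of $\lambda_1$ and $\|y-\lambda_1\|>2\varepsilon$. This point reaches $(\lambda_1,\lambda_1)$ in finite time.
\end{itemize}
The paper flags exactly this after Theorem~\ref{thm:main1} (``this direct application fails because of the lack of stability''). Overcoming it is the heart of the proof, not a routine step:
\begin{itemize}
\item The paper writes $C_\varepsilon=\inf_\lambda\beta_{\lambda,\varepsilon}$.
\item It replaces each ball $\overline{\mathbb B(\lambda;\varepsilon)}$ by its saturation $\mathcal D^i_{\lambda,\varepsilon}$ under the backward flow of the $i$-th drift. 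The complement of a product is invariant as soon as each factor's complement is.
\item It shows the exit cost is unchanged, because the quasi-potential is nonincreasing along forward orbits.
\item It handles $\lambda$ within $\varepsilon$ of a well through the $\rho$-rescaling and the threshold $\varepsilon_c$.
\item Only then does it recover the time and location of the first near-collision, via the barycentre and covering arguments of Proposition~\ref{lacollision}.
\end{itemize}
Your formula $\underline H_\varepsilon=\inf_{\partial\triangle_\varepsilon}V$ is plausibly right; one could also try the largest invariant subset of $\mathcal G_\varepsilon$ together with monotonicity of $V$ along the flow. But neither the exit time nor, especially, the exit location \eqref{GenericKramers-bis} follows from quoting Theorem~\ref{thm:KramersDZ} for $\mathcal G_\varepsilon$.

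Step~1 also does not close as proposed. The inequality $\frac{\dd}{\ddt}\EE\|X_t-\widehat X_t\|^2\le-\theta\,\EE\|X_t-\widehat X_t\|^2+O(\sigma^2)$ cannot come from the synchronization constant alone. The mean-field error $\F\star\mu^X_t(\widehat X_t)-\F(\widehat X_t-\lambda_1)$ contains $X_t-\widehat X_t$ itself, with a constant that typically exceeds $\theta$. For $\F(x)=\alpha x$ and $u(x)=x^4/4-x^2/2$, this error equals $\alpha(\lambda_1-\EE X_t)$ and contributes $+2\alpha\|\EE(X_t-\widehat X_t)\|^2$, while $\theta=\alpha-1$; the net coefficient is positive.

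The paper instead controls $\EE\|X_t-\lambda_1\|^2$ directly. It uses the monotonicity \eqref{Hyp-F2} to discard the interaction term, and the \emph{local} strict monotonicity of $\U$ at $\lambda_1$ (the set $\mathcal S_\rho$). That bound holds only while $X$ stays near $\lambda_1$, so the paper closes it with a continuation and contradiction argument on the deterministic time $S_\kappa$ (Step~4 of Proposition~\ref{label_bizarre2}).

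Two smaller points:
\begin{itemize}
\item No blockwise union bound is needed. Lemma~\ref{lem:MartingaleConcentration} holds for all $t\ge0$ at once, and with threshold $\sigma^{-1/2}$ it gives an additive error $O(\sqrt\sigma)$.
\item The linear processes should be restarted at $T_0$ from $(X_{T_0},Y_{T_0})$, as the paper does. The orbits of $-\W_{\lambda_i}$ issued from $x_i$ are not controlled by $\varepsilon_0$, so your sandwich by $\widehat C_{\varepsilon\pm\kappa}$ could fail at small times.
\end{itemize}
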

Estimates for the individual collision of the particle systems \eqref{particles} are analogue and given by the following theorem.
\begin{thm}\label{thm:main2} For $\underline{H}_0$ and $\mathcal M_0$ as in Theorem \ref{thm:main1}, and for any $N$ large enough, $1\le i\le N$ and $\delta>0$, it holds that
	\begin{equation*}
		\lim_{\varepsilon\to0}\lim_{\sigma\to0}\PP\left(\exp\left[\frac{2}{\sigma^2}\left(\underline{H}_0-\delta\right)\right]<\mathcal{C}^i_{\varepsilon,N}(\sigma)<
		\exp\left[\frac{2}{\sigma^2}\left(\underline{H}_0+\delta\right)\right]\right)=1\,,
	\end{equation*}
	and
	\begin{align*}
			&\lim_{\varepsilon\to0}\lim_{\sigma\to0}\PP\left(\dist\bigg( L^{i }_{\varepsilon,N}(\sigma),\mathcal M_0\times\mathcal M_0\bigg)\leq\delta\right)\\
			&=\lim_{\varepsilon\to0}\lim_{\sigma\to0}\PP\left(\max\bigg(\dist\big(
		X_{\mathcal{C}^i_{\varepsilon,N}(\sigma)}^{i,N},\mathcal M_0\big),\dist\big(
		Y_{\mathcal{C}^i_{\varepsilon,N}(\sigma)}^{i,N},\mathcal M_0\big)\bigg)\leq\delta\right)=1\,.\nonumber
	\end{align*}
\end{thm}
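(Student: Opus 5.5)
We plan to reduce Theorem \ref{thm:main2} to the self-stabilizing result, Theorem \ref{thm:main1}, by combining the coupling strategy of \cite{Tugaut2020} with the collision estimates established for $(X,Y)$. For each $i$ we introduce the auxiliary pair of \emph{linear} diffusions
\begin{align*}
\bar X^{i}_t&=x_1-\int_0^t\W_{\lambda_1}(\bar X^i_s)\,\dds+\sigma\int_0^t\Gamma(\bar X^i_s)\,\dd B^i_s\,,\\
\bar Y^{i}_t&=x_2-\int_0^t\W_{\lambda_2}(\bar Y^i_s)\,\dds+\sigma\int_0^t\Gamma(\bar Y^i_s)\,\dd\widetilde B^i_s\,,
\end{align*}
driven by the same Brownian motions as the $i$-th particles. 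The pair $(\bar X^i,\bar Y^i)$ is a classical $2d$-dimensional Freidlin--Wentzell diffusion with action functional exactly \eqref{Main_SS_AF}. We expect that the Freidlin--Wentzell collision-law for such linear pairs, which is the key intermediate step behind Theorem \ref{thm:main1} (entering $\triangle_\varepsilon$ viewed as exiting $\mathbb R^{2d}\setminus\triangle_\varepsilon$, with Theorem \ref{thm:KramersDZ} applied on a bounded stable truncation of the complement and with the cost $H_0^\varepsilon\to\underline H_0$ as $\varepsilon\downarrow0$), yields for $\bar C^i_\varepsilon(\sigma)$ and $\bar L^i_\varepsilon(\sigma)$ the asymptotics claimed in the theorem. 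It then remains to show that $(X^{i,N},Y^{i,N})$ stays uniformly close to $(\bar X^i,\bar Y^i)$ over the exponentially long window $[0,\exp(2(\underline H_0+\delta)/\sigma^2)]$.

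\textbf{Step 1 (finite-time transient).} On a fixed interval $[0,T_0]$ we compare particles with the deterministic flows through the LDP \eqref{LDP}. With probability $1-e^{-c/\sigma^2}$, each particle stays near $\phi(x_1)$ or $\phi(x_2)$. Since $\varepsilon<\varepsilon_0$, no near-collision occurs before $T_0$, and at time $T_0$ all particles are in a small ball around $\lambda_1$ or $\lambda_2$.

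\textbf{Step 2 (coupling after $T_0$).} We use the synchronization \eqref{Hyp:Synchro}. Writing the equation for $\|X^{i,N}_t-\bar X^i_t\|^2$, the Brownian terms cancel up to the Lipschitz part of $\Gamma$, handled through Lemma \ref{lem:MartingaleConcentration} and the moment bounds \eqref{Moment_particle}. The drift difference splits as $\W_{\mu^N_t}(X^{i,N})-\W_{\mu^N_t}(\bar X^i)$, which is contracting at rate $\theta$, plus $\frac1N\sum_j\F(\bar X^i-X^{j,N})-\F(\bar X^i-\lambda_1)$, controlled by \eqref{Hyp-F1} and by the empirical spread $\frac1N\sum_j\|X^{j,N}_t-\lambda_1\|$. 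This spread is small because all $X$-particles share the contracting effective drift, so their empirical measure remains concentrated near $\lambda_1$ up to an error $O(\sigma)+O(N^{-1/2})$. A Gronwall-type argument with contraction then gives, for every $\kappa>0$, $\sup_{t\in[T_0,e^{2(\underline H_0+\delta)/\sigma^2}]}\|X^{i,N}_t-\bar X^i_t\|\le\kappa$ with probability tending to one as $\sigma\to0$, for $N$ large enough; the same holds for $Y$.

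\textbf{Step 3 (transfer).} On this event we have the sandwich
\[
\bar C^i_{\varepsilon+\kappa}(\sigma)\le C^i_{\varepsilon,N}(\sigma)\le\bar C^i_{\varepsilon-\kappa}(\sigma)\,,
\]
and the collision locations differ by at most $\kappa$. Letting $\sigma\to0$, then $\kappa\to0$ with $\kappa<\varepsilon$, and then $\varepsilon\to0$, the bounds for the linear pair transfer to both the time estimate and the persistence estimate.

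\textbf{Main obstacle.} We expect the hardest step to be Step 2: keeping the coupling uniform over exponentially long times, since rare excursions of individual particles could shift the empirical measure. We would handle this with a restart argument on unit intervals. On each interval, an exit from the $\kappa$-tube costs probability $e^{-c/\sigma^2}$ with $c>\underline H_0+\delta$, obtained by choosing $\kappa$ and the localization radius suitably. A union bound over $e^{2(\underline H_0+\delta)/\sigma^2}$ intervals then still yields a vanishing total probability. This is why $N$ must be taken large: it lets the mean-field term stay close to $\F(\cdot-\lambda_k)$ with high enough exponential confidence.
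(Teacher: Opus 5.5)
Your architecture is the paper's: couple $(X^{i,N},Y^{i,N})$ with linearized diffusions driven by $\W_{\lambda_1},\W_{\lambda_2}$ and the same noises over an exponentially long window (Proposition \ref{label_bizarre9}), then import the linear-pair collision law of Section \ref{sec:LinearCase}. However, the step that carries all the difficulty is asserted rather than proved, and the mechanism you give for it is not the right one. In Step 2 you claim that $\mu^N_t=\frac1N\sum_j\delta_{X^{j,N}_t}$ stays near $\delta_{\lambda_1}$ ``because all $X$-particles share the contracting effective drift'', up to $O(\sigma)+O(N^{-1/2})$. Synchronization \eqref{Hyp:Synchro} only makes the particles contract towards each other; it does not pin the cloud at $\lambda_1$. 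Summing \eqref{MFSP1} over $j$, the interaction cancels because $\F$ is odd, so the empirical mean solves $\dd m^N_t=-\frac1N\sum_j\U(X^{j,N}_t)\,\ddt+\frac{\sigma}{N}\sum_j\Gamma(X^{j,N}_t)\,\dd B^j_t$. This is a bistable dynamics with noise of size $\sigma/\sqrt N$, and for small $N$ (e.g. $N=1$, where the interaction vanishes) it can leave the basin of $\lambda_1$ on the time scale $\exp[2(\underline H_0+\delta)/\sigma^2]$. A fixed-time $O(N^{-1/2})$ fluctuation bound says nothing about such times. Your restart/union-bound remedy simply presupposes the missing estimate (``high enough exponential confidence''), and tuning $\kappa$ or the localization radius does not produce it. No union bound is needed for the martingale term, since Lemma \ref{lem:MartingaleConcentration} is uniform in time.

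Two things are missing. (a) A collective deviation of $\mu^N_t$ must be shown to have a Freidlin--Wentzell cost growing linearly in $N$. (b) You need to break the circularity that coupling particle $i$ requires $\mu^N_t$ concentrated, while concentration requires most particles coupled.

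The paper runs the coupling only up to a random time $S_\kappa$, the first time $\xi^N_t=\frac1N\sum_j\|X^{j,N}_t-\lambda_1\|^2$ reaches $\kappa^2$ or the $(4n-2)$-th empirical moment reaches $R^{4n-2}$. It then shows $S_\kappa\ge\exp[2H/\sigma^2]$ with high probability via an It\^o--Gronwall bound on $\xi^N_t$: the interaction term has a sign by \eqref{Hyp-F2}, and the $\U$-term is split according to $\mathcal S_\rho$. This reduces everything to the proportion of particles outside $\mathbb B(\lambda_1;r)$. Through the coupling, that proportion is dominated by the proportion of linearized particles outside $\mathbb B(\lambda_1;r/2)$. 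The latter is controlled by the exit time of the $Nd$-dimensional linearized system from $\{\frac1N\sum_i\|x_i-\lambda_1\|^2\le c'r^2/4\}$. By the energy bound of Step 3.2 in Proposition \ref{label_bizarre2}, its cost is at least a constant times $Nc'r^2$, hence exceeds $H$ once $N\ge N_{\varepsilon,\kappa}$; this is exactly where ``$N$ large'' enters.

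Three smaller points.
\begin{itemize}
\item Start the linear pair at $T_0$ from the particle positions, as the paper does, not from $(x_1,x_2)$ at time $0$. The threshold $\varepsilon_0$ is defined through the flow of $-\U$, so \eqref{Linear:Init} need not hold for the flows of $-\W_{\lambda_k}$ from $(x_1,x_2)$, and your sandwich is unavailable on $[0,T_0]$ anyway.
\item The linear-pair law you need is Theorem \ref{lacollisionthm} with $\Psi_k=\W_{\lambda_k}$. It is not obtained from a stable truncation of $\mathbb R^{2d}\setminus\triangle_\varepsilon$; the paper goes through $\tau_{\lambda,\varepsilon}$ precisely because that set is not stable.
\item The sandwich $\bar C^i_{\varepsilon+\kappa}\le C^i_{\varepsilon,N}\le\bar C^i_{\varepsilon-\kappa}$ transfers the time asymptotics more directly than Propositions \ref{tyrese}--\ref{ned}, but it does not transfer the location. $C^i_{\varepsilon,N}$ is not a first-entrance time of the linear pair, so you still need the covering argument from the proof of \eqref{ben}. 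That is, before $\exp[2(\underline H_\varepsilon+\delta)/\sigma^2]$ the linear pair does not visit $\mathbb B(\hat\lambda;\varepsilon+\kappa+\hat r)\times\mathbb B(\hat\lambda;\varepsilon+\kappa+\hat r)$ for $\hat\lambda$ away from $\mathcal M_\varepsilon$, by Lemma \ref{dale-bis} and compactness.
\end{itemize}
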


\begin{rem} In the gradient case, $H_0$ in Theorems \ref{thm:main1} and \ref{thm:main2} adopts a convenient explicit and interpretable form. Provided $\Gamma={\rm I}_d$, $\U=\nabla u$, $\F=\nabla f$ (and so the effective force  $\W_\mu(x)=\nabla u(x)+\int \nabla f(x-z)\mu(\ddz)$ is the gradient $\nabla w_{\mu}(x)$), the function writes as
\begin{align*}
H_0(\lambda)&=\frac 1{4}\inf_{\substack{-\infty<T_1\leq T_2<\infty\\ \phi}}\left\{\int_{T_1}^{T_2}\left(\Big\|\dot{\phi}^1_t+\nabla w_{\lambda_1}(\phi^1_t)\Big\|^2+\Big\|\dot{\phi}^2_t+\nabla w_{\lambda_2}(\phi^2_t)\Big\|^2\right)\ddt\,:\,\phi\in \mathcal A^{(\lambda_1,\lambda_2),(\lambda,\lambda)}([T_1,T_2])  \right\}\\
&=w_{\lambda_1}(\lambda)-w_{\lambda_2}(\lambda)=2u(\lambda)-u(\lambda_1)-u(\lambda_2)+f(\lambda-\lambda_1)+f(\lambda-\lambda_2),
\end{align*}
where $w_\lambda(x):=u(x)+f(x-\lambda)$. Here, the related minima that is the \textit{collision-cost} $\underline H_0$ is
\begin{equation}\label{collisioncost_reduced}
\underline H_0=\inf_{\lambda\in\mathbb R^d}\Big(2u(\lambda)-u(\lambda_1)-u(\lambda_2)+f(\lambda-\lambda_1)+f(\lambda-\lambda_2)\Big).
\end{equation}
Recalling that the synchronization assumption $({\bf A})$-$(iv)$ ensures the uniform
convexity of $w_m$ for any $m$ in $\mathbb R^d$, the minimizing set $\mathcal M_0$ reduces to the single element:
\begin{equation*}\label{collisionlocation}
	\lambda_0=
	\Big(\nabla w_{\lambda_1}+\nabla w_{\lambda_2}\Big)^{-1}(0)\,.
\end{equation*}
If $f(x)=\alpha\|x\|^2/2$, $\nabla w_{\lambda_i}(x)=\big(\nabla u(x)+\alpha x\big)-\alpha \lambda_i$ and the above eventually gives
\begin{equation*}\label{collisionlocation_example}
\lambda_0=
\Big(\nabla u+\alpha {\rm Id}\Big)^{-1}\bigg(\frac{\alpha}{2}(\lambda_1+\lambda_2)\bigg),\qquad {\rm Id}\mbox{, the identity map on }\mathbb R^d.
\end{equation*}
From this expression, simple symmetry of the landscape enables to reduce the collision-location; for a perfectly symmetrical bi-stable landscape where $\lambda_1=-\lambda_2$, $\lambda_0$ becoming the zero of $x\mapsto \nabla u(x)+\alpha x$. 

For the one-dimensional double-wells $u(x)=x^4/4-x^2/2$, synchronization holds for $\alpha>1$ and the near first collision-location
persists at the unstable point $\lambda_0=0$ (independently of $\alpha$). The related collision-cost is given by $\underline H_0=H_0(\lambda_0)=\alpha+1/2$ so higher is the synchronization, higher will be the exit-cost. 

For the asymmetric (still in dimension one) double-wells potential $u(x)=x^4/4+x^3/3-x^2/2$, where the wells are located at the points $\lambda_1=-1/2-\sqrt{5}/2$ and $\lambda_2=-1/2+\sqrt{5}/2$ - the former defining the lowest energy well -, and for a synchronization factor~$\alpha>4/3$, $\lambda_0=(u'+\alpha {\rm Id})^{-1}(-\alpha/2)$ corresponds to the root of the polynomial~$u'(x)+\alpha (x+1/2)=x^3+x^2+(\alpha-1)x+\alpha/2$, which is distinct from the saddle point~$0$. For $\alpha$ close to $4/3$, $\lambda_0$ is found numerically close to $-1.19$ and for large value of $\alpha$, this root numerically concentrates around $-0,5$. (These one-dimensional cases apply to the {\it exact} first collision-time and the {\it exact} first collision-location, see Theorem  \ref{victoire2} in Appendix \ref{sec:1DCase}). The collision-location for these two one-dimensional examples were illustrated in Figure \ref{Fig1} - the asymmetric case being set with $\alpha$ close to $4/3$.

In dimension two, the potential 
$$
u(x_1, x_2):= \frac{3}{2}\left(1 - x_1^2 - x_2^2\right)^2+\frac{1}{3}\left(x_1^2 - 2\right)^2+\frac{1}{6}\left((x_1 + x_2)^2 - 1\right)^2 +\frac{1}{6}\left((x_1 - x_2)^2-1\right)^2\,
$$
features an example of a symmetric double-wells potential with a gradient satisfying the related working assumptions $(\mathbf{A})$. The wells are located in $(0,1)$ and $(0,-1)$ and a saddle-point lies in $(0,0)$. The synchronization $({\bf A})$-$(iii)$ occurs providing that $\alpha>10$, the collision persists (again) in $\lambda_0=(0,0)$ yielding (also again) $\underline{H}_0=\alpha+1/2$.

\end{rem}

Theorem \ref{thm:main1} is consistent with one would expect from a naive application of Freidlin-Wentzell estimates~\eqref{KramersSelfStab} for self-stabilizing diffusions following the interpretation of $C_\epsilon(\sigma)$ as an exit-time from $(\mathbb R^d\times\mathbb R^d)\setminus\triangle_\varepsilon$ \eqref{NaiveCollisionDomain} and $L_\epsilon(\sigma)$ as an exit-location: taking~$m=2d$, $\mathcal W=(B,\tilde B)$, $b=(-\U,-\U)$, $\Phi=(\F,\F)$ and~$\gamma$ is the~$\mathbb R^{2d\times 2d}$-diagonal block matrix with $\Gamma$ as its main component, and provided that the domain~$(\mathbb R^d\times\mathbb R^d)\setminus\triangle_\varepsilon$ was stable, $C_\varepsilon(\sigma)$ would
obey to the Kramers' type law \eqref{KramersSelfStab} with the exit-cost $\inf_{(x,y)\in\partial \triangle_\varepsilon} V(x,y)$, and all possible exit-locations from $(\mathbb R^d\times\mathbb R^d)\setminus\triangle_\varepsilon$ would be situated in the
corresponding set of minimizers, $\argmin_{(x,y)\in\partial\triangle_\varepsilon} V$. The boundary $\partial \triangle_\varepsilon$ corresponding to the set~$\{(x,y)\in \mathbb
R^d\times \mathbb R^d\,:\,\|x-y\|=2\varepsilon\}$, as $\varepsilon$ is taken smaller and smaller, $\inf_{(x,y)\in\partial \triangle_\varepsilon} V(x,y)$ would become $\inf_{x\in\mathbb R^d} V(x,x)$, that is $\underline{H}_0:=\inf H_0$, and exit-locations would cluster to the related minimizers set, $\mathcal M_0$. 

Nevertheless this direct application fails because of the lack of stability of $(\mathbb R^d\times\mathbb R^d)\setminus\triangle_\varepsilon$ for $(-\U,-\U)$. A direct example is given by taking two orbits \eqref{eq:LimitDynamic} starting at two different points located at a distance more than $\varepsilon$ from each other but both lying in the same basin of attraction. If the basin is $\mathcal G_1$, the orbits are both attracted to $\lambda_1$. Indeed, in this setting, the dynamical system converges to $(\lambda_1,\lambda_1)$, which is not in $(\mathbb R^d\times\mathbb R^d)\setminus\triangle_\varepsilon$.
 
 The difficulty for a rigorous demonstration of Theorem \ref{thm:main1} first requires to tackle $C_\varepsilon(\sigma)$ a bit more delicately. To do so, one can observe that $C_\varepsilon(\sigma)$ can be written as \begin{equation}\label{KeyInterpretation}
 C_\varepsilon(\sigma)=\inf_{\lambda}\beta_{\lambda,\varepsilon}(\sigma)
 \end{equation}
  for $\beta_{\lambda,\varepsilon}(\sigma)$ the first time $X$ and $Y$ both enter the ball of radius $\mathbb B(\lambda;\varepsilon)$. Equality \eqref{KeyInterpretation} follows, on one side, from the triangular inequality yielding $C_\varepsilon(\sigma)\leq\inf_{\lambda}\beta_{\lambda,\varepsilon}(\sigma)$, and observing, on the other side, that
 $C_\varepsilon(\sigma)\geq\beta_{\lambda_{\varepsilon,\sigma},\varepsilon}(\sigma)\geq\inf_{\lambda}\beta_{\lambda,\varepsilon}(\sigma)$,
 for $\lambda_{\varepsilon,\sigma}:=2^{-1}(X_{C_\varepsilon(\sigma)}+Y_{C_\varepsilon(\sigma)})$.
 
 Equivalently, $\beta_{\lambda,\varepsilon}(\sigma)$ corresponds to the first exit-time of $(X,Y)$ from
 $$
 \left(\mathbb R^d\times\mathbb R^d\right)\setminus \big(\mathbb B(\lambda;\varepsilon)\times \mathbb B(\lambda;\varepsilon)\big)=\Big(\mathbb B(\lambda;\varepsilon)\times \mathbb B(\lambda;\varepsilon)\Big)^c.
 $$
 Although this domain is still not enough to guarantee stability for applying Freidlin-Wentzell results, $\beta_{\lambda,\varepsilon}(\sigma)$ provides a convenient proxy which allows to temporarily  parametrize the near collision-location of $(X,Y)$ at a given point $\lambda$. Up to an additional (non trivial) enlargement of the domain and thanks to the continuity property of the related exit-costs, Freidlin-Wentzell estimates eventually hold to successively to $\beta_{\lambda,\varepsilon}(\sigma)$ and next $C_\varepsilon(\sigma)$. 
 
The rest of the paper, dedicated to the proof of our main results is organized as follows. The above strategy is independent of the framework of self-stabilizing diffusions and is carried out in Section \ref{sec:LinearCase} in a preliminary case of two simplified measure-independent versions of \eqref{MV1} and \eqref{MFSP1}. Therein, we establish a Kramers' type law for the first near collisions of two independent non-reversible diffusions, each driven by a different uniformly contractive force. Asymptotics are obtained for small $\varepsilon>0$ (Proposition \ref{lacollision}) and descended to the limit $\varepsilon\downarrow 0$ (Theorem \ref{lacollisionthm}).  
 
These preliminaries give a sufficient guideline for self-stabilizing diffusions and particle systems after extending coupling techniques from~\cite{Alea,Tugaut2021}.
Theorem~\ref{thm:main1} is demonstrated in Section~\ref{sec:SelfStabilizingCase}, with intermediate zero-noise asymptotics for fixed small $\varepsilon>0$ in Propositions \ref{label_bizarre8}.

For the particle systems \eqref{MFSP1}-\eqref{MFSP2}, Theorem~\ref{thm:main2} is established in Section \ref{sec:ParticleCase}, with (again) intermediate results for small $\varepsilon$ which are stated in Propositions \ref{label_bizarre8} and \ref{tyrese} below. 
Let us briefly point out that if a uniform-in-time propagation of chaos result between \eqref{MFSP1}-\eqref{MFSP2} towards \eqref{MV1}-\eqref{MV2} was holding true, the main issue would naturally vanish: provided $N$ would be large enough, the closeness  between $(\mu^X_t,\mu^Y_t)$ and $(\lambda_1,\lambda_2)$ would naturally be preserved by  $(\frac{1}{N}\sum_{j=1}^N\delta_{X^{j,N}_t},\frac{1}{N}\sum_{j=1}^N\delta_{Y^{j,N}_t})$. However,  to the best of our knowledge, our current model assumptions only allow  (through a rather classical synchronized coupling argument, see e.g. \cite{BRTV}) a local-in-time propagation of chaos estimate of the form: for any finite time-horizon $T$, and, for $(X^i,Y^i)$ the
copies of \eqref{MV1} and \eqref{MV2} generated by the noise $(B^i,\widetilde{B}^i)$,
\begin{equation*}
	\sup_{0\leq t\leq T}\mathbb{E}\left[\|X_t^i-X_t^{i,N}\|^2\right]+\sup_{0\leq t\leq
		T}\mathbb{E}\left[\|Y_t^i-Y_t^{i,N}\|^2\right]\le \frac {C(T)}{N},
\end{equation*}
the constant $C(T)$ growing exponentially w.r.t. $T$. Such dependency would naturally be avoided in the gradient case $\U=\nabla u$ if $u$ was strictly convex or under global contracting one-sided Lipschitz condition for $-\U$ (e.g. \cite{CGM}). But this setting naturally is outside our model of interest. In the same way, a better uniform-in-time estimate (owing to an asynchronous coupling method, see e.g. \cite{DEGZ} and references therein) may be derived but this would be achieved (at least for the moment) at the cost of a non-vanishing noise. Also, albeit in the reversible case and in the homogeneous case $\Gamma=I_d$, one could take advantage of the work \cite{Monmarche25} in which the exponential growth (in $N$) of the first time $\tau^N$, such that the empirical distribution $\frac1{N}\sum_{i=1}^{N}\delta_{X^{i,N}_t}$ ``moves away'' from one (possible) invariant probability, is established. This would instantly grant Proposition \ref{label_bizarre9} but such argument would require to extend \cite{Monmarche25} to non-gradient situations.

The appendix section~\ref{Appendix-A} is dealing with the technical properties such as the proof of the well-posedness and the moment estimates, for both McKean-Vlasov diffusion and the mean-field system of interacting particles.

The appendix section~\ref{sec:1DCase} discusses the one-dimensional situation where the exact first collision-times
\[
C(\sigma)=\inf\{t\ge 0\,:\,X_t=Y_t\}, \ \ \ C^i_N(\sigma)=\inf\{t\ge 0\,:\,X^{i,N}_t=Y^{i,N}_t\}\,,
\]
are well-defined. Analogue for Theorems~\ref{thm:main1} and~\ref{thm:main2} (without collision-radius) are there re-established, with more direct proof arguments.
\section{On the first collision of two independent stochastically perturbed flows}\label{sec:LinearCase}

In this section, we establish the zero-noise asymptotic of the near collision-time and near-collision-location:
\begin{equation*}
c_\varepsilon(\sigma):=\inf\left\{t\ge0\,\,:\,\,\| x^\sigma_t-y^\sigma_t\| \leq2\varepsilon\right\}\,,\quad l_\varepsilon(\sigma)=\big(x^\sigma_{c_\varepsilon(\sigma)},y^\sigma_{c_\varepsilon(\sigma)}\big), \qquad \varepsilon>0\,,
\end{equation*}
related to the pair of diffusion processes:
\begin{equation}
\label{Linear-Systems}
\left\{
\begin{aligned}
&x^\sigma_t=x_0 -\int_0^t\Psi_1\left(x^\sigma_s\right)\dds+\sigma \int_0^t \Gamma(x^\sigma_s)\,\dd B_s,\qquad t\ge 0\,,\\
&y^\sigma_t=y_0 -\int_0^t\Psi_2\left(y^\sigma_s\right)\dds+\sigma\int_0^t \Gamma(y^\sigma_s)\,\dd\widetilde{B}_s,\qquad t\ge 0\,.
\end{aligned}
\right.
\end{equation}
Throughout this section, our main assumptions on \eqref{Linear-Systems} are as follows: $\Gamma$ satisfies $\mathbf{(A)}$-$(vi)$, and $-\Psi_1$ and $-\Psi_2$ are of class $\mathcal C^1$ and are uniformly contractive
 one-sided Lipschitz continuous with, for $k=1,2$, and some $\theta_k>0$,
 \begin{equation}\label{ContractiveOneSided}
 -\Big(\Psi_k(x)-\Psi_k(y)\Big)\cdot (x-y)\le -\theta_k\|x-y\|^2,\qquad x,y\in\mathbb R^d\,.
 \end{equation}
 Additionally, we assume that the vector fields $\Psi_1$ and $\Psi_2$ have each exactly one distinct asymptotically stable point,~$\lambda_1$ for $\Psi_1$ and $\lambda_2$ for $\Psi_2$, with $\lambda_1\neq\lambda_2$ and the initial states, $x_0$ and $y_0$, are distinct, and such that the orbits
\[
\varphi_t^{1,-}(x_0)=x_0-\int_0^t\Psi_1\left(\varphi_s^{1,-}(x_0)\right)\dds\,,\quad
\varphi_t^{2,-}(y_0)=y_0-\int_0^t\Psi_2\left(\varphi_s^{2,-}(y_0)\right)\dds\,,\qquad t\ge 0\,,
\]
never hit each other at any time $t$. Specifically, we assume that 
\begin{equation}\label{Linear:Init}
	\varepsilon_0:=\frac 1{2}\inf_{t\geq0}\|\varphi_t^{1,-}(x_0)-\varphi_t^{2,-}(y_0)\|> 0,
\end{equation}
the threshold $\varepsilon_0$ measuring the minimal distance between  initial states, $x_0$ and $y_0$, and between the attracting points, $\lambda_1$ and $\lambda_2$, making further the collision between $\{\varphi^{1,-}(x_0)\}$ and $\{\varphi^{2,-}(y_0)\}$ non-trivial.
If $\varepsilon$ was chosen larger than $\varepsilon_0$, then, in view of the large deviation principle~\eqref{LDP}, there would exist
$\sigma_0>0$ and $T_0<\infty$ such that $\mathbb P\big(c_\varepsilon(\sigma)\le T_0\big)=1$ for any $\sigma\le \sigma_0$ and $c_\varepsilon(\sigma)$
would be bounded a.s..

The one-sided Lipschitz conditions \eqref{ContractiveOneSided} ensure that the flows generated by $\Psi_1$ and $\Psi_2$ converge respectively to $\lambda_1$ and $\lambda_2$ independently of the initial states. Equivalently the reversed orbits started from neighborhoods of $\lambda_1$ and $\lambda_2$ span the whole space.

As outlined in the introduction of the paper, our strategy to establish the asymptotic behavior of the collision-time $c_\varepsilon(\sigma)$ and of the collision-location $\big(x^\sigma_{c_\varepsilon(\sigma)},y^\sigma_{c_\varepsilon(\sigma)}\big)$ lies with the interpretation
$c_\varepsilon(\sigma)=\inf_{\lambda\in\bRb^d}\tau_{\lambda,\varepsilon}(\sigma)$
for $\tau_{\lambda,\varepsilon}(\sigma)$ defining the first time $x^\sigma$ and $y^\sigma$ simultaneously reach the closed ball $\overline{\mathbb B(\lambda;\varepsilon)}$ for a given point $\lambda$ of $\mathbb R^d$, namely
\begin{equation}
\label{label_bizarre1}
\tau_{\lambda,\varepsilon}(\sigma):=\inf\left\{t\ge 0\,\,:\,\,\|x^\sigma_t-\lambda\|\leq\varepsilon\mbox{ and
}\|y^\sigma_t-\lambda\|\leq\varepsilon\right\}\,.
\end{equation}
Equivalently
$\tau_{\lambda,\varepsilon}(\sigma)$
corresponds to the first time $(x^\sigma,y^\sigma)$ exits from the domain
$$
\left(\bRb^d\times\bRb^d\right)\setminus\left(\overline{\mathbb{B}(\lambda;\varepsilon)}\times\overline{\mathbb{B}(\lambda;\varepsilon)}\right)=\left(\overline{\mathbb{B}(\lambda;\varepsilon)}\times\overline{\mathbb{B}(\lambda;\varepsilon)}\right)^c.
$$
From this interpretation and up to an additional enlargement of  $\left(\overline{\mathbb{B}(\lambda;\varepsilon)}\times\overline{\mathbb{B}(\lambda;\varepsilon)}\right)^c$ making the exit-domain suitable for an application of Freidlin-Wentzell exit-time estimates, below we successively: determine, for any $\lambda\in\mathbb R^d$ and $\varepsilon>0$, the zero-noise asymptotic of $\tau_{\lambda,\varepsilon}(\sigma)$ and the parameterized collision-location~$(x^\sigma_{\tau_{\varepsilon,\lambda}(\sigma)},y^\sigma_{\tau_{\lambda,\varepsilon}(\sigma)})$ (see Lemma~\ref{dale-bis} after);
deduce next the limiting behavior for $c_\varepsilon(\sigma)$ and~$(x^\sigma_{c_\varepsilon(\sigma)},y^\sigma_{c_\varepsilon(\sigma)})$ for
small enough $\varepsilon>0$ (Proposition~\ref{lacollision}); and finally conclude on the related asymptotic at the limit
$\varepsilon\downarrow 0$ (Theorem~\ref{lacollisionthm}).

\begin{rem}
A critical step in establishing the main Proposition~\ref{lacollision} and Theorem~\ref{lacollisionthm} of this section stands with Lemma~\ref{dale-bis}, which highlights the key Freidlin-Wentzell exit-laws for the domain $\big(\mathbb B(\lambda;\varepsilon)\times \mathbb B(\lambda;\varepsilon)\big)^c$. The proof of Lemma~\ref{dale-bis} is intricate and occupies the next section but directly addresses the ineludible issue arising with adapting Theorem \ref{thm:KramersDZ} to the specific exit-domain $\big(\mathbb B(\lambda;\varepsilon)\times \mathbb B(\lambda;\varepsilon)\big)^c$ with $\lambda$ close or away for the attractors of $x^\sigma$ and $y^\sigma$. 
Regarding this issue, we may briefly discuss an alternative and valuable strategy suggested by an anonymous referee during the review of the earlier version \href{arXiv:2206.04542}{ArXiv} of the present work, that we unfortunately did not have the opportunity to address. This strategy proposes, at first glance, a simpler and more elegant than our approach, but ultimately runs into the same fundamental technical difficulty of extending Freidlin-Wentzell theory to the domain $\big(\mathbb B(\lambda;\varepsilon)\times \mathbb B(\lambda;\varepsilon)\big)^c$.  

The key idea of this possible alternative is to exploit the re-normalized exponential exit-law obtained in the seminal paper of  M.~V.~Day \cite{Day}. These asymptotic specifically (see \cite[Corollary 2]{Day} derived from the main Theorem 4 therein) state that, under additional suitable smoothness ($\mathcal C^3$) of an exit-domain $\cGc$, the normalized first exit-time $\frac{\tau_\cGc(\sigma)}{\EE[\tau_\cGc(\sigma)]}$ weakly converges towards an exponential law of parameter $1$ as $\sigma$ tends to $0$ (the results of \cite{Day} hold for the non-reversible inhomogeneous diffusion setting \eqref{GenSGF}).
		
Then, to obtain the first time that the independent diffusions $x^\sigma$ and $y^\sigma$ reach simultaneously the ball  $\mathbb B(\lambda;\varepsilon)$, one may study the different excursions between its equilibrium and this ball of one given diffusion, say $x^\sigma$. Denoting by $\tau^{1,\ell}_{\lambda,\varepsilon}(\sigma)$ the $\ell$-th time that $x^\sigma$ reaches the ball, and assuming moreover that the law of the second diffusion converges, at large time, exponentially fast to its invariant probability measure, one can estimate the probability that, simultaneously, $y^\sigma$ lies in $\mathbb B(\lambda;\varepsilon)$ at time $\tau^{1,\ell}_{\lambda,\varepsilon}(\sigma)$. As $x^\sigma$ and $y^\sigma$ are independent, this probability is proportional to $\exp[-2H_2/\sigma^2]$ where $H_2$ is nothing else than the exit-cost of $y^\sigma$ from $\mathbb R^d\setminus \mathbb B(\lambda;\varepsilon)$. Using a geometric law, one may then obtain that the first time when the two diffusions are in $\mathbb B(\lambda;\varepsilon)$ is given by $\tau^{1,L}_{\lambda,\varepsilon}(\sigma)$ with $L$ exponentially equivalent to $\exp[+2H_2/\sigma^2]$. Thus, by independence of the excursions (due to the Markov property of the system), one could infer the result stated in Lemma~\ref{dale-bis} (for some very small $\varepsilon$) and all the subsequent results of this section.

This proof would so essentially propose an elegant coupling between $x^\sigma$ and $y^\sigma$ rather than our \textit{enlargement of the domain} approach, but, in order to proceed with this coupling approach, one would need to apply the result in~\cite{Day} and so would need \emph{again} to handle the general lack of stability of the exit-domain~$\mathbb R^d\setminus\mathbb B(\lambda;\varepsilon)$. Hence one would face \emph{again} the question of constructing a suitable close approximation of the domain, which is positively invariant by the drift $-\Psi_1$ and $\Psi_2$ and is smooth enough to apply \cite{Day}. This would not simplify the proof of the aforementioned lemma since its crux is to construct such suitable domain. Additionally, the alternative proof would also require to showing the exponential convergence of the law of one of the diffusion towards a stationary state, which is achievable only in the reversible setting.
\end{rem}

\subsection{Asymptotic estimates for \texorpdfstring{$\tau_{\lambda,\varepsilon}(\sigma)$}{tau(lambda,epsilon,sigma)}}
\label{subsec:Linear-collisionA}

As mentioned previously, the
set~$\left(\overline{\mathbb{B}(\lambda;\varepsilon)}\times\overline{\mathbb{B}(\lambda;\varepsilon)}\right)^c$
is not necessarily stable by $(-\Psi_1,-\Psi_2)$, Theorem~\ref{thm:KramersDZ} can not be directly applied to deduce the
asymptotic of $\tau_{\lambda,\varepsilon}(\sigma)$. This technical difficulty can be bypassed by a suitable two-steps modification of the
exit-set~$\left(\overline{\mathbb{B}(\lambda;\varepsilon)}\times\overline{\mathbb{B}(\lambda;\varepsilon)}\right)^c$.

 As a first modification, let us consider the sets
\begin{equation*}
\mathcal{D}_{\lambda,\varepsilon}^1:={\left\{\varphi_{T_0,t}^{1,+}(x)\,:\,-\infty<T_0\le t<\infty
,\,x\in\overline{\mathbb{B}\left(\lambda;\varepsilon\right)}\right\}}\,,\,\, 
\mathcal{D}_{\lambda,\varepsilon}^2:={\left\{\varphi_{T_0,t}^{2,+}(y)\,:\,-\infty<T_0\le t<\infty ,\,y\in\overline{\mathbb{B}\left(\lambda;\varepsilon\right)}\right\}}\,,
\end{equation*}
for $\varphi^{1,+}(x)$ and $\varphi^{2,+}(y)$ corresponding to {\it time-reversed versions} of $\varphi^{1,-}$ and $\varphi^{2,-}$:
\begin{equation*}
\varphi_{T_0,t}^{1,+}(x)=x+\int_{T_0}^t\Psi_1\left(\varphi_{T_0,s}^{1,+}(x)\right)\dds,\quad
\varphi_{T_0,t}^{2,+}(y)=y+\int_{T_0}^t\Psi_2\left(\varphi_{T_0,s}^{2,+}(y)\right)\dds,\qquad t\ge T_0
\,.
\end{equation*}
For $i=1,2$, $\mathcal{D}_{\lambda,\varepsilon}^i$ defines a closed enlargement of $\overline{\mathbb{B}(\lambda;\varepsilon)}$ consisting of all the  points
attainable by the family of flows $\{\varphi^{i,+}_{T_0,\cdot}(x)\}_{T_0,x}$ starting from $\overline{\mathbb{B}(\lambda;\varepsilon)}$. In particular, each element  $z=\varphi^{i,+}_{T_0,t}(x)=\varphi^{i,+}_{0,t-T_0}(x)$, $x\in\overline{\mathbb B(\lambda;\varepsilon)}$, corresponds to the starting point of an orbit $\varphi^{i,-}(z)$ reaching $\overline{\mathbb{B}(\lambda;\varepsilon)}$ after a period $t-T_0$ (by time-reversal, $0\le s\le t-T_0$, $\varphi^{i,-}_{s}(z)=\varphi^{i,+}_{t-T_0-s,0}(x)$). As such, the set $\mathcal{D}_{\lambda,\varepsilon}^i$ contains all the orbits $\varphi^{i,-}$ hitting the ball at some time, and, subsequently, the complementary~$\left(\mathcal{D}_{\lambda,\varepsilon}^i\right)^c:=\mathbb R^d\setminus\mathcal{D}_{\lambda,\varepsilon}^i$ corresponds to the starting points for which $\varphi^{i,-}$ never cross $\overline{\mathbb{B}(\lambda;\varepsilon)}$. By extension, $\left(\mathcal{D}_{\lambda,\varepsilon}^i\right)^c$ corresponds to the largest set stable by
$-\Psi_i$ and contained in $\overline{\mathbb{B}(\lambda;\varepsilon)}^c$.
The inherent stability property of each set further guarantees that
 the domain 
 $$
 (\mathcal D^{1}_{\lambda,\varepsilon}\times \mathcal D^{2}_{\lambda,\varepsilon})^c:=\left(\bRb^d\times \bRb^d\right)\setminus(\mathcal{D}_{\lambda,\varepsilon}^1\times\mathcal{D}_{\lambda,\varepsilon}^2)=\left(\bRb^d\times\left(\mathcal{D}_{\lambda,\varepsilon}^2\right)^c\right)\bigcup\left(\left(\mathcal{D}_{\lambda,\varepsilon}^1\right)^c\times\bRb^d\right),
 $$ 
is stable by~$(-\Psi_1,-\Psi_2)$. Indeed, for any element $(x',y')$
of~$\left(\bRb^d\times \bRb^d\right)\setminus(\mathcal{D}_{\lambda,\varepsilon}^1\times\mathcal{D}_{\lambda,\varepsilon}^2)$, either $x'$
lies in $\bRb^d\setminus\mathcal{D}_{\lambda,\varepsilon}^1$ or $y'$ lies in $\bRb^d\setminus\mathcal{D}_{\lambda,\varepsilon}^2$. Therefore, in
each case, at least one of the marginal domain is stable and so the domain is necessarily
stable by $(-\Psi_1,-\Psi_2)$. Observing that $(\lambda_1,\lambda_2)$ necessarily lies within $(\mathcal{D}_{\lambda,\varepsilon}^1\times\mathcal{D}_{\lambda,\varepsilon}^2)^c$ (for any $\lambda\in\mathbb R^d$), Theorem \ref{thm:KramersDZ} applies with the related exit-cost:
\[
\underline h=\tilde h_\varepsilon(\lambda):=\inf_{(x,y)\in \partial (\mathcal D^{1}_{\lambda,\varepsilon}\times \mathcal D^{2}_{\lambda,\varepsilon})^c}v(x,y), 
\]
for
\[
v(x,y):=\inf_{-\infty<T_1\le T_2<\infty}\inf_{\phi}\left\{i_{T_1,T_2}(\phi)\,:\,\phi=(\phi^1,\phi^2)\in \mathcal A^{(\lambda_1,\lambda_2),(x,y)}[T_1,T_2]\right\},
\]
\[
i_{T_1,T_2}(\phi)=\frac 1{4}\int_{T_1}^{T_2}\left(\Big\|\frac{\dd\phi^1_t}{\ddt}+\Psi_1(\phi^1_t)\Big\|^2_{
		(\Gamma\Gamma^*)^{-1}(\phi^1_t)}+
		\Big\|\frac{\dd\phi^2_t}{\ddt}+\Psi_2(\phi^2_t)\Big\|^2_{(\Gamma\Gamma^*)^{-1}(\phi^2_t)}\right)\,\ddt\,.
\]
While $\mathcal{D}_{\lambda,\varepsilon}^1\times \mathcal{D}_{\lambda,\varepsilon}^2$ is
larger than $\mathbb{B}(\lambda;\varepsilon)\times \mathbb{B}(\lambda;\varepsilon)$ (and so $\big(\mathcal D^{1}_{\lambda,\varepsilon}\times \mathcal D^{2}_{\lambda,\varepsilon}\big)^c\subset\big(\mathbb{B}(\lambda;\varepsilon)\times \mathbb{B}(\lambda;\varepsilon)\big)^c$), 
$\tilde h_\varepsilon$ can nevertheless be readily computed in two appropriate situations: $(a)$ when $\lambda$ is at a distance strictly larger than $\varepsilon$ from both wells; $(b)$ when $\lambda$
lies in a $\varepsilon$-neighborhood of one of the two wells.

\medskip

$\bullet$ For $(a)$: whenever $\min_{i=1,2}(\|\lambda-\lambda_i\|)>\varepsilon$, the ball $\mathbb B(\lambda;\varepsilon)$ and the domains 
$\mathcal{D}_{\lambda,\varepsilon}^i$, $i=1,2$, do not contain any well. As such, any path connecting $(\lambda_1,\lambda_2)$ to $\overline{\mathbb B(\lambda;\varepsilon)}\times\overline{\mathbb B(\lambda;\varepsilon)}$  necessarily crosses the boundary of the domain $\overline{\mathbb B(\lambda;\varepsilon)}\times\overline{\mathbb B(\lambda;\varepsilon)}$. We deduce that the infimum of $v(x,y)$ for $(x,y)$ in $\overline{\mathbb B(\lambda;\varepsilon)}\times\overline{\mathbb B(\lambda;\varepsilon)}$ is equal to the infimum of the same function on $\partial(\mathbb B(\lambda;\varepsilon)\times \mathbb B(\lambda;\varepsilon))$. Therefore, recalling that  $\overline{\mathbb B(\lambda;\varepsilon)}\times\overline{\mathbb B(\lambda;\varepsilon)}\subset\mathcal{D}_{\lambda,\varepsilon}^1\times \mathcal{D}_{\lambda,\varepsilon}^2$, 
\[
\tilde h_\varepsilon(\lambda)=\inf_{(x,y)\in \mathcal D^{1}_{\lambda,\varepsilon}\times \mathcal D^{2}_{\lambda,\varepsilon}}v(x,y)\le \inf_{(x,y)\in \overline{\mathbb B(\lambda;\varepsilon)}\times \overline{\mathbb B(\lambda;\varepsilon)}}v(x,y)=\inf_{(x,y)\in \partial(\mathbb B(\lambda;\varepsilon)\times \mathbb B(\lambda;\varepsilon))}v(x,y).
\]
 On the other hand, for any $(x,y)$ in $\mathcal{D}_{\lambda,\varepsilon}^1\times\mathcal{D}_{\lambda,\varepsilon}^2$, the representation $(x,y)=(\varphi_{T_x,t_x}^{1,+}(x'),\varphi_{T_y,t_y}^{2,+}(y'))$ holds for some $x',y'$ in $\overline{\mathbb
	B(\lambda;\varepsilon)}$ - or equivalently $(x',y')=(\varphi_{t_x-T_x}^{1,-}(x),\varphi_{t_y-T_y}^{2,-}(y))$. 
	Since the action functional $i_{T_1,T_2}$ is zero when computed along the paths $\phi=(\varphi^{1,-},\varphi^{2,-})$ on any interval $[T_1,T_2]$,  the minimal cost of connecting $(\lambda_1,\lambda_2)$ to $(x',y')$ is less than the cost of connecting $(\lambda_1,\lambda_2)$ to $(x,y)$. In particular
	\[
	v(x,y)=v(\varphi_{T_x,t_x}^{1,+}(x'),\varphi_{T_y,t_y}^{1,+}(y'))\geq v(x',y'),
	\] 
which yields $\inf_{(x,y)\in \mathcal D^{1}_{\lambda,\varepsilon}\times \mathcal D^{2}_{\lambda,\varepsilon}}v(x,y)\ge\inf_{(x,y)\in \partial(\mathbb B(\lambda;\varepsilon)\times \mathbb B(\lambda;\varepsilon))}v(x,y)$.

\medskip

$\bullet$ For $(b)$: In the case where $\lambda$ is in a close neighbourhood of one of the attracting points,
say~$\lambda_1$ with $\|\lambda-\lambda_1\|=:\tilde \varepsilon <\varepsilon$, then $\mathbb B(\lambda;\varepsilon)$ is stable
by $-\Psi_1$ and necessarily~$\mathcal{D}_{\lambda,\varepsilon}^1=\bRb^d$. Since~$\varepsilon<\varepsilon_0$,
$\lambda_2$ is located outside $\mathbb B(\lambda;\varepsilon)$. And since~$\varphi^{2,+}_{T,t}(\lambda_2)=\lambda_2$ for all
$t\ge T$, necessarily $\lambda_2\notin \mathcal D^1_{\lambda,\varepsilon}$ and $(\mathcal D^2_{\lambda,\varepsilon})^c$ is
stable by $-\Psi_2$. Replicating the same arguments as for $(a)$ (only for the marginal), the exit-cost related to $\big(\mathcal D^1_{\lambda,\varepsilon}\times\mathcal D^2_{\lambda,\varepsilon})^c$ is thus given by
\begin{align*}
\inf_{(x,y)\in\partial \big(\mathcal D^1_{\lambda,\varepsilon}\times\mathcal D^2_{\lambda,\varepsilon}\big)^c} v(x,y)&=\inf_{x\in\mathbb R^d,y\in\partial \mathcal D^2_{\lambda,\varepsilon}}v(x,y)=
\inf_{x\in\mathbb R^d,y\in\partial 	B(\lambda;\varepsilon)}v(x,y)\\
=&\frac 1{4}\inf_{y\in\partial 	B(\lambda;\varepsilon)}\inf_{-\infty<T_1\le T_2<\infty}\inf_{\phi}\left\{ \int_{T_1}^{T_2} 
\Big\|\frac{\dd\phi_t}{\ddt}+\Psi_2(\phi_t)\Big\|^2_{A^{-1}(\phi_t)}\,\ddt\,:\,\phi\in \mathcal A^{\lambda_2,y}[T_1,T_2]\right\}\,,
\end{align*}
recalling that $A=\Gamma\Gamma^{*}$.

The analogue can be naturally drawn in the case $\|\lambda-\lambda_2\|=\tilde \varepsilon$ with the resulting exit-cost:
\begin{align*}
	\inf_{(x,y)\in\partial \big(\mathcal D^1_{\lambda,\varepsilon}\times\mathcal D^2_{\lambda,\varepsilon}\big)^c} v(x,y)&=\inf_{x\in\partial \mathbb	B(\lambda;\varepsilon) ,y\in\mathbb R^d }v(x,y)\\
	&=\frac 1{4}\inf_{-\infty<T_1\le T_2<\infty}\inf_{\phi}\left\{ \int_{T_1}^{T_2} 
	\Big\|\frac{\dd\phi_t}{\ddt}+\Psi_1(\phi_t)\Big\|^2_{A^{-1}(\phi_t)}\,\ddt\,:\,\phi\in \mathcal A^{\lambda_1,x}[T_1,T_2]\right\}.
\end{align*} 
The remaining case ``$(c)$:  $\lambda$ is exactly at a distance $\varepsilon$ of $\lambda_1$ or $\lambda_2$'' (that is: one of the wells
is located at the boundary of $\mathbb B(\lambda;\varepsilon)$)
is a degenerate situation  where the exit-cost $\tilde h_\varepsilon$ cannot be simply identified. This difficulty can be overcame by slightly modifying $\mathcal
D^1_{\lambda,\varepsilon}\times \mathcal D^2_{\lambda,\varepsilon}$ into
\begin{equation*}
\mathcal O_{\lambda,\varepsilon,\rho}:=
\left\{
\begin{aligned}
&\mathcal D^1_{\lambda,\rho\varepsilon}\times \mathcal D^2_{\lambda,\varepsilon}\,\,\mbox{if}\,\| \lambda-\lambda_{1}\| =\varepsilon,\\
&\mathcal D^1_{\lambda,\varepsilon}\times \mathcal D^2_{\lambda,\rho\varepsilon}\,\,\mbox{if}\,\| \lambda-\lambda_{2}\| =\varepsilon,\\
&\mathcal D^1_{\lambda,\varepsilon}\times \mathcal D^2_{\lambda,\varepsilon}\,\,\mbox{otherwise}\,,
\end{aligned}
\right.
\end{equation*}
for $\rho$ arbitrarily chosen in the interval $(0,1)$. Rescaling $\varepsilon$ to $\rho \varepsilon$ whenever
$\|\lambda-\lambda_1\|=\varepsilon$ or~$\|\lambda-\lambda_2\|=\varepsilon$ ensures immediately $\mathcal O_{\lambda,\varepsilon,\rho}$ satisfies to the
situation $(a)$.

Considering separately the cases $\|\lambda-\lambda_1\|=\varepsilon$, $\|\lambda-\lambda_2\|=\varepsilon$, and the remaining one, and according to the discussion above, the set $\mathcal
O_{\lambda,\varepsilon,\rho}$ is stable by $(-\Psi_1,-\Psi_2)$. The related exit-cost
\[
\widehat \h^\rho_\varepsilon(\lambda):=\inf_{(x,y)\in\partial \mathcal
O_{\lambda,\varepsilon,\rho}} v(x,y)
\]
is equivalently given by
\begin{equation}\label{cost1}
\widehat \h^\rho_\varepsilon(\lambda)=\left\{
\begin{aligned}
	&\inf_{x\in\partial\mathbb B(\lambda;\rho\varepsilon),y\in\partial \mathbb B(\lambda;\varepsilon)} v(x,y)\quad\mbox{if}\quad\|
	\lambda-\lambda_1\|=\varepsilon,\\
	&\inf_{x\in\partial\mathbb B(\lambda;\varepsilon),y\in\partial \mathbb
		B(\lambda;\rho\varepsilon)} v(x,y)
	\quad\mbox{if}\quad\| \lambda-\lambda_2\|=\varepsilon,\\
	&\inf_{(x,y)\in\partial(\mathcal D^{1}_{\lambda,\varepsilon}\times\mathcal D^2_{\lambda,\varepsilon})^c} v(x,y)\quad\mbox{otherwise}.
\end{aligned}
\right.
\end{equation}

Applying Theorem~\ref{thm:KramersDZ},
we derive the following Kramers' type law for the first exit-time
\begin{equation*}
\widehat{\tau}^\rho_{\lambda,\varepsilon}(\sigma):=\inf\left\{t\ge 0\,:\,(x_t^\sigma,y_t^\sigma)\notin (\mathbb R^d\times \mathbb
R^d)\setminus \mathcal O_{\lambda,\varepsilon,\rho}\right\}\,.
\end{equation*}
\begin{lem}\label{negan}
For any $\lambda$ in $\bRb^d$, $0<\varepsilon<\varepsilon_0$, $0<\rho<1$ and for any $\delta>0$,

\begin{equation}
\label{gouverneur1}
\lim_{\sigma\to0}\PP\left(\exp\left[\frac{2}{\sigma^2}\left(\widehat \h^\rho_\varepsilon(\lambda)-\delta\right)\right]<
\widehat{\tau}^\rho_{\lambda,\varepsilon}(\sigma)<\exp\left[\frac{2}{\sigma^2}\left(\widehat
\h^\rho_\varepsilon(\lambda)+\delta\right)\right]\right)=1\,.
\end{equation}
Moreover, we have:
\begin{equation}
\label{gouverneur2}
\lim_{\sigma\to0}\PP\left({\rm
dist}\left((x^\sigma_{\widehat{\tau}^\rho_{\lambda,\varepsilon}(\sigma)},y^\sigma_{\widehat{\tau}^\rho_{\lambda,\varepsilon}(\sigma)}),\mathbb{B}(\lambda;\varepsilon)\times\mathbb{B}(\lambda;\varepsilon)\right)\leq\delta\right)=1\,.
\end{equation}
\end{lem}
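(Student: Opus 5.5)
The plan is to apply Theorem~\ref{thm:KramersDZ} to the $2d$-dimensional diffusion $z^\sigma=(x^\sigma,y^\sigma)$, with drift $b=(-\Psi_1,-\Psi_2)$, block-diagonal dispersion $\gamma={\rm diag}(\Gamma,\Gamma)$ and $\mathcal W=(B,\widetilde B)$. The exit domain is $\mathcal G=(\mathcal O_{\lambda,\varepsilon,\rho})^c$, suitably truncated to be bounded. Since the real difficulty is the geometry of $\mathcal G$ rather than the dynamics, the proof amounts to checking the hypotheses of Theorem~\ref{thm:KramersDZ} and identifying the exit cost with $\widehat\h^\rho_\varepsilon(\lambda)$ from \eqref{cost1}.

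\textbf{Regularization.} Theorem~\ref{thm:KramersDZ} needs global Lipschitz coefficients and a bounded $\mathcal G$. I would first intersect $\mathcal G$ with a large product ball $\mathbb B(0;R)\times\mathbb B(0;R)$. By \eqref{ContractiveOneSided}, $-\Psi_k(x)\cdot(x-\lambda_k)\le-\theta_k\|x-\lambda_k\|^2$, so for $R$ large (with $\lambda_1,\lambda_2$ well inside) this product ball is stable, and the intersection of two stable sets is stable. Outside this ball I would modify $\Psi_k$ into globally Lipschitz fields; this does not change the law of $z^\sigma$ before it exits the ball. The quasi-potential $v$ grows to infinity near $\|x\|=R$ or $\|y\|=R$, so for $R$ large the artificial part of the boundary has cost strictly above $\widehat\h^\rho_\varepsilon(\lambda)$. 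Hence it affects neither \eqref{gouverneur1} nor \eqref{gouverneur2}; the latter follows from \eqref{GenericKramers-bis}.

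\textbf{Hypotheses of Theorem~\ref{thm:KramersDZ}.}
\begin{itemize}
\item \emph{Stability} of $\mathcal G$ was argued above: each factor $(\mathcal D^i)^c$ is invariant under $-\Psi_i$, and in the degenerate case the radius $\rho\varepsilon$ keeps us in case $(a)$.
\item \emph{Unique attractor:} $(\lambda_1,\lambda_2)\in\mathcal G$ is the unique asymptotically stable point, by \eqref{ContractiveOneSided}.
\item \emph{Boundary condition:} from any boundary point the deterministic flow converges to $(\lambda_1,\lambda_2)$. This holds globally, but the theorem requires the boundary flow to enter $\mathcal G$ rather than slide along $\partial\mathcal G$. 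Here $\partial\mathcal D^i$ is made of flow lines tangent to the ball, so I would invoke the version of Freidlin--Wentzell for closed sets with non-smooth boundary (\cite[Ch.~4]{FreWen-12}, \cite[Thm.~5.7.11]{DemZei-10}). That version only uses lower/upper semicontinuity of $v$ on $\partial\mathcal G$ and the fact that the exit cost is attained from inside. If needed, I would slightly shrink or enlarge $\mathcal D^i$ by $\eta$ and let $\eta\downarrow0$, using continuity of $v$.
\end{itemize}
With these hypotheses in place, \eqref{gouverneur1} follows directly with exit cost $\inf_{\partial\mathcal G}v=\widehat\h^\rho_\varepsilon(\lambda)$.

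\textbf{Exit location \eqref{gouverneur2}.} Take the closed set $\mathcal N=\{z\in\partial\mathcal G:\dist(z,\mathbb B(\lambda;\varepsilon)^2)\ge\delta\}$. By the case analysis $(a)$/$(b)$, any $z=(\varphi^{1,+}_{T,t}(x'),\varphi^{2,+}_{T',t'}(y'))$ has $v(z)\ge v(x',y')$. I would upgrade this to a strict inequality with a uniform gap: paths reaching $\mathcal N$ must afterwards follow reversed (costly) flow segments of length bounded below, since the distance to the ball is $\ge\delta$. The cost of such a segment is then positive by a compactness argument on $\partial\mathcal G\cap\mathcal N$ combined with lower semicontinuity of $v$. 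This gives $\inf_{\mathcal N}v>\widehat\h^\rho_\varepsilon(\lambda)$, and \eqref{GenericKramers-bis} finishes.

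\textbf{Main obstacle.} I expect the hardest step to be this strict gap, together with the regularity of $\partial\mathcal D^i$. What I would try first is to write the minimal cost over $\mathcal N$ as a cost to the ball plus an extra cost $\ge c(\delta)>0$, using that the reversed flow $+\Psi_i$ moves away from $\lambda_i$ at rate $\ge\theta_i$ and is therefore not free.
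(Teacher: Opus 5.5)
Your route is the paper's. The paper proves the lemma in a few lines: it applies Theorem~\ref{thm:KramersDZ} to the invariant set $(\mathcal O_{\lambda,\varepsilon,\rho})^c$, reads the cost off \eqref{cost1}, and deduces \eqref{gouverneur2} from \eqref{GenericKramers-bis} on the grounds that the minimizers of $v$ on $\partial\mathcal O_{\lambda,\varepsilon,\rho}$ lie on the spheres. Your truncation to a large invariant product ball and your Lipschitz modification are legitimate housekeeping that the paper skips. Your worry about the flow sliding along $\partial\mathcal G$ is unnecessary: as stated, Theorem~\ref{thm:KramersDZ} imposes no transversality at the boundary, only invariance and attraction of boundary points, and both hold globally here.

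The genuine gap is the strict inequality $\inf_{\mathcal N}v>\widehat\h^\rho_\varepsilon(\lambda)$. You correctly identify this as what \eqref{GenericKramers-bis} needs, and the paper only asserts it. Your proposed mechanism does not give it. A near-optimal path to $z=(x,y)\in\mathcal N$ need not pass through $\overline{\mathbb B(\lambda;\varepsilon)}$ and then climb against the flow; for $d\ge 2$ it can reach the upstream region $\mathcal D^1_{\lambda,\varepsilon}\setminus\overline{\mathbb B(\lambda;\varepsilon)}$ from the side. So the cost of reversed flow segments gives no lower bound on $v(z)$.

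The correct reduction runs as follows.
Since each coordinate can idle at its equilibrium at zero cost, $v(x,y)=v_1(x)+v_2(y)$, with $v_i$ the marginal quasi-potentials. Because $v$ is continuous and coercive, if $\inf_{\mathcal N}v=\widehat\h^\rho_\varepsilon(\lambda)$ then it is attained at some $(x,y)\in\mathcal N$, say with $x\notin\overline{\mathbb B(\lambda;\varepsilon)}$. Then $x'=\varphi^{1,-}_s(x)$ lies in the closed ball defining the first factor of $\mathcal O_{\lambda,\varepsilon,\rho}$ for some $s>0$, so $(x',y)\in\mathcal O_{\lambda,\varepsilon,\rho}$. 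Since $\inf_{\mathcal O_{\lambda,\varepsilon,\rho}}v=\inf_{\partial\mathcal O_{\lambda,\varepsilon,\rho}}v$ (paths from $(\lambda_1,\lambda_2)$ into this closed set must cross its boundary), you get $v_1(x')=v_1(x)$. In case $(b)$ with $\|\lambda-\lambda_1\|<\varepsilon$, a far first coordinate is handled by replacing $x$ with $\lambda_1$ and using $v_1>0$ away from $\lambda_1$.

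What is really needed, then, is a \emph{strict} Lyapunov property: $v_i(\varphi^{i,-}_s(x))<v_i(x)$ for all $s>0$ and $x\neq\lambda_i$. The weak inequality $v(x,y)\ge v(x',y')$ from case $(a)$ does not provide it.

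When $A$ is a constant multiple of the identity this is easy. Push a near-optimal path $\phi$ forward by $\varphi^{i,-}_s$. Then $\dot{\tilde\phi}+\Psi_i(\tilde\phi)=D\varphi^{i,-}_s(\phi)\big(\dot\phi+\Psi_i(\phi)\big)$, and $\|D\varphi^{i,-}_s\|\le e^{-\theta_i s}$ by \eqref{ContractiveOneSided}. Hence $v_i(\varphi^{i,-}_s(x))\le e^{-2\theta_i s}v_i(x)<v_i(x)$. For non-constant $\Gamma$ the ellipticity ratio spoils this bound for small $s$, and a further argument is required, e.g.\ through the Hamiltonian structure of extremals.
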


We stress that the three cases ($||\lambda-\lambda_1||=\epsilon$, $||\lambda-\lambda_2||=\epsilon$ and $\min\{||\lambda-\lambda_1||,||\lambda-\lambda_2||\}>\epsilon$) are taken into account in Equations~\eqref{gouverneur1}-\eqref{gouverneur2}.

 \begin{proof}
 
 The asymptotic \eqref{gouverneur1} is a direct consequence of \eqref{GenericKramers} in Theorem \ref{thm:KramersDZ}, and the
 estimate~\eqref{gouverneur2} stating the persistence of the exit-location of $(x^\sigma,y^\sigma)$ on $\mathbb
 B(\lambda;\varepsilon)\times \mathbb B(\lambda;\varepsilon)$ follows from \eqref{GenericKramers-bis}. Precisely, as
 $\sigma\downarrow 0$,
 $(x^\sigma_{\widehat{\tau}^\rho_{\lambda,\varepsilon}(\sigma)},y^\sigma_{\widehat{\tau}^\rho_{\lambda,\varepsilon}(\sigma)})$ concentrates
 on the points on the boundary $\partial \mathcal O_{\lambda,\varepsilon,\rho}$ where 
 $(x,y)\mapsto v(x,y)$
  is minimal. In view of \eqref{cost1}, these minimizers are located on $\partial \mathbb B(\lambda;\varepsilon)$ or  $\partial \mathbb
  B(\lambda;\rho\varepsilon)$. And so the exit-location has to persist on
  $\mathbb{B}(\lambda;\varepsilon)\times\mathbb{B}(\lambda;\varepsilon)$.
\end{proof}

From Lemma \ref{negan}, we gradually derive a Kramers' type law for $\tau_{\lambda,\varepsilon}(\sigma)$ through the two following
lemmas.

\begin{lem}
\label{dale}
Define, for $\rho>0$,
\begin{equation*}
\tau^\rho_{\lambda,\varepsilon}(\sigma)=\left\{
\begin{aligned}
&\inf\left\{t\ge 0\,:\,(x_t^\sigma,y_t^\sigma)\in \mathbb B(\lambda;\rho\varepsilon)\times \mathbb
B(\lambda;\varepsilon)\right\}\,\,\mbox{if}\,\| \lambda-\lambda_{1}\| =\varepsilon,\\
&\inf\left\{t\ge 0\,:\,(x_t^\sigma,y_t^\sigma)\in \mathbb B(\lambda;\varepsilon)\times \mathbb
B(\lambda;\rho\varepsilon)\right\}\,\,\mbox{if}\,\| \lambda-\lambda_{2}\| =\varepsilon,\\
&\inf\left\{t\ge 0\,:\,(x_t^\sigma,y_t^\sigma)\in \mathbb B(\lambda;\varepsilon)\times \mathbb
B(\lambda;\varepsilon)\right\}\,\,\mbox{otherwise}\,.
\end{aligned}
\right.
\end{equation*}
Then, for any $\lambda\in\bRb^d$,  $0<\varepsilon<\varepsilon_0$, $0<\rho<1$ and for any $\delta>0$:

\begin{equation}
\label{merle}
\lim_{\sigma\to0}\PP\left(\exp\left[\frac{2}{\sigma^2}\left(\widehat\h^\rho_\varepsilon(\lambda)-\delta\right)\right]<
\tau^\rho_{\lambda,\varepsilon}(\sigma)<\exp\left[\frac{2}{\sigma^2}\left(\widehat\h^\rho_\varepsilon(\lambda)+\delta\right)\right]\right)=1\,.
\end{equation}
Moreover \eqref{gouverneur2} still holds true with
$(x^\sigma_{\tau^\rho_{\lambda,\varepsilon}(\sigma)},y^\sigma_{\tau^\rho_{\lambda,\varepsilon}(\sigma)})$ in place of
$(x^\sigma_{\widehat\tau^\rho_{\lambda,\varepsilon}(\sigma)},y^\sigma_{\widehat\tau^\rho_{\lambda,\varepsilon}(\sigma)})$.
\end{lem}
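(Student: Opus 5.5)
The plan is to sandwich $\tau^\rho_{\lambda,\varepsilon}(\sigma)$ between $\widehat\tau^\rho_{\lambda,\varepsilon}(\sigma)$ and a quantity that is at most $\widehat\tau^\rho_{\lambda,\varepsilon}(\sigma)$ plus a short deterministic time. Then Lemma \ref{negan} transfers directly.

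The lower bound is free. In every case the target set of $\tau^\rho_{\lambda,\varepsilon}$ (e.g. $\mathbb B(\lambda;\rho\varepsilon)\times\mathbb B(\lambda;\varepsilon)$) is contained in $\mathcal O_{\lambda,\varepsilon,\rho}$, since $\overline{\mathbb B(\lambda;r)}\subset\mathcal D^i_{\lambda,r}$. Hence $\widehat\tau^\rho_{\lambda,\varepsilon}(\sigma)\le\tau^\rho_{\lambda,\varepsilon}(\sigma)$, and the left inequality in \eqref{merle} follows from \eqref{gouverneur1}.

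For the upper bound, I use the strong Markov property at $\widehat\tau:=\widehat\tau^\rho_{\lambda,\varepsilon}(\sigma)$. By \eqref{gouverneur2}, with probability tending to one, $(x^\sigma_{\widehat\tau},y^\sigma_{\widehat\tau})$ lies within $\delta'$ of $\mathbb B(\lambda;\varepsilon)\times\mathbb B(\lambda;\varepsilon)$. In the degenerate cases the same proof even gives proximity to $\mathbb B(\lambda;\rho\varepsilon)$ in the rescaled coordinate. From there, I want to show that the process enters the (open) target set within a time $T_0$ with probability at least $\exp[-\frac{2}{\sigma^2}\eta]$, where $\eta\to0$ as $\delta'\to0$, uniformly over starting points in that $\delta'$-neighbourhood. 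To do this I build an explicit control path: a straight segment of length $O(\delta')$ over time $O(\delta')$ from the current point into the interior of the target balls. Its action is $O(\delta')$ because $\Psi_i$ is locally bounded and $A^{-1}$ is bounded by $\mathbf{(A)}$-$(vi)$. The finite-horizon LDP lower bound \eqref{LDP} (uniform over compact sets of starting points, using the moment bounds to confine everything to a compact) then gives the probability estimate.

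If this attempt fails, the process will a.s. have returned to a neighbourhood of $(\lambda_1,\lambda_2)$, or at least left the vicinity of the exit set, before the next $\widehat\tau$-type excursion. I then iterate: successive exits from the complement of $\mathcal O_{\lambda,\varepsilon,\rho}$, separated by returns to a small ball around $(\lambda_1,\lambda_2)$, each cost time at most $\exp[\frac{2}{\sigma^2}(\widehat\h^\rho_\varepsilon+\delta/3)]$ with high probability. This follows from \eqref{gouverneur1} applied uniformly over starting points near $(\lambda_1,\lambda_2)$, the standard uniform form of Theorem \ref{thm:KramersDZ}. Each excursion succeeds with probability at least $e^{-2\eta/\sigma^2}$, so a geometric number of order $e^{2\eta/\sigma^2}$ of excursions suffices. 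Choosing $\eta<\delta/3$ yields $\tau^\rho_{\lambda,\varepsilon}(\sigma)<\exp[\frac2{\sigma^2}(\widehat\h^\rho_\varepsilon+\delta)]$ with probability tending to one.

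For the location claim, I combine the upper bound just proved with two facts. First, the exit location at time $\tau^\rho_{\lambda,\varepsilon}$ lies on the closure of the target set by construction, hence within $\delta$ of $\mathbb B(\lambda;\varepsilon)\times\mathbb B(\lambda;\varepsilon)$. Second, the preceding argument only uses excursions whose exit points satisfy \eqref{gouverneur2}. So the location statement is essentially automatic, and the real content is the time estimate.

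The main obstacle is the uniformity in the upper bound. I need the LDP lower bound and the Freidlin–Wentzell upper estimate uniform in the initial condition, despite $\Psi_i$ being only locally Lipschitz. I also need to rule out that the process wanders far from $(\lambda_1,\lambda_2)$ between excursions for times longer than $e^{2\delta/\sigma^2}$. I plan to handle this through the uniform contractivity \eqref{ContractiveOneSided}, which gives a quick return to a neighbourhood of $(\lambda_1,\lambda_2)$, and by localizing to a large ball where the coefficients can be replaced by globally Lipschitz ones.
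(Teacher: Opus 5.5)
Your lower tail and the location claim coincide with the paper's: the target lies inside $\mathcal O_{\lambda,\varepsilon,\rho}$, so $\widehat\tau^\rho_{\lambda,\varepsilon}(\sigma)\le\tau^\rho_{\lambda,\varepsilon}(\sigma)$, and \eqref{gouverneur1} finishes. The upper tail is where you take a genuinely different route, and it is correct in outline.

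\textbf{The paper's upper tail.} The paper picks $\xi<\rho$ close to $\rho$ and uses continuity of $\eta\mapsto\widehat\h^\eta_\varepsilon(\lambda)$. It splits $\PP(\tau^\rho_{\lambda,\varepsilon}\ge e^{\cdots})\le\PP(\widehat\tau^\xi_{\lambda,\varepsilon}\ge e^{\cdots})+\PP(\widehat\tau^\xi_{\lambda,\varepsilon}<\tau^\rho_{\lambda,\varepsilon})$. It then disposes of the last term by asserting that on this event the location at $\widehat\tau^\xi_{\lambda,\varepsilon}$ lies outside the closed target, which \eqref{gouverneur2} makes negligible. This is short, but it tacitly upgrades the $\delta$-proximity in \eqref{gouverneur2} to actual entrance into the open target. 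Also, outside the two degenerate cases $\mathcal O_{\lambda,\varepsilon,\xi}=\mathcal O_{\lambda,\varepsilon,\rho}$, so shrinking to $\xi$ gives no slack there.

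\textbf{Your upper tail.} Your strong-Markov, last-mile and renewal scheme avoids that identification. You only use proximity, pay an extra action $\eta=O(\delta')$ for a short controlled segment into the interior, and absorb the resulting success probability $e^{-2\eta/\sigma^2}$ with about $e^{2\eta/\sigma^2}$ independent excursions. The price is the uniform Freidlin--Wentzell machinery you flag. One point should be made explicit: a union bound over $e^{2\eta/\sigma^2}$ cycles needs per-cycle failure probabilities that are $o(e^{-2\eta/\sigma^2})$, not merely $o(1)$. This is available. The standard upper-bound proof gives super-exponentially small tails for the exit time, uniformly for starting points near $(\lambda_1,\lambda_2)$. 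The probability of not returning to a small ball around $(\lambda_1,\lambda_2)$ within a horizon $T_1$ is at most $e^{-M/\sigma^2}$, with $M\to\infty$ as $T_1\to\infty$ after localizing to a large ball. Alternatively, bound the expected cycle lengths and use Wald's identity.

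\textbf{A possible simplification.} You could drop the excursion structure altogether, because the Freidlin--Wentzell upper bound does not need a stable domain. Concatenate three pieces: the zero-cost flow towards $(\lambda_1,\lambda_2)$; a near-optimal path to a point of the closed target where $v\le\widehat\h^\rho_\varepsilon(\lambda)+\eta/2$ (such points exist by \eqref{cost1} and the case analysis (a)--(b)); and your last-mile segment. This gives $\inf_{z\in K}\PP_z\big(\tau^\rho_{\lambda,\varepsilon}(\sigma)\le T\big)\ge\exp\big[-\frac{2}{\sigma^2}(\widehat\h^\rho_\varepsilon(\lambda)+\eta)\big]$ on a large compact $K$. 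The usual Markov iteration then yields the upper tail of \eqref{merle} without the uniform exit-location estimate.
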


We point out that $\widehat
\tau^\rho_{\lambda,\varepsilon}(\sigma)$ consists in the first entering-time in the enlargement (through the ascending flows) of the ball of center $\lambda$ and radius $\varepsilon$ (or $\rho\varepsilon$ if $\lambda$ is in a $\varepsilon$-neighbourhood of one of the two attractors) whereas $\tau^\rho_{\lambda,\varepsilon}(\sigma)$ directly deals with the balls.

\begin{proof}
Since ${\mathbb B}(\lambda;\varepsilon)$, for the case {\bf $(a)$}, and ${\mathbb B}(\lambda;\rho\varepsilon)$, for the case {\bf $(b)$}, are both contained in each domain $\mathcal
D_{\lambda,\varepsilon}^i$, necessarily the inequality~$\tau^\rho_{\lambda,\varepsilon}(\sigma)\ge
\widehat{\tau}^\rho_{\lambda,\varepsilon}(\sigma)$ holds almost surely. As \eqref{gouverneur1} ensures that
\[
\lim_{\sigma\to0}\PP\left(\widehat{\tau}^\rho_{\lambda,\varepsilon}(\sigma)\le \exp\left[\frac{2}{\sigma^2}\left(\widehat
\h^\rho_\varepsilon(\lambda)-\delta\right)\right]\right)=0\,,
\]
the lower tail in \eqref{merle} follows. For the upper-tail
\[
\lim_{\sigma\to0}\PP\left(\tau^\rho_{\lambda,\varepsilon}(\sigma)<\exp\left[\frac{2}{\sigma^2}\left(\widehat
\h^\rho_\varepsilon(\lambda)+\delta\right)\right]\right)=1\,,
\]
fix $\delta>0$, let $\xi>0$ be smaller than $\rho$ and use the inequality:
\begin{align*}
\mathbb P\left( \tau^\rho_{\lambda,\varepsilon}(\sigma)\ge
\exp\left[\frac{2}{\sigma^2}\left(\widehat\h^\rho_\varepsilon(\lambda)+\delta\right)\right]\right)
&\le \mathbb P\left(\widehat{\tau}^\xi_{\lambda,\varepsilon}(\sigma)\ge
\exp\left[\frac{2}{\sigma^2}\left(\widehat\h^\rho_\varepsilon(\lambda)+\delta\right)\right] \right)\\
&\quad+\mathbb P\left( \tau^\rho_{\lambda,\varepsilon}(\sigma)\ge
\exp\left[\frac{2}{\sigma^2}\left(\widehat\h^\rho_\varepsilon(\lambda)+\delta\right)\right],\,\widehat{\tau}^\xi_{\lambda,\varepsilon}(\sigma)<
\tau^\rho_{\lambda,\varepsilon}(\sigma)\right)\\
&\le \mathbb P\left(\widehat{\tau}^\xi_{\lambda,\varepsilon}(\sigma)\ge
\exp\left[\frac{2}{\sigma^2}\left(\widehat\h^\rho_\varepsilon(\lambda)+\delta\right)\right] \right)
+\mathbb P\left(\widehat{\tau}^\xi_{\lambda,\varepsilon}(\sigma)< \tau^\rho_{\lambda,\varepsilon}(\sigma)\right).
\end{align*}
Since $\eta\mapsto \widehat\h^\eta_\varepsilon(\lambda)$ is continuous, we can choose $\xi$ close enough to $\rho$ so
that~$\widehat\h^\rho_\varepsilon(\lambda)>\widehat\h^\xi_\varepsilon(\lambda)-\delta/2$. This way, the event
$\left\{\widehat{\tau}^\xi_{\lambda,\varepsilon}(\sigma)\ge
\exp\left[\frac{2}{\sigma^2}\left(\widehat\h^\rho_\varepsilon(\lambda)+\delta\right)\right]\right\}$ is embedded into the event
$\left\{\widehat{\tau}^\xi_{\lambda,\varepsilon}(\sigma)\ge
\exp\left[\frac{2}{\sigma^2}\left(\widehat\h^\xi_\varepsilon(\lambda)+\frac{\delta}{2}\right)\right]\right\}$ and the upper-tail
estimate for $\widehat{\tau}^\xi_{\lambda,\varepsilon}(\sigma)$ in~\eqref{gouverneur1} ensures that $\mathbb
P\left\{\widehat{\tau}^\xi_{\lambda,\varepsilon}(\sigma)\ge
\exp\left[\frac{2}{\sigma^2}\left(\widehat\h^\rho_\varepsilon(\lambda)+\delta\right)\right] \right\}$ vanishes as $\sigma$ tends to~$0$.
For the remaining component, the event $\{\widehat{\tau}^\xi_{\lambda,\varepsilon}(\sigma)< \tau^\rho_{\lambda,\varepsilon}(\sigma)\}$
implies that the vector
$\big(x^\sigma_{\widehat{\tau}^\xi_{\lambda,\varepsilon}(\sigma)},y^\sigma_{\widehat{\tau}^\xi_{\lambda,\varepsilon}(\sigma)}\big)$ does
not belong to $\overline{\mathbb B(\lambda;\varepsilon)}\times\overline{\mathbb B(\lambda;\varepsilon)}$. Recalling~\eqref{gouverneur2}
from Lemma \ref{negan}, this event becomes negligible as $\sigma\downarrow 0$ and so
$\mathbb P\left(\widehat{\tau}^\xi_{\lambda,\varepsilon}(\sigma)< \tau^\rho_{\lambda,\varepsilon}(\sigma)\right)$ vanishes as $\sigma$
tends to $0$.

  The concentration of
  $(x_{\tau^\rho_{\lambda,\varepsilon}(\sigma)},y_{\tau^\rho_{\lambda,\varepsilon}(\sigma)})$ is a straightforward consequence of the very
  definition of $\tau_{\lambda,\varepsilon}^\rho(\sigma)$.
\end{proof}

 \begin{lem}
\label{dale-bis}
Let $\tau_{\lambda,\varepsilon}(\sigma)$ be defined as in \eqref{label_bizarre1}.
For any $\lambda\in\bRb^d$ and $0<\varepsilon<\varepsilon_0$, it holds: for any $\delta>0$,
\begin{equation}\label{shane}
\lim_{\sigma\to0}\PP\left(\exp\left[\frac{2}{\sigma^2}\left(\widehat\h_\varepsilon(\lambda)-\delta\right)\right]<
\tau_{\lambda,\varepsilon}(\sigma)<\exp\left[\frac{2}{\sigma^2}\left(\widehat\h_\varepsilon(\lambda)+\delta\right)\right]\right)=1\,,
\end{equation}
for

\begin{equation}\label{gouverneur3}
\widehat\h_\varepsilon(\lambda):=\lim_{\rho\rightarrow 1}\widehat\h^\rho_{\varepsilon}(\lambda)=\left\{
\begin{array}{ll}
\inf_{x\in\mathbb R^d,y\in\partial \mathbb B(\lambda;\varepsilon)}v(x,y) &\quad\mbox{if}\,\,\|\lambda-\lambda_1\|<
\varepsilon,\\
\inf_{x\in\partial \mathbb B(\lambda;\varepsilon),y\in\mathbb R^d}v(x,y)&\quad\mbox{if}\,\,\|\lambda-\lambda_2\|<
\varepsilon,\\
\inf_{x\in\partial \mathbb B(\lambda;\varepsilon),y\in\partial \mathbb
B(\lambda;\varepsilon)}v(x,y)&\quad\mbox{if}\,\,\min_{i=1,2}\|\lambda-\lambda_i\|\ge \varepsilon.	
\end{array}
\right.
\end{equation}
Additionally,
\begin{equation}
\label{gouverneur4}
\lim_{\sigma\to0}\PP\left({\rm
dist}\left((x^\sigma_{\tau_{\lambda,\varepsilon}(\sigma)},y^\sigma_{\tau_{\lambda,\varepsilon}(\sigma)}),\mathbb{B}(\lambda;\varepsilon)\times\mathbb{B}(\lambda;\varepsilon)\right)\leq\delta\right)=1\,.
\end{equation}
\end{lem}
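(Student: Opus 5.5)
The idea is to squeeze $\tau_{\lambda,\varepsilon}(\sigma)$ between the stopping times $\tau^\rho_{\lambda,\varepsilon}(\sigma)$ of Lemma~\ref{dale} and then let $\rho\uparrow 1$. The key tool is continuity of $\rho\mapsto\widehat\h^\rho_\varepsilon(\lambda)$ at $\rho=1$.

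\emph{Generic case.} Assume first that $\min_{i=1,2}\|\lambda-\lambda_i\|\neq\varepsilon$. Then $\tau^\rho_{\lambda,\varepsilon}(\sigma)$ coincides with the entrance time into $\mathbb B(\lambda;\varepsilon)\times\mathbb B(\lambda;\varepsilon)$. Up to the open/closed ball distinction, this is $\tau_{\lambda,\varepsilon}(\sigma)$.

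To handle that distinction, compare with radii $\varepsilon'$ slightly below and above $\varepsilon$, both still in $(0,\varepsilon_0)$ and on the same side of the wells. This gives the a.s. sandwich
\[
\tau^{\rho}_{\lambda,\varepsilon'_+}(\sigma)\le \tau_{\lambda,\varepsilon}(\sigma)\le \tau^{\rho}_{\lambda,\varepsilon'_-}(\sigma).
\]
Lemma~\ref{dale} then yields the two-sided bound \eqref{shane} with costs $\widehat\h_{\varepsilon'_\pm}(\lambda)$. It remains to show that $\varepsilon\mapsto\widehat\h_\varepsilon(\lambda)$ is continuous. This follows from the formula \eqref{gouverneur3} and continuity of $v$. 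In turn, $v$ is continuous (even locally Lipschitz) by uniform ellipticity of $A$: one connects nearby endpoints by straight segments of unit time, at a cost of $O(\|z-z'\|^2+\|z-z'\|)$ locally.

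\emph{Degenerate case.} Suppose $\|\lambda-\lambda_1\|=\varepsilon$; the case of $\lambda_2$ is symmetric. For every $\rho<1$,
\[
\tau_{\lambda,\varepsilon}(\sigma)\le\tau^\rho_{\lambda,\varepsilon}(\sigma),
\]
since $\mathbb B(\lambda;\rho\varepsilon)\subset\overline{\mathbb B(\lambda;\varepsilon)}$. This gives the upper tail with $\widehat\h^\rho_\varepsilon(\lambda)$. For the lower tail, observe that $\tau_{\lambda,\varepsilon}(\sigma)$ dominates the exit-time of $(x^\sigma,y^\sigma)$ from the complement of $\mathcal D^1_{\lambda,\varepsilon''}\times\mathcal D^2_{\lambda,\varepsilon''}$ for $\varepsilon''$ slightly larger than $\varepsilon$. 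For such $\varepsilon''$, $\lambda_1\in\mathbb B(\lambda;\varepsilon'')$, so we are in situation $(b)$ and Lemma~\ref{negan} applies with cost $\inf_{x,\,y\in\partial\mathbb B(\lambda;\varepsilon'')}v(x,y)$.

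Combining the two bounds requires showing that both limits ($\rho\uparrow1$ and $\varepsilon''\downarrow\varepsilon$) equal the value assigned in \eqref{gouverneur3}, which here falls in the third line. This identification is the main obstacle.

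\emph{Identifying the limit.} As $\rho\uparrow1$, $\widehat\h^\rho_\varepsilon(\lambda)=\inf_{x\in\partial\mathbb B(\lambda;\rho\varepsilon),\,y\in\partial\mathbb B(\lambda;\varepsilon)}v(x,y)$ converges to the boundary-product infimum by continuity of $v$. As $\varepsilon''\downarrow\varepsilon$, the cost with $x$ free is $\inf_{y\in\partial\mathbb B(\lambda;\varepsilon'')}v(\lambda_1,y)$. We must check that this equals the third-line value. The key observation is that, since $\lambda_1\in\partial\mathbb B(\lambda;\varepsilon)$, we may take $x=\lambda_1$ at zero cost for the first marginal. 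Hence
\[
\inf_{x\in\partial\mathbb B(\lambda;\varepsilon),\,y\in\partial\mathbb B(\lambda;\varepsilon)}v(x,y)=\inf_{y\in\partial\mathbb B(\lambda;\varepsilon)}v(\lambda_1,y),
\]
so both limits coincide, again using continuity in the radius. I would make this precise by noting that $v(x,y)=v_1(x)+v_2(y)$ splits into the marginal quasi-potentials, by independence and additivity of the action. Then $\inf v_1=0$ is attained at $\lambda_1$, and all the formulas reduce to sums of marginal infima, which are continuous in the radius. In fact this additivity is the step I would establish first, since it makes all the continuity claims transparent.

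\emph{Exit location.} Finally, \eqref{gouverneur4} is immediate. At $\tau_{\lambda,\varepsilon}(\sigma)$ both $x^\sigma$ and $y^\sigma$ lie in $\overline{\mathbb B(\lambda;\varepsilon)}$ by path continuity, so the distance to $\mathbb B(\lambda;\varepsilon)\times\mathbb B(\lambda;\varepsilon)$ is zero.
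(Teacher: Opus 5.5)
Your proposal is correct and follows the paper's own route. The upper tail comes from $\tau_{\lambda,\varepsilon}(\sigma)\le\tau^\rho_{\lambda,\varepsilon}(\sigma)$ with $\rho\uparrow 1$, the lower tail in the degenerate case from a hitting time of $y^\sigma$ alone, and the generic case from Lemma~\ref{dale}. Your additions make rigorous steps the paper only asserts: the radius perturbations (closed versus open balls), the use of the stable set $\mathcal D^2_{\lambda,\varepsilon''}$ instead of the ball, and the splitting $v(x,y)=v_1(x)+v_2(y)$ (identifying $\lim_{\rho\to1}\widehat\h^\rho_\varepsilon(\lambda)$ with the third line of \eqref{gouverneur3}).

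There are two minor slips to fix. A straight segment traversed in unit time does not have small cost, since the drift $\Psi_i$ contributes an amount of order one along it; traverse it in time $\|z-z'\|$ to get an $O(\|z-z'\|)$ bound. In situation $(b)$, Lemma~\ref{negan} gives the cost $\inf_{x\in\mathbb R^d,\,y\in\partial\mathbb B(\lambda;\varepsilon'')}v(x,y)$, not the product-boundary infimum you first wrote; you do use the correct one afterwards.
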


\begin{proof}
The estimate \eqref{gouverneur4} is straightforward by definition of $\tau_{\lambda,\varepsilon}(\sigma)$. Next observe that $\tau_{\lambda,\varepsilon}(\sigma)\le \tau^\rho_{\lambda,\varepsilon}(\sigma)$ almost surely, and so, for any $\delta'>0$,
\[
\lim_{\sigma\rightarrow 0}\mathbb P\left(\tau_{\lambda,\varepsilon}(\sigma)<\exp\left[\frac
2{\sigma^2}(h^\rho_\varepsilon(\lambda)+\delta')\right]\right)=1\,.
\]
Also as $\lim_{\rho\rightarrow 1}\widehat\h^\rho_\varepsilon(\lambda)=\widehat\h_\varepsilon(\lambda)$, taking $\delta>0$ arbitrary, and
choosing $\rho,\delta'$ small enough so that $\widehat\h^\rho_\varepsilon(\lambda)+\delta'\le \widehat\h_\varepsilon(\lambda)+\delta$
yields the upper-tail:
\[
\lim_{\sigma\rightarrow 0}\mathbb P\left(\tau_{\lambda,\varepsilon}(\sigma)<\exp\left[\frac
2{\sigma^2}(\widehat\h_\varepsilon(\lambda)+\delta)\right]\right)=1\,.
\]
For the lower-tail
\[
\lim_{\sigma\rightarrow 0}\mathbb P\left(\tau_{\lambda,\varepsilon}(\sigma)>\exp\left[\frac
2{\sigma^2}(\widehat\h_\varepsilon(\lambda)-\delta)\right]\right)=1\,,
\]
let us consider the situation $\|\lambda-\lambda_1\|=\varepsilon$ which implies that $\|\lambda-\lambda_2\|>\varepsilon$, $\big(\mathcal D^2_{\lambda,\varepsilon}\big)^c$ is stable by $-\Psi_2$ and $\widehat\h_\varepsilon(\lambda)$ reduces to
$\inf_{x\in\mathbb R^d,y\in\partial\mathbb B(\lambda;\varepsilon)}v(x,y)$.
Observing that $\tau_{\lambda,\varepsilon}(\sigma)$ is greater or equal to $\tilde \tau_{\lambda,\varepsilon}(\sigma):=\inf\{t\ge
0\,:\,y_t^\sigma\in\mathbb B(\lambda;\varepsilon)\}$, the lower-tail estimate follows from the inequality
\begin{align*}
&\mathbb P\left(\tau_{\lambda,\varepsilon}(\sigma)>\exp\left[\frac 2{\sigma^2}(\widehat\h_\varepsilon(\lambda)-\delta)\right] \right)\ge
\mathbb P\left(\tilde \tau_{\lambda,\varepsilon}(\sigma)>\exp\left[\frac 2{\sigma^2}\bigg(\inf_{x\in\mathbb R^d,y\in\partial\mathbb
B(\lambda;\varepsilon)}v(x,y)-\delta\bigg) \right]\right)
\end{align*}
and, by applying Theorem \ref{thm:KramersDZ} to $\tilde \tau_{\lambda,\varepsilon}(\sigma)$.

\noindent
Replicating the same reasoning to the case $\|\lambda-\lambda_2\|=\varepsilon$, the claim follows. Finally, whenever
$\|\lambda-\lambda_1\|\neq\varepsilon$ and $\|\lambda-\lambda_2\|\neq\varepsilon$, $\tau_{\lambda,\varepsilon}(\sigma)$ simply reduces to
$\widehat{\tau}^\rho_{\lambda,\varepsilon}(\sigma)$.
\end{proof}

\subsection{Asymptotic estimates for \texorpdfstring{$c_\varepsilon(\sigma)$}{c(epsilon,sigma)}}\label{subsec:Linear-collisionB}
In light of Lemma \ref{dale-bis}, the characteristic {\it exit-cost} of $c_\varepsilon(\sigma)$ can be identified, at least heuristically, as follows:
assuming that \eqref{shane} and \eqref{gouverneur4} still hold true by minimizing over all intermediate points $\lambda$. The exit-cost
governing the asymptotic~$\sigma\downarrow 0$ would be given by $\inf_\lambda\widehat\h_\varepsilon(\lambda)$ and the exit-location
$(x^\sigma_{c_\varepsilon(\sigma)},y^\sigma_{c_\varepsilon(\sigma)})$ would concentrate on the domain $\mathbb
B(\lambda_\varepsilon;\varepsilon)\times \mathbb B(\lambda_\varepsilon;\varepsilon)$ where $\lambda_\varepsilon$ belongs to the set of minimizers of
$\widehat\h_{\varepsilon}$. This set possibly contains multiple elements, and, in view of \eqref{gouverneur3}, can be split into two
main parts: (1): the family of minimizers belonging either to $\mathbb B(\lambda_1;\varepsilon)$ or to $\mathbb B(\lambda_2;\varepsilon)$ for which
related minima of $\widehat\h_{\varepsilon}$ are given by
   \[
  m_{1,\varepsilon}:=\inf_{\lambda \in \mathbb B(\lambda_1;\varepsilon)}\,\inf_{x\in\mathbb R^d,y\in\partial \mathbb
  	B(\lambda;\varepsilon)}v(x,y),\quad m_{2,\varepsilon}:=\inf_{\lambda \in \mathbb
  	B(\lambda_2;\varepsilon)}\,\inf_{x\in\partial \mathbb B(\lambda;\varepsilon),y\in\mathbb R^d}v(x,y)\,,
  \]
 and (2): the family of minimizers belonging to $\mathbb R^d\setminus \big(\mathbb B(\lambda_1;\varepsilon)\cup\mathbb
 B(\lambda_2;\varepsilon)\big) $ for which the corresponding minima are given by
 $m_\varepsilon:=\displaystyle \inf_{\lambda\in\mathbb R^d} \big\{h_\varepsilon(\lambda)\::\:\inf_i\|\lambda-\lambda_i\|\ge \varepsilon\big\}$ for $h_\varepsilon$ being itself given
 by
 \[
 \h_\varepsilon(\lambda):=\inf_{x\in\partial \mathbb B(\lambda;\varepsilon),y\in\partial
 \mathbb B(\lambda;\varepsilon)}v(x,y)\,.
 \]
Provided that $\varepsilon$ is small enough, the latter will predominate and will drive the persistence region
of $(x^\sigma_{c_\varepsilon(\sigma)},y^\sigma_{c_\varepsilon(\sigma)})$. Indeed, as $\varepsilon\downarrow 0$, owing to the continuity of $(x,y)\mapsto v(x,y)$ (since all $\Psi_1$ and $\Psi_2$ are $\cCc^1$), for any given $\lambda\in\mathbb R^d$, as $\varepsilon$ tends to $0$,
\[
m_{1,\varepsilon}\rightarrow\inf_{\lambda \in \mathbb B(\lambda_1;0)}\,\inf_{x\in\mathbb R^d,y\in\partial \mathbb
  	B(\lambda;0)}v(x,y)=\inf_{x\in\bRb^d}v(x,\lambda_1)=v(\lambda_1,\lambda_1)\,.
\]

Indeed, by its very definition, $v(x,\lambda_1)$ is the cost for going from $(\lambda_1,\lambda_2)$ to $(x,\lambda_1)$. However, the two dynamics ($x^\sigma$ and $y^\sigma$) have separated rate functions, see the definition of $i_{T_1,T_2}(\phi)$. Consequently, $v(x,\lambda_1)$ is the cost for the first coordinate for going from $\lambda_1$ to $x$ plus the cost for the second coordinate for going from $\lambda_2$ to $\lambda_1$. It is then immediate that $\displaystyle\inf_{x\in\bRb^d}v(x,\lambda_1)=v(\lambda_1,\lambda_1)$. In the same way, we have, as $\varepsilon\downarrow0$,

\[
m_{2,\varepsilon}(\lambda)\rightarrow\inf_{\lambda \in \mathbb
  	B(\lambda_2;0)}\,\inf_{x\in\partial \mathbb B(\lambda;0),y\in\mathbb R^d}v(x,y)=\inf_{y\in\bRb^d}v(\lambda_2,y)=v(\lambda_2,\lambda_2)\,.
\]

However, for any given $\lambda\in\mathbb R^d$,  $\h_\varepsilon(\lambda)$
converges to the function
\begin{equation}\label{IntermediateCollisionCost}
\h_0(\lambda):=v(\lambda,\lambda),
\end{equation}
further yielding $\inf_{\lambda \in \mathbb R^d}\h_\varepsilon(\lambda)\rightarrow\inf_{\lambda \in \mathbb R^d} v(\lambda,\lambda)$. As such, 
	\[
	\lim_{\varepsilon \rightarrow 0}\inf_\lambda \widehat\h_{\varepsilon}(\lambda)=\lim_{\varepsilon \rightarrow 0}\inf_\lambda \h_{\varepsilon}(\lambda)=\inf_\lambda \h_{0}(\lambda)
	\] 
	and the set of minimizers of $\widehat\h_\varepsilon$ converges to $\text{argmin}\,\h_0$. 

Following this preliminary discussion, let us so introduce the threshold
\begin{equation*}
	\varepsilon_c:=\inf\left\{\varepsilon\,\in\,(0,\varepsilon_0)\,:\,
	m_{i,\varepsilon}\ge \inf_\lambda \h_\varepsilon(\lambda),\,\,i\in\{1,2\}\right\}
\end{equation*}
 which defines the largest radius for which the exit-cost $\inf_\lambda\widehat\h_\varepsilon(\lambda)$ reduces into
$\inf_{\lambda}\h_\varepsilon(\lambda)$. In particular, for all $\varepsilon\le \varepsilon_c$, $m_{i,\varepsilon}>\inf_\lambda h_\varepsilon(\lambda)$ and $\inf_\lambda h_\varepsilon(\lambda)=\inf_\lambda \widehat h_\varepsilon(\lambda)$. Remark further that whenever $\varepsilon<\varepsilon_c$, 
$$
\inf_{\lambda}
 \h_\varepsilon(\lambda)=\inf_{\lambda} \left\{\h_\varepsilon(\lambda)\::\:\inf_i\|\lambda-\lambda_i\|\ge \varepsilon\right\}\,.
 $$ 
 With this, we are ready to connect $c_\varepsilon(\sigma)$ and $h_\varepsilon$ at the zero-noise limit.
\begin{prop} 
\label{lacollision} Given $\varepsilon<\varepsilon_c$ and $\underline{\h}_{\varepsilon}=\inf_{\lambda \in \bRb^d}\h_{\varepsilon}(\lambda)$, we have: for any $\delta>0$,
\begin{equation}
\label{tdog}
\lim_{\sigma\to0}\PP\left(\exp\left[\frac{2}{\sigma^2}\left(\underline{\h}_{\varepsilon} -\delta\right)\right]<
c_\varepsilon(\sigma)<\exp\left[\frac{2}{\sigma^2}\left(\underline{\h}_{\varepsilon}+\delta\right)\right]\right)=1\,.
\end{equation}

In addition,  for $\mathcal M_\varepsilon$ the set of minimizers of $\lambda\mapsto \h_{\varepsilon}(\lambda)$, it holds:
\begin{equation}
\label{tdog2}
\lim_{\sigma\to0}\PP\left(\inf_{\lambda_\varepsilon\in\mathcal M_\varepsilon}\max\bigg({\rm
dist}\big(x^\sigma_{c_\varepsilon(\sigma)},\mathbb B(\lambda_\varepsilon;\varepsilon)\big),{\rm
dist}\big(y^\sigma_{c_\varepsilon(\sigma)},\mathbb B(\lambda_\varepsilon;\varepsilon) \big)\bigg)\ge \delta\right)=0\,.
\end{equation}
 \end{prop}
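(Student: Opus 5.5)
The plan is to use the identity $c_\varepsilon(\sigma)=\inf_{\lambda\in\mathbb R^d}\tau_{\lambda,\varepsilon}(\sigma)$ (the analogue of \eqref{KeyInterpretation}) together with Lemma~\ref{dale-bis}, applied over a finite family of centers $\lambda$. The upper tail is easy. Pick $\lambda_\varepsilon\in\mathcal M_\varepsilon$. Since $\varepsilon<\varepsilon_c$, we may take $\min_i\|\lambda_\varepsilon-\lambda_i\|\ge\varepsilon$, so $\widehat\h_\varepsilon(\lambda_\varepsilon)=\h_\varepsilon(\lambda_\varepsilon)=\underline\h_\varepsilon$. Then $c_\varepsilon(\sigma)\le\tau_{\lambda_\varepsilon,\varepsilon}(\sigma)$, and \eqref{shane} gives the upper bound in \eqref{tdog}.

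For the lower tail I will discretize the space of centers.
\begin{itemize}
\item[1.] \emph{Compact localization.} By the moment bounds, or by a standard Freidlin--Wentzell estimate on the probability of reaching a large ball before the exponential time, the process $(x^\sigma,y^\sigma)$ stays in a compact set $K_R$ up to time $\exp[2(\underline\h_\varepsilon+1)/\sigma^2]$ with probability tending to one. Here $R$ is chosen so that the quasi-potential $v$ exceeds $\underline\h_\varepsilon+1$ outside $K_R$, which is possible by the uniform contractivity \eqref{ContractiveOneSided}.
\item[2.] \emph{Finite net of centers.} Fix $\eta\in(0,1)$ small and cover $\overline{\mathbb B(0;R)}$ by finitely many points $\{\lambda^k\}_{k\le K}$ with mesh $\eta\varepsilon$. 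Any near collision $\|x_t-y_t\|\le2\varepsilon$ inside $K_R$ places both particles in $\overline{\mathbb B(\lambda^k;(1+\eta)\varepsilon)}$, where $\lambda^k$ is nearest to the midpoint. Hence $c_\varepsilon(\sigma)\ge\min_k\tau_{\lambda^k,(1+\eta)\varepsilon}(\sigma)$ on that event.
\item[3.] \emph{Union bound.} Lemma~\ref{dale-bis} at radius $(1+\eta)\varepsilon$ (still below $\varepsilon_c$ for $\eta$ small, after slightly shrinking $\varepsilon$ if needed) together with a union bound over the finitely many $k$ gives, with probability tending to one, $c_\varepsilon(\sigma)>\exp[2(\min_k\widehat\h_{(1+\eta)\varepsilon}(\lambda^k)-\delta/2)/\sigma^2]$.
\item[4.] \emph{Continuity in the radius.} Using the continuity of $v$ and the fact that $\varepsilon<\varepsilon_c$ rules out the well-centred branches, I will show $\inf_\lambda\widehat\h_{(1+\eta)\varepsilon}(\lambda)\to\underline\h_\varepsilon$ as $\eta\to0$. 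This yields the lower tail.
\end{itemize}

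For \eqref{tdog2}, let $\mathcal N$ be the set of net points whose enlarged ball $\mathbb B(\lambda^k;(1+\eta)\varepsilon)$ is at distance at least $\delta/2$ from every $\mathbb B(\lambda_\varepsilon;\varepsilon)$ with $\lambda_\varepsilon\in\mathcal M_\varepsilon$. By lower semicontinuity and compactness, $\min_{\lambda^k\in\mathcal N}\widehat\h_{(1+\eta)\varepsilon}(\lambda^k)\ge\underline\h_\varepsilon+2\kappa$ for some $\kappa>0$, provided $\eta$ is small. Lemma~\ref{dale-bis} then shows that each $\tau_{\lambda^k,(1+\eta)\varepsilon}$ with $\lambda^k\in\mathcal N$ exceeds $\exp[2(\underline\h_\varepsilon+\kappa)/\sigma^2]$, while $c_\varepsilon(\sigma)$ is below that level with probability tending to one. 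So at time $c_\varepsilon(\sigma)$ the two particles sit in a ball around a net point outside $\mathcal N$, which gives \eqref{tdog2}.

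The main obstacle is the uniformity and continuity in steps 2 and 4. Lemma~\ref{dale-bis} is pointwise in $(\lambda,\varepsilon)$, so the net must be finite, which requires the compact localization. One also needs continuity and a strict gap of $\lambda\mapsto\h_\varepsilon(\lambda)$ and of $\varepsilon\mapsto\underline\h_\varepsilon$, which I would derive from the local controllability of the nondegenerate action functional (small path corrections cost $O(\eta)$). If that gap argument fails, I would instead redo the enlargement construction of Lemma~\ref{negan} with $\mathcal D^i$ built from the union of balls $\bigcup_\lambda\mathbb B(\lambda;\varepsilon)$ along the diagonal. Here $\triangle_\varepsilon$ replaces $\mathbb B\times\mathbb B$, and Theorem~\ref{thm:KramersDZ} would be applied directly.
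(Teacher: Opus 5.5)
Your argument is the paper's proof, reorganised slightly. The common ingredients are: the upper tail from $c_\varepsilon(\sigma)\le\tau_{\lambda_\varepsilon,\varepsilon}(\sigma)$ and Lemma~\ref{dale-bis}; a finite cover of a compact set of centres, with Lemma~\ref{dale-bis} at a slightly enlarged radius, a union bound, and continuity of the costs in the radius; and compact localisation via a Freidlin--Wentzell exit estimate from a large ball that is stable by \eqref{ContractiveOneSided}. Use that exit estimate rather than the moment bounds, since the $\sigma$-uniform moment bounds cannot control a supremum over an exponentially long window. The paper orders things differently. It first shows the barycentre $z^\sigma_{c_\varepsilon(\sigma)}$ concentrates near $\mathcal M_\varepsilon$ (its \eqref{ben}, localising only $x^\sigma$ around $\lambda_1$ and using the upper tail), and then covers only $\overline{\mathcal M_{\varepsilon,\delta}}$ for the lower tail. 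You cover a whole ball of centres and get the lower tail first; both orders work.

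One step must be corrected, because as written it does not give \eqref{tdog2}. You define $\mathcal N$ through the set-distance between $\mathbb B(\lambda^k;(1+\eta)\varepsilon)$ and $\mathbb B(\lambda_\varepsilon;\varepsilon)$. A net point outside $\mathcal N$ then only satisfies $\|\lambda^k-\lambda_\varepsilon\|<(2+\eta)\varepsilon+\delta/2$ for some $\lambda_\varepsilon\in\mathcal M_\varepsilon$. Knowing both particles lie in $\overline{\mathbb B(\lambda^k;(1+\eta)\varepsilon)}$ therefore yields only $\mathrm{dist}(x^\sigma_{c_\varepsilon(\sigma)},\mathbb B(\lambda_\varepsilon;\varepsilon))<(2+2\eta)\varepsilon+\delta/2$. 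That bound is useless when $\delta$ is small compared with the fixed $\varepsilon$.

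Define instead $\mathcal N:=\{\lambda^k:\mathrm{dist}(\lambda^k,\mathcal M_\varepsilon)\ge\delta/2\}$, i.e. localise the centre (equivalently the midpoint), which is exactly \eqref{ben}. Outside $\mathcal N$ both particles are within $(1+\eta)\varepsilon+\delta/2$ of a common $\lambda_\varepsilon$. Hence both are at distance at most $\eta\varepsilon+\delta/2<\delta$ from $\mathbb B(\lambda_\varepsilon;\varepsilon)$ once $\eta\varepsilon<\delta/2$.

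The gap $\min_{\lambda^k\in\mathcal N}\widehat\h_{(1+\eta)\varepsilon}(\lambda^k)\ge\underline\h_\varepsilon+2\kappa$, for some $\kappa>0$ and all small $\eta$, then follows as you suggest:
\begin{itemize}
\item coercivity and continuity of $v$ give $\inf\{\h_\varepsilon(\lambda):\mathrm{dist}(\lambda,\mathcal M_\varepsilon)\ge\delta/2\}>\underline\h_\varepsilon$;
\item joint continuity of $(\lambda,r)\mapsto\h_r(\lambda)$ on compacts transfers this to radius $(1+\eta)\varepsilon$;
\item centres within $(1+\eta)\varepsilon$ of a well are handled by the strict inequality $m_{i,\varepsilon}>\underline\h_\varepsilon$, which comes from $\varepsilon<\varepsilon_c$ and is stable under the small change of radius.
\end{itemize}
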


\begin{proof} 

\textbf{Outline of the proof:} We will first derive the upper-tail in \eqref{tdog}. Then, in a second step, we prove \eqref{tdog2} by using a suitable barycentre and  Limit~\eqref{ben} below. Then, by some compact arguments, we will obtain both \eqref{tdog2} and the lower-tail in \eqref{tdog}.

\noindent\textbf{Upper-tail in \eqref{tdog}:}
Since $c_\varepsilon(\sigma)$ is the minimum of~$\tau_{\lambda,\varepsilon}(\sigma)$ (\eqref{label_bizarre1}) over all possible $\lambda$, the limit
$$
\lim_{\sigma\to0}\PP\left(c_\varepsilon(\sigma)<\exp\left[\frac{2}{\sigma^2}\left(\underline{\h}_{\varepsilon}+\delta\right)\right]\right)=1
$$
follows immediately from the upper-tail~\eqref{shane} in Lemma~\ref{dale-bis} applied to $\lambda=\lambda_\varepsilon$ a minimizer of the function~$\h_{\varepsilon}$.

\noindent
\textbf{Lower-tail in \eqref{tdog} and \eqref{tdog2}:} To obtain the lower-tail estimate and further the estimate  \eqref{tdog2}, it is sufficient to show that the barycentre $z^\sigma_t:=(x^\sigma_t+y^\sigma_t)/2$, $t\ge 0$, satisfies the exit-property:
\begin{equation}\label{ben}
\lim_{\sigma\rightarrow 0}\mathbb P\left(z^\sigma_{c_\varepsilon(\sigma)}\in \overline{\mathcal{M}_{\varepsilon,\delta}}\right)= 1\,,
\end{equation}
where $\mathcal{M}_{\varepsilon,\delta}:=\{z\in\mathbb R^d\,:\,\mathrm{dist}(z,\mathcal{M}_{\varepsilon})<\delta\}$ for an arbitrary
positive $\delta$. Indeed, since $\|x^\sigma_{c_\varepsilon(\sigma)}-y^\sigma_{c_\varepsilon(\sigma)}\|= 2\varepsilon$, we have naturally 

$$
\left\{z^\sigma_{c_\varepsilon(\sigma)}\in \overline{\mathcal{M}_{\varepsilon,\delta}}\right\}\subset\left\{ \inf_{\lambda_\varepsilon\in\mathcal M_\varepsilon}\max\bigg({\rm
	dist}\big(x^\sigma_{c_\varepsilon(\sigma)},\mathbb B(\lambda_\varepsilon;\varepsilon)\big),{\rm
	dist}\big(y^\sigma_{c_\varepsilon(\sigma)},\mathbb B(\lambda_\varepsilon;\varepsilon) \big)\bigg)\ge \delta\right\}^c\,,
$$
and so \eqref{ben} gives \eqref{tdog2}.
Additionally, since
\begin{align*}
&\mathbb P\left(c_\varepsilon(\sigma)
\le \exp\left[\frac{2}{\sigma^2}(\underline \h_\varepsilon-\delta)\right]\right)
\le \mathbb P\left(z^\sigma_{c_\varepsilon(\sigma)}\notin \overline{\mathcal{M}_{\varepsilon,\delta}}\right)
+\mathbb P\left(c_\varepsilon(\sigma)\le \exp\left[\frac{2}{\sigma^2}(\underline
\h_\varepsilon-\delta)\right],\,\,z^\sigma_{c_\varepsilon(\sigma)}\in \overline{\mathcal{M}_{\varepsilon,\delta}}\right)\,,
\end{align*}
the limit \eqref{ben} reduces the proof of the lower tail in \eqref{tdog} to establish that
\begin{align*}
\lim_{\sigma\rightarrow 0}\mathbb P\left(c_\varepsilon(\sigma)\le \exp\left[\frac{2}{\sigma^2}(\underline
\h_\varepsilon-\delta)\right],\,\,z^\sigma_{c_\varepsilon(\sigma)}\in \overline{\mathcal{M}_{\varepsilon,\delta}}\right)=0\,.
\end{align*}
For $\delta>0$, since $\overline{\mathcal{M}_{\varepsilon,\delta}}$ is compact (this property being a consequence of the compactness of
$\mathcal M_\varepsilon$  which follows from the global contractivity of $\Psi_1$ and of $\Psi_2$), one can construct a finite
covering~$\cup_{\ell=1}^L\mathbb B(\lambda^\ell;r)\supset \overline{\mathcal{M}_{\varepsilon,\delta}}$ - where $r$, aimed to be small, will be
chosen later on - and for which the event
\[
\left\{c_\varepsilon(\sigma)\le \exp\left[\frac{2}{\sigma^2}(\underline \h_\varepsilon-\delta)\right],\,z^\sigma_{c_\varepsilon(\sigma)}\in
\overline{\mathcal{M}_{\varepsilon,\delta}}\right\}
\]
is embedded into the union
\[
\bigcup_{\ell=1}^L\left\{c_\varepsilon(\sigma)\le \exp\left[\frac{2}{\sigma^2}(\underline
\h_\varepsilon-\delta)\right],\,z^\sigma_{c_\varepsilon(\sigma)}\in \overline{\mathbb B(\lambda^\ell;r)}\right\}.
\]
For any $\ell$,  the event $\{z^\sigma_{c_\varepsilon(\sigma)}\in\overline{\mathbb B(\lambda^\ell;r)}\}$ implies that the events $\left\{\|
x^\sigma_{c_\varepsilon(\sigma)}-\lambda^\ell\|\le \varepsilon+r\right\}$ and~$\left\{\| y^\sigma_{c_\varepsilon(\sigma)}-\lambda^\ell \|\le
\varepsilon+r\right\}$ occur simultaneously, and so does $\{c_\varepsilon(\sigma)\ge \tau_{\lambda^\ell,\varepsilon+r}(\sigma)\}$. Choosing~$r$
small enough so that $\h_{\varepsilon+r}(\lambda^\ell)-\delta'\ge \underline \h_\varepsilon-\delta$ for $\delta'>0$, Lemma~\ref{dale-bis}
yields, for any $\ell$,
\begin{align*}
&\lim_{\sigma\rightarrow 0}\mathbb P\left(\tau_{\lambda^\ell,\varepsilon+r}(\sigma)\le \exp\left[\frac{2}{\sigma^2}\bigg(\underline
\h_\varepsilon-\delta\bigg)\right]\right)
\le \lim_{\sigma\rightarrow 0}\mathbb P\left(\tau_{\lambda^\ell,\varepsilon+r}(\sigma)\le
\exp\left[\frac{2}{\sigma^2}\bigg(\h_{\varepsilon+r}(\lambda^\ell)-\delta'\bigg)\right]\right)=0\,.
\end{align*}
Therefore
\begin{align*}
\sum_{\ell=1}^L\mathbb P\left(c_\varepsilon(\sigma)\le \exp\left[\frac{2}{\sigma^2}(\underline
\h_\varepsilon-\delta)\right],\,z^\sigma_{c_\varepsilon(\sigma)}\in \overline{\mathbb B(\lambda^\ell;r)}\right)\le 
\sum_{\ell=1}^L\mathbb P\left(\tau_{\lambda^\ell,\varepsilon+r}(\sigma)\le
\exp\left[\frac{2}{\sigma^2}\bigg(\h_{\varepsilon+r}(\lambda^\ell)-\delta'\bigg)\right]\right)=0
\end{align*}
vanishes as $\sigma$ tends to $0$, yielding
\[
\lim_{\sigma\rightarrow 0}\mathbb P\left(c_\varepsilon(\sigma)\le \exp\left[\frac{2}{\sigma^2}(\underline
\h_\varepsilon-\delta)\right]\right)=0\,.
\]
\textbf{Demonstration of \eqref{ben}:}  Let us close the proof with \eqref{ben}. To this aim, for $\xi>0$, define the level set
\begin{equation*}
	S_{\varepsilon,\xi}=\left\{\lambda\in\mathbb R^d\,:\,v(\lambda_1,\lambda)\ge \underline\h_\varepsilon+3\xi\right\},
\end{equation*}
for 
\begin{equation*}
	\tilde v(\lambda_1,\lambda):=\frac 1{4}\inf_{0\le T<\infty}\inf_{\phi}\left\{\int_{0}^{T}\Big\|\dot{\phi}_t+\W_{\lambda_1}(\phi_t)\Big\|^2_{A^{-1}(\phi_t)}\,\ddt:\,\phi\in \mathcal C^1 ([0,T]), \,\phi(0)=\lambda_1,\,\phi(T)=\lambda \right\}.
\end{equation*}
Since $\varepsilon<\varepsilon_c$ ensures that $\lambda_1\notin \mathcal M_\varepsilon$, $S_{\varepsilon,\xi}$ is necessarily non-empty for $\xi$ small enough (for $\lambda_1\in S_{\varepsilon,\xi}$). Define
next, $M_{\varepsilon,\xi}$ a minimal radius, strictly larger than $\varepsilon$ and such that $\mathbb R^d\setminus\mathbb
B(\lambda_1;M_{\varepsilon,\xi}-\varepsilon)$ lies in $S_{\varepsilon,\xi}$. Equivalently, for any $\lambda\in\mathbb R^d$ such that
$\|\lambda-\lambda_1\|>M_{\varepsilon,\xi}-\varepsilon$, $v(\lambda_1,\lambda)$ is larger
than~$\underline\h_\varepsilon+3\xi$. From this, one can derive the bound:
\begin{align*}
&\mathbb P\left(z^\sigma_{c_\varepsilon(\sigma)}\notin \overline{\mathcal{M}_{\varepsilon,\delta}}\right)
\le \mathbb P\left(z^\sigma_{c_\varepsilon(\sigma)}\notin \overline{\mathbb B(\lambda_1;M_{\varepsilon,\xi})}\right)
+\mathbb P\left(z^\sigma_{c_\varepsilon(\sigma)}\in \overline{\mathbb B(\lambda_1;M_{\varepsilon,\xi})}\setminus
\overline{\mathcal{M}_{\varepsilon,\delta}}\right)=:I_1(\sigma)+I_2(\sigma)\,,
\end{align*}
and next check that $I_i(\sigma)$ for $i=1,2$ tends to $0$.

For the limit $\lim_{\sigma\rightarrow 0}I_1(\sigma)=0$: using again the fact that
$\|x^\sigma_{c_\varepsilon(\sigma)}-y^\sigma_{c_\varepsilon(\sigma)}\|=2\varepsilon$, the distance between~$x^\sigma_{c_\varepsilon(\sigma)}$ and
$\lambda_1$ is larger than $\| z^\sigma_{c_\varepsilon(\sigma)}-\lambda_1\|-\varepsilon$. Introducing
the first time $x^\sigma$ exits the ball~$\mathbb B(\lambda_1;M_{\varepsilon,\xi}-\varepsilon)$:
\[
\tilde \tau_{M_{\varepsilon,\xi}-\varepsilon}(\sigma):=\inf\left\{t\ge 0\,:\,\| x^\sigma_t-\lambda_1\| \ge
M_{\varepsilon,\xi}-\varepsilon\right\}\,,
\]
we have
\begin{align*}
\mathbb P\left(z^{\sigma}_{c_\varepsilon(\sigma)}\notin \overline{\mathbb{B}(\lambda_1;M_{\varepsilon,\xi})}\right)\le \mathbb
P\left(x^{\sigma}_{c_\varepsilon(\sigma)}\notin \overline{\mathbb{B}(\lambda_1;M_{\varepsilon,\xi}-\varepsilon)}\right)\le \mathbb
P\left(\tilde \tau_{M_{\varepsilon,\xi}-\varepsilon}\le c_\varepsilon(\sigma)\right)\,.
\end{align*}

Since $\mathbb B(\lambda_1;M_{\varepsilon,\xi}-\varepsilon)$ is stable by $-\Psi_1$,
Theorem \ref{thm:KramersDZ} applies for $\tilde{\tau}_{M_{\varepsilon,\xi}-\varepsilon}$ with the exit-cost
\begin{equation*}
\tilde h_\varepsilon:=\inf_{x\in \partial
\mathbb{B}(\lambda_1;M_{\varepsilon,\xi}-\varepsilon)}v(\lambda_1,\lambda)\geq\underline\h_\varepsilon+3\xi\,.
\end{equation*}
In addition, since, for any $\delta'>0$,
\begin{align*}
\mathbb P\left(\tilde \tau_{M_{\varepsilon,\xi}-\varepsilon}(\sigma)\le c_\varepsilon(\sigma)\right)
&\le \mathbb P\left(\tilde{\tau}_{M_{\varepsilon,\xi}-\varepsilon}(\sigma)\le \exp\left[\frac{2}{\sigma^2}(\underline
\h_\varepsilon+\delta')\right]\right)+\mathbb P\left(c_\varepsilon(\sigma)> \exp\left[\frac{2}{\sigma^2}(\underline \h_\varepsilon+\delta')\right]\right),
\end{align*}
choosing $\delta'<3\xi$ so that $\underline \h_\varepsilon+\delta'\le \tilde \h_\varepsilon-\delta''$ for $\delta''>0$ yields
\begin{align*}
\lim_{\sigma\rightarrow 0}\mathbb P\left(\tilde \tau_{M_{\varepsilon,\xi}-\varepsilon}\le \exp\left[\frac{2}{\sigma^2}(\underline
\h_\varepsilon+\delta')\right]\right)\le \lim_{\sigma\rightarrow 0}\mathbb P\left(\tilde \tau_{M_{\varepsilon,\xi}-\varepsilon}\le
\exp\left[\frac{2}{\sigma^2}(\tilde \h_\varepsilon-\delta'')\right]\right)=0\,.
\end{align*}
Owing to the upper-tail of $c_\varepsilon(\sigma)$ established at the beginning of the proof, we deduce that
\begin{align*}
&\lim_{\sigma\rightarrow 0}\mathbb P\left(z^{\sigma}_{c_\varepsilon(\sigma)}\notin
\overline{\mathbb{B}(\lambda_1;M_{\varepsilon,\xi})}\right)\\
&\le \lim_{\sigma\rightarrow 0}\mathbb P\left(\tilde \tau_{M_{\varepsilon,\xi}-\varepsilon}\le \exp\left[\frac{2}{\sigma^2}(\underline
\h_\varepsilon+\delta')\right]\right)+\lim_{\sigma\rightarrow 0}\mathbb P\left(c_\varepsilon(\sigma)>
\exp\left[\frac{2}{\sigma^2}(\underline \h_\varepsilon+\delta')\right]\right)=0\,.
\end{align*}

For the limit $\lim_{\sigma\rightarrow 0}I_2(\sigma)=0$: introducing a new covering $\cup_{\ell=1}^{\hat L} \mathbb B(\hat \lambda^\ell;\hat r)\supset
\overline{\mathbb B(\lambda_1;M_{\varepsilon,\xi})}\setminus \overline{\mathcal{M}_{\varepsilon,\delta}}$~-~where $\hat r$ will be again chosen later on - we derive the upper-bound
\begin{align*}
\mathbb P\bigg(z^\sigma_{c_\varepsilon(\sigma)}\in \overline{\mathbb B(\lambda_1;M_{\varepsilon,\xi})}\setminus
\overline{\mathcal{M}_{\varepsilon,\delta}}\bigg)\le \sum_{\ell=1}^{\hat L}\mathbb P\bigg(z^\sigma_{c_\varepsilon(\sigma)}\in \mathbb B(\hat
\lambda^\ell;\hat r)\bigg)\,.
\end{align*}
For any $\ell$, we also have
\begin{align*}
\mathbb P\bigg(z^\sigma_{c_\varepsilon(\sigma)}\in \mathbb B(\hat \lambda^\ell;\hat r)\bigg)
&\le \mathbb P\bigg(x^\sigma_{c_\varepsilon(\sigma)}\in \mathbb B(\hat \lambda^\ell;\hat r+\varepsilon),y^\sigma_{c_\varepsilon(\sigma)}\in
\mathbb B(\hat \lambda^\ell;\hat r+\varepsilon)\bigg)\le \mathbb P\bigg(\tau_{\hat \lambda^\ell,\varepsilon+\hat r}(\sigma)\le c_\varepsilon(\sigma)\bigg)\,.
\end{align*}
According to Lemma \ref{dale-bis}, for any $\delta>0$,
\begin{align*}
\lim_{\sigma\rightarrow 0}\mathbb P\bigg(\tau_{\hat \lambda^\ell,\varepsilon+\hat r}(\sigma)\le
\exp\left[\frac{2}{\sigma^2}(\h_{\varepsilon+\hat r}(\hat \lambda^\ell)-\delta)\right]\bigg)=0\,.
\end{align*}
Choosing $\hat r$ and $\delta$ so that, for some $\hat\delta>0$, $\h_{\varepsilon+\hat r}(\hat \lambda^l)-\hat \delta\ge  \underline
\h_\varepsilon+\delta$ then gives
\begin{align*}
&\lim_{\sigma\rightarrow 0}\mathbb P\bigg(\tau_{\hat \lambda^\ell,\varepsilon+\hat r}(\sigma)\le \exp\left[\frac{2}{\sigma^2}(\underline
\h_\varepsilon+\delta)\right]\bigg)\le
\lim_{\sigma\rightarrow 0} \mathbb P\bigg(\tau_{\hat \lambda^\ell,\varepsilon+\hat r}(\sigma)\le
\exp\left[\frac{2}{\sigma^2}(\h_{\varepsilon+\hat r}(\hat \lambda^\ell)-\hat \delta)
\right]\bigg)=0\,.
\end{align*}
Therefore, using the inequality
\begin{align*}
\mathbb P\left(\tau_{\hat \lambda^\ell,\varepsilon+\hat r}(\sigma)\le c_\varepsilon(\sigma)\right)&\le \mathbb P\left(\tau_{\hat
\lambda^\ell,\varepsilon+\hat r}(\sigma)\le \exp\left[\frac{2}{\sigma^2}(\underline \h_\varepsilon+\delta)\right]\right)+\mathbb P\left(c_\varepsilon(\sigma)\ge \exp\left[\frac{2}{\sigma^2}(\underline \h_\varepsilon+\delta)\right]\right)\,,
\end{align*}
and again Lemma \ref{dale-bis} and the upper-tail of $c_\varepsilon(\sigma)$ yields to
\[
\lim_{\sigma\rightarrow 0}\sum_{\ell=1}^{\hat{L}}\mathbb P\bigg(z^\sigma_{c_\varepsilon(\sigma)}\in \mathbb B(\hat \lambda^\ell;\hat
r)\bigg)=0\,.
\]
This immediately implies that $\lim_{\sigma\rightarrow 0}I_2(\sigma)=0$ and ends the proof of \eqref{ben}.
\end{proof}
Following Proposition \ref{lacollision}, we can eventually conclude on the limit $\varepsilon\downarrow 0$ associated to $c_\varepsilon(\sigma)$ with the following theorem.
\begin{thm}
\label{lacollisionthm} For $\mathcal M_0$ the set of minimizers of $\h_0$ and $\underline{\h}_0:=\h_0(\lambda_0)$ for some $\lambda_0\in \mathcal M_0$, we have: for any $\delta>0$,  

\begin{equation*}
\lim_{\varepsilon\to0}
\lim_{\sigma\to0}\PP\left(\exp\left[\frac{2}{\sigma^2}\left(\underline{\h}_0
-\delta\right)\right]<c_\varepsilon(\sigma)<
\exp\left[\frac{2}{\sigma^2}\left(\underline{\h}_0+\delta\right)\right]\right)=1\,
\end{equation*}
and
\begin{equation*}
\lim_{\varepsilon\to0}\lim_{\sigma\to0}\PP\left(\max\bigg({\rm dist}( x^\sigma_{c_\varepsilon(\sigma)},\m_0),{\rm dist}(
y^\sigma_{c_\varepsilon(\sigma)},\m_0)\le \delta\right)=1\,.
\end{equation*}

\end{thm}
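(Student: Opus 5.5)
The plan is to deduce Theorem~\ref{lacollisionthm} from Proposition~\ref{lacollision} by letting $\varepsilon\downarrow 0$ after $\sigma\downarrow 0$. Two facts are needed: $\underline\h_\varepsilon\to\underline\h_0$, and the minimizer sets $\mathcal M_\varepsilon$ accumulate inside $\mathcal M_0$ (in the Hausdorff-type sense $\sup_{\lambda\in\mathcal M_\varepsilon}{\rm dist}(\lambda,\mathcal M_0)\to 0$). With these two facts, the theorem follows by direct bookkeeping of the $\delta$'s.

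\textbf{Step 1: $\underline\h_\varepsilon\to\underline\h_0$.}
\begin{itemize}
\item Upper bound. Since $v$ is continuous, $\h_\varepsilon(\lambda_0)\to v(\lambda_0,\lambda_0)=\underline\h_0$ for a fixed $\lambda_0\in\mathcal M_0$. Hence $\limsup_{\varepsilon\to0}\underline\h_\varepsilon\le\underline\h_0$.
\item Lower bound. I would show $\h_\varepsilon(\lambda)\ge \h_0(\lambda)-\omega(\varepsilon)$ uniformly for $\lambda$ in a compact set $K$. This uses uniform continuity of $v$ on compacts: for $\|x-\lambda\|=\|y-\lambda\|=\varepsilon$, $|v(x,y)-v(\lambda,\lambda)|\le\omega(\varepsilon)$. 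That $v$ is continuous (indeed locally Lipschitz) follows by concatenating short straight-line corrections, whose action is $O(\varepsilon)$ by local boundedness of $\Psi_i$ and $A^{-1}$.
\item Compactness. Minimizers of $\h_\varepsilon$ for $\varepsilon\le\varepsilon_c$ stay in a fixed compact $K$. By global contractivity \eqref{ContractiveOneSided}, $v(\lambda,\lambda)\ge v(\lambda_1,\lambda)\to\infty$ as $\|\lambda\|\to\infty$. This is a quasi-potential growth estimate, obtained by comparing with $\frac{\theta_1}{4\Lambda_+'}\|\lambda-\lambda_1\|^2$-type bounds via the one-sided Lipschitz property along any path.
\end{itemize}
Together these give $\lim_{\varepsilon\to 0}\underline\h_\varepsilon=\underline\h_0$.

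\textbf{Step 2: accumulation of minimizers.} Fix $\delta>0$. On the compact set $K\setminus\{\lambda:{\rm dist}(\lambda,\mathcal M_0)<\delta/2\}$, the continuous function $\h_0$ satisfies $\h_0\ge\underline\h_0+\eta$ for some $\eta>0$. By the uniform convergence of Step 1, for $\varepsilon$ small we get $\h_\varepsilon\ge\underline\h_\varepsilon+\eta/2$ there. Hence $\mathcal M_\varepsilon\subset\{{\rm dist}(\cdot,\mathcal M_0)<\delta/2\}$.

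\textbf{Step 3: conclusion.}
\begin{itemize}
\item Collision time. Given $\delta$, choose $\varepsilon_1\le\varepsilon_c$ with $|\underline\h_\varepsilon-\underline\h_0|<\delta/2$ for $\varepsilon<\varepsilon_1$. Apply \eqref{tdog} with $\delta/2$: the event in \eqref{tdog} is contained in the target event for $c_\varepsilon(\sigma)$, so the inner $\sigma$-limit equals $1$ for every $\varepsilon<\varepsilon_1$.
\item Collision location. By \eqref{tdog2} with $\delta/4$, with probability tending to one both $x^\sigma_{c_\varepsilon(\sigma)}$ and $y^\sigma_{c_\varepsilon(\sigma)}$ are within $\varepsilon+\delta/4$ of a common $\lambda_\varepsilon\in\mathcal M_\varepsilon$. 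By Step 2, $\lambda_\varepsilon$ is within $\delta/2$ of $\mathcal M_0$. So for $\varepsilon<\delta/4$ both distances to $\mathcal M_0$ are at most $\delta$.
\end{itemize}

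\textbf{Main obstacle.} The main difficulty is the uniform (in $\lambda$) control in Step 1: the continuity of $v$ near the diagonal and the coercivity needed for compactness. Pointwise convergence alone does not give convergence of infima or of argmin sets. I would first try the explicit straight-segment concatenation bound for continuity, and the one-sided Lipschitz energy estimate for coercivity.
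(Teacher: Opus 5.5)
Your proposal is correct and follows the same route as the paper, which deduces Theorem~\ref{lacollisionthm} directly from Proposition~\ref{lacollision} together with the convergence $\underline{\h}_\varepsilon\to\underline{\h}_0$ and the convergence of the minimizer sets $\mathcal M_\varepsilon$ toward $\mathcal M_0$. Your version is more careful than the paper's one-line justification: the paper only invokes continuity of $v$ and the pointwise limit $\h_\varepsilon(\lambda)\to\h_0(\lambda)$, whereas your uniform-on-compacts bound plus the quadratic coercivity of the quasi-potentials is what actually justifies passing to convergence of the infima and of the argmin sets.
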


Theorem \ref{lacollisionthm} provides the natural analogue to Theorem \ref{thm:main1} for the reduced systems \eqref{Linear-Systems} and is a straightforward consequence of Proposition
\ref{lacollision}, the continuity of $\varepsilon\rightarrow\underline \h_\varepsilon=\inf_\lambda h_\varepsilon(\lambda)$ (since all the coefficients are of class $\cCc^1$) and of the limit $\lim_{\varepsilon\rightarrow 0}\underline h_\varepsilon=\underline h_0$.
 \begin{rem} Let us briefly comment on the set of minimizers $\mathcal M_\varepsilon$ and highlight the possible collision-locations
 $\lambda_\varepsilon\in\mathcal M_\varepsilon$ in Proposition \ref{lacollision} assuming $\Psi_i=\nabla \psi_i$, $i=1,2$, where the $\psi_i$'s are $\mathcal C^2$-uniformly convex potentials. In this case,
\begin{align*}
\inf_\lambda h_\varepsilon(\lambda)&= \inf_{\lambda}\inf_{(x,y)\in\partial \mathbb{B}(0;\varepsilon)\times \partial \mathbb{B}(0;\varepsilon)}
\Big(\psi_1(\lambda+x)-\psi_1(\lambda_1)+\psi_2(\lambda+y)-\psi_2(\lambda_2)\Big)\\
&=\inf_{(x,y)\in\partial \mathbb{B}(0;\varepsilon)\times \partial \mathbb{B}(0;\varepsilon)}\inf_{\lambda}
\Big(\psi_1(\lambda+x)-\psi_1(\lambda_1)+\psi_2(\lambda+y)-\psi_2(\lambda_2)\Big)\,,
\end{align*}
the first equality following from shifting the minimization over $(x,y)\in\partial \mathbb{B}(\lambda;\varepsilon)\times \partial
\mathbb{B}(\lambda;\varepsilon)$ to the set of points $(x+\lambda,y+\lambda)$ for $(x,y)\in\partial \mathbb{B}(0;\varepsilon)\times
\partial \mathbb{B}(0;\varepsilon)$ and the second equality from the  \emph{min-min} principle.
The potentials $\psi_1$ and $\psi_2$ being uniformly convex, the minimizers of
$\lambda\mapsto\psi_1(\lambda+x)-\psi_1(\lambda_1)+\psi_2(\lambda+y)-\psi_2(\lambda_2)$ are explicitly given by
\[\lambda(x,y)=\Big(\nabla \psi_1(\cdot+x)+\nabla \psi_2(\cdot+y)\Big)^{-1}(0)\,,
\]
independently of $(x,y)$. As such, $\inf_\lambda h_\varepsilon(\lambda)$ rewrites as
\[
\inf_{(x,y)\in\partial \mathbb{B}(0;\varepsilon)\times \partial
\mathbb{B}(0;\varepsilon)}\Bigg(\psi_1(\lambda(x,y)+x)-\psi_1(\lambda_1)+\psi_2(\lambda(x,y)+y)-\psi_2(\lambda_2)
\Bigg)\,,
\]
and, subsequently, any $\lambda_\varepsilon$ of $\mathcal M_{\varepsilon}$ corresponds to a point
$\lambda_\varepsilon(x^*_\varepsilon,y^*_\varepsilon)$ where $(x^*_\varepsilon,y^*_\varepsilon)$ achieves the above minimum. Compared to the
limit collision-location, $\lambda_0=(\nabla \psi_1+\nabla \psi_2)^{-1}(0)$, the minimizers $\lambda_\varepsilon$ are
perturbations of $\lambda_0$ in a direction of magnitude $\varepsilon$. As $\varepsilon$ decreases to~$0$, the regularity of $\Psi_1$
and $\Psi_2$ guarantees that $\mathcal M_\varepsilon$  concentrates on the single point~$\{\lambda_0\}$ and $\inf_\lambda
h_\varepsilon$ converges naturally to $\inf_\lambda h_0$. Illustratively, consider the case
where $\psi_1$ and $\psi_2$ are quadratic potentials of the form $\psi_i(z)=\gamma_i\|z-\lambda_i\|^2/2$ for~$\gamma_i>0$. The first
collision-location $\lambda_0$ is then explicitly given by $\frac{\gamma_1\lambda_1+\gamma_2\lambda_2}{\gamma_1+\gamma_2}$ and the related exit-cost by $\underline{\h}_0=\frac{\gamma_1\gamma_2}{2(\gamma_1+\gamma_2)}\|\lambda_2-\lambda_1\|^2$.
Meanwhile, for any $(x,y)$, $\lambda(x,y)$ is given by~$\frac{\gamma_1(\lambda_1-x)+\gamma_2(\lambda_2-y)}{\gamma_1+\gamma_2}$.
\end{rem}

\section{On the first near-collision of two self-stabilizing processes}\label{sec:SelfStabilizingCase}
This section is dedicated to the proof of Theorem
\ref{thm:main1}. The proof essentially consists in obtaining, over a suitably large time-interval,  a coupling estimate 
between \eqref{MV1}-\eqref{MV2}, and their 
``linearized'' versions set at their respective stable points:
\begin{subequations}
	\begin{equation}
		\label{Linearized-MV1}
		x_{T,t}^\sigma=X_{T} -\int_{T}^t\Big(\mathbb{U}(x_{T,s}^\sigma)+\F(x_{T,s}^\sigma-\lambda_1)\Big)\dds+\sigma\int_T^t\Gamma\left(x_{T,s}^\sigma\right)\dd B_s,\qquad t\ge T,
	\end{equation}
	and
	\begin{equation}
		\label{Linearized-MV2}
		y_{T,t}^\sigma=Y_{T}-\int_{T}^t\Big(\mathbb{U}(y_{T,s}^\sigma)+\F(y_{T,s}^\sigma-\lambda_2)\Big)\dds+\sigma\int_T^t\Gamma\left(y_{T,s}^\sigma\right)\dd\widetilde{B}_s,\qquad t\ge T.
	\end{equation}
\end{subequations}
The suitable starting time $T$ of the couplings will depend on a parameter $\kappa$ measuring the average closeness between \eqref{MV1}-\eqref{MV2} and their associated attractors: this is specified in Lemma~\ref{chante} below and coupling estimates are stated in Proposition \ref{label_bizarre2}. Under Assumption {\bf (A)}, \eqref{Linearized-MV1} and \eqref{Linearized-MV2} satisfy Proposition \ref{lacollision}, and the asymptotic of the near collision-time $C_\varepsilon(\sigma)$ and of the collision-location $L_\varepsilon(\sigma)$ can be then deduced, first at an arbitrary small radius $\varepsilon$ (Proposition \ref{label_bizarre8}) and next to the limit $\varepsilon\downarrow 0$, the latter achieving the claim in Theorem \ref{thm:main1}.

\subsection{Coupling estimates for the self-stabilizing diffusions}

\begin{lem}
\label{chante} 
For any $\kappa>0$, there exist a non-random time $T_\kappa\in [0,\infty)$ and $\sigma_\kappa>0$ such that, for any $\sigma<\sigma_\kappa$,
\begin{equation}
\label{MeanCoupling-MV}
\max\Big\{\mathbb E\left[\|X_{T_\kappa}-\lambda_1\|^2\right],\mathbb E\left[\|Y_{T_\kappa}-\lambda_2\|^2\right]
\Big\}\leq\frac{\kappa^2}{4}\,. 
\end{equation}
\end{lem}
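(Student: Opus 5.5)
The plan is a two-step argument: first the deterministic flow started at $x_1$ gets close to $\lambda_1$ by some finite time, then the McKean--Vlasov process stays close to that flow in mean square on this fixed horizon once $\sigma$ is small. I treat $X$ only; $Y$ is identical with $(x_2,\lambda_2)$. By $\mathbf{(A)}$-$(v)$, $\phi_t(x_1)\to\lambda_1$. So, for given $\kappa>0$, I choose a deterministic $T_\kappa$ with $\|\phi_{T_\kappa}(x_1)-\lambda_1\|^2\le\kappa^2/32$ and $\|\phi_{T_\kappa}(x_2)-\lambda_2\|^2\le\kappa^2/32$; this is possible since both orbits converge. It then suffices to show $\mathbb E\|X_{T_\kappa}-\phi_{T_\kappa}(x_1)\|^2\le\kappa^2/32$ for $\sigma$ small, and to conclude with $\|a+b\|^2\le 2\|a\|^2+2\|b\|^2$.

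For the comparison on $[0,T_\kappa]$, set $e_t=X_t-\phi_t(x_1)$. By Itô's formula,
\[
\dd\|e_t\|^2=-2e_t\cdot\Big(\W_{\mu^X_t}(X_t)-\W_{\mu^X_t}(\phi_t(x_1))\Big)\ddt-2e_t\cdot\int\F(\phi_t(x_1)-z)\mu^X_t(\ddz)\,\ddt+\sigma^2\mathrm{tr}A(X_t)\,\ddt+\dd M_t .
\]
Here I used $\U(\phi)=\W_{\mu}(\phi)-\int\F(\phi-z)\mu(\ddz)$. The synchronization condition $\mathbf{(A)}$-$(iv)$ makes the first term at most $-2\theta\|e_t\|^2$. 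For the interaction remainder, $\F(0)=0$ and \eqref{Hyp-F1} give
\[
\Big\|\int\F(\phi_t-z)\mu^X_t(\ddz)\Big\|\le \K\,\mathbb E\Big[\|X_t-\phi_t\|\big(1+\|X_t-\phi_t\|^{2n-1}\big)\Big]\lesssim \mathbb E\|e_t\|+\mathbb E\|e_t\|^{2n}.
\]
Taking expectations and applying Young's inequality yields
\[
\frac{\dd}{\ddt}\mathbb E\|e_t\|^2\le C\,\mathbb E\|e_t\|^2+C\,\mathbb E\|e_t\|^{2n}+d\Lambda_+^2\sigma^2 .
\]
The term $\mathbb E\|e_t\|^{2n}$ is not linear in $\mathbb E\|e_t\|^2$, and handling it is the main obstacle.

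To close the estimate, I would run the same Itô computation on $\|e_t\|^{2p}$ for every $p\le n$ (and $p=2n$ if needed). The drift is again controlled by $-2p\theta\|e\|^{2p}$ plus the interaction term. The martingale and Itô corrections are $O(\sigma^2)$ times moments of $X$, and these are uniformly bounded by Proposition \ref{prop:WellposedMV}. Since $\phi$ is bounded on $[0,T_\kappa]$ and $\sigma<(\sqrt d\Lambda_+)^{-1}$, Hölder's inequality gives
\[
\mathbb E\|e_t\|^{2n}\le (\mathbb E\|e_t\|^2)^{1/2}\,(\mathbb E\|e_t\|^{4n-2})^{1/2}\le C'(\mathbb E\|e_t\|^2)^{1/2}.
\]
This is not Lipschitz, so I would instead use a bootstrap, i.e.\ a stopping argument in time. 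Let $u(t)=\mathbb E\|e_t\|^2$, which starts at $u(0)=0$. As long as $u\le1$, we have $\mathbb E\|e\|^{2n}\le C'' u$; for this I use the fact that the Itô estimate on higher moments also starts at $0$ and is driven by $\sigma^2$. Grönwall's lemma then gives $u(t)\le C\sigma^2 e^{CT_\kappa}$ on $[0,T_\kappa]$. For $\sigma<\sigma_\kappa$ small this stays below $\min(1,\kappa^2/32)$, so the bootstrap hypothesis is never violated.

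An alternative to the bootstrap is a sharper use of the monotonicity \eqref{Hyp-F2_bis}. It gives that $\int\F(\cdot-z)\mu^X_t(\ddz)$ is monotone in the appropriate sense, so the self-interaction term could be absorbed rather than bounded. I would try the bootstrap first. Combining the two steps then gives \eqref{MeanCoupling-MV}.
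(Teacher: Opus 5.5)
The structure of your argument matches the paper's: you choose $T_\kappa$ from the convergence of the noiseless orbits, then control $u(t)=\mathbb E\|X_t-\phi_t(x_1)\|^2$ on $[0,T_\kappa]$ using It\^o's formula, synchronization and Gr\"onwall. There is, however, a gap at the step that is supposed to close the Gr\"onwall estimate. After your Young's inequality you are left with $C\,\mathbb E\|e_t\|^{2n}$. Your bootstrap then relies on the claim that ``as long as $u\le 1$, $\mathbb E\|e_t\|^{2n}\le C''u$'', and nothing in the proposal supports this. Smallness of a second moment does not by itself bound a higher moment linearly by it. The higher-moment It\^o estimates you allude to would, at best, bound $\mathbb E\|e_t\|^{2n}$ by a power of $\sigma$ rather than by $u$. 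Moreover, closing the equations for $\|e_t\|^{2p}$ requires controlling their interaction terms in exactly the way that is missing here. As written, the argument does not close.

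The obstacle is self-inflicted: you should not split the cross term with Young's inequality. The remainder is $-2\,\mathbb E[e_t]\cdot v_t$, where $v_t=\int\F(\phi_t-z)\,\mu^X_t(\ddz)$ is a \emph{deterministic} vector, and you should keep this product intact. Your own ingredients give the two factor bounds:
\begin{itemize}
\item By Jensen, $\mathbb E\|e_t\|\le\sqrt{u(t)}$.
\item By Cauchy--Schwarz, together with \eqref{Moment_SS} and the boundedness of $t\mapsto\phi_t(x_1)$,
\[
\|v_t\|\le K\,\mathbb E\big[\|e_t\|(1+\|e_t\|^{2n-1})\big]\le\sqrt{u(t)}\,\Big(K^2\,\mathbb E\big[(1+\|e_t\|^{2n-1})^2\big]\Big)^{1/2}\le C\sqrt{u(t)}.
\]
\end{itemize}
Hence the cross term is at most $Cu(t)$, so $u'\le Cu+C\sigma^2$ with $u(0)=0$, and Gr\"onwall gives $u(t)\le C\sigma^2e^{Ct}$ immediately. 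This is precisely the paper's argument, written there with an independent copy $\tilde X$ of $X$.

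Your monotonicity alternative also works, but not in your decomposition, where the remainder has no sign. Instead keep $\U(X_t)-\U(\phi_t)$, whose contribution \eqref{Hyp-V2} bounds by $2\gamma u$, together with the full interaction term. That term is nonpositive:
\begin{itemize}
\item $-2\,\mathbb E[(X_t-\phi_t)\cdot\F(X_t-\tilde X_t)]\le 0$;
\item $\mathbb E\,\F(X_t-\tilde X_t)=0$ by antisymmetry of $\F$;
\item $\mathbb E[X_t\cdot\F(X_t-\tilde X_t)]=\tfrac12\mathbb E\big[\|X_t-\tilde X_t\|\,G(\|X_t-\tilde X_t\|)\big]\ge 0$.
\end{itemize}
This yields $u'\le 2\gamma u+C\sigma^2$ directly, without using the synchronization condition.
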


\begin{proof}
Define  $\xi_1(t)=\EE\left[\|X_t-\varphi_t^{(1)}\|^2\right]$ for $\varphi^{(1)}=\big{\{}\varphi_t^{(1)}=x_1-\int_0^t\U\left(\varphi_s^{(1)}\right)\dds\big{\}}_{t\ge 0}$ and set $\F\star\mu_t^X(x)=\int \F(x-y)\mu_t^X(\ddy)$. By It\^o's formula, we get: for any $t\geq0$,
\[
\xi_1(t)=-2\mathbb E\Big[\int_0^t\left(X_s-\varphi_s^{(1)}\right)\cdot\left(\W_{\mu^X_s}(X_s)-\U(\varphi^{(1)}_s)\right)\,\dds\Big]+\sigma^2\mathbb E\Big[\int_0^t\mathrm{Trace}\big(\Gamma\Gamma^*\big)(X_s)\,\dds\Big],
\]
or equivalently
\[
\frac{\dd\xi_1}{\ddt}(t)=-2\mathbb E\Big[\left(X_t-\varphi_t^{(1)}\right)\cdot\left(\W_{\mu^X_t}(X_t)-\U(\varphi^{(1)}_t)\right)\Big]+\sigma^2\mathbb E\Big[\mathrm{Trace}\big(\Gamma\Gamma^*\big)(X_t)\Big].
\] 
Adding and subtracting $\F\star \mu^X_t(\varphi^{(1)}_t)$, by Assumptions $\textbf{(A)}$-$(iv)$ (synchronization) and $\textbf{(A)}$-$(vi)$ (bounded and Lipschitz), it follows that
\begin{align*}
	 \frac{\dd\xi_1}{\ddt}(t)
	 &= -2\EE\left[\left(X_t-\varphi_t^{(1)}\right)\cdot\left(\W_{\mu^X_t}(X_t)-\W_{\mu^X_t}(\varphi^{(1)}_t)\right)\right]-2\EE\left[\left(X_t-\varphi_t^{(1)}\right)\cdot\F\star\mu_t^X(\varphi^{(1)}_t)\right]\\
	 &\qquad+\sigma^2\EE\left[\mathrm{Trace}\big(\Gamma(X_t)\Gamma^*(X_t)\big)\right]\\
	 &\le -2\EE\left[\left(X_t-\varphi_t^{(1)}\right)\cdot\F\star\mu_t^X(\varphi^{(1)}_t)\right] +2d(\sigma\Lambda_+)^2. 
\end{align*}

Due to $(\mathbf{A})$-$(iii)$, we can write, for $\tilde X$ being an independent copy of $X$,
also defined on $(\Omega,\mathcal F,\{\mathcal F_t\}_{t\ge 0},\mathbb P)$,
\begin{align*}
&-\EE\left[\left(X_t-\varphi_t^{(1)}\right)\cdot\F\star\mu_t^X(\varphi_t^{(1)})\right]\\
&=\EE\left[\left(X_t-\varphi_t^{(1)}\right)\cdot\F\left(\tilde X_t-\varphi_t^{(1)}\right)\right]=\EE\left[\left(X_t-\varphi_t^{(1)}\right)\cdot\frac{\left(\tilde X_t-\varphi_t^{(1)}\right)}{\|\tilde X_t-\varphi_t^{(1)}\|}G\left(\big{\|}\tilde X_t-\varphi_t^{(1)}\big{\|}\right)\right]\\
&\leq\EE\left[\big{\|}X_t-\varphi_t^{(1)}\big{\|}\big{\|}\tilde X_t-\varphi_t^{(1)}\big{\|}\widetilde{G}\left(\big{\|}\tilde X_t-\varphi_t^{(1)}\big{\|}\right)\right],
\end{align*}
where $\widetilde{G}(x):=\|x\|^{-1}G(\|x\|)$ defines a polynomial function of order $2n-1$. Applying Cauchy-Schwarz inequality, and using the independence and the moments control \eqref{Moment_SS} of $X$ and $\tilde X$, it follows that
\begin{align*}
\EE\left[\left(X_t-\varphi_t^{(1)}\right)\cdot\F\star\mu_t^X(\varphi_t^{(1)})\right]&\leq
c\EE\left[\big{\|}X_t-\varphi_t^{(1)}\big{\|}\big{\|}\tilde X_t-\varphi_t^{(1)}\big{\|}\left(1+\big{\|}\tilde X_t-\varphi_t^{(1)}\big{\|}^{2n}\right)\right]\\
&\leq c\sqrt{\EE\left[\big{\|}X_t-\varphi_t^{(1)}\big{\|}^2\right]\EE\left[\big{\|}\tilde X_t-\varphi_t^{(1)}\big{\|}^2\right]}\sqrt{\EE\left[\left(1+\big{\|}\tilde X_t-\varphi_t^{(1)}\big{\|}^{2n}\right)^2\right]}\\
&\leq C(4n-1)!\,\EE\left[\big{\|}X_t-\varphi_t^{(1)}\big{\|}^2\right]=C(n)\xi_1(t),
\end{align*}
for $c$ a positive constant only depending on the coefficient of $\tilde G$ and $C:=c\sqrt{2}\big(1+\max_{t\ge 0}\mathbb E[\|X_t\|^{4n}]\big)$. Thus,
\[
\frac{\dd\xi_1 }{\ddt}(t)\leq 2 C(n)\xi_1(t)+2d(\sigma\Lambda_+)^2,\qquad t\ge 0,
\]
and since $\xi_1(0)=0$, Gronwall's inequality then implies:
\[
\xi_1(t)\leq 4dC(n)(\sigma\Lambda_+)^2 \exp\{2C(n) t\},\qquad t\ge 0.
\]

Replicating the preceding arguments, we obtain 
\[
\EE\left[\big{\|}Y_t-\lambda_2\big{\|}^2\right]\leq2\big{\|}\varphi_t^{(2)}-\lambda_2\big{\|}^2+dC(\Lambda_+\sigma)^2
\exp\{C t\},\qquad t\ge 0.
\]
According to Assumption $(\mathbf{A})$-$(iv)$, as $x_1\in\mathcal{G}(\lambda_1)$ and $x_2\in\mathcal{G}(\lambda_2)$, we have
$\lim_{t\to\infty}\varphi_t^{(1)}=\lambda_1$ and $\lim_{t\to\infty}\varphi_t^{(2)}=\lambda_2$. Setting $T_\kappa$ large enough such that
\[
2\big{\|}\varphi^{(1)}_{T_\kappa} -\lambda_1\big{\|}^2+2\big{\|}\varphi^{(2)}_{T_\kappa}-\lambda_2\big{\|}^2\leq\frac{\kappa^2}{8},
\]
one can next take $\sigma$ small enough such that $dC(\Lambda_+\sigma)^2
\exp\{C T_\kappa\}\leq\frac{\kappa^2}{8}$ that is $\sigma<\sigma_\kappa=\frac{\kappa}{\Lambda_+}\big(8Cd\exp\{CT_\kappa\}\big)^{-1/2}$. Then, for any $\sigma<\sigma_\kappa$,
\[
\big(\mathbb E\left[\|X_{T_\kappa}-\lambda_1\|^2\right]\big)^{1/2}\leq\frac{\kappa}{4}\quad\mbox{and}\quad \big(\mathbb E\left[\|Y_{T_\kappa}-\lambda_2\|^2\right]\big)^{1/2}\leq\frac{\kappa}{4}.
\]
This achieves the proof.

\end{proof}

Next proposition ensures that the self-stabilizing processes and their associated linearized versions stay close to each other during an exponentially long time interval. The result also significantly expands the coupling techniques of the second author, e.g. \cite{Alea,Tugaut2021}, to non-reversible self-stabilizing diffusions with non-constant dispersion matrix and with $\F$ non-necessarily quadratic.

\begin{prop}
\label{label_bizarre2}
For any $\xi,H>0$, there exists $T_\xi\in(0,\infty)$, depending only on $\U,\F, \Gamma$ and $\xi$ such that
\begin{equation*}
\lim_{\sigma\to0}\PP\left(\sup_{T_\xi\leq t\leq
\exp\left[\frac{2H}{\sigma^2}\right]}\|X_t-x_{T_\xi,t}^\sigma\|\geq\xi\right)=0=\lim_{\sigma\to0}\PP\left(\sup_{T_\xi\leq t\leq
\exp\left[\frac{2H}{\sigma^2}\right]}\|Y_t-y_{T_\xi,t}^\sigma\|\geq\xi\right)\,.
\end{equation*}
\end{prop}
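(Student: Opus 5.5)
The plan is a synchronous coupling: $X$ and $x^\sigma_{T_\xi,\cdot}$ are driven by the same Brownian motion $B$ and start from the same point $X_{T_\xi}$. I treat only $X$; $Y$ is identical with $\lambda_2$, $\widetilde B$. Write $D_t:=X_t-x^\sigma_{T_\xi,t}$, so $D_{T_\xi}=0$. Since $x^\sigma_{T_\xi,\cdot}$ has drift $-\W_{\lambda_1}=-\W_{\delta_{\lambda_1}}$ and $X$ has drift $-\W_{\mu^X_t}$, Itô's formula gives
\begin{align*}
\dd\|D_t\|^2&=-2D_t\cdot\big(\W_{\mu^X_t}(X_t)-\W_{\mu^X_t}(x^\sigma_{T_\xi,t})\big)\ddt-2D_t\cdot\big(\F\star\mu^X_t-\F(\cdot-\lambda_1)\big)(x^\sigma_{T_\xi,t})\ddt\\
&\quad+\sigma^2\|\Gamma(X_t)-\Gamma(x^\sigma_{T_\xi,t})\|^2\ddt+2\sigma D_t\cdot\big(\Gamma(X_t)-\Gamma(x^\sigma_{T_\xi,t})\big)\dd B_t .
\end{align*}
The synchronization condition $(\mathbf A)$-$(iv)$ bounds the first term by $-2\theta\|D_t\|^2$. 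The Lipschitz property of $\Gamma$ bounds the Itô correction by $\sigma^2 C\|D_t\|^2$, which is absorbed once $\sigma$ is small. For the cross term, Young's inequality gives the bound $\theta\|D_t\|^2+\theta^{-1}\|\F\star\mu^X_t(x^\sigma_{T_\xi,t})-\F(x^\sigma_{T_\xi,t}-\lambda_1)\|^2$. By \eqref{Hyp-F1}, this remainder is at most $C(1+\|x^\sigma_{T_\xi,t}\|^{4n-2}+\mathbb E\|X_t\|^{4n-2})\,\mathbb E\|X_t-\lambda_1\|^2$. So the whole argument reduces to keeping the deterministic quantity $\eta(t):=\mathbb E\|X_t-\lambda_1\|^2$ small for all $t\ge T_\xi$, and keeping $x^\sigma$ in a bounded set on the exponential time window.

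\textbf{Step 1 (uniform-in-time smallness of $\eta$).} I would fix $\kappa$ and take $T_\xi:=T_\kappa$ from Lemma \ref{chante}, so that $\eta(T_\kappa)\le\kappa^2/4$. Then I apply Itô to $\|X_t-\lambda_1\|^2$ and take expectations. The drift is $-2\mathbb E[(X_t-\lambda_1)\cdot\W_{\mu^X_t}(X_t)]$. Writing $\W_{\mu^X_t}(\lambda_1)=\U(\lambda_1)+\F\star\mu^X_t(\lambda_1)=\F\star\mu^X_t(\lambda_1)$, since $\U(\lambda_1)=0$, and using \eqref{Hyp:Synchro} together with the odd symmetry of $\F$ (as in the proof of Lemma \ref{chante}, but now with a minus sign that helps, or at worst is controlled by $\|\F\star\mu^X_t(\lambda_1)\|\le C\sqrt{\eta}$ via the moment bounds \eqref{Moment_SS}), I aim for
\[
\eta'(t)\le-2\theta\eta(t)+C\eta(t)^{3/2}+C\sigma^2 .
\]
A barrier argument then shows that if $\eta(T_\kappa)\le\kappa^2/4$ with $\kappa$ small, depending on $\theta$ and $C$, and $\sigma$ is small, then $\eta(t)\le\kappa^2/2$ for all $t\ge T_\kappa$. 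This is the step I expect to be the main obstacle: the interaction remainder must be superlinear in $\sqrt\eta$ (or controlled by $\theta$), so that it cannot beat the contraction. I would first try to exploit $\mathbb E[(X-\lambda_1)\cdot\F(X-\tilde X)]\ge0$, by antisymmetry and monotonicity of $G$, to obtain the sign directly.

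\textbf{Step 2 (pathwise closeness).} With $\eta\le\kappa^2/2$, I restrict to the event that $x^\sigma_{T_\xi,\cdot}$ stays in a ball $\mathbb B(\lambda_1;R)$ up to time $\exp[2H/\sigma^2]$. The event holds with probability tending to one: for $R$ large, the exit cost of the strongly confining linear diffusion from that ball exceeds $H$ (Theorem \ref{thm:KramersDZ}, with \eqref{Hyp-U3-bis}), and the initial point is controlled by Lemma \ref{chante} and Markov's inequality. On this event, $\dd\|D_t\|^2\le(-\theta\|D_t\|^2+C_R\kappa^2)\ddt+\dd M_t$.

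\textbf{Step 3 (martingale control).} To handle $M$ over the exponentially long window, I apply Lemma \ref{lem:MartingaleConcentration} to the exponential functional $e^{\gamma\|D_t\|^2/\sigma^2}$ after stopping at $\|D\|=\xi$. Equivalently, I compare with a one-dimensional Ornstein–Uhlenbeck-type bound whose variance is $O(\sigma^2\xi^2)$. This shows that the probability of $\|D\|$ reaching $\xi$ in unit time is $\le e^{-c\xi^2/\sigma^2}$ once $C_R\kappa^2/\theta<\xi^2/4$. I then choose $\kappa$ accordingly (this fixes $T_\xi$) and take a union bound over the $\exp[2H/\sigma^2]$ unit intervals. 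This gives vanishing probability provided $c\xi^2>2H$. To secure this, note that the noise in $D$ is proportional to $\|D\|$ rather than constant, so the escape cost from $\mathbb B(0;\xi/2)$ to $\partial \mathbb B(0;\xi)$ is in fact infinite at the logarithmic scale. This makes the union bound work for every $H$.
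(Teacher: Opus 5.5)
\textbf{There is a genuine gap in Step 1.} The inequality $\eta'\le-2\theta\eta+C\eta^{3/2}+C\sigma^2$ cannot be obtained from \eqref{Hyp:Synchro}, and it fails in the paper's own prototype.

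With your decomposition, the leftover term is $-2\,\mathbb E[X_t-\lambda_1]\cdot\F\star\mu^X_t(\lambda_1)=2\,m_t\cdot\mathbb E[\F(X_t-\lambda_1)]$, where $m_t:=\mathbb E X_t-\lambda_1$. The bound $\|\F\star\mu^X_t(\lambda_1)\|\le C\sqrt\eta$ only makes this term $O(\eta)$, not $O(\eta^{3/2})$, and its sign is unfavourable. For $\F(x)=\alpha x$ it equals $2\alpha\|m_t\|^2$, while for a double well $\theta=\alpha+\inf_{x,\xi}\xi\cdot\nabla^2u(x)\xi/\|\xi\|^2<\alpha$. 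For a law close to a Dirac mass, $\|m_t\|^2\approx\eta$, so your estimate becomes $\eta'\le 2(\alpha-\theta)\eta+C\sigma^2$, with no contraction at all.

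This is structural. By oddness of $\F$, $\frac{\dd}{\ddt}\mathbb E X_t=-\mathbb E\,\U(X_t)$. Synchronization therefore only contracts the spread of $\mu^X_t$; its mean is driven by $\U$ alone, which is contractive only locally around $\lambda_1$.

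Your fallback, the sign $\mathbb E[(X_t-\lambda_1)\cdot\F(X_t-\tilde X_t)]\ge 0$, removes the interaction but leaves $-2\mathbb E[(X_t-\lambda_1)\cdot\U(X_t)]\le -2\rho\eta+C\sqrt{\mathbb P(X_t\notin\mathbb B(\lambda_1;r))}$. The last term is not a function of $\eta$; it behaves like $\mathbb E\|X_t-\lambda_1\|^3$, which is not $O(\eta^{3/2})$. What you need is that $\mu^X_t$ puts little mass outside a small ball around $\lambda_1$, and the only source of that is the pathwise coupling you postpone to Steps 2--3. So the order ``first $\eta$ for all $t$, then paths'' cannot be closed.

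\textbf{The missing idea is the paper's bootstrap.} Let $S_\kappa$ be the deterministic first time after $T_\kappa$ at which $\eta$ reaches $\kappa^2$. Your Steps 2--3, with the interaction remainder bounded by $C_R\kappa$ while $x^\sigma_{T_\kappa,\cdot}$ stays in $\mathbb B(\lambda_1;R)$, give the coupling up to $\min\{S_\kappa,e^{2H/\sigma^2}\}$. The coupling is then fed back: on $[T_\kappa,S_\kappa]$,
$$\mathbb P(X_t\notin\mathbb B(\lambda_1;r))\le \mathbb P(\|X_t-x^\sigma_{T_\kappa,t}\|\ge r/2)+\mathbb P(\|x^\sigma_{T_\kappa,t}-\lambda_1\|\ge r/2)$$
is small. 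The inequality above then becomes $\eta'\le f(\sigma)-2\rho\eta$ with $f(\sigma)\to0$, so $\eta<\kappa^2/4$ on $[T_\kappa,S_\kappa]$, which contradicts $S_\kappa<e^{2H/\sigma^2}$. Note that this controls $\eta$ only up to exponential times, not for all $t\ge T_\kappa$ as you claim.

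\textbf{Step 3 also has a flaw: the escape cost is not infinite.} Since $\|\Gamma(X_t)-\Gamma(x^\sigma_{T_\kappa,t})\|\le K_\Gamma\|D_t\|$, the noise is additive of size $O(\sigma K_\Gamma)$ in $\log\|D_t\|^2$. Going from $\xi/2$ to $\xi$ therefore costs a finite amount independent of $\xi$, and your union bound over $e^{2H/\sigma^2}$ unit intervals works only for $H$ below a fixed threshold. You could rescue it by using that $\|D_t\|^2$ typically sits at level $O(\kappa^2)$ and choosing $\kappa$ depending on $H$. The paper needs no union bound: Lemma \ref{lem:MartingaleConcentration} with $c=\sigma^{-1/2}$ gives $2\sigma M_t\le\sigma\langle M\rangle_t+2\sqrt\sigma$ for all $t$ simultaneously, off an event of probability $e^{-1/\sqrt\sigma}$. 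The coupling then reduces to a deterministic Gronwall-type inequality.
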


\begin{proof}[Proof of Proposition \ref{label_bizarre2}] We only focus the demonstration on the pair $\left(X_t\right)_{t\geq0}$ and
	$\left(x_{T_\kappa,t}^\sigma\right)_{t\geq0}$, the coupling between $\left(Y_t\right)_{t\geq0}$ and
	$\left(y_{T_\kappa,t}^\sigma\right)_{t\geq0}$ being handled in the same way. 
	
	For convenience, the proof is split into four steps with the two first steps  simply handling the diffusive and martingale part of $\|X_t-x_{T_\kappa,t}^\sigma\|^2$ (control of the martingale part are derived from the moment estimate in Proposition \ref{prop:WellposedMV} and Lemma \ref{lem:MartingaleConcentration}). {bf Step 3} addresses the control of the force fields $\F$ and $\U$ and establish a first version of the claim up to a certain random time ($S_\kappa$ below). This bound is proved in {\bf Step 4} to eventually increase faster than $\exp[2H/\sigma^2]$ and this concludes the proof.

\noindent{}{\bf Step 1. }Fix $\xi>0$ and, given~$\kappa>0$~-~to be chosen later - and $\sigma>0$, let $T_\kappa$ be given by
\eqref{MeanCoupling-MV}. Almost surely, for any $t\ge T_\kappa$,
\begin{align*}
\|X_t-x_{T_\kappa,t}^\sigma\|^2=&-2\int_{T_\kappa}^t\left(X_s-x_{T_\kappa,s}^\sigma\right)\cdot\bigg(\W_{\mu_s^{X}}\left(X_s\right)-\W_{\lambda_1}\left(x_{T_\kappa,s}^\sigma\right)\bigg)\dds\\
&+\sigma^2\int_{T_\kappa}^t\mathrm{Trace}\Big(\left(\Gamma(X_s)-
\Gamma(x_{T_\kappa,s}^\sigma)\right)\left(\Gamma^*(X_s)-
\Gamma^*(x_{T_\kappa,s}^\sigma)\right)\Big)\dds\\
&+2\sigma\int_{T_\kappa}^t\left(X_s-x_{T_\kappa,s}^\sigma\right)\cdot\bigg(\Gamma\left(X_s\right)-\Gamma\left(x_{T_\kappa,s}^\sigma\right)\bigg)\dd B_s,
\end{align*}
recalling that $\mu_t^{X}=\mathcal{L}\left(X_t\right)$, $\W_\mu(x)=\U(x)+\F\star\mu(x)$ and
$\W_{\lambda_1}(x)=\W_{\delta_{\lambda_1}}(x)$ where $\delta_{\lambda_1}$ is the Dirac measure in $\lambda_1$. As $\Gamma$ is Lipschitz continuous:
\begin{align*}
	\|X_t-x_{T_\kappa,t}^\sigma\|^2\le &-2\int_{T_\kappa}^t\left(X_s-x_{T_\kappa,s}^\sigma\right)\cdot\bigg(\W_{\mu_s^{X}}\left(X_s\right)-\W_{\lambda_1}\left(x_{T_\kappa,s}^\sigma\right)\bigg)\dds\\
	&+2\sigma\int_{T_\kappa}^t\left(X_s-x_{T_\kappa,s}^\sigma\right)\cdot\bigg(\Gamma\left(X_s\right)-\Gamma\left(x_{T_\kappa,s}^\sigma\right)\bigg)\dd B_s+C\sigma^2\int_{T_\kappa}^t\|X_s-x_{T_\kappa,s}^\sigma\|^2\,\dds.
\end{align*}

\noindent{}{\bf Step 2. }Let us first handle the martingale upper-bound, writing
\[
M_{T_\kappa,t}=\int_{T_\kappa}^t\Theta_s\,\dd B_s:=\int_{T_\kappa}^t\left(X_s-x_{T_\kappa,s}^\sigma\right)\cdot\bigg(\Gamma\left(X_s\right)-\Gamma\left(x_{T_\kappa,s}^\sigma\right)\bigg)\dd B_s,\ \ t\ge T_\kappa.
\]
For $\langle M\rangle$ the quadratic variation of $M$:
\[
\langle M\rangle_{T_\kappa,t}=\int_{T_\kappa}^t\|\Theta_s\|^2\,\dds=\int_{T_\kappa}^t\left(X_s-x_{T_\kappa,s}^\sigma\right)\cdot\left(\big(\Gamma\left(X_s\right)-\Gamma(x_{T_\kappa,s}^\sigma)\big)\big(\Gamma(X_s)-\Gamma(x_{T_\kappa,s}^\sigma\big)^*\right)\left(X_s-x_{T_\kappa,s}^\sigma\right)\,\dds,
\]
for $t\ge T_\kappa$, set
\[
\mathcal{A}^\sigma:=\left\{\omega\in\Omega\,\,:\,\,\forall t\in\left[T_\kappa;\exp\left(\frac{2H}{\sigma^2}\right)\right],\, M_t<\frac{1}{2}\left\langle
M\right\rangle_t+\frac{1}{\sqrt{\sigma}}\right\}.
\] 
Owing to the uniform-in-time moment control \eqref{Moment_SS} in Proposition \ref{prop:WellposedMV}, and the control (implied by~\eqref{Moment_SS} with drift $\W_{\lambda_1}$ and without interaction), we have
$$
\sup_{t\ge 0}\mathbb E[\|x_{T_\kappa,t}\|^{2p}]\le c^p(p-1)!,
$$
and $\langle M\rangle$ satisfies a local Novikov type condition (see e.g. \cite[Chapter 3, Corollary 5.14]{KarShr91}). Indeed, for any sequence $t_\ell=\ell h+T_\kappa$, $\ell\ge 0$, for $h>0$ small enough so that $2\max\{C;c\}h<1$ 
\begin{align*}
&\EE\left(\exp\left[\frac{1}{2}\int_{t_\ell}^{t_{\ell+1}}\|\Theta_s\|^2\,\dds\right]\right)=\sum_{k\ge 0}\frac {1}{k!2^k}\EE\left(\int_{t_\ell}^{t_{\ell+1}}\|\Theta_s\|^2\,\dds \right)^{2k}\le \sum_{k\ge 0}\frac {1}{k!2^k}\big(t_{\ell+1}-t_{\ell}\big)^k\sup_{t\ge 0}\EE\left(\|\Theta_s\|^{2k}\right)\\
&\le \sum_{k\ge 0}\frac {h^k(\Lambda_+)^{2k}2^{2k}}{k!2^k}\sup_{t\ge T_\kappa}\EE[\|X_t\|^{2k} +\|x_{r,T_\kappa}\|^{2k}]\le \sum_{k\ge 0}\frac {(h)^k(\Lambda_+)^{2k}2^k}{k!}(k-1)!C^k= \sum_{k\ge 0}\frac 1{k}\Big(2Ch(\Lambda_+)^2\Big)^{k}.
\end{align*}
Choosing $h>0$ small enough so that $2Ch(\Lambda_+)^2<1$ immediately ensures the finiteness of the sum and the local Novikov condition:
$$
\EE\left(\exp\left[\frac{1}{2}\int_{t_\ell}^{t_{\ell+1}}\|\Theta_s\|^2\,\dds\right]\right)<\infty
$$
for the increasing sequence $\{t_\ell\}_{\ell\in\bNb}$. Now, we can use a classical martingale concentration argument, ensuring the estimate of the probability of the complementary of $\mathcal{A}^\sigma$, with:

\begin{align*}
\PP\left(\left(\mathcal{A}^\sigma\right)^c\right)&=\PP\left(\exists t\geq T_\kappa\,\,:\,\, M_t\geq\frac{1}{2}\left\langle
M\right\rangle_t+\frac{1}{\sqrt{\sigma}}\right)\\
&=\PP\left(\exists t\geq T_\kappa\,\,:\,\,\exp\left[M_t-\frac{1}{2}\left\langle
M\right\rangle_t\right]>\exp\left(\frac{1}{\sqrt{\sigma}}\right)\right)\\
&\leq\EE\left(\exp\left[M_{T_\kappa}-\frac{1}{2}\left\langle
M\right\rangle_{T_\kappa}\right]\right)\exp\left(-\frac{1}{\sqrt{\sigma}}\right)=\exp\Big(-\frac{1}{\sqrt{\sigma}}\Big),
\end{align*}
the above upper-bounds following successively from Doob's maximal inequality and the martingale property of $t\mapsto
L_t:=\exp\left[M_t-\frac{1}{2}\left\langle M\right\rangle_t\right]$ (following Girsanov's theorem). For any $\omega\in\mathcal{A}^\sigma$ and for any $t\geq T_\kappa$, 
  we have the inequality $\sigma
M_t\leq\frac{\sigma}{2}\left\langle M\right\rangle_t+\sqrt{\sigma}$. Due to the boundedness of $\Gamma$,
\[
\left\langle M\right\rangle_t\leq (\Lambda_+)^2d\int_{T_\kappa}^t\big{\|}X_s-x_{T_\kappa,s}^\sigma\big{\|}^2\dds.
\]
 Hence, on $\mathcal{A}^\sigma$, we have that
\begin{align}\label{Coupling_SS_Proof_1}
	\|X_t-x_{T_\kappa,t}^\sigma\|^2\leq&\sqrt{\sigma}+C\big(\sigma^2+\frac{\sigma}{2}\big)\int_{T_\kappa}^t\big{\|}X_s-x_{T_\kappa,s}^\sigma\big{\|}^2\dds\nonumber\\
	&-2\int_{T_\kappa}^t\left(X_s-x_{T_\kappa,s}^\sigma\right)\cdot\bigg(\W_{\mu_s^{X}}\left(X_s\right)-\W_{\lambda_1}\left(x_{T_\kappa,s}^\sigma\right)\bigg)\dds.
\end{align}

\noindent{}{\bf Step 3. }We now deal with the following term:
$-\left(X_t-x_{T_\kappa,t}^\sigma\right)\cdot\Big(\W_{\mu_t^{X}}\left(X_t\right)-\W_{\lambda_1}\left(x_{T_\kappa,t}^\sigma\right)\Big)$. Adding and subtracting $\W_{\mu_t^{X}}(x_{T_\kappa,t}^\sigma)$ 
yields
\begin{align*}
&{-}\left(X_t-x_{T_\kappa,t}^\sigma\right)\cdot\big(\W_{\mu_t^{X}}\left(X_t\right)-\W_{\lambda_1}\left(x_{T_\kappa,t}^\sigma\right)\big)\\
&={-}2\left(X_t-x_{T_\kappa,t}^\sigma\right)\cdot\left(\W_{\mu_t^{X}}\left(X_t\right)-\W_{\mu_t^{X}}\left(x_{T_\kappa,t}^\sigma\right)\right){-}2\left(X_t-x_{T_\kappa,t}^\sigma\right)\cdot\left(\W_{\mu_t^{X}}\left(x_{T_\kappa,t}^\sigma\right)-\W_{\lambda_1}\left(x_{T_\kappa,t}^\sigma\right)\right)\,,\\
&\leq{-}2\left(X_t-x_{T_\kappa,t}^\sigma\right)\cdot\left(\W_{\mu_t^{X}}\left(X_t\right)-\W_{\mu_t^{X}}\left(x_{T_\kappa,t}^\sigma\right)\right)+\big{\|}X_t-x_{T_\kappa,t}^\sigma\big{\|}\,\big{\|}\F\star\mu_t^X\left(x_{T_\kappa,t}^\sigma\right)-\F(x_{T_\kappa,t}^\sigma-\lambda_1)\big{\|}\,.
\end{align*}

\noindent{}{\bf Step 3.1. } Let us now estimate the difference 
$\F\star\mu_t^X\left(x_{T_\kappa,t}^\sigma\right)-\F(x_{T_\kappa,t}^\sigma-\lambda_1)$: according to $\mathbf{(A)}$-$(iii)$,
\begin{align*}
&\big{\|}\F\star\mu_t^X\left(x_{T_\kappa,t}^\sigma\right)-\F(x_{T_\kappa,t}^\sigma-\lambda_1)\big{\|}\\
&\leq\int_{\bRb^d}\big{\|}\F\left(x_{T_\kappa,t}^\sigma-u\right)
-\F(x_{T_\kappa,t}^\sigma-\lambda_1)\big{\|}\mu^X_t(\ddu)\\
&\leq
C\int_{\bRb^d}\|u-\lambda_1\|\left(1+\big{\|}x_{T_\kappa,t}^\sigma-u\big{\|}^{2n}+\big{\|}x_{T_\kappa,t}^\sigma-\lambda_1\big{\|}^{2n}\right)\mu^X_t(\ddu)\\
&\le 
C\int_{\bRb^d}\|u-\lambda_1\|\left(1+\big{\|}x_{T_\kappa,t}^\sigma-\lambda_1\big{\|}^{2n}+\big{\|}u-\lambda_1\big{\|}^{2n}\right)\mu^X_t(\ddu).
\end{align*}
Using Cauchy-Schwarz inequality and the uniform-in-time moments control \eqref{Moment_SS} of $X$, we obtain, for some function $\psi(r):=C(n)(1+r^{2n})$,
\[
\big{\|}\F\star\mu_t^X\left(x_{T_\kappa,t}^\sigma\right)-\F(x_{T_\kappa,t}^\sigma-\lambda_1)\big{\|}
\leq \big(\mathbb E\left[\|X_{T_\kappa}-\lambda_1\|^2\right]\big)^{1/2}
\psi(\|x_{T_\kappa,t}^\sigma-\lambda_1\|)\,.
\]

\noindent{}{\bf Step 3.2. } Now, we introduce the \emph{deterministic} times:

\begin{equation*}
S_\kappa^{(1)}:=\inf\left\{t\geq
T_\kappa\,\,:\,\,\mathbb E\left[\|X_{T_\kappa}-\lambda_1\|^2\right]\geq\kappa^2\right\},\quad S_\kappa^{(2)}:=\inf\left\{t\geq T_\kappa\,\,:\,\,\mathbb E\left[\|Y_{T_\kappa}-\lambda_2\|^2\right]\geq\kappa^2\right\},
\end{equation*}
and
\begin{equation*}
S_\kappa:=\min\left\{S_\kappa^{(1)},S_\kappa^{(2)}\right\}\,.
\end{equation*}

Roughly speaking, on the interval $[T_\kappa,S_\kappa]$, we have a good uniform control on the coupling between $X$ and
$x_{T_\kappa,\cdot}^\sigma$ on one hand and on the other hand, the same holds true for the processes $Y$ and
$y_{T_\kappa,\cdot}^\sigma$. We will show that
this deterministic time is larger than $\exp\{\frac{2H}{\sigma^2}\}$ for any $H>0$, provided that $\sigma$ is small enough.

Next, set \[
\displaystyle\mathcal{B}_{R}^\sigma:=\Big\{\omega\in\Omega\,\,:\,\,\sup_{T_\kappa\leq
t\leq\min\left\{\exp\left[\frac{2H}{\sigma^2}\right],S_\kappa\right\}}\big{\|}x_{T,t}^\sigma-\lambda_1\big{\|}\leq
R\Big\}=\left\{\tau_R\geq\min\left\{\exp\left[\frac{2H}{\sigma^2}\right],S_\kappa\right\}\right\}
\]
 where $\tau_R$ is the first time
that the process $x_{T_\kappa,\cdot}^\sigma$ exits from the ball $\mathbb B(\lambda_1;R)$.  
Let us show that for $R$ large
enough, the probability of $\mathcal{B}_{R}^\sigma$ tends to $1$ as $\sigma$ goes to $0$:
\begin{align*}
\PP\left(\left(\mathcal{B}_{R}^\sigma\right)^c\right)&=\PP\left(\tau_R<\min\left\{\exp\left[\frac{2H}{\sigma^2}\right],S_\kappa\right\}\right)\leq\PP\left(\tau_R<\exp\left[\frac{2H}{\sigma^2}\right]\right)\,.
\end{align*}
To this aim, choose $R$ large enough such that the exit-cost
\[ 
\frac 1{4}\inf_{x\in \partial \mathbb B(\lambda_1;R)}\inf_{-\infty<T_1\le T_2<\infty}\inf_\phi\left\{\int_{T_1}^{T_2} \Big\|\frac{\dd\phi_t}{\ddt}+\W_{\lambda_1}(\phi_t)\Big\|^2_{ A^{-1}(\phi_t)}\, \ddt\,:\, \phi\in\mathcal
A^{\lambda_1,x}[T_1,T_2]
\right\}\,,
\]
associated to the ball of center $\lambda_1$ and radius $R$ is larger
than $H+1$. Such a radius $R$ exists since: due to assumption $(\mathbf{A})$-$(vi)$, we get, for $\theta$ as in \eqref{Hyp:Synchro},
\begin{align*}
\Big\|\frac{\dd\phi_t}{\ddt}+\W_{\lambda_1}(\phi_t)\Big\|^2_{ A^{-1}(\phi_t)}&\geq\Lambda_-\Big\|\frac{\dd\phi_t}{\ddt}+\W_{\lambda_1}(\phi_t)\Big\|^2=\Lambda_-\Big\|\frac{\dd\phi_t}{\ddt}+\theta\big(\phi_t-\lambda_1\big)+\W_{\lambda_ 1}(\phi_t)-\theta\big(\phi_t-\lambda_1\big)\Big\|^2\\
&\geq\Lambda_-\Big[\Big\|\frac{\dd\phi_t}{\ddt}+\theta(\phi_t-\lambda_1)\Big\|^2+\Big\|\W_{\lambda_1}(\phi_t)-\theta\big(\phi_t-\lambda_1\big)\Big\|^2\\
&\quad +2\left(\frac{\dd\phi_t}{\ddt}+\theta\big(\phi_t-\lambda_1\big)\right)\cdot\left(\W_{\lambda_1}(\phi_t)-\theta\big(\phi_t-\lambda_1\big)\right)\Big]\,.
\end{align*}
Owing to the synchronization assumption (again \eqref{Hyp:Synchro}) from which we have $(\phi_t-\lambda_1)\cdot \W_{\lambda_1}(\phi_t)\ge \theta \|\phi_t-\lambda_1\|^2$, we obtain
\begin{align*}
&\left(\frac{\dd\phi_t}{\ddt}+\theta\big(\phi_t-\lambda_1\big)\right)\cdot \left(\W_{\lambda_1}(\phi_t)-\theta\big(\phi_t-\lambda_1\big)\right)\\
&\geq\frac{\dd\phi_t}{\ddt}\cdot \left(\W_{\lambda_1}(\phi_t)-\theta\big(\phi_t-\lambda_1\big)\right)\geq-\frac1{2}\Big\|\frac{\dd\phi_t}{\ddt}\Big\|^2-\frac1{2}\Big\|\W_{\lambda_1}(\phi_t)-\theta\big(\phi_t-\lambda_1\big)\Big\|^2\,.
\end{align*}
Thus 
\[
\Big\|\frac{\dd\phi_t}{\ddt}+\W_{\lambda_1}(\phi_t)\Big\|^2_{ A^{-1}(\phi_t)}\geq\Lambda_-\left(\Big\|\frac{\dd\phi_t}{\ddt}+\theta\big(\phi_t-\lambda_1\big)\Big\|^2-\Big\|\frac{\dd\phi_t}{\ddt}\Big\|^2\right)\geq2\Lambda_-\theta\frac{\dd\phi_t}{\ddt}\cdot\big(\phi_t-\lambda_1\big).
\]
Hence the exit-cost from the ball $\mathbb B(\lambda_1;R)$ is more than
\[
\frac{1}{4}\Lambda_-\theta\int_{T_1}^{T_2}2\frac{\dd\phi_t}{\ddt}\cdot\big(\phi_t-\lambda_1\big)\ddt=\frac{1}{4}\Lambda_-\theta\big(\|\phi_{T_2}-\lambda_1\|^2-\|\phi_{T_1}-\lambda_1\|^2\big)\ge \Lambda_-\theta R^2\longrightarrow+\infty,
\]
as $R$ tends to infinity. Indeed, $\phi_{T_2}$ is in the ball of center $\lambda_1$ and radius $R$ whereas $\phi_{T_1}=\lambda_1$. Then, by Theorem \ref{thm:KramersDZ} we have: for any $\delta>0$,
\[
\lim_{\sigma\to0}\PP\left(\tau_R<\exp\left[\frac{2H+1-\delta}{\sigma^2}\right]\right)=0.
\]
In particular, taking $\delta=1$ yields $\lim_{\sigma\to0}\PP\left(\left(\mathcal{B}_{R}^\sigma\right)^c\right)=0$.

Let us now consider \eqref{Coupling_SS_Proof_1} under $\mathcal{B}_{R}^\sigma\bigcap\mathcal{A}^\sigma$. Due to the contractivity \eqref{Hyp:Synchro} of
$-\W_{\mu^X_t}$, we have

\[
-\left(X_t-x_{T_\kappa,t}^\sigma\right)\cdot\left(\W_{\mu_t^X}\left(X_t\right)-\W_{\mu_t^X}\left(x_{T_\kappa,t}^\sigma\right)\right)\leq-\theta\big{\|}X_t-x_{T_\kappa,t}^\sigma\big{\|}^2,\qquad t\ge T_{\kappa}.
\]
Setting $\zeta_t:=\|X_t-x_{T_\kappa,t}^\sigma\|^2$, since, by Lemma \ref{chante}, $\big(\mathbb E\left[\|X_{T_\kappa}-\lambda_1\|^2\right]\big)^{1/2}\leq\kappa$ if $t\leq S_\kappa$,
the following holds true, for all $T_{\kappa}\le t\le \min\{\exp[\frac{2\underline{H}_\varepsilon}{\sigma^2}],S_\kappa\}$,
\[
	\zeta_t\le \sqrt{\sigma}+\int_{T_\kappa}^t\Big(\big(C(\sigma^2+\frac\sigma{2})
	-2\theta\big)\sqrt{\zeta_s}+\psi(R)\kappa\Big)\sqrt{\zeta_s}\dds\,,
\]
where we have notably used the control of the martingale from {\bf Step 2}.

Considering now $\sigma$ sufficiently small so that $C\big(\sigma^2+\frac{\sigma}{2}\big)-2\theta\le -\theta$, on the set $\Big\{\sup_{T_\xi\leq t\leq
	\exp\left[\frac{2H}{\sigma^2}\right]}\|X_t-x_{T_\xi,t}^\sigma\|\geq\xi\Big\}$,
	$$
	\bigg(C(\sigma^2+\frac\sigma{2})
	-2\theta\bigg)\sqrt{\zeta_s}+\psi(R)\kappa\le -\theta\xi+\psi(R)\kappa\,.
	$$
	Given $\kappa$, we shall then choose $\xi$ large enough so that $\psi(R)\kappa/\theta<\xi$ to eventually obtain:
\[
\zeta_t\leq \sqrt{\sigma},\qquad T_{\kappa}\le t\le \min\left\{\exp\Big[\frac{2\underline{H}_\varepsilon}{\sigma^2}\Big],S_\kappa\right\}.
\]

Letting $\sigma$ tends to $0$ achieves to prove that
\[
\lim_{\sigma\to0}\PP\left(\mathcal{B}_{R}^\sigma\bigcap\mathcal{A}^\sigma\bigcap\left\{\sup_{T_\kappa\leq
t\leq\min\left\{\exp\left[\frac{2H}{\sigma^2}\right],S_\kappa\right\}}\|X_t-x_{T_\xi,t}^\sigma\|\geq\xi\right\}\right)=0\,.
\]
Recalling that
$\lim_{\sigma\to0}\PP\left(\left(\mathcal{B}_{R}^\sigma\right)^c\right)=\lim_{\sigma\to0}\PP\left(\left(\mathcal{A}^\sigma\right)^c\right)=0$, we conclude

\begin{equation}
\label{label_bizarre3}
\lim_{\sigma\to0}\PP\left(\sup_{T_\kappa\leq
t\leq\min\left\{\exp\left[\frac{2H}{\sigma^2}\right],S_\kappa\right\}}\|X_t-x_{T_\xi,t}^\sigma\|\geq\xi\right)=0\,.
\end{equation}

\noindent{}{\bf Step 4.} We will now prove that for any $\kappa>0$, $S_\kappa\geq\exp\left[\frac{2H}{\sigma^2}\right]$ if $\sigma$ is
small enough. We will proceed like in \cite{Tugaut2021}. Namely, by putting $\xi_t:=\EE\left[\|X_t-\lambda_1\|^2\right]$, we derive the
following estimate:
\[
\frac{\dd}{\ddt}\xi_t\leq K\sigma^2-2\rho\xi_t+M\sqrt{\PP(X_t\notin\mathcal{S}_\rho)}\,,
\]
where $K$ and $M$ are positive constants. Here, $\mathcal{S}_\rho$ is the path-connected subset  which contains $\lambda_1$ of the set

\[
\left\{x\in\bRb^d\,\,:\,\,(x-\lambda_1)\cdot\U(x)\geq\rho\|x-\lambda_1\|^2\right\}\,.
\]

This set is not empty if $\rho>0$ is sufficiently small since the Jacobian of $-\U$ in the point $\lambda_1$ is a negative definite matrix. By taking $r>0$ small enough
(albeit not asymptotically small), we have the inclusion $\mathbb{B}(\lambda_1;r)\subset\mathcal{S}_\rho$ which immediately implies:

\[
\PP(X_t\notin\mathcal{S}_\rho)\leq\PP(X_t\notin\mathbb{B}(\lambda_1;r))\,,
\]

We also remark

\[
\PP(X_t\notin\mathbb{B}(\lambda_1;r))\leq\PP\left(\|X_t-x_{T_\kappa,t}^\sigma\|\geq\frac{r}{2}\right)+\PP\left(\|x_{T_\kappa,t}^\sigma-\lambda_1\|\geq\frac{r}{2}\right)\,.
\]

The second upper-bound is easily controlled: due to the contractivity of the drift $\W_{\lambda_1}$, proceeding similarly as in \cite{Tugaut2021}
and taking $T_\kappa$ large enough (that is to say $\kappa$ small enough), we obtain the following limit:

\[
\lim_{\sigma\to0}\PP\left(\|x_{T_\kappa,t}^\sigma-\lambda_1\|\geq\frac{r}{2}\right)=0\,.
\]

The first one will be controlled in the following way. We assume that there exists a decreasing sequence
$(\sigma_n)_{n\in\mathbb{N}}$ with $\lim_{n\to\infty}\sigma_n=0$ such that $S_\kappa<\exp\left[\frac{2H}{\sigma_n^2}\right]$. We
stress that the time $S_\kappa$ does depend on $n$ albeit we have not written it in any additional superscript for the comfort of
the reading. Then, by using the previous inequalities, we obtain:

\begin{align*}
\lim_{n\to\infty}\sup_{T_\kappa\leq t\leq
S_\kappa}\PP(X_t\notin\mathbb{B}(\lambda_1;r))&\leq\lim_{n\to\infty}\PP\left(\sup_{T_\kappa\leq t\leq
S_\kappa}\|X_t-x_{T_\kappa,t}^{\sigma_n}\|\geq\frac{r}{2}\right)\\
&\leq\lim_{n\to\infty}\PP\left(\sup_{T_\kappa\leq t\leq
\inf\left\{\exp\left[\frac{2H}{\sigma_n^2}\right];S_\kappa\right\}}\|X_t-x_{T_\kappa,t}^{\sigma_n}\|\geq\frac{r}{2}\right)=0\,,
\end{align*}

by \eqref{label_bizarre3} and by the equality $S_\kappa=\inf\{S_\kappa;\exp\left[\frac{2H}{\sigma_n^2}\right]\}$. As a consequence, for any $T_\kappa\leq t\leq S_\kappa$, we find:

\[
\frac{\dd}{\ddt}\xi_t\leq f(\sigma_n)-2\rho\xi_t\,,
\]

$f$ being an increasing function with $f(0)=0$. Consequently, for any small enough $\sigma_n$ (that is large enough $n$), we have
that $\xi_t<\frac{\kappa^2}{4}$ for any $T_\kappa\leq t\leq S_\kappa$. In particular: $\kappa^2=\xi_{S_\kappa}<\frac{\kappa^2}{4}$
which leads us to a contradiction. We thus deduce $S_\kappa\geq\exp\left[\frac{2H}{\sigma^2}\right]$ as soon as $\sigma$ is small enough. The proof is achieved.

\end{proof}

\subsection{Proof of Theorem \ref{thm:main1}}
Since the vector fields
\begin{equation}
\label{GlobalPot}
\W_{\lambda_1}(x):=\U(x)+\F(x-\lambda_1),\,\quad\W_{\lambda_2}(x):=\U(x)+\F(x-\lambda_2)\,,
\end{equation}
driving \eqref{Linearized-MV1} and \eqref{Linearized-MV2} are contractive, applying Lemma \ref{dale-bis} in Section \ref{sec:LinearCase} - up to a time shift,
allowed by the Markov property of $(x_{T_\xi,\cdot}^\sigma,y_{T_\xi,\cdot}^\sigma)$ - yields to the Kramers' type law for 
\[
\tau_{\lambda,\varepsilon}(\sigma):=\inf\left\{t\geq
T_\xi\,\,:\,\,(x_{T_\xi,t}^\sigma,y_{T_\xi,t}^\sigma)\in\mathbb{B}\left(\lambda;\varepsilon\right)\times\mathbb{B}\left(\lambda;\varepsilon\right)\right\},
\]
where $T_\xi$ is given as in Proposition \ref{label_bizarre2}. Namely, for any $\delta>0$:
\begin{equation}
\label{label_bizarre5}
\lim_{\sigma\to0}\PP\left(\exp\left[\frac{2}{\sigma^2}\left(\widehat{H}_\varepsilon(\lambda)-\delta\right)\right]<\tau_{\lambda,\varepsilon}(\sigma)<\exp\left[\frac{2}{\sigma^2}\left(\widehat{H}_\varepsilon(\lambda)+\delta\right)\right]\right)=1\,,
\end{equation}
\begin{equation*}
	\mbox{and}\quad\lim_{\sigma\to0}\PP\left({\rm
		dist}\left((x^\sigma_{\tau_{\lambda,\varepsilon}(\sigma)},y^\sigma_{\tau_{\lambda,\varepsilon}(\sigma)}),\mathbb{B}(\lambda;\varepsilon)\times\mathbb{B}(\lambda;\varepsilon)\right)\leq\delta\right)=1\,.
	\end{equation*}
where
\begin{equation*}
	\widehat H_\varepsilon(\lambda)=\left\{
	\begin{aligned}
		&\inf_{x\in\mathbb R^d,y\in\partial \mathbb
			B(\lambda;\varepsilon)}V(x,y)&\,\mbox{if}\,\,&\|\lambda-\lambda_1\|< \varepsilon\,,\\
		&\inf_{x\in\partial \mathbb
			B(\lambda;\varepsilon),y\in\mathbb R^d} V(x,y)&\,\mbox{if}\,\,&\|\lambda-\lambda_2\|< \varepsilon,\\
		&\inf_{x,y\in \partial
			\mathbb{B}(\lambda;\varepsilon)} V(x,y)&\,\mbox{if}\,\,&\min_i\|\lambda-\lambda_i\|>\varepsilon,
	\end{aligned}
	\right.
\end{equation*}
\begin{equation}
V(x,y)=\inf_{-\infty<T_1\le T_2<\infty}\inf_{\phi}\left\{I(T_1,T_2,\phi)\,:\,\phi \in\mathcal A^{(\lambda_1,\lambda_2),(x,y)}[T_1,T_2]\right\},
\end{equation}
the action functional $I$ and set $\mathcal A^{(\lambda_1,\lambda_2),(x,y)}[T_1,T_2]$ being defined as in Theorem \ref{thm:main1}.

The asymptotic \eqref{label_bizarre5} and the collision-location property stated in Lemma \ref{dale-bis} - taking also into account the
succeeding discussion in Section \ref{subsec:Linear-collisionB} on the simplification of the minimizer sets - both shift to the self-stabilizing diffusions, $X$ and $Y$ which leads to the
following result:

\begin{prop}
\label{torche} The first entering-time of $(X,Y)$ in the domain
$\mathbb{B}(\lambda;\varepsilon)\times\mathbb{B}\left(\lambda;\varepsilon\right)$:
\[
\beta_{\lambda,\varepsilon}(\sigma):=\inf\left\{t\geq0\,\,:\,\,(X_t,Y_t)\in\mathbb{B}\left(\lambda;\varepsilon\right)\times\mathbb{B}\left(\lambda;\varepsilon\right)
\right\}\,,
\]
satisfies, for any $\lambda\in\bRb^d$ and any $\delta>0$,

\begin{equation}
\label{label_bizarre7}
\lim_{\sigma\to0}\PP\left(\exp\left[\frac{2}{\sigma^2}\left(\widehat{H}_\varepsilon(\lambda)-\delta\right)\right]<\beta_{\lambda,\varepsilon}(\sigma)<\exp\left[\frac{2}{\sigma^2}\left(\widehat{H}_\varepsilon(\lambda)+\delta\right)\right]\right)=1\,,
\end{equation}
and
\begin{equation*}
\lim_{\sigma\to0}\PP\left({\rm dist}\bigg((X_{\widehat \beta_{\lambda,\varepsilon}(\sigma)},Y_{\widehat
\beta_{\lambda,\varepsilon}(\sigma)}),\mathbb B(\lambda;\varepsilon)\times\mathbb B(\lambda;\varepsilon)\bigg)\le \delta\right)=1\,.
\end{equation*}
\end{prop}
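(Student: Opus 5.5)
The plan is to transfer the Kramers-type law \eqref{label_bizarre5} for the linearized pair $(x^\sigma_{T_\xi,\cdot},y^\sigma_{T_\xi,\cdot})$ to $(X,Y)$ through the coupling of Proposition \ref{label_bizarre2}. To make the transfer work, we sandwich the ball $\mathbb B(\lambda;\varepsilon)$ between two slightly perturbed balls, $\mathbb B(\lambda;\varepsilon-\xi)$ and $\mathbb B(\lambda;\varepsilon+\xi)$.

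Fix $\lambda$, $\delta>0$, and take $\xi\in(0,\varepsilon)$ small. The choice of $\xi$ comes from continuity of $r\mapsto\widehat H_r(\lambda)$ near $r=\varepsilon$. This continuity follows from continuity of $V$, which holds because $\W_{\lambda_i}$ is $\mathcal C^1$, $\Gamma$ is Lipschitz and $A$ is elliptic. We require
\[
\widehat H_{\varepsilon+\xi}(\lambda)\ge \widehat H_\varepsilon(\lambda)-\delta/3,\qquad \widehat H_{\varepsilon-\xi}(\lambda)\le \widehat H_\varepsilon(\lambda)+\delta/3 .
\]
Degenerate radii with $\|\lambda-\lambda_i\|=\varepsilon\pm\xi$ are avoided, or treated through the $\rho$-rescaling as in Lemma \ref{dale}. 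Apply Proposition \ref{label_bizarre2} with this $\xi$ and with $H:=\widehat H_\varepsilon(\lambda)+\delta$. Let $\Omega_\sigma$ be the event that $\|X_t-x^\sigma_{T_\xi,t}\|<\xi$ and $\|Y_t-y^\sigma_{T_\xi,t}\|<\xi$ for all $t\in[T_\xi,e^{2H/\sigma^2}]$. Then $\mathbb P(\Omega_\sigma)\to1$. Denote by $\tau_{\lambda,r}(\sigma)$ the entrance time of the linearized pair into $\mathbb B(\lambda;r)^2$ after $T_\xi$.

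\textbf{Upper tail.} On $\Omega_\sigma$, if $\tau_{\lambda,\varepsilon-\xi}(\sigma)<e^{2H/\sigma^2}$, then $(X,Y)$ lies in $\mathbb B(\lambda;\varepsilon)^2$ at that time, so $\beta_{\lambda,\varepsilon}(\sigma)\le\tau_{\lambda,\varepsilon-\xi}(\sigma)$. By \eqref{label_bizarre5} at radius $\varepsilon-\xi$, with probability tending to one,
\[
\tau_{\lambda,\varepsilon-\xi}(\sigma)<\exp\Big[\tfrac{2}{\sigma^2}\big(\widehat H_\varepsilon(\lambda)+2\delta/3\big)\Big]<e^{2H/\sigma^2}.
\]
This gives the upper tail of \eqref{label_bizarre7}.

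\textbf{Lower tail.} First exclude the initial window $[0,T_\xi]$. On this fixed finite horizon, $\mathcal W_2$-closeness of $\mu^X_t$ to the Dirac path $\delta_{\phi_t(x_1)}$ (Lemma \ref{chante}-type estimates) lets us compare $X$ with a diffusion driven by a measure-independent drift. The finite-horizon LDP \eqref{LDP} then shows that $(X,Y)$ stays within $\varepsilon_0-\varepsilon$ of $(\phi_\cdot(x_1),\phi_\cdot(x_2))$ on $[0,T_\xi]$ with probability tending to one. Since $\varepsilon<\varepsilon_0$, no entrance into $\mathbb B(\lambda;\varepsilon)^2$ occurs there.

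After $T_\xi$, on $\Omega_\sigma$, an entrance of $(X,Y)$ before $e^{2H/\sigma^2}$ forces the linearized pair into $\mathbb B(\lambda;\varepsilon+\xi)^2$. Hence
\[
\beta_{\lambda,\varepsilon}(\sigma)\ge \min\big(\tau_{\lambda,\varepsilon+\xi}(\sigma),e^{2H/\sigma^2}\big).
\]
By \eqref{label_bizarre5} at radius $\varepsilon+\xi$, the right-hand side exceeds $\exp[\frac{2}{\sigma^2}(\widehat H_\varepsilon(\lambda)-\delta)]$ with probability tending to one.

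\textbf{Location statement.} The localization of $(X,Y)$ at time $\beta_{\lambda,\varepsilon}(\sigma)$ is immediate from the definition of the hitting time together with path continuity. We read $\widehat\beta$ as $\beta$.

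The main obstacle is the continuity of $\widehat H_r(\lambda)$ in the radius $r$, especially across the case switches where $\|\lambda-\lambda_i\|$ is close to $\varepsilon$. There the formula changes type, so one-sided continuity has to be checked using the stability arguments of Section \ref{subsec:Linear-collisionA}. A secondary point is the finite-window control on $[0,T_\xi]$, which needs the LDP for the McKean–Vlasov process. If that is unavailable, we would instead use a moment/Gronwall bound as in Lemma \ref{chante} combined with Lemma \ref{lem:MartingaleConcentration}.
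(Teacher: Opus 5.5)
Your proposal is correct and follows the paper's own argument: on the coupling event of Proposition \ref{label_bizarre2} you sandwich $\beta_{\lambda,\varepsilon}(\sigma)$ between $\tau_{\lambda,\varepsilon+\xi}(\sigma)$ and $\tau_{\lambda,\varepsilon-\xi}(\sigma)$, then conclude with \eqref{label_bizarre5} and the continuity of $\eta\mapsto\widehat H_\eta(\lambda)$; this continuity does survive the case switches, because $V(x,y)$ is the sum of the two marginal quasi-potentials and each vanishes at its attractor. Your explicit choice of coupling horizon $H=\widehat H_\varepsilon(\lambda)+\delta$, and your exclusion of entrances on $[0,T_\xi]$ via finite-horizon closeness to $(\phi_\cdot(x_1),\phi_\cdot(x_2))$ together with $\varepsilon<\varepsilon_0$, simply make rigorous what the paper asserts in one line (that $\beta_{\lambda,\varepsilon}(\sigma)\to\infty$).
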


\begin{proof}Fix $\delta>0$ and let $\sigma$ and $\xi$ be small enough so that $T_\xi$ given in \eqref{MeanCoupling-MV} is smaller
than~$\exp\{\frac{2}{\sigma^2}(\widehat{H}_\varepsilon(\lambda)-\delta)\}$. As  $\lim_{\sigma\rightarrow
0}\beta_{\lambda,\varepsilon}(\sigma)=\infty$ a.s., the event $\{T_\xi> \beta_{\lambda,\varepsilon}(\sigma)\}$ becomes negligible at
the limit $\sigma\downarrow 0$. On the remaining event $\{T_\xi\le \beta_{\lambda,\varepsilon}(\sigma)\}$, according to
Lemma~\ref{label_bizarre2}, $(X_t,Y_t)$ and $(x_{T_\xi,t}^\sigma,y_{T_\xi,t}^\sigma)$ are at distance of at most $\xi$ from each others.
This way, for $\xi<\varepsilon$, $\widehat{\beta}_{\lambda,\varepsilon}(\sigma)$ necessarily lies in the interval
$[\tau_{\lambda,\varepsilon+\xi}(\sigma),\tau_{\lambda,\varepsilon-\xi}(\sigma)]$. As $\eta\mapsto \widehat{H}_\eta(\lambda)$ is
continuous, we can further choose $\xi$ again small enough  so that $H_\varepsilon(\lambda)+\delta\ge
\widehat{H}_{\varepsilon-\xi}(\lambda)+\delta'$ and $\widehat{H}_\varepsilon(\lambda)-\delta\le
\widehat{H}_{\varepsilon+\xi}(\lambda)-\delta''$, for some ${\delta'}
,{\delta''}>0$. The Kramers' type law~\eqref{label_bizarre5} then ensures
\begin{align*}
\lim_{\sigma\rightarrow
0}\PP\left(\exp\left[\frac{2}{\sigma^2}\left(\widehat{H}_{\varepsilon}(\lambda)-\delta\right)\right]<\beta_{\lambda,\varepsilon}(\sigma)\right)\ge\lim_{\sigma\rightarrow
0}\PP\left(\exp\left[\frac{2}{\sigma^2}\left(\widehat{H}_{\varepsilon+\xi}(\lambda)-\delta''\right)\right]<\tau_{\lambda,\varepsilon+\xi}(\sigma)\right)\
=1\,.
\end{align*}
and
\begin{align*}
\lim_{\sigma\rightarrow
0}\PP\left(\beta_{\lambda,\varepsilon}(\sigma)<\exp\left[\frac{2}{\sigma^2}\left(\widehat{H}_{\varepsilon}(\lambda)+\delta\right)\right]\right)\ge \lim_{\sigma\rightarrow
0}\PP\left(\tau_{\lambda,\varepsilon-\xi}(\sigma)<\exp\left[\frac{2}{\sigma^2}\left(\widehat{H}_{\varepsilon-\xi}(\lambda)+\delta'\right)\right]\right)=1\,.
\end{align*}
The asymptotic of $(X_{\beta_{\lambda,\varepsilon}(\sigma)},Y_{\beta_{\lambda,\varepsilon}(\sigma)})$ is an immediate consequence of
the very definition of~$\beta_{\lambda,\varepsilon}(\sigma)$.
\end{proof}

As a consequence of the above, we immediately deduce the asymptotic of the first time that the diffusions $X$ and $Y$ are at a
distance $2\varepsilon$.

\begin{prop}
\label{label_bizarre8} For $\Psi_1$ and $\Psi_2$ as in~\eqref{GlobalPot}, define
\begin{equation}\label{epsiloncollisioncost}
	H_\varepsilon(\lambda):=\inf_{x,y\in \partial \mathbb{B}(\lambda;\varepsilon)}V(x,y)\,,
\end{equation}
and the threshold

\begin{equation}\label{ColliRad2}\varepsilon_c=\inf\left\{\varepsilon\le \varepsilon_0\,:\,\min\{M_{1,\varepsilon};M_{2,\varepsilon}\}>\inf_\lambda
H_\varepsilon(\lambda),\,\,i\neq j\in\{1,2\}\right\}\,.
\end{equation}
for 
\[
M_{1,\varepsilon}=\inf_{x\in\partial\mathbb B(\lambda_1;\varepsilon),y\in\mathbb R^d}V(x,y),\quad M_{2,\varepsilon}=\inf_{x\in\mathbb R^d,y\in\partial\mathbb B(\lambda_2;\varepsilon)}V(x,y).
\]
Let $C_\varepsilon(\sigma)$ be as in \eqref{near-collision-time-SS}, $\varepsilon\in(0,\varepsilon_c)$, $\underline
H_\varepsilon=\min H_\varepsilon$ and let $\mathcal M_\varepsilon$ be the set of minimizers of~$H_\varepsilon$. Then, for any
$\delta>0$,
\begin{equation*}
\lim_{\sigma\to0}\PP\left(\exp\left[\frac{2}{\sigma^2}\left(\underline
H_\varepsilon-\delta\right)\right]<{C}_\varepsilon(\sigma)<\exp\left[\frac{2}{\sigma^2}\left(\underline
H_\varepsilon+\delta\right)\right]\right)=1\,,
\end{equation*}
and
\begin{equation*}
\lim_{\sigma\to0}\PP\left(\inf_{\lambda_\varepsilon\in\mathcal M_\varepsilon}\max\bigg({\rm
dist}\big(X_{C_\varepsilon(\sigma)},\mathbb B(\lambda_\varepsilon;\varepsilon)\big),{\rm dist}\big(Y_{C_\varepsilon(\sigma)},\mathbb
B(\lambda_\varepsilon;\varepsilon)\big)\bigg)\ge \delta\right)=0\,.
\end{equation*}
\end{prop}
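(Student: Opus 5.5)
The plan is to transfer the proof of Proposition~\ref{lacollision} from the linearized pair to $(X,Y)$, with Proposition~\ref{torche} playing the role of Lemma~\ref{dale-bis} and Proposition~\ref{label_bizarre2} supplying the coupling. The starting point is the representation \eqref{KeyInterpretation}, $C_\varepsilon(\sigma)=\inf_\lambda\beta_{\lambda,\varepsilon}(\sigma)$, together with the identity $\|X_{C_\varepsilon(\sigma)}-Y_{C_\varepsilon(\sigma)}\|=2\varepsilon$. That identity holds because $\|x_1-x_2\|>2\varepsilon$ and paths are continuous. Throughout, $\Psi_i=\W_{\lambda_i}$; by \eqref{Hyp:Synchro} these are uniformly contractive, so Section~\ref{sec:LinearCase} applies to $(x^\sigma_{T_\xi,\cdot},y^\sigma_{T_\xi,\cdot})$.

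\textbf{Upper tail.} Pick $\lambda_\varepsilon\in\mathcal M_\varepsilon$. Since $\varepsilon<\varepsilon_c$, the minimizers satisfy $\min_i\|\lambda_\varepsilon-\lambda_i\|\ge\varepsilon$, and then $\widehat H_\varepsilon(\lambda_\varepsilon)=H_\varepsilon(\lambda_\varepsilon)=\underline H_\varepsilon$. The boundary case $\|\lambda_\varepsilon-\lambda_i\|=\varepsilon$ is handled by the $\rho$-modification and continuity, exactly as in Lemma~\ref{dale-bis}. We have $C_\varepsilon(\sigma)\le\beta_{\lambda_\varepsilon,\varepsilon}(\sigma)$, so the upper tail of \eqref{label_bizarre7} gives the upper bound.

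\textbf{Lower tail.} First reduce to a compact set. Let $z_t=(X_t+Y_t)/2$. Choose $M$ large enough that the exit cost of the linearized $x^\sigma_{T_\xi,\cdot}$ from $\mathbb B(\lambda_1;M-\varepsilon)$ exceeds $\underline H_\varepsilon+3\eta$; this is possible by the bound $\ge\Lambda_-\theta R^2$ from Step~3.2 of the proof of Proposition~\ref{label_bizarre2}. On the event where $\sup_{T_\xi\le t\le e^{2(\underline H_\varepsilon+\eta)/\sigma^2}}\|X_t-x^\sigma_{T_\xi,t}\|<\xi$, which by Proposition~\ref{label_bizarre2} with $H=\underline H_\varepsilon+\eta$ has probability tending to one, the event $z_{C_\varepsilon}\notin\overline{\mathbb B(\lambda_1;M)}$ forces the linearized process to leave $\mathbb B(\lambda_1;M-\varepsilon-\xi)$ before time $e^{2(\underline H_\varepsilon+\eta)/\sigma^2}$. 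Theorem~\ref{thm:KramersDZ} makes this negligible. The events $C_\varepsilon(\sigma)<T_\xi$ are also negligible, by the finite-horizon LDP and $\varepsilon<\varepsilon_0$, since $X,Y$ stay close to $\phi(x_1),\phi(x_2)$ on $[0,T_\xi]$.

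Next, cover the compact set $\overline{\mathbb B(\lambda_1;M)}$ by finitely many balls $\mathbb B(\lambda^\ell;r)$. If $z_{C_\varepsilon}\in\mathbb B(\lambda^\ell;r)$, then $C_\varepsilon(\sigma)\ge\beta_{\lambda^\ell,\varepsilon+r}(\sigma)$. By continuity of $(\eta,\lambda)\mapsto\widehat H_\eta(\lambda)$ we get $\widehat H_{\varepsilon+r}(\lambda^\ell)\ge\underline H_\varepsilon-\delta/2$ for small $r$. We also use $\inf_\lambda\widehat H_\varepsilon=\underline H_\varepsilon$ for $\varepsilon<\varepsilon_c$, because $M_{i,\varepsilon}>\underline H_\varepsilon$. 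The lower tail of \eqref{label_bizarre7} applied to each of the finitely many $\lambda^\ell$, followed by a union bound, then gives the lower bound.

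\textbf{Location.} The same covering argument proves the location statement. For balls centered at $\hat\lambda^\ell$ in $\overline{\mathbb B(\lambda_1;M)}\setminus\{\mathrm{dist}(\cdot,\mathcal M_\varepsilon)<\delta\}$, compactness and lower semicontinuity give $\widehat H_{\varepsilon+\hat r}(\hat\lambda^\ell)\ge\underline H_\varepsilon+2\eta'$. Combining this with the upper tail shows that $z_{C_\varepsilon}$ lands within $\delta$ of $\mathcal M_\varepsilon$ with probability tending to one. Since $\|X_{C_\varepsilon}-z_{C_\varepsilon}\|=\varepsilon$, this yields the stated distance estimate.

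\textbf{Main obstacle.} The delicate point is continuity of $\widehat H_\eta(\lambda)$ jointly in $(\eta,\lambda)$, together with the identification $\inf_\lambda\widehat H_\varepsilon=\underline H_\varepsilon$ via \eqref{ColliRad2}. Near the wells $\widehat H$ switches formula, so I would treat balls meeting $\overline{\mathbb B(\lambda_i;\varepsilon+r)}$ separately. For those balls I would bound $\widehat H$ from below by $M_{i,\varepsilon}-o(1)>\underline H_\varepsilon$, using that entering a ball near $\lambda_1$ requires $Y$ to travel near $\lambda_1$.
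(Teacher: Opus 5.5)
Your proposal is correct and takes the same route as the paper. The paper's proof of this proposition reruns the proof of Proposition~\ref{lacollision} with the barycentre $Z_t=2^{-1}(X_t+Y_t)$, uses Proposition~\ref{torche} in place of Lemma~\ref{dale-bis}, and uses $\varepsilon<\varepsilon_c$ to identify $\inf_\lambda\widehat H_\varepsilon$ with $\underline H_\varepsilon$; the points you make explicit (the coupling of Proposition~\ref{label_bizarre2} in the compactness step, discarding $\{C_\varepsilon(\sigma)<T_\xi\}$, and using $\widehat H$ rather than $H$ for balls near the wells) are details the paper leaves implicit, not a different argument.
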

\begin{proof}The assumption $\varepsilon<\varepsilon_c$ immediately ensures $\inf_{\lambda}\widehat
H_\varepsilon(\lambda)=\inf_{\lambda} H_\varepsilon(\lambda)$,
and the proof is readily  similar to the one of Proposition \ref{lacollision} replacing $z_t^\sigma$ by the barycentre
$Z_t:=2^{-1}(X_t+Y_t)$.
\end{proof}

In the same way Proposition~\ref{lacollision} yielded Theorem~\ref{lacollisionthm}, Proposition \ref{label_bizarre8} yields to
Theorem~\ref{thm:main1}.

\section{On the first near-collision of the particle systems \texorpdfstring{\eqref{particles}}{}}\label{sec:ParticleCase}

Essentially, the proof steps to establish Theorem \ref{thm:main2} are similar to the ones carrying Theorem \ref{thm:main1}. Observing first that
\[
C^i_{\varepsilon,N}(\sigma)=\inf_{\lambda\in\mathbb R^d}\beta^i_{\lambda,\varepsilon,N}(\sigma), \qquad
\beta^i_{\lambda,\varepsilon,N}(\sigma)=\inf\Big\{t>0\,:\,(X^{i,N}_t,Y^{i,N}_t)\in\mathbb B(\lambda;\varepsilon)\times\mathbb
B(\lambda;\varepsilon)\Big\},
\]
and given a ``good'' coupling estimate between the particle systems $(X^{i,N},Y^{i,N})$ and their linearized versions: 
\begin{subequations}
	\begin{equation}
		\label{xi}
		x_{T,t}^{i,\sigma}=X^{i,N}_T-\int_T^t\Big(\mathbb{U}(x_{T,s}^{i,\sigma})+\F(x_{T,s}^{i,\sigma}-\lambda_1)\Big)\,\dds+\sigma\int_{T}^t \Gamma(x_{T,t}^{i,\sigma})\,\dd B_s^i\,,\,t\ge T\,,
	\end{equation}
	and
	\begin{equation}
		\label{yi}
		y_{T,t}^{i,\sigma}=Y^{i,N}_T -\int_T^t\Big(\mathbb{U}(y_{T,s}^{i,\sigma})+\F(y_{T,s}^{i,\sigma}-\lambda_2)\Big)\,\dds+\sigma\int_{T}^t \Gamma(y_{T,s}^{i,\sigma})\,\dd\widetilde B_s^i\,,\,t\ge
		T\,,
	\end{equation}
\end{subequations}
starting at an appropriate time $T$, the exponential growth of $\beta^i_{\lambda,\varepsilon,N}(\sigma)$ and the location of the pair $(X^{i,N},Y^{i,N})$ at time $\beta^i_{\lambda,\varepsilon,N}(\sigma)$ are direct consequences of the results established in Section \ref{sec:LinearCase} (particularly Lemma \ref{dale-bis}). Next replicating successively the proof arguments of Propositions \ref{lacollision} and \ref{torche} will lead to the asymptotics of $\inf_{\lambda\in\mathbb R^d}\beta^i_{\lambda,\varepsilon,N}(\sigma)$ and the location of the pair $(X^{i,N},Y^{i,N})$ at time $\inf_{\lambda}\beta^i_{\lambda,\varepsilon,N}(\sigma)$ for $\sigma\downarrow 0$ and $\varepsilon\downarrow 0$.
 
Hence, for the particle systems \eqref{MFSP1}-\eqref{MFSP2}, the inherent difficulty relies on exhibiting a suitable coupling estimate between $(X^{i,N},Y^{i,N})$ and $(x^{i,\sigma}_{T,\cdot},y^{i,\sigma}_{T,\cdot})$. For the case of self-stabilizing diffusions (Section \ref{sec:SelfStabilizingCase}), such coupling was essentially handled by establishing the closeness between $(\mu^X_t,\mu^Y_t)$ and $(\delta_{\lambda_1},\delta_{\lambda_2})$, over an expanding time-period $\left(T_{\kappa},\exp\big[2H/\sigma^2\big]\right)$ (see {\bf Step 4.} in the proof of Proposition \ref{label_bizarre2}). Naturally, proceeding in the same way for \eqref{MFSP1}-\eqref{MFSP2} 
requires to establish an analogue
of~\eqref{MeanCoupling-MV}, dealing with the empirical measures $\frac{1}{N}\sum_{j=1}^N\delta_{X^{j,N}_t}$
and~$\frac{1}{N}\sum_{j=1}^N\delta_{Y^{j,N}_t}$, in place of  $\mu^X_t$ and $\mu^Y_t$ making the coupling trickier. The analogue of  coupling estimate established in Proposition \ref{label_bizarre2} is provided by Proposition \ref{label_bizarre9}.

\subsection{Coupling estimates for the particle systems}

As a preliminary step, let us point out that the whole particle systems \eqref{MFSP1} and \eqref{MFSP2} can also be equivalently formulated as some $\mathbb R^{dN}$-valued diffusion processes $\mathbf X^N_t:=(X^{1,N}_t,\cdots,X^{N,N}_t)$ and $\mathbf
Y^N_t:=(Y^{1,N}_t,\cdots,Y^{N,N}_t)$ satisfying
\[
\mathbf X^{N}_t=\mathbf{x}_1^N-\int_0^t \mathbf{\Upsilon_N}(\mathbf X^N_s)\,\dds+\sigma\int_0^t\mathbf{\Gamma}(\mathbf X^{N}_s)\,\dd \mathbf B_s,\qquad t\ge 0\,,
\]
\[
\mathbf Y^{N}_t=\mathbf{x}_2^N-\int_0^t\mathbf{\Upsilon_N}(\mathbf Y^N_s)\,\dds+\sigma \int_0^t\mathbf{\Gamma}(\mathbf Y^{N}_s)\,\dd\widetilde{\mathbf B}_s,\qquad t\ge 0\,,
\]
the initial states $\mathbf{x}_1^N$ and $\mathbf{x}_2^N$ being respectively the common points $(x_1,\cdots,x_1)$ and $(x_2,\cdots,x_2) $, $\mathbf B:=(B^1,\cdots,B^N)$ and $\widetilde{\mathbf B}:=(\widetilde{B}^1,\cdots,\widetilde{B}^N)$, the diffusion $\mathbf \Gamma$ corresponding to the diagonal block $Nd\times Nd$-matrix function endowing $\Gamma$ as its main diagonal component, and the driving drift is given by the vector field:
\[
\mathbf{\Upsilon_N}:\mathbf x^N=(x_1,\cdots,x_N)\in\mathbb R^{dN}\mapsto \mathbf{\Upsilon_N}(\mathbf
x^N)=\Big(\U(x_1)+\frac{1}{N}\sum_{j=1}^N\F(x_1-x_j),\cdots,\U(x_N)+\frac{1}{N}\sum_{j=1}^N\F(x_N-x_j)\Big).
\]
\begin{rem} Let us also remark that the fixed points for the flows generated by $\mathbf{\Upsilon_N}$ are of the form $(x_0,\cdots,x_0)$ where $x_0$ is a zero of $\U$. Indeed, the set of zeros of $\mathbf{\Upsilon_N}$ is characterized by $x_i=\Big(\U+\F\star \mu_{\mathbf{x^N}}\Big)^{-1}(0)$ with $\mu_{\mathbf{x^N}}:=N^{-1}\sum_{j=1}^N\delta_{x_j}$. We can write
\[
\left(\U+\F\star\mu_{\mathbf{x^N}}\right)(x_i)-\left(\U+\F\star\mu_{\mathbf{x^N}}\right)(x_0)=(x_i-x_0)\cdot\int_{0}^{1}\nabla\W_{\mu_{\mathbf{x^N}}}(x_0+t(x_i-x_0))(x_1+t(x_i-x_0))\ddt\,,
\]
where $\nabla\W_{\mu_{\mathbf{x^N}}}$ stands for the Jacobian matrix of $\W_{\mu_{\mathbf{x^N}}}$. Owing to the synchronization assumption $\mathbf{(A)}$-$(iv)$, we know that there exists $\theta>0$ such that $0=\W_{\mu_{\mathbf{x^N}}}(x_i)-\W_{\mu_{\mathbf{x^N}}}(x_1)\geq\theta||x_i-x_0||^2$ so $x_i=x_0$ for any $1\leq i\leq N$. Since $\F(0)=0$, we immediately obtain $\U(x_0)=0$.
	\end{rem}

The analogue to the coupling estimate stated in Proposition \ref{label_bizarre2} for self-stabilizing diffusions is established for the particle systems with the following:

	\begin{prop}\label{label_bizarre9}
		For any $\varepsilon\in(0,\varepsilon_c)$, arbitrary $H>0$ and for any $\kappa>0$ small enough, there exist a rank
		$N_{\varepsilon,\kappa}\ge 1$ and a positive finite time $T_{\varepsilon,\kappa}$, both independent of $\sigma$, such that, for any $N\ge
		N_{\varepsilon,\kappa}$,
		\begin{align*}
			&\lim_{\sigma\rightarrow 0}\PP\left(\max_{T_{\varepsilon,\kappa}\le t\le \exp[\frac {2H}{\sigma^2}]}\|X_t^{i,N}-x_{T_{\varepsilon,\xi},t}^{i,\sigma}\|\geq\xi\right)=0=\lim_{\sigma\rightarrow 0}\PP\left(\max_{T_{\varepsilon,\xi}\le t\le \exp[\frac {2H}{\sigma^2}]}\|Y_t^{i,N}-y_{T_{\varepsilon,\xi},t}^{i,\sigma}\|\geq\kappa\right)\,.
		\end{align*}
	\end{prop}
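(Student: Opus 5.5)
We follow the scheme of the proof of Proposition \ref{label_bizarre2}. The only genuinely new ingredient is the control of the interaction term. Here $\mu^X_t$ is replaced by the empirical measure $\mu^N_t:=N^{-1}\sum_j\delta_{X^{j,N}_t}$, which is random and involves all particles, so the deterministic quantity $\mathbb E\|X_t-\lambda_1\|^2$ of Step 3 is replaced by the empirical dispersion
\[
\Xi^N_t:=\frac1N\sum_{j=1}^N\|X^{j,N}_t-\lambda_1\|^2 .
\]
We only treat $X^{i,N}$; the $Y$ system is handled identically.

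\textbf{Step 1: analogue of Lemma \ref{chante}.} We compare $\mathbf X^N$ with the deterministic flow of $\mathbf{\Upsilon_N}$ started from $(x_1,\dots,x_1)$. Because $\F(0)=0$, this flow is $(\varphi^{(1)}_t,\dots,\varphi^{(1)}_t)$, and it converges to $\lambda_1$. We apply It\^o's formula to $\frac1N\sum_j\|X^{j,N}_t-\varphi^{(1)}_t\|^2$ and add and subtract $\W_{\mu^N_t}(\varphi^{(1)}_t)$. The synchronization condition \eqref{Hyp:Synchro}, applied with the random measure $\mu^N_t$ (it holds for every $\mu$), controls the first difference. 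The term
\[
N^{-1}\sum_j (X^j-\varphi)\cdot \F\star\mu^N_t(\varphi)
\]
is bounded, using the polynomial growth of $G$ and the moments \eqref{Moment_particle}, by $C\,\mathbb E[\text{same quantity}]$ after Cauchy--Schwarz. Gronwall's lemma then gives a bound of order $\sigma^2e^{CT}$, uniformly in $N$. We choose $T_{\varepsilon,\kappa}$ so that $\|\varphi^{(1)}_T-\lambda_1\|$ is small and then restrict $\sigma$, which yields $\mathbb E[\Xi^N_{T}]\le\kappa^2/4$. This is only finite-time, and $N$ plays no role yet.

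\textbf{Step 2: pathwise coupling up to a stopping time.} Define the random time
\[
S^N_\kappa:=\inf\{t\ge T:\ \Xi^N_t\ge\kappa^2\}.
\]
On $[T,S^N_\kappa]$ we repeat Steps 1--3 of the proof of Proposition \ref{label_bizarre2}. The martingale part is handled by the event $\mathcal A^\sigma$, via Lemma \ref{lem:MartingaleConcentration} and Lemma \ref{lem:Moment_particle}. The event $\mathcal B^\sigma_R$ confines the linearized process $x^{i,\sigma}_{T,\cdot}$ to a ball whose exit cost exceeds $H+1$, using Theorem \ref{thm:KramersDZ}. The interaction error now satisfies
\[
\|\F\star\mu^N_t(x)-\F(x-\lambda_1)\|\le \psi(\|x-\lambda_1\|)\,\big(\Xi^N_t\big)^{1/2}\,(1+\text{empirical higher moments}),
\]
so it is bounded by $\psi(R)\kappa$ times a factor. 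The higher empirical moments are kept bounded on a high-probability event. The contraction $\theta$ then gives $\|X^{i,N}_t-x^{i,\sigma}_{T,t}\|<\xi$ up to $\min\{e^{2H/\sigma^2},S^N_\kappa\}$, provided $\kappa\psi(R)/\theta<\xi$.

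\textbf{Step 3 (main obstacle): $S^N_\kappa\ge e^{2H/\sigma^2}$ with probability tending to one.} In the McKean--Vlasov case, Step 4 of Proposition \ref{label_bizarre2} exploited that $S_\kappa$ is deterministic. Here $S^N_\kappa$ is a random exit time of the whole $\mathbb R^{dN}$-valued diffusion. We plan to apply Theorem \ref{thm:KramersDZ} directly to $\mathbf X^N$, taking as domain a neighbourhood of the diagonal point $\boldsymbol\lambda_1=(\lambda_1,\dots,\lambda_1)$, in the spirit of \cite{Tugaut2020}. By the Remark above, $\boldsymbol\lambda_1$ is a stable fixed point of $-\mathbf{\Upsilon_N}$. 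We would take the domain
\[
\mathcal D=\{\mathbf x:\ N^{-1}\textstyle\sum_j\|x_j-\lambda_1\|^2<\kappa^2\}.
\]
Near $\lambda_1$ we have
\[
(x-\lambda_1)\cdot\U(x)\ge\rho\|x-\lambda_1\|^2
\]
(the set $\mathcal S_\rho$ of Step 4), and $\F$ pairs monotonically through \eqref{Hyp-F2_bis}. Together these show that $-\mathbf\Upsilon_N$ points inward on $\partial\mathcal D$ for small $\kappa$, so $\mathcal D$ is stable. The key point is a lower bound on the exit cost that scales like $N\rho\kappa^2$, because the action of $\mathbf X^N$ is a sum of $N$ individual actions. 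The estimate is the analogue of the computation giving $\Lambda_-\theta R^2$: it uses $\dot\phi\cdot(\phi-\boldsymbol\lambda_1)$ summed over particles and the local contractivity $\rho$, and yields a cost of at least $c\,N\rho\kappa^2\Lambda_-$. We then choose $N_{\varepsilon,\kappa}$ so that $cN\rho\kappa^2\Lambda_->H+1$. Theorem \ref{thm:KramersDZ} requires globally Lipschitz coefficients, whereas here they are only locally Lipschitz; we handle this by modifying $\mathbf\Upsilon_N$ outside a large ball, which is harmless on $\mathcal B^\sigma_R$. This yields $\mathbb P(S^N_\kappa<e^{2H/\sigma^2})\to0$. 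Combining with Step 2 and choosing $\kappa$ small relative to $\xi$ concludes the proof. The delicate point we expect is checking the stability of $\mathcal D$ for all $N$ uniformly, and that the Step 1 starting point lies inside $\mathcal D$ with probability close to one; the latter follows from Markov's inequality on $\mathbb E[\Xi^N_T]$.
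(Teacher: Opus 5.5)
Your Steps 1--2 follow the paper's scheme. Step 3, which you rightly call the crux, is where you leave the paper's route, and as written it does not go through. Applying Theorem \ref{thm:KramersDZ} to the interacting system $\mathbf X^N$ on $\mathcal D=\{\mathbf x:\ N^{-1}\sum_j\|x_j-\lambda_1\|^2<\kappa^2\}$ needs two things: stability of $\mathcal D$ under $-\mathbf{\Upsilon_N}$, and, for the $\dot\phi\cdot(\phi-\lambda_1)$ cost trick, an aggregated contraction inequality
\[
\frac1N\sum_{j=1}^N(x_j-\lambda_1)\cdot\W_{\mu^N_{\mathbf x}}(x_j)\ \ge\ c\,\frac1N\sum_{j=1}^N\|x_j-\lambda_1\|^2\quad\text{on }\overline{\mathcal D},\qquad \mu^N_{\mathbf x}=\frac1N\sum_{k=1}^N\delta_{x_k},
\]
uniformly in large $N$. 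Local contractivity on $\mathcal S_\rho$ plus discarding the interaction via \eqref{Hyp-F2_bis} cannot give this. On $\partial\mathcal D$, a fraction $\kappa^2/\|x^*-\lambda_1\|^2$ of the particles can sit at any point $x^*$ with $\|x^*-\lambda_1\|^2\le N\kappa^2$, the rest sitting at $\lambda_1$. Since the statement must hold for all large $N$, such $x^*$ include $O(1)$ points where $(x^*-\lambda_1)\cdot\U(x^*)<0$, for instance $x^*=1/2$ for $u(x)=x^4/4-x^2/2$ with $\lambda_1=-1$. For that configuration the $\U$-part of $\frac{\dd}{\ddt}N^{-1}\sum_j\|x_j-\lambda_1\|^2$ is strictly positive, and once the interaction is thrown away nothing compensates. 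The full drift may still point inward, but proving it requires using the interaction quantitatively: synchronization, the small mass far from $\lambda_1$, and confinement of $\U$ at infinity for particles at distance of order $\sqrt N\kappa$, where the degree-$2n$ polynomial $G$ is large. That is a genuine lemma you have not supplied, and it is exactly the kind of non-stability issue the paper is built to avoid.

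The paper never applies Freidlin--Wentzell to the interacting system. In Step 5.2 it writes It\^o's formula for $N^{-1}\sum_j\|X^{j,N}_t-\lambda_1\|^2$ and drops the interaction, which is harmless there because only an upper bound is needed. It uses $\rho$ on $\mathcal S_\rho$ and the one-sided constant $\gamma$ on the bounded set $\mathcal S_\rho^c\subset\mathbb B(\lambda_1;R_\rho)$. The only bad term left is $(2\rho+\gamma)R_\rho^2\int p^N_s\,\dds$, where $p^N_s$ is the fraction of particles outside $\mathcal S_\rho$. Through the coupling valid up to $S_\kappa$, $p^N_s$ is dominated by the fraction of \emph{linearized} particles $x^{i,\sigma}_{T_\kappa,s}$ outside $\mathbb B(\lambda_1;r/2)$, plus an arbitrarily small constant. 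Theorem \ref{thm:KramersDZ} is then applied to the $N$ \emph{independent} linearized diffusions, whose drift $\W_{\lambda_1}$ is globally $\theta$-contractive toward $\lambda_1$. On $\mathcal O=\{N^{-1}\sum_i\|x_i-\lambda_1\|^2\le c'r^2/4\}$ stability is automatic and the cost is at least $\Lambda_-\theta Nc'r^2/4$, so no aggregated inequality for the interacting drift is ever needed. This transfer of the ``bad fraction'' to the linearized system is the idea your Step 3 is missing.

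There are also three smaller gaps.
\begin{itemize}
\item In Step 1, the Cauchy--Schwarz factorization of the McKean--Vlasov proof used independence of $X$ and its copy. For particles, the random polynomial factor coming from $G$ is correlated with $N^{-1}\sum_j\|X^{j,N}_t-\varphi_t\|^2$, so you do not get $C\,\EE[\,\cdot\,]$. The paper uses the finite-time LDP for the $\mathbb R^{dN}$ system instead.
\item Markov's inequality applied to $\EE[\Xi^N_T]\le\kappa^2/4$ does not place $\mathbf X^N_T$ in $\mathcal D$ with probability tending to one. Apply it to the $O(\sigma^2)$ fluctuation around $\varphi_T$ instead.
\item The bound on the higher empirical moments in Step 2 needs its own stopping time. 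The paper uses $S^{(2)}_\kappa$ together with a union bound over the $N$ linearized exit times.
\end{itemize}
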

\begin{proof} 

\noindent{}{\bf Outline of the proof.} In a first step, we prove that almost surely (as the parameter $\sigma$ tends to $0$), there is a time $T_\kappa$ at which a good coupling holds. Next, we derive an inequality for controlling this coupling after the time $T_\kappa$. In a third step, we handle a martingale upper-bound. The Step 4 is intended to show that we have a suitable coupling in a time of the form $\inf\{\exp\left(\frac{2H}{\sigma^2}\right);\eta_\sigma\}$, $\eta_\sigma$ being some stopping-time that we will introduce below. In the last step, we prove that with a probability close to $1$ for evanescent noise, then $\eta_\sigma$ is larger than $\exp\left(\frac{2H}{\sigma^2}\right)$, which achieves the proof.

\noindent{}{\bf Step 1. }Fix $\kappa,\sigma>0$. Invoking the large deviation principle \eqref{LDP}, we may choose a finite non-random time $T_\kappa$ such that 

\[
\PP\left(\left\{\frac 1N\sum_{i=1}^N\|X^{i,N}_{T_\kappa}-\lambda_1\|^{2n}\geq \frac{\kappa^{2n}}{4}\right\}\bigcap\mathcal{C}^\sigma\right)=0,
\]

where $\mathcal{C}^\sigma\subset\Omega$ is satisfying $\PP\left(\mathcal{C}^\sigma\right)\longrightarrow1$ as $\sigma$
 goes to zero.

\noindent{}{\bf Step 2. }Applying It\^o's formula yields, for any $t\geq T_\kappa$:

\begin{align*}
\|X_t^{i,N}-x_{T_\kappa,t}^{i,\sigma}\|^{2}
=&-2\int_{T_\kappa}^t\left(X_s^{i,N}-x_{T_\kappa,s}^{i,\sigma}\right)\cdot\bigg(\W_{\mu_s^{N}}\left(X_s^{i,N}\right)-\W_{\lambda_1}\left(x_{T_\kappa,s}^{i,\sigma}\right)\bigg)\dds\\
&+\sigma^2\int_{T_\kappa}^t\mathrm{Trace}\Big(\left(\Gamma(X_s^{i,N})-
\Gamma(x_{T_\kappa,s}^{i,\sigma})\right)\left(\Gamma^*(X_s^{i,N})-
\Gamma^*(x_{T_\kappa,s}^{i,\sigma})\right)\Big)\dds\\
&+2\sigma\int_{T_\kappa}^t\left(X_s^{i,N}-x_{T_\kappa,s}^{i,\sigma}\right)\cdot\bigg(\Gamma\left(X_s^{i,N}\right)-\Gamma\left(x_{T_\kappa,s}^{i,\sigma}\right)\bigg)\dd B_s^i,
\end{align*}
setting $\mu_t^{N}=\frac{1}{N}\sum_{j=1}^N\delta_{X_t^{j,N}}$, $\W_\mu(x)=\U(x)+\F\star\mu(x)$ and
$\W_{\lambda_1}(x)=\U(x)+\F(x-\lambda_1)$. As $\Gamma$ is Lipschitz continuous: 
\begin{align*}
	\|X_t^{i,N}-x_{T_\kappa,t}^{i,\sigma}\|^{2}\le &-2\int_{T_\kappa}^t\left(X_s^{i,N}-x_{T_\kappa,s}^{i,\sigma}\right)\cdot\bigg(\W_{\mu_s^{N}}\left(X_s^{i,N}\right)-\W_{\lambda_1}\left(x_{T_\kappa,s}^{i,\sigma}\right)\bigg)\dds\\
	&+2\sigma\int_{T_\kappa}^t\left(X_s^{i,N}-x_{T_\kappa,s}^{i,\sigma}\right)\cdot\bigg(\Gamma\left(X_s^{i,N}\right)-\Gamma\left(x_{T_\kappa,s}^{i,\sigma}\right)\bigg)\dd B_s^i\\
	&+C\sigma^2\int_{T_\kappa}^t\|X_s^{i,N}-x_{T_\kappa,s}^{i,\sigma}\|^{2}\,\dds.
\end{align*}

\noindent{}{\bf Step 3. }Let us first handle the martingale upper-bound, writing
\[
M_{T_\kappa,t}^i=\int_{T_\kappa}^t\Theta_s^i\,\dd B_s^i:=\int_{T_\kappa}^t\left(X_s^{i,N}-x_{T_\kappa,s}^{i,\sigma}\right)\cdot\bigg(\Gamma\left(X_s^{i,N}\right)-\Gamma\left(x_{T_\kappa,s}^{i,\sigma}\right)\bigg)\dd B_s^i,\ \ t\ge T_\kappa.
\]
For $\langle M\rangle$ the quadratic variation of $M$:
\begin{align*}
&\langle M\rangle_{T_\kappa,t}^i=\int_{T_\kappa}^t\|\Theta_s^i\|^2\,\dds\\
&=\int_{T_\kappa}^t\left(X_s^{i,N}-x_{T_\kappa,s}^{i,\sigma}\right)\cdot\left(\big(\Gamma\left(X_s^{i,N}\right)-\Gamma(x_{T_\kappa,s}^{i\sigma})\big)\big(\Gamma(X_s^{i,N})-\Gamma(x_{T_\kappa,s}^{i,\sigma}\big)^*\right)\left(X_s^{i,N}-x_{T_\kappa,s}^{i,\sigma}\right)\,\dds,
\end{align*}
for $t\ge T_\kappa$, set, for $H>0$ arbitrary,
\[
\mathcal{A}^\sigma:=\left\{\omega\in\Omega\,\,:\,\,\forall\,t\in\left[T_\kappa;\exp\left(\frac{2H}{\sigma^2}\right)\right],\, M_t^i<\frac{1}{2}\left\langle
M\right\rangle_t^i+\frac{1}{\sqrt{\sigma}}\right\}.
\] 
Owing to the uniform-in-time moment control~\eqref{Moment_particle} in Lemma~\ref{lem:Moment_particle}, and the control (implied by~\eqref{Moment_SS} with drift $\W_{\lambda_1}$ and without interaction), we have
$$
\sup_{t\ge 0}\mathbb E[\|x_{T_\kappa,t}^{i,\sigma}\|^{2p}]\le c^p(p-1)!,
$$
and $\langle M\rangle^i$ satisfies a local Novikov type condition (see e.g. \cite[Chapter 3, Corollary 5.14]{KarShr91}). Indeed, for any sequence $t_\ell=\ell h+T_\kappa$, $\ell\ge 0$, for $h>0$ small enough so that $2\max\{C;c\}h<1$ 
\begin{align*}
&\EE\left(\exp\left[\frac{1}{2}\int_{t_\ell}^{t_{\ell+1}}\|\Theta_s^i\|^2\,\dds\right]\right)=\sum_{k\ge 0}\frac {1}{k!2^k}\EE\left(\int_{t_\ell}^{t_{\ell+1}}\|\Theta_s^i\|^2\,\dds \right)^{k}\le \sum_{k\ge 0}\frac {1}{k!2^k}\big(t_{\ell+1}-t_{\ell}\big)^k\sup_{t\ge 0}\EE\left(\|\Theta_s^i\|^{2k}\right)\\
&\le \sum_{k\ge 0}\frac {h^k(\Lambda_+)^{2k}2^{2k}}{k!2^k}\sup_{t\ge T_\kappa}\EE[\|X_t^{i,N}\|^{2k} +\|x_{r,T_\kappa}^{i,\sigma}\|^{2k}]\le \sum_{k\ge 0}\frac {h^k(\Lambda_+)^{2k}2^k}{k!}(k-1)!C^k= \sum_{k\ge 0}\frac 1{k}\Big(2Ch(\Lambda_+)^2\Big)^{k}\,.
\end{align*}
Here, $C$ is defined similarly as in the proof of Lemma~\ref{chante}.
Choosing $h>0$ small enough so that $2Ch(\Lambda_+)^2<1$ immediately ensures the finiteness of the sum and the local Novikov condition:
$$
\EE\left(\exp\left[\frac{1}{2}\int_{t_\ell}^{t_{\ell+1}}\|\Theta_s^i\|^2\,\dds\right]\right)<\infty
$$
for the increasing sequence $\{t_\ell\}_{\ell\in\bNb}$. Now, using a classical martingale concentration argument similar to the one at Step~2 in the proof of Proposition~\ref{label_bizarre2}, we get:
\[
\PP\left(\left(\mathcal{A}^\sigma\right)^c\right)\leq\exp\Big(-\frac{1}{\sqrt{\sigma}}\Big).
\]
For any $\omega\in\mathcal{A}^\sigma$ and for any $t\geq T_\kappa$, 
  we have the inequality $\sigma
M_t\leq\frac{\sigma}{2}\left\langle M\right\rangle_t+\sqrt{\sigma}$. Due to the boundedness of $\Gamma$,
\[
\left\langle M\right\rangle_t\leq (\Lambda_+)^2d\int_{T_\kappa}^t\big{\|}X_s^{i,N}-x_{T_\kappa,s}^{i\sigma}\big{\|}^2\dds.
\]
 Hence, on $\mathcal{A}^\sigma$, we have that
\begin{align}\label{Coupling_SS_Proof_1bis}
	\|X_t^{i,N}-x_{T_\kappa,t}^{i,\sigma}\|^2\leq&\sqrt{\sigma}+C\big(\sigma^2+\frac{\sigma}{2}\big)\int_{T_\kappa}^t\big{\|}X_s^{i,N}-x_{T_\kappa,s}^{i,\sigma}\big{\|}^2\dds\nonumber\\
	&-2\int_{T_\kappa}^t\left(X_s^{i,N}-x_{T_\kappa,s}^{i,\sigma}\right)\cdot\bigg(\W_{\mu_s^{N}}\left(X_s^{i,N}\right)-\W_{\lambda_1}\left(x_{T_\kappa,s}^{i,\sigma}\right)\bigg)\dds.
\end{align}

\noindent{}{\bf Step 4. }We now deal with the expression:
$-\left(X_t^{i,N}-x_{T_\kappa,t}^{i,\sigma}\right)\cdot\Big(\W_{\mu_t^{N}}\left(X_t^{i,N}\right)-\W_{\lambda_1}\left(x_{T_\kappa,t}^{i,\sigma}\right)\Big)$. Adding and subtracting $\W_{\mu_t^{N}}(x_{T_\kappa,t}^{i,\sigma})$ yields

\begin{align*}
&-\left(X_t^{i,N}-x_{T_\kappa,t}^{i,\sigma}\right)\cdot\big(\W_{\mu_t^{N}}\left(X_t^{i,N}\right)-\W_{\lambda_1}\left(x_{T_\kappa,t}^{i\sigma}\right)\big)\\
&=-\left(X_t^{i,N}-x_{T_\kappa,t}^{i,\sigma}\right)\cdot\left(\W_{\mu_t^{N}}\left(X_t^{i,N}\right)-\W_{\mu_t^{N}}\left(x_{T_\kappa,t}^{i,\sigma}\right)\right)-\left(X_t^{i,N}-x_{T_\kappa,t}^{i,\sigma}\right)\cdot\left(\W_{\mu_t^{N}}\left(x_{T_\kappa,t}^{i,\sigma}\right)-\W_{\lambda_1}\left(x_{T_\kappa,t}^{i,\sigma}\right)\right)\,,\\
&\leq-\left(X_t^{i,N}-x_{T_\kappa,t}^{i,\sigma}\right)\cdot\left(\W_{\mu_t^{N}}\left(X_t^{i,N}\right)-\W_{\mu_t^{N}}\left(x_{T_\kappa,t}^{i,\sigma}\right)\right)+\big{\|}X_t-x_{T_\kappa,t}^\sigma\big{\|}\,\big{\|}\F\star\mu_t^{N}\left(x_{T_\kappa,t}^ {i,\sigma}\right)-\F(x_{T_\kappa,t}^{i,\sigma}-\lambda_1)\big{\|}\,.
\end{align*}

\noindent{}{\bf Step 4.1. } According to \eqref{Hyp-F1}, we have the following estimate after using the triangular inequality:
\begin{align}\label{Control-F-Particles}
&\big{\|}\F\star\mu_t^{N}\left(x_{T_\kappa,t}^{i,\sigma}\right)-\F(x_{T_\kappa,t}^{i,\sigma}-\lambda_1)\big{\|}\le \int_{\mathbb R^d}\big{\|}\F(x_{T_\kappa,t}^{i,\sigma}-u)-\F(x_{T_\kappa,t}^{i,\sigma}-\lambda_1)\big{\|}\mu_t^{N}(\ddu)\nonumber\\
&\le  K\int_{\bRb^d}\|u-\lambda_1\|\left(1+\big{\|}x_{T_\kappa,t}^{i,\sigma}-u\big{\|}^{2n-1}+\big{\|}x_{T_\kappa,t}^{i,\sigma}-\lambda_1\big{\|}^{2n-1}\right)\mu_t^{N}(\ddu)\nonumber\\
&\leq \frac{K}{N}\sum_{j=1}^N\|X_t^{j,N}-\lambda_1\|\left(1+\big{\|}x_{T_\kappa,t}^{i,\sigma}-\lambda_1\big{\|}^{2n-1}+\|X_t^{j,N}-x_{T_\kappa,t}^{i,\sigma}\|^{2n-1}\right)\nonumber\\
&\leq \frac{K}{N}\sum_{j=1}^N\|X_t^{j,N}-\lambda_1\|\left(1+\big{\|}x_{T_\kappa,t}^{i,\sigma}-\lambda_1\big{\|}^{2n-1}+2^{2n-1}\left[\|X_t^{j,N}-\lambda_1\|^{2n-1}+\|x_{T_\kappa,t}^{i,\sigma}-\lambda_1\|^{2n-1}\right]\right)\nonumber\\
&\leq 2^{2n-1}\left(1+\big{\|}x_{T_\kappa,t}^{i,\sigma}-\lambda_1\big{\|}^{2n-1}\right)\frac{K}{N}\sum_{j=1}^N\|X_t^{j,N}-\lambda_1\|\left(1+\|X_t^{j,N}-\lambda_1\|^{2n-1}\right)\,.
\end{align}

Using Cauchy-Schwarz inequality leads us to
\[
\big{\|}\F\star\mu_t^{N}\left(x_{T_\kappa,t}^{i,\sigma}\right)-\F(x_{T_\kappa,t}^{i,\sigma}-\lambda_1)\big{\|}\leq\eta(\|x_{T_\kappa,t}^{i,\sigma}-\lambda_1\|)\sqrt{\frac{1}{N}\sum_{j=1}^N\|X_t^{j,N}-\lambda_1\|^2}\sqrt{1+\frac{1}{N}\sum_{j=1}^N\|X_t^{j,N}-\lambda_1\|^{4n-2}}\,,
\]
for $\eta(r):=C(1+r^{2n})$, $C$ being some constant.

\noindent{}{\bf Step 4.2. } To handle the above, we introduce the \emph{random} times:
\begin{equation*}
S_\kappa^{(1)}:=\inf\left\{t\geq
T_\kappa\,\,:\,\,\frac{1}{N}\sum_{j=1}^N\|X_t^{j,N}-\lambda_1\|^{2}\geq\kappa^{2}\right\},
\end{equation*}
and
\[
S_\kappa^{(2)}:=\inf\left\{t\geq
T_\kappa\,\,:\,\,\frac{1}{N}\sum_{j=1}^N\|X_t^{j,N}-\lambda_1\|^{4n-2}\geq R^{4n-2}\right\},
\]
and then we put $S_\kappa:=\inf\{S_\kappa^{(1)};S_\kappa^{(2)};\exp[\frac{2H}{\sigma^2}]\}$. We remark that at time $S_\kappa^{(2)}$, at least one of the particle has already exited the Euclidean ball of center $\lambda_1$ and radius $R$. In the following, $R>1$ is intended to be large enough.

Roughly speaking, on the interval $[T_\kappa,S_\kappa]$, we have a good uniform control for the bound \eqref{Control-F-Particles}.
We will show that $S_\kappa$ is mainly given by $\exp\{\frac{2H}{\sigma^2}\}$ for any $H>0$, provided that $\sigma$ is small enough and $R$, defining $S^{(2)}_\kappa$, large enough. Next, set

\[
\mathcal{B}_{R}^{i,\sigma}:=\Big\{\omega\in\Omega\,\,:\,\,\sup_{T_\kappa\leq
t\leq S_\kappa}\big{\|}x_{T,t}^{i,\sigma}-\lambda_1\big{\|}\leq
R-1\Big\}=\left\{\tau_{R-1}^i\geq S_\kappa\right\}
\]
 where $\tau_{R-1}^i$ is the first time
that the process $x_{T_\kappa,\cdot}^{i,\sigma}$ exits from the ball $\mathbb B(\lambda_1;R-1)$. As in the proof of Proposition~\ref{label_bizarre2}, for $R$ large
enough, the probability of $\mathcal{B}_{R}^{i,\sigma}$ tends to $1$ as $\sigma$ goes to $0$. Putting  $\mathcal{B}_{R}^{\sigma}:=\bigcap_{i=1}^N\mathcal{B}_{R}^{i,\sigma}$, let us consider \eqref{Coupling_SS_Proof_1bis} under $\mathcal{B}_{R}^\sigma\bigcap\mathcal{A}^\sigma$. Due to the contractivity \eqref{Hyp:Synchro} of
$-\W_{\mu_t^{N}}$, we have
\[
-\left(X_t^{i,N}-x_{T_\kappa,t}^{i,\sigma}\right)\cdot\left(\W_{\mu_t^{N}}\left(X_t^{i,N}\right)-\W_{\mu_t^{N}}\left(x_{T_\kappa,t}^{i,\sigma}\right)\right)\leq-\theta\big{\|}X_t^{i,N}-x_{T_\kappa,t}^{i,\sigma}\big{\|}^2,\qquad t\ge T_{\kappa}.
\]
Setting $\zeta_t^i:=\|X_t^{i,N}-x_{T_\kappa,t}^{i,\sigma}\|^2$, for all $T_{\kappa}\le t\le \min\{\exp[\frac{2\underline{H}_\varepsilon}{\sigma^2}],S_\kappa\}$,
\begin{align*}
	\zeta_t^i\le \sqrt{\sigma}+\int_{T_\kappa}^t\Big(\big(C(\sigma^2+\frac\sigma{2})
	-2\theta\big)\sqrt{\zeta_s^i}+\eta(R)\kappa\Big)\sqrt{\zeta_s^i}\dds.
\end{align*}
Considering now $\sigma$ sufficiently small so that $C\big(\sigma^2+\frac{\sigma}{2}\big)-2\theta\le -\theta$, on the set $\Big\{\sup_{T_\kappa\leq t\leq
	\exp\left[\frac{2H}{\sigma^2}\right]}\|X_t^{i,N}-x_{T_\xi,t}^{i,\sigma}\|\geq\xi\Big\}$, we have:
	$$
	\bigg(C(\sigma^2+\frac\sigma{2})
	-2\theta\bigg)\sqrt{\zeta_s^i}+\eta(R)\kappa\le -\theta\xi+\eta(R)\kappa\,.
	$$
Given $\xi$, we shall then choose $\kappa$ small enough so that $\eta(R)\kappa/\theta<\xi$ to eventually obtain:
\[
\zeta_t^i\leq \sqrt{\sigma},\qquad T_{\kappa}\le t\le \min\left\{\exp\Big[\frac{2H}{\sigma^2}\Big],S_\kappa\right\}.
\]

Letting $\sigma$ tends to $0$ achieves to prove that
\[
\lim_{\sigma\to0}\PP\left(\mathcal{C}^\sigma\bigcap\mathcal{B}_{R}^\sigma\bigcap\mathcal{A}^\sigma\bigcap\left\{\sup_{T_\kappa\leq
t\leq\min\left\{\exp\left[\frac{2H}{\sigma^2}\right],S_\kappa\right\}}\|X_t^{i,N}-x_{T_\kappa,t}^{i,\sigma}\|\geq\kappa\right\}\right)=0\,,
\]

for any $1\leq i\leq N$. Recalling that
$\lim_{\sigma\to0}\PP\left(\left(\mathcal{C}^\sigma\right)^c\right)=\lim_{\sigma\to0}\PP\left(\left(\mathcal{B}_{R}^\sigma\right)^c\right)=\lim_{\sigma\to0}\PP\left(\left(\mathcal{A}^\sigma\right)^c\right)=0$, we conclude

\[
\lim_{\sigma\to0}\PP\left(\sup_{1\leq i\leq N}\sup_{T_\kappa\leq
t\leq\min\left\{\exp\left[\frac{2H}{\sigma^2}\right],S_\kappa\right\}}\|X_t^{i,N}-x_{T_\kappa,t}^{i,\sigma}\|\geq\kappa\right)=0,
\]
and so our claim up to the random time $S_\kappa$. 
	
\noindent{}{\bf Step 5.} To conclude the proof, it now remains to show that as $\sigma\rightarrow 0$, the probability of the event $\left\{\exp\left[\frac{2H}{\sigma^2}\right]> S_\kappa\right\}$ tends to $0$. This final point is achieved in the two next sub-steps.

\noindent{}{\bf Step 5.1.} We assume that  
$S_\kappa^{(2)}<\inf\left\{\exp\left[\frac{2H}{\sigma^2}\right];S_\kappa^{(1)}\right\}$ which will lead us to a contradiction. If so, on the interval $(S^{(2)}_\kappa,S_\kappa]=\left(S^{(2)}_\kappa,\inf\left\{\exp\left[\frac{2H}{\sigma^2}\right];S_\kappa^{(1)}\right\}\right]$, there would exist a time $t$ such that, for some $1\leq i\leq N$, $\|X_t^{i,N}-x_{T_\kappa,t}^{i,\sigma}\|\leq\kappa$ with high probability (this probability being the one of $\mathcal{C}^\sigma\bigcap\mathcal{B}_{R}^\sigma\bigcap\mathcal{A}^\sigma$). However, since $\|X_t^{i,N}-\lambda_1\|\leq\|x_{T_\kappa,t}^{i,\sigma}-\lambda_1\|+\|X_t^{i,N}-x_{T_\kappa,t}^{i,\sigma}\|$, we would have at least one particle with label $i$ such that $\|x^{i,\sigma}_{T_\kappa,t}-\lambda_1\|\ge R-1$, providing that $\kappa<1$. Yet denoting by $\tau^{i}(\sigma)$ the first exit-time of $x^{i,\sigma}_{T_\kappa,\cdot}$ outside the ball $\mathbb B(\lambda_1,R-1)$, we have
\begin{align*}
\mathbb P\Big(\inf_{1\le i\le N}\tau^{i}(\sigma)\le \exp\left[\frac{2H}{\sigma^2}\right]\Big)\le N\mathbb P\Big(\tau^{i}(\sigma)\le \exp\left[\frac{2H}{\sigma^2}\right]\Big)
\end{align*}
According to Theorem \ref{thm:KramersDZ}, we have $\tau^i(\sigma)\sim\exp\left[ \frac{2H_R}{\sigma^2}\right]$ where $H_R$ is the exit-cost of $x^{i,\sigma}_{T_\kappa,\cdot}$ from the ball $\mathbb B(\lambda_1;R-1)$. Recalling the argument of {\bf Step 3.2} from the proof of Proposition \ref{label_bizarre2}, $H_R$ grows to $\infty$ as $R$ increases. Choosing $R$ large enough  so that $H_R\ge H$, we have $S^{(2)}_\kappa\ge \exp\left[\frac{2 H}{\sigma^2}\right]$ (and by extension  $S^{(2)}_\kappa\ge \inf\left\{\exp\left[\frac{2 H}{\sigma^2}\right];S^{(1)}_\kappa\right\}$, with a probability $f(\sigma)$ tending to $1$ as $\sigma$ vanishes.

\noindent{}{\bf Step 5.2.} We will now prove that for any $\kappa>0$, $S^1_\kappa\geq\exp\left[\frac{2H}{\sigma^2}\right]$ if $\sigma$ is
small enough. We will proceed like in \cite{Tugaut2021}.

\noindent{}{\bf Step 5.2.1.} We derive the following estimate:

\begin{align}
\label{EmpiricalIto_bis}
&\frac1N\sum_{i=1}^N\|X^{i,N}_t-\lambda_1\|^{2}=\frac1N\sum_{i=1}^N	\|X^{i,N}_{T_\kappa}-\lambda_1\|^{2}-\frac{2}N\sum_{i=1}^N\int_{T_\kappa}^t(X^{i,N}_s-\lambda_1)\cdot\U(X^{i,N}_s)\,\dds\nonumber\\
 		&\qquad\qquad-\frac{2}{N^2}\sum_{i,j=1}^N\int_{T_\kappa}^t(X^{i,N}_s-\lambda_1)\cdot\F(X^{i,N}_s-X^{j,N}_s)\,\dds\nonumber\\
 		&\qquad\qquad+\frac{2\sigma}N\sum_{i=1}^N\int_{T_\kappa}^t\big(X^{i,N}_s-\lambda_1\big)\cdot\Gamma(X^{i,N}_s)\,\dd B^i_s +\frac{\sigma^2}N\sum_{i=1}^N\int_{T_\kappa}^t\mathrm{Trace}\big(\Gamma(X^{i,N}_s)\Gamma^*(X^{i,N}_s)\big)\,\dds.
 	\end{align}
 	Observe next that, for all $s\ge T_\kappa$, according to the Property \eqref{Hyp-F2} of $\F$,
\[
\sum_{i,j=1}^N(X^{i,N}_s-\lambda_1)\cdot\F(X^{i,N}_s-X^{j,N}_s)=-\frac12\sum_{i,j=1}^N\Big(X^{i,N}_s-X^{j,N}_s\Big)\cdot\F(X^{i,N}_s-X^{j,N}_s)\Big)\le 0\,.
\]
We can prove similarly as in {\bf Step 3.}, that on the event $\mathcal{C}^\sigma\bigcap\mathcal{A}^\sigma$ (the latter one having a probability close to $1$ as $\sigma$ tends to $0$), we have :
$$
\frac{2\sigma}N\sum_{i=1}^N\int_{T_\kappa}^t\big(X^{i,N}_s-\lambda_1\big)\cdot\Gamma(X^{i,N}_s)\,\dd B^i_s<\sqrt{\sigma}+\frac{d(\Lambda_+)^2\sigma^2}{N^2}\sum_{i=1}^N\int_{T_\kappa}^t\|X_s^{i,N}-\lambda_1\|^2\dds.
$$

\noindent{}{\bf Step 5.2.2.} Introducing $\mathcal{S}_\rho:=\left\{x\in\bRb^d\,\,:\,\,(x-\lambda_1)\cdot\U(x)\geq\rho\|x-\lambda_1\|^2\right\}$, we recall from {\bf Step 4.} of Proposition~\ref{label_bizarre2} that this set is not empty and contains $\lambda_1$ in its interior if $\rho>0$ is sufficiently small since the Jacobian of $-\U$ in the point $\lambda_1$ is a negative definite matrix. Recalling the property \eqref{Hyp-V1} of $\U$, the set $\mathbb R^d\setminus\mathbb B(\lambda_1;R'')\subset\mathcal{S}_\rho$ for $\rho$ (also) small enough and thus the complementary 

$$
\mathcal{S}_\rho^c=\left\{x\in\bRb^d\,\,:\,\,(x-\lambda_1)\cdot\U(x)<\rho\|x-\lambda_1\|^2\right\}
$$
 is included into the ball of center $\lambda_1$ and radius $R_\rho>0$ large enough. Recalling also the property \eqref{Hyp-V2}, let $\gamma>0$ be such that for any $x\in\bRb^d$, we have $(x-\lambda_1)\cdot\U(x)\geq-\gamma\|x-\lambda_1\|^2$. 

Then, by putting $\xi_t^N:=\frac{1}{N}\sum_{j=1}^N\|X_t^{j,N}-\lambda_1\|^{2}$, we deduce from \eqref{EmpiricalIto_bis}:

\begin{align*}
\xi_t^N&\le \xi_{T_\kappa}^N-\frac2{N}\sum_{i=1}^N\int_{T_\kappa}^t(X^{i,N}_s-\lambda_1)\cdot\U(X^{i,N}_s)\,\dds+\frac{d(\Lambda_+)^2\sigma^2}{N}\int_{T_\kappa}^t\xi_s^N\dds+\sqrt{\sigma}+\sigma^2\int_{T_\kappa}^td(\Lambda_+)^2\dds\\
&\leq\xi_{T_\kappa}^N+\sqrt{\sigma}+
\int_{T_\kappa}^t\left(\frac{d(\Lambda_+)^2\sigma^2}{N}-2\rho\right)\xi_s^N\dds
+(2\rho+\gamma)\sum_{j=1}^N\int_{T_\kappa}^t\|X_s^{j,N}-\lambda_1\|^{2}\mathds{1}_{\{X_s^{j,N}\notin\mathcal{S}_\rho\}}\dds,
\end{align*}
using the splitting:
\begin{align*}
&-\sum_{i=1}^N\int_{T_\kappa}^t(X^{i,N}_s-\lambda_1)\cdot\U(X^{i,N}_s)\mathds{1}_{\{X_s^{j,N}\in\mathcal{S}_\rho\}}\,\dds\le -\rho\sum_{i=1}^N\int_{T_\kappa}^t\|X^{i,N}_s-\lambda_1\|^2\mathds{1}_{\{X_s^{j,N}\in \mathcal{S}_\rho\}}\,\dds\\
&=-\rho\sum_{i=1}^N\int_{T_\kappa}^t\|X^{i,N}_s-\lambda_1\|^2\,\dds+\rho\sum_{i=1}^N\int_{T_\kappa}^t\|X^{i,N}_s-\lambda_1\|^2\mathds{1}_{\{X_s^{j,N}\notin \mathcal{S}_\rho\}}\,\dds
\end{align*}
and 
\begin{align*}
	-\sum_{i=1}^N\int_{T_\kappa}^t(X^{i,N}_s-\lambda_1)\cdot\U(X^{i,N}_s)\mathds{1}_{\{X_s^{j,N}\notin\mathcal{S}_\rho\}}\,\dds\le \gamma  \sum_{i=1}^N\int_{T_\kappa}^t\|X^{i,N}_s-\lambda_1\|^2\mathds{1}_{\{X_s^{j,N}\notin\mathcal{S}_\rho\}}\,\dds.
\end{align*}
Due to the boundedness of $\mathcal{S}_\rho^c$, we obtain for sufficiently small $\sigma$, the inequality
\begin{equation}\label{Intermediate-2706}
\xi_t^N\leq\xi_{T_\kappa}^N+\sqrt{\sigma}+\int_{T_\kappa}^t\sigma^2d(\Lambda_+)^2+(C\sigma^2-2\rho)\xi_s^{N}\dds+(2\rho+\gamma)(R_\rho)^2\int_{T_\kappa}^tp_s^{N}\dds,
\end{equation}
where $p_s^{N}$ is the proportion of particles $X^{i,N}$ not in $\mathcal{S}_\rho$ at time $s$.

\noindent{}{\bf Step 5.2.3.} By taking $r>0$ small enough, we know the inclusion $\mathbb{B}(\lambda_1;r)\subset\mathcal{S}_\rho$ holds which immediately implies $p_s^{N}\leq q_s^{N}$ where $q_s^{N}$ is the proportion of particles $X^{i,N}$ not in the ball $\mathbb{B}(\lambda_1;r)$ at time $s$. However, taking $\xi$ smaller than $\frac{r}{2}$, we find that before time $S_\kappa$, on the event $\mathcal{A}^\sigma\bigcap\mathcal{B}_R^\sigma\bigcap\mathcal{C}^\sigma$ (which is of probability close to $1$ as $\sigma$ tends to $0$), $\|X_s^{i,N}-x_{T_\kappa,s}^{i,\sigma}\|<\frac{r}{2}$. In particular, this implies that $q_s^{N}$ is dominated by $\widetilde{q}_s^{N}:=\frac{1}{N}\#\left\{1\leq i\leq N\,\,:\,\,\|x_{T_\kappa,s}^{i,\sigma}-\lambda_1\|>\frac{r}{2}\right\}$ (again with high probability). More precisely,
 we have: for all $c\in(0,1)$,
 $$
 \mathbb P\Big(q_s^{N}\le \widetilde{q}_s^{N}+c,\, t\in[T_{\kappa},S_{\kappa}]\Big)=1-\nu(\sigma,c),
 $$
with $\lim_{\sigma\rightarrow0}\nu(\sigma,c)=0$.
Indeed, almost surely,
$$
q_s^{N}\le \widetilde{q}_s^{N}+\frac{1}{N}\#\left\{1\leq i\leq N\,\,:\,\,\|X^{i,N}_s-x_{T_\kappa,s}^{i,\sigma}\|>\frac{r}{2}\right\},
$$
and, for any fixed $i$, 
$$
\mathbb P\Big(\|X^{i,N}_s-x_{T_\kappa,s}^{i,\sigma}\|>\frac{r}{2},\,\mbox{for all}\, t\in[T_{\kappa},S_{\kappa}]\Big)=\nu(\sigma)\underset{\sigma\rightarrow 0}{\longrightarrow} 0,
$$
Taking $k$, the lower integer of $cN$, we have
\begin{align*}
&\mathbb P\Big(\frac{1}{N}\#\left\{1\leq i\leq N\,\,:\,\,\max_{s\in[T_\kappa,S_{\kappa}]}\|X^{i,N}_s-x_{T_\kappa,s}^{i,\sigma}\|>\frac{r}{2}\right\}\ge c\Big)\\
&=\mathbb P\Big(\#\left\{1\leq i\leq N\,\,:\,\,\max_{s\in[T_\kappa,S_{\kappa}]}\|X^{i,N}_s-x_{T_\kappa,s}^{i,\sigma}\|>\frac{r}{2}\right\}\ge k\Big)\\
&\le\sum_{\ell=k}^N\mathbb P\Big(\mbox{Exactly}\,\ell\,\mbox{particles satisfies}\,\max_{s\in[T_\kappa,S_{\kappa}]}\|X^{i,N}_s-x_{T_\kappa,s}^{i,\sigma}\|>\frac{r}{2}\Big)\le \sum_{\ell=k}^N\binom{N}{\ell}\big(\nu(\sigma)\big)^\ell=\nu(\sigma,c).
\end{align*}

\noindent{}{\bf Step 5.2.4.} Consequently, plugged into \eqref{Intermediate-2706}, the above yields, on an event of probability close to $1$ as $\sigma$ tends to $0$,
\begin{equation}\label{Intermediate-2706-2}
	\xi_t^N\leq\xi_{T_\kappa}^N+\sqrt{\sigma}+\int_{T_\kappa}^t\sigma^2d(\Lambda_+)^2+\left(\frac{d(\Lambda_+)^2\sigma^2}{N}-2\rho\right)\xi_s^{N}\dds+(2\rho+\gamma)(R_\rho)^2\int_{T_\kappa}^t \big(\widetilde{q}_s^{N}+c\big)\dds,
\end{equation}
where $c$ can be chosen arbitrarily small.
 It now remains to prove that $\widetilde{q}_s^{N}$ is small with $\sigma$ for any $T_\kappa\leq s\leq\exp\left[\frac{2H}{\sigma^2}\right]$:  
for $c'\in(0,1)$ (again) arbitrary, we have
\begin{align*}
\mathbb P\Big(\widetilde{q}^N_s\ge c',\,\forall s\in[T_\kappa,S_\kappa]\Big)&=\mathbb P\Big(\#\left\{i\,:\,x^{i,\sigma}_{T_\kappa,s}\notin\mathbb B(\lambda_1;\frac r{2})\right\}\ge c'N,\,\forall s\in[T_\kappa,S_\kappa]\Big)\\
&\le \mathbb P\Big(\sum_{i=1}^N\|x^{i,\sigma}_{T_\kappa,s}-\lambda_1\|^2\ge r^2c'N/4,\,\forall s\in[T_\kappa,S_\kappa]\Big)\,.
\end{align*}
Introducing the exit-time
$$
{\bf \tau}^{N}(\sigma,r,c')=\inf\left\{s\ge T_\kappa\,:\,\sum_{i=1}^N\|x^{i,\sigma}_{T_\kappa,s}-\lambda_1\|^2\ge r^2c'N/4\right\}\,,
$$
we get
$$
\mathbb P\Big(\widetilde{q}^N_s\ge c',\,\forall s\in[T_\kappa,S_\kappa]\Big)\le \mathbb P\Big({\bf \tau}^{N}(\sigma,r,c')\le S_\kappa\Big)\le \mathbb P\left({\bf \tau}^{N}(\sigma,r,c')\le \exp\left[\frac{2H}{\sigma^2}\right]\right)\,.
$$
Applying Theorem \ref{thm:KramersDZ} to the whole particle system $(x^{1,\sigma}_{T_\kappa},\cdots, x^{1,\sigma}_{T_\kappa})$ and with the escape domain defined as $\mathcal O:=\{{\bf x}=(x_1,\cdots,x_N)\in\mathbb R^{dN}\,:\,\frac1{N}\sum_{i=1}^N\|x_i-\lambda_1\|^2\le c'r^{2}/4\}$ (which is naturally positive invariant), we know that ${\bf \tau}^{N}(\sigma,r,c')\asymp \exp\big[\frac{2 {\bf H}_{N,r,c'}}{\sigma^2}\big]$ where
\begin{align*}
{\bf H}_{N,r,c'}
	=\frac 1{4}\inf_{{\bf y}\in\partial\mathcal O }\inf_{-\infty<T_1\le T_2<\infty}\inf_{{\bf \phi}=(\phi^1,\cdots,\phi^N)}\left\{ \sum_{i=1}^N\int_{T_1}^{T_2} 
	\Big\|\frac{\dd  \phi^i_t}{\ddt}+\W_{\lambda_1}(\phi^i_t)\Big\|^2_{A^{-1}(\phi^i_t)}\,\ddt\,:\,{\bf \phi}\in \mathcal A^{(\lambda_1,\cdots,\lambda_1),{\bf y}}[T_1,T_2]\right\}
\end{align*}
where $\partial \mathcal O=\left\{{\bf y}=(y_1,\cdots,y_N)\in\mathbb R^{Nd},\,\frac1{N}\sum_{i=1}^N\|y_i-\lambda_1\|^2=c'r^2/4\right\}$. Replicating the arguments from {\bf Step 3.2.} in the proof of Proposition \ref{label_bizarre2}, we have
\begin{align*}
\sum_{i=1}^N\int_{T_1}^{T_2} 
\Big\|\frac{\dd  \phi^i_t}{\ddt}+\W_{\lambda_1}(\phi^i_t)\Big\|^2_{A^{-1}(\phi^i_t)}\,\ddt&\ge \Lambda_-\theta\sum_{i=1}^N\Big(\|\phi^i_{T_2}-\lambda_1\|^2-\|\phi^i_{T_1}-\lambda_1\|^2\Big)=\Lambda_-\theta N\frac{c'r^2}{4}\,.
\end{align*}
We may next take $N$ large enough so that $N\ge (\Lambda_-\theta c'r^2/4)^{-1}H$ which implies 
$$
\mathbb P\Big(\widetilde{q}^N_s\ge c',\,\forall s\in[T_\kappa,S_\kappa]\Big)\le \mathbb P\Big({\bf \tau}^{N}(\sigma,r,c')\le S_\kappa\Big)\le \mathbb P\left({\bf \tau}^{N}(\sigma,r,c')\le \exp\left[\frac{2H}{\sigma^2}\right]\right)\underset{\sigma\rightarrow 0}{\longrightarrow} 0\,.
$$

\noindent{}{\bf Step 5.2.5.} Now coming back to \eqref{Intermediate-2706-2}, the above eventually yields, for all $t\in[T_\kappa,S_\kappa]$, with a high probability:
\begin{equation*}
	\xi_t^N\leq\xi_{T_\kappa}^N+\sqrt{\sigma}+\int_{T_\kappa}^t\sigma^2d(\Lambda_+)^2+(2\rho+\gamma)(R_\rho)^2  \big(c'+c\big)+\left(\frac{d(\Lambda_+)^2\sigma^2}{N}-2\rho\right)\xi_s^{N}\dds\,,
\end{equation*}
where $\rho$, $c,c'$ can be chosen arbitrarily small, independently of $\sigma$. By Jensen's inequality $\big(\frac 1N\sum_{i=1}^N\|X^{i,N}_{T_\kappa}-\lambda_1\|^{2}\big)^{2n}\le \frac 1N\sum_{i=1}^N\|X^{i,N}_{T_\kappa}-\lambda_1\|^{4n}< \frac{\kappa^{4n}}{4}$, and so necessarily $\xi^N_{T_\kappa}\le \frac{\kappa^2}{2^{1/n}}$. Hence, for $\rho$ fixed, we shall choose $c,c'$ so that $(2\rho+\gamma)(R_\rho)^2(c+c')\le 2\rho 2^{-1/n}\kappa^2$ so that 
\begin{equation*}
	\xi_t^N\leq\xi_{T_\kappa}^N+\sqrt{\sigma}+\int_{T_\kappa}^t\sigma^2d(\Lambda_+)^2+\frac{2\rho\kappa^2}{4^{1/n}}+\left(\frac{d(\Lambda_+)^2\sigma^2}{N}-2\rho\right)\xi_s^{N}\dds\,.
\end{equation*}
Choosing finally $\sigma$ small enough, we can impose that $\xi^N_t\le \frac{\kappa^2}{4^{1/n}}$ for all $t\in[T_\kappa,\exp[2H/\sigma^2]$,
which achieves to ensure that $S^{(1)}_\kappa\ge \exp[2H/\sigma^2]$ after a \emph{reductio ad absurdum}.

\end{proof}

\subsection{Proof of Theorem \ref{thm:main2}}
\noindent
According to Lemma \ref{dale-bis} and Proposition~\ref{label_bizarre9}, and choosing $\varepsilon<\varepsilon_c$ with
$\varepsilon_c$ as in \eqref{ColliRad2}, and given $(X_{T_{\varepsilon,\kappa}}^{i,N},Y_{T_{\varepsilon,\kappa}}^{i,N})$, the
hitting-times
\[
\widehat{\beta}^i_{\lambda,\varepsilon,N}(\sigma)=\inf\left\{t\ge
T_{\varepsilon,\kappa}\,:\,(x^{i,\sigma}_{T_{\varepsilon,\kappa},t},y^{i,\sigma}_{T_{\varepsilon,\kappa},t})\in\mathbb{B}(\lambda;\varepsilon)\times
\mathbb{B}(\lambda;\varepsilon)\right\},\,1\le i\le N,
\]
all satisfy the Kramers' type law with the exit-cost $H_\varepsilon(\lambda)$ as in \eqref{label_bizarre7} and the exit-property:
\[
\lim_{\sigma\rightarrow 0}\mathbb P^{i,N}_{T_{\varepsilon,\kappa};(x,y)}\left\{{\rm
dist}\left(\Big(x^{i,\sigma}_{T_{\varepsilon,\kappa},\widehat{\beta}^i_{\lambda,\varepsilon,N}(\sigma)},y^{i,\sigma}_{T_{\varepsilon,\kappa},\widehat{\beta}^i_{\lambda,\varepsilon,N}(\sigma)}\Big),\mathbb
B(\lambda;\varepsilon)\times\mathbb B(\lambda;\varepsilon) \right)\leq\delta\right\}=1\,.
\]
for $\mathbb P^{i,N}_{T_{\varepsilon,\kappa};(x,y)}$ the conditional probability given
$\{(X_{T_{\varepsilon,\kappa}}^{i,N},Y_{T_{\varepsilon,\kappa}}^{i,N})=(x,y)\}$.
Following the same proof arguments as for Proposition \ref{torche}, we obtain

\begin{prop}
\label{tyrese}
For any $\varepsilon\in(0,\varepsilon_c)$ and $\lambda\in\mathbb R^d$, provided $N$ is large enough, we have: for any $\delta>0$ and
any $1\le i\le N$,

\begin{equation*}
\lim_{\sigma\to0}\PP\left(\exp\left[\frac{2}{\sigma^2}\left(H_{\varepsilon}(\lambda)-\delta\right)\right]<\widehat{\beta}^{i}_{\lambda,\varepsilon,N}(\sigma)<\exp\left[\frac{2}{\sigma^2}\left(H_{\varepsilon}(\lambda)+\delta\right)\right]\right)=1\,,
\end{equation*}
and
\[
\lim_{\sigma\rightarrow 0}\mathbb P\left({\rm
dist}\left(\Big(X^{i,N}_{\widehat{\beta}^{i}_{\lambda,\varepsilon,N}(\sigma)},Y^{i,N}_{\widehat{\beta}^{i}_{\lambda,\varepsilon,N}(\sigma)}\Big),\mathbb
B(\lambda;\varepsilon)\times\mathbb B(\lambda;\varepsilon)\right)<\delta \right)=1\,.
\]\end{prop}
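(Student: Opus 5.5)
The plan is to mimic the proof of Proposition \ref{torche}, using the particle coupling of Proposition \ref{label_bizarre9} in place of Proposition \ref{label_bizarre2}. Fix $\varepsilon\in(0,\varepsilon_c)$, $\lambda$, $\delta>0$, and choose $H>H_\varepsilon(\lambda)+\delta$. Take $\xi<\varepsilon$ small, to be tuned later. Proposition \ref{label_bizarre9} gives $N$ large and a deterministic time $T=T_{\varepsilon,\xi}$, both independent of $\sigma$. On an event $\mathcal E_\sigma$ with $\mathbb P(\mathcal E_\sigma)\to1$ as $\sigma\to0$, we then have
\[
\sup_{T\le t\le \exp[2H/\sigma^2]}\Big(\|X^{i,N}_t-x^{i,\sigma}_{T,t}\|+\|Y^{i,N}_t-y^{i,\sigma}_{T,t}\|\Big)\le 2\xi .
\]

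\textbf{Step 1: the linearized pair.} The linearized pair $(x^{i,\sigma}_{T,\cdot},y^{i,\sigma}_{T,\cdot})$ is driven by the contractive fields $\W_{\lambda_1},\W_{\lambda_2}$ of \eqref{GlobalPot}, with independent noises $B^i,\widetilde B^i$. I condition on $(X^{i,N}_T,Y^{i,N}_T)$ and use the Markov property. Lemma \ref{dale-bis} then yields the Kramers law for the entrance time $\tau^i_{\lambda,\eta}(\sigma)$ of this pair into $\mathbb B(\lambda;\eta)\times\mathbb B(\lambda;\eta)$, with cost $\widehat H_\eta(\lambda)$. Since $\eta<\varepsilon_c$, this cost coincides with $H_\eta(\lambda)$ up to the well cases, as in Section \ref{subsec:Linear-collisionB}.

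The initial condition is random, but Lemma \ref{dale-bis} is stated for a fixed starting point. To handle this, I use the fact that Freidlin--Wentzell estimates are uniform over starting points in compact subsets of the stable domain. The moment bounds of Lemma \ref{lem:Moment_particle} put $(X^{i,N}_T,Y^{i,N}_T)$ in a compact set with probability close to one. Dominated convergence over the conditional law then gives the unconditional statement for $\tau^i_{\lambda,\eta}(\sigma)$.

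\textbf{Step 2: sandwiching $\widehat\beta^i_{\lambda,\varepsilon,N}(\sigma)$.} First, the event $\{\widehat\beta^i_{\lambda,\varepsilon,N}(\sigma)\le T\}$ is negligible. Indeed, on $[0,T]$ the LDP \eqref{LDP}, applied to the finite-dimensional system \eqref{MFSP1}-\eqref{MFSP2}, keeps $(X^{i,N},Y^{i,N})$ close to the noiseless orbits. These are synchronized copies of $\phi(x_1),\phi(x_2)$, which stay $2\varepsilon_0$ apart, so they never collide before $T$.

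On $\mathcal E_\sigma\cap\{\widehat\beta^i>T\}$, up to time $\exp[2H/\sigma^2]$, the triangle inequality gives
\[
\tau^i_{\lambda,\varepsilon+2\xi}(\sigma)\le \widehat\beta^i_{\lambda,\varepsilon,N}(\sigma)\le \tau^i_{\lambda,\varepsilon-2\xi}(\sigma).
\]
The lower bound needs no time restriction beyond that window, since beyond $\exp[2H/\sigma^2]$ the lower bound on $\widehat\beta^i$ holds trivially.

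By continuity of $\eta\mapsto\widehat H_\eta(\lambda)$, I choose $\xi$ so that
\[
\widehat H_{\varepsilon+2\xi}(\lambda)-\delta/2\ge H_\varepsilon(\lambda)-\delta,\qquad \widehat H_{\varepsilon-2\xi}(\lambda)+\delta/2\le H_\varepsilon(\lambda)+\delta .
\]
Step 1 then gives both tails. The upper tail uses $H>H_\varepsilon+\delta$, so that the coupling window covers the relevant times. The location statement follows from the very definition of $\widehat\beta^i$: at that time $(X^{i,N},Y^{i,N})$ lies in $\overline{\mathbb B(\lambda;\varepsilon)}^2$.

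\textbf{Main obstacle.} I expect the delicate point to be the continuity of $\eta\mapsto\widehat H_\eta(\lambda)$ near the degenerate radii $\eta=\|\lambda-\lambda_k\|$, where $\widehat H$ switches regime. I would handle these cases via the $\rho$-rescaled domains $\mathcal O_{\lambda,\varepsilon,\rho}$, exactly as in Lemmas \ref{dale} and \ref{dale-bis}. Alternatively, I would restrict $\xi$ so that $[\varepsilon-2\xi,\varepsilon+2\xi]$ avoids these radii when $\|\lambda-\lambda_k\|\neq\varepsilon$. I would treat the remaining equality case separately with one-sided bounds, as was done in the proof of Lemma \ref{dale-bis}.
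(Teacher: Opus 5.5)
Your proposal follows the paper's argument: you condition at the coupling time, apply Lemma \ref{dale-bis} to the linearized pair $(x^{i,\sigma}_{T,\cdot},y^{i,\sigma}_{T,\cdot})$, and transfer its Kramers law to $(X^{i,N},Y^{i,N})$ through the sandwich $\tau^i_{\lambda,\varepsilon+2\xi}(\sigma)\le\widehat\beta^i_{\lambda,\varepsilon,N}(\sigma)\le\tau^i_{\lambda,\varepsilon-2\xi}(\sigma)$ given by Proposition \ref{label_bizarre9}, exactly as in Proposition \ref{torche}. Like the paper's proof, it actually delivers the cost $\widehat H_\varepsilon(\lambda)$ of \eqref{label_bizarre7} (strictly below $H_\varepsilon(\lambda)$ when $\lambda\in\mathbb B(\lambda_k;\varepsilon)$, so your choice of $\xi$ should compare $\widehat H_{\varepsilon\pm 2\xi}(\lambda)$ with $\widehat H_\varepsilon(\lambda)$), with $N$ chosen after $\delta$ through $\xi$; and the continuity you flag as the main obstacle in fact holds even at $\eta=\|\lambda-\lambda_k\|$, because the cost of reaching $\partial\mathbb B(\lambda;\eta)$ from $\lambda_k$ tends to $0$ there.
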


From this, we can next derive the analogue of Proposition \ref{label_bizarre8}  which enables to conclude Theorem \ref{thm:main2}.

\begin{prop}
\label{ned} Let $H_\varepsilon$, $\underline H_\varepsilon$, $\mathcal M_\varepsilon$ and $\varepsilon_c$ be as in Proposition
\ref{label_bizarre8}. For any $\varepsilon\in(0,\varepsilon_c)$ and assuming that $N$ is large enough, it holds: for any $\delta>0$,
\begin{equation*}
\lim_{\sigma\to0}\PP\left(\exp\left[\frac{2}{\sigma^2}\left(\underline{H}_{\varepsilon}-\delta\right)\right]<\mathcal{C}^{i}_{\varepsilon,N}(\sigma)<
\exp\left[\frac{2}{\sigma^2}\left(\underline{H}_{\varepsilon}+\delta\right)\right]\right)=1\,.
\end{equation*}

Moreover, the collision-location persists near $\mathcal M_\varepsilon$ with: for any $\delta >0$, $1\le i\le N$,
\begin{equation*}
\lim_{\sigma\to0}\PP\left(\inf_{\lambda_\varepsilon\in\mathcal M_\varepsilon}\max\bigg({\rm
dist}\big(X^{i,N}_{C^{i}_{\varepsilon,N}(\sigma)},\mathbb B(\lambda_\varepsilon;\varepsilon)\big),{\rm
dist}\big(Y^{i,N}_{C^{i}_{\varepsilon,N}(\sigma)},\mathbb B(\lambda_\varepsilon;\varepsilon)\big)\bigg)\ge \delta\right)=0\,.
\end{equation*}
\end{prop}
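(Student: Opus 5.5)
The proof will mirror that of Proposition \ref{lacollision} (as Proposition \ref{label_bizarre8} did for the self-stabilizing case). The ingredients are the representation $C^i_{\varepsilon,N}(\sigma)=\inf_{\lambda}\beta^i_{\lambda,\varepsilon,N}(\sigma)$, the coupling of Proposition \ref{label_bizarre9}, and the Kramers-type law of Proposition \ref{tyrese}. Throughout, fix $\varepsilon<\varepsilon_c$ and $\delta>0$, and take $N$ large enough that Proposition \ref{label_bizarre9} holds with a horizon $H>\underline H_\varepsilon+1$.

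\textbf{Upper tail.} Pick $\lambda_\varepsilon\in\mathcal M_\varepsilon$. The inequality $C^i_{\varepsilon,N}(\sigma)\le\beta^i_{\lambda_\varepsilon,\varepsilon,N}(\sigma)$ holds almost surely. The first hitting time of the ball pair by $(X^{i,N},Y^{i,N})$ is sandwiched between the linearized hitting times for radii $\varepsilon\pm\xi$ on the coupling event of Proposition \ref{label_bizarre9}, exactly as in the proof of Proposition \ref{torche}. Combining this with Proposition \ref{tyrese} and the continuity of $\eta\mapsto H_\eta(\lambda_\varepsilon)$ gives $\mathbb P\big(C^i_{\varepsilon,N}(\sigma)<e^{2(\underline H_\varepsilon+\delta)/\sigma^2}\big)\to1$. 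Since $\varepsilon<\varepsilon_c$, we have $\inf_\lambda\widehat H_\varepsilon=\underline H_\varepsilon$, so balls containing a well cannot yield a cheaper collision.

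\textbf{Location and lower tail.} Introduce the barycentre $Z^{i}_t=(X^{i,N}_t+Y^{i,N}_t)/2$, and aim to show $\mathbb P\big(Z^i_{C^i_{\varepsilon,N}(\sigma)}\in\overline{\mathcal M_{\varepsilon,\delta}}\big)\to1$. Because $\|X^{i,N}-Y^{i,N}\|=2\varepsilon$ at the collision time, this yields the location statement. Split the complement into two pieces.

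First, the barycentre lies far from $\lambda_1$. Choose $M$ so that $V(\lambda_1,\cdot)$-type costs exceed $\underline H_\varepsilon+3\xi$ outside $\mathbb B(\lambda_1;M-\varepsilon)$. On the coupling event, $X^{i,N}$ exits this ball only if $x^{i,\sigma}_{T,\cdot}$ exits a slightly smaller ball. Theorem \ref{thm:KramersDZ} applies to the contractive linearized process, and together with the upper tail this shows that the event is negligible.

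Second, the barycentre lies in the compact annular region $\overline{\mathbb B(\lambda_1;M)}\setminus\overline{\mathcal M_{\varepsilon,\delta}}$. Cover it by finitely many balls $\mathbb B(\hat\lambda^\ell;\hat r)$. On each, $C^i_{\varepsilon,N}\ge\beta^i_{\hat\lambda^\ell,\varepsilon+\hat r,N}$, and by Proposition \ref{tyrese} together with the lower semicontinuity/continuity of $H_{\varepsilon+\hat r}$ this is larger than $e^{2(\underline H_\varepsilon+\delta)/\sigma^2}$ with high probability. The upper tail then forces this piece to vanish as well.

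The lower tail follows by the same finite covering argument, applied to a covering of $\overline{\mathcal M_{\varepsilon,\delta}}$ with radius $r$ chosen so that $H_{\varepsilon+r}(\lambda^\ell)-\delta'\ge\underline H_\varepsilon-\delta$.

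\textbf{Main obstacle.} The delicate step is transferring the exit estimates through the coupling uniformly over the finitely many $\lambda^\ell$, and over horizons up to $e^{2H/\sigma^2}$ with $H$ above every cost involved. The coupling in Proposition \ref{label_bizarre9} only holds with probability tending to one for fixed $N\ge N_{\varepsilon,\kappa}$ and a fixed horizon $H$. I would therefore fix $H$ first, then choose $\kappa,\xi$ smaller than both $\hat r$ and $r$, and finally choose $N$. Because each covering is finite, the union bounds over $\ell$ remain harmless. Compactness of $\mathcal M_\varepsilon$ follows from the contractivity of $\W_{\lambda_1},\W_{\lambda_2}$, as noted in Section \ref{sec:LinearCase}.
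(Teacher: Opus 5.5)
Your proposal follows the paper's route: the paper obtains Proposition \ref{ned} by rerunning the proof of Proposition \ref{lacollision}, namely the upper tail from a minimizer $\lambda_\varepsilon$, the barycentre $Z^i$, the split into a far-from-$\lambda_1$ region and a compact region, and finite coverings for the location and the lower tail, with Propositions \ref{label_bizarre9} and \ref{tyrese} transferring everything from the linearized pair. Your order of choices ($H$ first, then $\kappa,\xi$ below the covering radii, then $N$) is the right one and matches the dependence $N\ge N_{\varepsilon,\kappa}$ in Proposition \ref{label_bizarre9}; note that it makes the threshold on $N$ depend on $\delta$, which is also all the paper's argument gives and seems unavoidable in general, since for fixed $N$ particle $i$ feels only the fraction $(N-1)/N$ of its interaction with the others, so the finite-$N$ collision cost typically differs from $\underline H_\varepsilon$ by a term of order $1/N$.
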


\section{Some extensions}\label{sec:Extensions}

Our setting  can also be extended to the situation where
\eqref{MV1}-\eqref{MV2} start from random initial states. As long as $(X_0,Y_0)$ is a.s. bounded (to ensure uniform moments control)
and as long as the marginal laws of $X_0$ and $Y_0$ have full support on different basins of attraction of the quasi-potential $V$, our main results
still hold true.

The condition $\mathbf{(A)}$-$(ii)$ can be weakened to take into account a multi-wells landscape, provided that Assumption $\mathbf{(A)}$-$(v)$ remains in force; namely the initial states of the pairs \eqref{MV1} and \eqref{particles} are located in two distinct basins of attraction. The presence of multiple attracting points does not fundamentally alter first collision estimates. A more interesting situation emerges when considering simultaneous collisions times for more than a pair of self-stabilizing diffusions. Assuming $\U$ generates  $m_1$ distinct attracting points, $\lambda_1,\cdots,\lambda_{m_1}$, for the dynamical system \eqref{eq:LimitDynamic} and considering $m_2$ self-stabilizing systems,   $X^1,\cdots,X^{m_2}$, (with $m_1\ge m_2> 2$), driven by independent noises and each starting from a different basin of attraction, it is expect that analogues to  Theorems ~\ref{thm:main1} and~\ref{thm:main2} hold for the near mutual collision-time
\[
C_\varepsilon(\sigma):=\inf\left\{t\ge 0\,:\,\max_{1\le \ell,\ell'\le m_2}\|X^\ell_t-X^{\ell'}_t\|\le \varepsilon\right\}=\inf_{\lambda}\inf\left\{t\ge 0\,:\,X^\ell_t\in \mathbb \overline{B(\lambda;\varepsilon)},\,1\le\ell\le m_2\right\}
\]
The corresponding {\it collision-cost} \eqref{collisioncost} will be only altered through the action functional; the latter (heuristically) becoming
\[
I(T_1,T_2,\phi)=\frac 1{4}\sum_{\ell=1}^{m_2}
\int_{T_1}^{T_2}\Big\|\frac{\dd\phi^\ell_t}{\ddt}+\W_{\lambda_\ell}(\phi^\ell_t)\Big\|^2_{
	A^{-1}(\phi^\ell_t)} \,\ddt,	
\]
while the constrained path-space is replaced by 
\[
\mathcal A^{\lambda_l,\lambda}[T_1,T_2]=\left\{\phi=\{(\phi^1_t,\cdots,\phi^l_t)\}_{T_1\le t\le T_2}\in\mathcal C^1((T_1,T_2);\mathbb R^{m_2d})\,:\, \phi^\ell_{T_1}=\lambda_\ell,\phi^\ell_{T_2}=\lambda\right\}.
\]
In the reversible case, the corresponding exit-cost would so be given by
\[
\underline{H}_0=\inf_\lambda\left\{ \sum_{\ell=1}^{m_2}\big(u(\lambda)-u(\lambda_\ell)+f(\lambda-\lambda_\ell)\big)\right\}\,,
\]
and the collision-location found at the point:
\[
\Big(\sum_{\ell=1}^{m_2}\nabla\Psi_\ell\Big)^{-1}(0),\qquad \Psi_\ell(x):=u(x)+f(x-\lambda_\ell)\,.
\]
In the case where $f(x)=\alpha\|x\|^2/2$ this would mean
\[
\lambda_0=\Big(\mathbb{U}+\alpha {\rm Id}\Big)^{-1}\bigg(\alpha m_2^{-1}\sum_{\ell=1}^{m_2}\lambda_{\ell}\bigg)\,.
\]
The rigorous derivation of these heuristics are nonetheless non-trivial, and should be addressed carefully.

Finally, the {\it synchronization} assumption $\mathbf{(A)}$-$(iii)$ can potentially be weakened. While this condition has been mainly used in our proof arguments to achieve suitable couplings, as pointed out in \cite[Corollary D]{Alea} (for $d=1$) and \cite[Theorem 3.4]{Tugaut2021}
(for general $d>1$), {\it synchronization} may also be weakened albeit in the reversible case where $\F$ is
linear (i.e. $\F(x)=\alpha\|x\|^2/2$). The weaker condition formulates therein as: for $i=1,2$, there exists $\rho_i>0$ such that, for
$x\in\mathbb R^d$
\begin{equation*}
	\left( x-\lambda_i\right)\cdot\left(u(x)+\alpha(x-\lambda_i)\right)\geq\rho_i\|x-\lambda_i\|^2\,.
\end{equation*}
This condition allows broadly a control of the proximity between
the law of the self-stabilizing diffusions and their assigned attractors (see again \cite{Alea,Tugaut2021} for the precise
statement). From this control,~\eqref{MeanCoupling-MV} hereafter may still be deduced. However, coupling estimates will cease to hold true and
obtaining Theorems~\ref{thm:main1} and~\ref{thm:main2} under this weaker condition will necessitate a complete new strategy, rather
based on expanding~\cite{HIP} into a non-(global) convex framework.

\paragraph{Acknowledgements}
For the first author, the paper was prepared within the framework of the Basic Research Program at HSE University and the RSF project No. 24-11-00123. The second author acknowledges the support of the French ANR grant 'METANOLIN' (ANR-19-CE40-0009). The first unpublished version of this article, \href{arXiv:2206.04542}{arXiv:2206.04542}, was achieved during the visit of the first author at the University Jean-Monnet in early 2022, and the present version during a last visit on 2025. The first author expresses his gratitude to all members of the university for their hospitality during these stays.

\bibliographystyle{alpha}
\bibliography{biblio_collision3}

\appendix 
\section{Well-posedness and moment estimates for the systems  \texorpdfstring{\eqref{MV}}{}, \texorpdfstring{\eqref{particles}}{} and \texorpdfstring{\eqref{Linear:Init}}{}}\label{Appendix-A}
\subsection{Proof of Proposition \ref{prop:WellposedMV}}
As a preliminary step, let us recollect  from \cite{HIP} the following classical growth control result.
\begin{lem}\label{lem:GrowthControl} Let $f:[0,\infty)\rightarrow [0,\infty)$ be a.e. differentiable. Then, if there exists $M>0$ such that 
	$$
	\{t\ge 0\,:\,f(t)\ge M\}\subset \{t\ge 0\,:\,\frac{\dd f(t)}{\ddt}\le 0\},
	$$
	 we have $\max_{t\ge 0}f(t)\le \max\big(M,f(0)\big)$.
\end{lem}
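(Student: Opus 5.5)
The argument is by contradiction, in the spirit of a barrier argument. Suppose $\max_{t\ge0} f(t) > \max(M,f(0))$. Then there is a time $t_1>0$ with $f(t_1)>\max(M,f(0))$. Set
\[
t_0:=\sup\{t\in[0,t_1]\,:\,f(t)\le \max(M,f(0))\},
\]
the last time before $t_1$ at which $f$ lies below the level $L:=\max(M,f(0))$. This set contains $0$, so $t_0$ is well defined. Assuming $f$ is continuous (I would use this, since it is implicit in the intended application to moment functions $t\mapsto\mathbb E\|X_t\|^{2p}$), we get $f(t_0)=L$ and $t_0<t_1$. On the whole interval $(t_0,t_1]$ we have $f>L\ge M$, and so by hypothesis $\frac{\dd f}{\ddt}\le 0$ at every point of differentiability in that interval.

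The next step is to pass from this sign information on the derivative to monotonicity, which gives $f(t_1)\le f(t_0)=L$ and contradicts the choice of $t_1$. This is the main obstacle. A.e. differentiability with a.e. nonpositive derivative does not by itself force monotonicity: the Cantor function is a counterexample. I would therefore make explicit the regularity that the applications actually provide. Either $f$ is absolutely continuous, so that $f(t_1)-f(t_0)=\int_{t_0}^{t_1} f'(s)\,\dds\le 0$. Or, in the paper's use, $f$ is $\mathcal C^1$, since it is obtained from It\^o's formula as an integral of a continuous integrand. Under either assumption the conclusion follows at once.

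For a strictly everywhere-differentiable $f$, I would instead invoke the mean value theorem on $[t_0,t_1]$. This gives $f(t_1)-f(t_0)=f'(c)(t_1-t_0)$ for some $c\in(t_0,t_1)$, where $f(c)>M$ and hence $f'(c)\le 0$, which again yields the contradiction.

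Finally, combining the cases shows that $f(t)\le \max(M,f(0))$ for all $t\ge0$. In the text I would state the lemma with $f$ absolutely continuous, or $\mathcal C^1$, to cover the moment estimates in Appendix~\ref{Appendix-A}.
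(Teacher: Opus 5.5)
The paper gives no proof of this lemma; it is quoted from \cite{HIP} as a classical fact, so there is no in-paper argument to compare with. Your last-crossing argument is the standard one, and it is correct under the regularity you add. You are also right that extra regularity is unavoidable. With only a.e.\ information, take $f=2M\,c$ on $[0,1]$, where $c$ is the Cantor function, and $f\equiv 2M$ on $[1,\infty)$. This $f$ satisfies the hypothesis off a null set, yet $\sup f=2M>\max(M,f(0))=M$.

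Two small sharpenings.

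\emph{Literal reading.} Suppose the hypothesis means that at every $t$ with $f(t)\ge M$ the derivative exists and is $\le 0$. Then continuity of $f$ alone suffices. Since $f>L\ge M$ on $(t_0,t_1]$, $f$ is automatically differentiable on $(t_0,t_1)$, so your mean value theorem step goes through without assuming differentiability anywhere else. Continuity cannot be dropped even under this reading. The function $f=0$ on $[0,1]$, $f=2M$ on $(1,\infty)$ is a.e.\ differentiable and satisfies the literal hypothesis, since $\{f\ge M\}=(1,\infty)$ where $f'=0$. It still violates the conclusion.

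\emph{The application.} For Appendix~\ref{Appendix-A}, absolute continuity is the safer property to invoke than $\mathcal C^1$. It\^o's formula gives $\mathbb E\|X_t\|^{2p}=\|x\|^{2p}+\int_0^t g(s)\,\dds$ with $g$ locally integrable. The differential inequality \eqref{Moment_SS_Iteration} then holds a.e., which is exactly your first case. Claiming $\mathcal C^1$ would additionally require continuity of $g$.
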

\noindent
\textbf{Existence result:} To construct a solution to \eqref{MV1} (and likewise for \eqref{MV2}), introduce the following truncated approximations of $\U$ and $\F$: 
\begin{equation*}
\U_r(x):=\U(x)\mathds{1}_{\{\|x\|\le r\}}+\U\Big(r\frac x{\|x\|}\Big)\mathds{1}_{\{\|x\|>r\}} ,\qquad \F_r(x):=\F(x)\mathds{1}_{\{\|x\|\le r\}}+\F\Big(r\frac x{\|x\|}\Big)\mathds{1}_{\{\|x\|>r\}}, \ \ r>0.
\end{equation*} 
For each fixed $r>0$, $\U_r$ and $\F_r$ are both globally Lipschitz continuous, and $\lim_{r\rightarrow \infty}\U_r\rightarrow \U$ and $\lim_{r\rightarrow \infty}\F_r\rightarrow \F$ pointwise on any compact set of $\mathbb R^d$. Moreover, for $\F_r$, since the truncation preserves the anti-symmetry of $\F$ in \eqref{RotInv}, and, since $\F\Big(r\frac x{\|x\|}\Big)=\frac{x}{\|x\|}G(r)$ whenever $\|x\|>r$, the monotone property \eqref{Hyp-F2} of $\F$ is also preserved globally with: for any integer $p\ge 0$, and any $x,y\in\mathbb R^d$,
\begin{align}
&(\|x\|^px-\|y\|^py)\cdot \F_r(x-y)\nonumber\\
&=(\|x\|^px-\|y\|^py)\cdot \F(x-y)\mathds{1}_{\{\|x-y\|\le r\}}+(\|x\|^px-\|y\|^py)\cdot \F\bigg(r\frac{x-y}{\|x-y\|}\bigg)\mathds{1}_{\{\|x-y\|> r\}}\ge 0,
\end{align}
the final inequality following from the expression $\F\big(r\frac{x-y}{\|x-y\|}\big)=\frac{(x-y)}{\|x-y\|}G(r)$ and \eqref{Hyp-F2}. By extension, the property \eqref{Hyp-F2_bis} still holds true for $\F_r$ and gives
\begin{equation}\label{Hyp-F2-bis}
	2\iint \|x\|^n x\cdot \F_r(x-y)\mu(\ddx)\mu(\ddy)=\iint (\|x\|^nx -\|y\|^ny)\cdot \F_r(x-y)\mu(\ddx)\mu(\ddy)\ge 0.
\end{equation}
For $\U_r$, and taking  $r\ge \Rd'$, the confining property \eqref{Hyp-U3-bis} of $\U$ is immediately preserved for $\Rd'\le \|x\|\le r$ and, for $\|x\|\ge r$, one has
\begin{align}\label{Hyp-U3-truncated}
	-x\cdot \U_r(x)=-x\cdot \U\Big(r\frac x{\|x\|}\Big)=- \frac {\|x\|}r\times\Bigg( r\frac x{\|x\|}\cdot \U\Big(r\frac x{\|x\|}\Big)\Bigg)\le -\frac {\|x\| }r \cd(\Rd')r^2=-r\|x\| \cd(\Rd')\,.
\end{align}
Following these preparatory remarks on $\F_r$ and $\U_r$, consider now, for $r>0$ fixed, the McKean-Vlasov SDE: 
\begin{equation}\label{Truncated-MV}
X^r_t=x_1 -\int_0^t\Big(\U_r\left(X^r_s\right)+\int \F_r\left(X^r_s-x\right)\,\mu^{X^r}_s(\ddx)\Big)\dds+\sigma \int_0^t \Gamma(X^r_s)\,\dd B_s, \quad \mu^{X^r}_t=\text{Law}(X^r_t),\quad t\ge 0\,.
\end{equation}
Since $\U_r$, $\F_r$ and $\Gamma$ are globally Lipschitz continuous, \eqref{Truncated-MV} admits a unique pathwise solution (applying e.g. \cite[Theorem 2.2]{Meleard-96}). Additionally, for any $p\ge 1$, It\^o's formula yields
\begin{align*}
	\mathbb E[\|X^r_t\|^{2p}]&=x_1^{2p}-2p\int_0^t \mathbb E\Big[\|X^r_s\|^{2(p-1)}\big(X^r_s\cdot \U_r(X^r_s)\big)\Big]\,\dds\\
	&\quad-2p\int_0^t \mathbb E\Big[\|X^r_s\|^{2(p-1)}\big(X^r_s\cdot \int \F_r(X^r_s-x)\mu^{X^r}_s(\ddx)\big)\Big]\,\dds\\
	&\quad +\frac{\sigma^2}{2}  \int_0^t\mathbb E\Bigg[\|X^r_s\|^{2(p-2)} \Big(2p\mbox{Trace}(\Gamma\Gamma^*(X^r_s))\|X^r_s\|^{2} +4p(p-1) X^r_s\cdot \Gamma\Gamma^*(X^r_s)X^r_s\Big)\Bigg]\,\dds,
\end{align*} 
or equivalently
\begin{align}\label{ControlMom-1}
	\frac{\dd\mathbb E[\|X^r_t\|^{2p}]}{\ddt}&=-2p\mathbb E\Big[\|X^r_t\|^{2(p-1)}\big(X^r_t\cdot \U_r(X^r_t)\big)\Big]-2p\mathbb E\Big[\|X^r_t\|^{2(p-1)}\big(X^r_t\cdot \int \F_r(X^r_t-x)\mu^{X^r}(t,\ddx)\big)\Big]\nonumber\\
	&\quad +\frac{\sigma^2}{2} \mathbb E\Bigg[\|X^r_t\|^{2(p-2)}\Big( 2p\mbox{Trace}(\Gamma\Gamma^*(X^r_t))\|X^r_t\|^{2} +4p(p-1) X^r_t\cdot \Gamma\Gamma^*(X^r_t)X^r_t\Big)\Bigg]\,.
\end{align} 
According to \eqref{Hyp-F2-bis},
\begin{align*}
\mathbb E\Big[\|X^r_t\|^{2(p-1)}\Big(X^r_t\cdot \int \F_r(X^r_t-x)\mu^{X^r}_t(\ddx)\Big)\Big]=\iint x^{2(p-1)}x\cdot \F_r(x-y)\mu^{X^r}_t(\ddx) \mu^{X^r}_t(\ddy)\ge 0, 	
\end{align*}
and, owing to the boundedness of $\Gamma$ in ${\bf (A)}$-$(vi)$, 
$$
\mathbb E\Bigg[\|X^r_t\|^{2(p-2)}\Big(2p\mbox{Trace}\big(\Gamma\Gamma^*(X^r_t)\big)\|X^r_t\|^{2} +4p(p-1)X^r_t\cdot \Gamma\Gamma^*(X^r_t)X^r_t\Big)\Bigg]\le \big( 2pd+4p(p-1)\big) \big(\Lambda_+\big)^2\mathbb E\Big[\|
X^r_t\|^{2(p-1)}\Big].
$$
For the remaining elements in the r.h.s. of \eqref{ControlMom-1}, let now us assume that $r$ is larger than $\Rd'$. 
Using successively the local Lipschitz continuity and the confining property \eqref{Hyp-U3-bis} of $\U$, along the property \eqref{Hyp-U3-truncated}, it follows that
\begin{align*}
	-X^r_t\cdot \U_r(X^r_t)&=-X^r_t\cdot \U_r(X^r_t)\Big(\mathds{1}_{\{\|X^r_t\|<\Rd'\}}+\mathds{1}_{\{\Rd'< \|X^r_t\|< r\}}+\mathds{1}_{\{\|X^r_t\|\ge r\}}\Big)\\
	&\le \big(\cl(\Rd')\Rd'+\|\U(0)\|\big)\mathds{1}_{\{\|X^r_t\|< \Rd'\}}-\cd(\Rd')\|X^r_t\|^2\mathds{1}_{\{\Rd'\le \|X^r_t\|<r \}} -\cd(\Rd')r\|X^r_t\|\mathds{1}_{\{\|X^r_t\|\ge r \}}\\
	&\le\big(\cl(\Rd')\Rd'+\|\U(0)\|\big)\mathds{1}_{\{\|X^r_t\|<{\Rd'} \}}-\cd(\Rd')\Rd'\|X^r_t\|\mathds{1}_{\{\Rd'\le \|X^r_t\|\}}\\
	&\le\big(\cl(\Rd')\Rd'+\|\U(0)\|\big)-\cd(\Rd')\Rd'\|X^r_t\|\,,
\end{align*}
for $\cd(\Rd')$ is as in ${\bf (A)}$-$(i)$ and $\cl(\Rd')$ is the local Lipschitz constant (as in \eqref{LocalLip-U}):
$$
\cl(\Rd')=\sup_{x\neq y, x,y\in \mathbb{B}(0;\Rd')}\frac{\|U(x)-U(y)\|}{\|x-y\|}.
$$
Plugged into \eqref{ControlMom-1}, the above implies the estimate:
\begin{align*}
	&\frac{\dd\mathbb E[\|X^r_t\|^{2p}]}{\ddt}\\
	&\le \mathbb E\Big[2p\big(\cl(\Rd')\Rd'+\|\U(0)\|-\cd(\Rd')\Rd'\|X^r_t\|\big)\|X^r_t\|^{2(p-1)} +\frac{\sigma^2}{2}\big( 2pd+4p(p-1)\big)(\Lambda_+)^2\|X^r_t\|^{2(p-1)}\Big]\\
	&\quad =:\mathbb E\Big[\Big(a(p)-2p\cd(\Rd')\Rd'\|X^r_t\|\Big)\|X^r_t\|^{2(p-1)}\Big]\,.
\end{align*}
Now, taking {\color{black}$R'\ge \max\{1,\Rd'\}$} as the zero of $z\in[0,\infty)\mapsto a(p)-2p\cd(\Rd')\Rd' z$, we have  
$$
\Big(a(p)-2p\cd(\Rd'){\Rd'}\|X^r_t\|\Big)\le\Big(a(p)-2p\cd(\Rd'){\Rd'}\|X^r_t\|\Big)\mathds{1}_{\{\|X^r_t\|\le {R'}\}}\,,
$$ 
and so
\begin{align*}
	\frac{\dd\mathbb E[\|X^r_t\|^{2p}]}{\ddt}&\le   \mathbb E\Big[\Big(a(p)-2p\cd(\Rd')\Rd'\|X^r_t\|\Big)\|X^r_t\|^{2(p-1)}\mathds{1}_{\{\|X^r_t\|
	\le {R'} \}}\Big]\\
	&\le  \Big(a(p)-2p\cd(\Rd')\Rd'{R'}\Big)\|{R'}\|^{2(p-1)}.
\end{align*}
This estimate induces the control of all moments of $X^r$:
\begin{equation}\label{Truncated_Moment_SS}
\max_{t\in[0,T]}\mathbb E[\|X^r_t\|^{2p}]\le 2^{2p}\Bigg(\|x\|^{2p}+T\Big(a(p)-2p\cd(\Rd')\Rd'{R'}\Big)\|{R'}\|^{2(p-1)}\Bigg).
\end{equation}
 on any finite time interval $[0,T]$, uniformly in the truncation order $r$.  
 In particular, as $\U_r$ and $\F_r$ conserve the polynomial growths of $\U$ and $\F$ with respective order $2m$ and $2n$ (recall $\mathbf{(A)}$-$(i)$ and $\mathbf{(A)}$-$(iii)$), for all $s\le t\le T$:
 \begin{align*}
 \sup_r\mathbb E[\|X^r_t-X^r_s\|^{2p}]&\le C (t-s)^{2p}\sup_r\max_{v\in[0,T]}\mathbb E\Big[\|\U_r(X^r_v)\|^{2p}+\int\|\F_r(X^r_v-x)\|^{2p}\mu^{X^r}_v(\ddy)+\|\Gamma\Gamma^*(X^r_v)\|^{2p}\Big]\\
 &\le C (t-s)^{2p}\sup_r\mathbb  E\Big[1+\|X^r_v\|^{2p \max(2m,2n)}\Big]\le C (t-s)^{2p}.
 \end{align*}
 Thus the family of probabilities $\{\mathbb P\circ (X^r_t)_{t\in[0,T]}^{-1}\}_{r\ge 0}$ on the path space $\mathcal C([0,T];\mathbb R^d)$ are tight on $\mathcal P\big(\mathcal C([0,T];\mathbb R^d)\big)$ and, by Prohorov's theorem, admits a converging subsequence  $\{\mathbb P\circ (X^{r_k}_t)_{t\in[0,T]}^{-1}\}_{k\ge 0}$. Applying the Skorokhod representation theorem -- and up to the additional extraction of a clustering subsequence and up to an enlargement of the initial sample space $(\Omega,\mathcal F,\{\mathcal F_t\}_{t\ge 0},\mathbb P)$ --  $(X^r_t)_{t\in[0,T]}$ converges almost surely to some continuous process $(X^\infty_t)_{t\in[0,T]}$. As $$
 \max_{0\le t\le T}\mathbb E\big[\|X^\infty_t\|^{2p}\big]\le \liminf_{r\rightarrow \infty}\max_{0\le t\le T}\mathbb E\big[\|X^r_t\|^{2p}\big]<\infty,
 $$
  by continuity of $\U$, $\F$ and $\Gamma$, $X^\infty$ satisfies 
 \begin{equation*}
 	X^\infty_t=x_1-\int_0^t\Big(\U\left(X^\infty_s\right)+\int \F\left(X^\infty_s-x\right)\,\mu^{X^\infty}_s(\ddx)\Big)\dds+\sigma \int_0^t \Gamma(X^\infty_s)\,\dd B_s, \quad \mu^{X^\infty}_t=\text{Law}(X^\infty_t)\,,\,0\le t\le T.
 \end{equation*}
 The time-horizon $T$ being arbitrary, the existence of a solution to \eqref{MV1} is ensured on the time interval $[0,\infty)$.

\noindent
\textbf{Uniform-in-time moment controls:} The moment control \eqref{Truncated_Moment_SS} is stable at the limit $r\rightarrow \infty$ but  only provides local-in-time moment estimates for $X$. To establish uniform in time estimates for $\mathbb E[\|X_t\|^{2p}]$, we shall replicate the previous arguments having led to \eqref{Truncated_Moment_SS} starting by applying again It\^o's formula to get: for all $p\ge 1$,
\begin{align*}
	\frac{\dd\mathbb E[\|X_t\|^{2p}]}{\ddt}&= -2p\mathbb E\Big[\|X_t\|^{2(p-1)}X_t\cdot\Big( \U(X_t)+ \int \F(X_t-x)\mu^{X}_t(\ddx)\Big)\Big]\\
	&\quad+\frac{\sigma^2}{2}\mathbb E\Big[\|X_t\|^{2(p-2)}\Big(2p\mbox{Trace}(\Gamma\Gamma^*)(X_t)+4p(p-1)X_t\cdot \Gamma\Gamma^*(X_t)X_t\Big)\Big].
\end{align*}
Owing (again) to the property \eqref{Hyp-F2_bis},
$$
\mathbb E\Big[\|X_t\|^{2(p-1)}X_t\cdot\int \F(X_t-x)\mu^{X}_t(\ddx)\Big]=\iint \|x\|^{2(p-1)}xF(x-y)\mu^{X}_t(\ddx)\mu^X_t(\ddy)\ge 0
$$
and, using the boundedness of $\Gamma$, it follows that
\[
\frac{\dd\mathbb E[\|X_t\|^{2p}]}{\ddt}\le -2p\mathbb E\Big[\|X_t\|^{2(p-1)} X_t\cdot\U(X_t)\Big]+\frac{\sigma^2}{2}( 2p+4p(p-1))d \big(\Lambda_+\big)^2\mathbb E\Big[\|X_t\|^{2(p-1)}\Big].
\]
Observe next that, the confining property \eqref{Hyp-U3-bis} yields
\begin{align*}
	-X_t\cdot \U(X_t)&=-X_t\cdot\U(X_t)\Big(\mathds{1}_{\{\|X_t\|\le \Rd'\}}+\mathds{1}_{\{\|X_t\|> \Rd'\}}\Big)\\
	&\le \Big(\cl(\Rd')\Rd'+\|\U(0)\|\Big)\mathds{1}_{\{\|X_t\|\le \Rd'\}}-\cd(\Rd')\|X_t\|^2\mathds{1}_{\{\|X_t\|>\Rd'\}}\\
	&\le 	\Big(\cl(\Rd')\Rd'+\|\U(0)\|+\cd(\Rd')\Rd'^2\Big)\mathds{1}_{\{\|X_t\|\le \Rd'\}}-\cd(\Rd')\|X_t\|^2=: C(\Rd')-\cd(\Rd')\|X_t\|^2.
\end{align*}
Hence, we have
\begin{align}\label{Moment_SS_Iteration}
\frac{\dd\mathbb E[\|X_t\|^{2p}]}{\ddt}&\le 2p\mathbb E\Big[\|X_t\|^{2(p-1)}\big(C(\Rd')-\cd(\Rd')\|X_t\|^{2}\big)\Big]+(\sigma\Lambda_+)^2d(p+2p(p-1)) \mathbb E\Big[\|X_t\|^{2(p-1)}\Big]\nonumber\\
&\le b(p) \mathbb E\big[\|X_t\|^{2(p-1)}\big]-2p\cd(\Rd')\mathbb E\big[\|X_t\|^{2p}\big],
\end{align}
for 
$$
b(p):=2pC(\Rd')+(\sigma\Lambda_+)^2d(p+2p(p-1)).
$$
From \eqref{Moment_SS_Iteration}, and by exploiting Lemma \ref{lem:GrowthControl}, we can now iteratively draw the control of $\sup_{t\ge 0}\mathbb E[\|X_t\|^{2p}]$. For $p=1$, defining $z_1$ the first zero of the mapping $z\in[0,\infty)\mapsto b(1)-2\cd(\Rd')z$, we get $z_1=(\cd(\Rd'))^{-1}\Big(C(\Rd')+\big(\sigma\Lambda_+\big)^2d\Big)$ and so by Lemma \ref{lem:GrowthControl}, we obtain
\[
\sup_{t\ge 0}\mathbb E[\|X_t\|^{2}]\le \max\left\{\|x\|^{2},\,z_1\right\}=:\zeta_1.
\]
For $p\ge 2$, assuming  $\sup_{t\ge 0}\mathbb E[\|X_t\|^{2p}]\le \zeta_{p-1}$, define
 $$
 z_p:=\inf\left\{z\ge 0\,:\,b(p)\zeta_{p-1}-2p\cd(\Rd')z\le 0\right\}
 $$
 or equivalently
 $$ z_p=(2p\cd(\Rd'))^{-1}b(p)\zeta_{p-1}=(\cd(\Rd'))^{-1}\Big(C(\Rd')+(\sigma\Lambda_+)^2\big(\frac d{2}+p-1\big)\Big)\zeta_{p-1}\,,
 $$ 
 Lemma \ref{lem:GrowthControl} implies that
 \[
 \sup_{t\ge 0}\mathbb E[\|X_t\|^{2p}]\le \max\{\|x\|^{2p},z_p\}=:\zeta_p.
 \]
 Setting $c=3(\cd(\Rd'))^{-1}\max\left\{C(\Rd'),(\sigma\Lambda_+)^2\frac d{2}\right\}$ so that
 $$
 (\cd(\Rd'))^{-1}\Big(C(\Rd')+(\sigma\Lambda_+)^2\big(\frac d{2}+p-1\big)\Big)\le c(p-1)
 $$
 enables to exhibit the upper-bounds
 \begin{align*}
 \zeta_p&\le \max\left\{\|x\|^{2p};c(p-1)\max\left\{\|x\|^{2(p-1)};z_{p-1}\right\}\right\}=\max\left\{\|x\|^{2p};c(p-1)\|x\|^{2(p-1)};c(p-1)z_{p-1}\right\}\\
 &\le \cdots \le \max\left\{\max_{k=1,\cdots p-1}\left\{c^{k}\|x\|^{2(p-k)}\frac{(p-1)!}{(p-1-k)!}\right\};\|x\|^{2p}; (p-1)!c^pz_1\right\}.
 \end{align*}
 The estimate \eqref{Moment_SS} eventually follows with $C=c\max\left\{\|x\|^{2};z_1;1\right\}$.

\medskip

\noindent
\textbf{Uniqueness:}  Owing to the synchronization assumption $\textbf{(A)}$-$(iv)$ on the effective force $\W_\mu$, pathwise uniqueness for \eqref{MV1} follows from standard comparison arguments. For the sake of brevity, these arguments are succinctly detailed hereafter: assuming $Z$ is an another solution to \eqref{MV1} driven by the same Brownian motion $B$, so that $Z$ satisfies
\begin{equation}\label{SecondSol}
	Z_t=x_1-\int_0^t\Big(\U\left(Z_s\right)+\int \F\left(Z_s-x\right)\,\mu^{Z}_s(\ddx)\Big)\dds+\sigma \int_0^t \Gamma(Z_s)\,\dd B_s, \quad \mu^{Z}_t=\text{Law}(Z_t)\,,\, t\ge 0,
\end{equation}
and $\max_{0\le t\le T}\mathbb E[\|Z_t\|^{2n}]<\infty$.

Then, we have
\begin{align*}
	&\frac{\dd\mathbb E[\|X_t-Z_t\|^2]}{\ddt}=-\mathbb E\Big[(X_t-Z_t)\cdot \Big(\U(X_t)-\U(Z_t)\Big)\Big]-\mathbb E\Big[(X_t-Z_t)\cdot \Big(\F(X_t-\tilde X_t)-\F(Z_t-\tilde Z_t)\Big)\Big]\\
	&\qquad\qquad\qquad\qquad+\sigma^2\mathbb E\Big[\mbox{Trace}\big( \Gamma(X_s)-\Gamma(Z_s)\big)\big( \Gamma(X_s)-\Gamma(Z_s)\big)^*\Big]\\
	&=-\mathbb E\Big[(X_t-Z_t)\cdot \Big(\W_{\mu^X_t}(X_t)-\W_{\mu^Z_t}(Z_t)\Big)\Big]
	+\sigma^2\mathbb E\Big[\mbox{Trace}\big( \Gamma(X_s)-\Gamma(Z_s)\big)\big( \Gamma(X_s)-\Gamma(Z_s)\big)^*\Big]\\
	&\le -\mathbb E\Big[(X_t-Z_t)\cdot \Big(\W_{\mu^X_t}(Z_t)-\W_{\mu^Z_t}(Z_t)\Big)\Big]
	+\sigma^2\mathbb E\Big[\mbox{Trace}\big( \Gamma(X_s)-\Gamma(Z_s)\big)\big( \Gamma(X_s)-\Gamma(Z_s)\big)^*\Big]\\
	&\le \big(\frac 1{2}+\sigma^2\KD\big)\mathbb E\Big[\|X_t-Z_t\|^2\Big]+\frac 1{2}\mathbb E\Big[\| \W_{\mu^X_t}(Z_t)-\W_{\mu^Z_t}(Z_t)\|^2\Big].
\end{align*}
for $\KD$ the Lipschitz coefficient of $\Gamma$. 
According to \eqref{Hyp-F1}, for any independent copies $\tilde X$ and $\tilde Z$ of $X$ and $Z$ defined on an independent filtered space $(\tilde \Omega,\tilde{\mathcal F},\{\tilde{\mathcal F}_t\}_{t\ge 0},\tilde{\mathbb P})$, we have
\begin{align*}
	\|\W_{\mu^X_t}(Z_t)-\W_{\mu^Z_t}(Z_t)\|&=\|\int \F(Z_t-x)\mu^X_t(\ddx)-\int \F(Z_t-z)\mu^Z_t(\ddz)\|\\
	&\le C\tilde{\mathbb E} \Big[\|\tilde X_t-\tilde Z_t\|\Big(1+\|Z_t\|^{2n}+\|\tilde X_t\|^{2n}+\|\tilde Z_t\|^{2n}\Big)\Big]\\
	&\le C\Big(\tilde{\mathbb E} \Big[\|\tilde X_t-\tilde Z_t\|\Big]\Big)^{1/2}\Big(\tilde{\mathbb E} \Big[1+\|Z_t\|^{2n}+\|\tilde X_t\|^{2n}+\|\tilde Z_t\|^{2n}\Big]\Big)^{1/2}
\end{align*}
Taking the second moment of the above inequality, and owing to the moment control of $X$ and $Z$ (and by extension of $\tilde X$ and $\tilde Z$), it follows that
$$
\mathbb E\left[\|\W_{\mu^X_t}(Z_t)-\W_{\mu^Z_t}(Z_t)\|^2\right]\le C \mathbb E\left[\|\tilde X_t-\tilde Z_t\|^2\right]=C\mathbb E\left[\|X_t-Z_t\|^2\right]
$$ 
Hence, for $0<T<\infty$ arbitrary,
\begin{align*}
	\frac{\dd\mathbb E[\|X_t-Z_t\|^2]}{\ddt}\le \big(\frac 1{2}+\sigma^2\KD+C\big)\mathbb E[\|X_t-Z_t\|^2]
\end{align*}
and, by Gronwall's inequality, this yields $\max_{t\le T}\mathbb E[\|X_t-Z_t\|^2]=0$. Consequently $\{X_t\}_{0\le t\le T}$ and $\{Z_t\}_{0\le t\le T}$ have the same one-time marginal laws reducing \eqref{SecondSol} to the ``linear'' equation:
$$
Z_t=x_1-\int_0^t\Big(\U\left(Z_s\right)+\int \F\left(Z_s-x\right)\,\mu^{X}_s(\ddx)\Big)\dds+\sigma \int_0^t \Gamma(Z_s)\,\dd B_s, \quad \mu^{Z}_t=\text{Law}(Z_t)\,,\, 0\le t\le T.
$$
Owing to the local Lipschitz continuity of $\U$ and of $\F$ (and using, again, \cite[Theorem 2.5, Chapter 5]{KarShr91}), necessarily $X=Z$ a.s..
\subsection{Proof of Lemma \ref{lem:Moment_particle}}
The proof is analogue to the moment estimate in Proposition \ref{prop:WellposedMV}: For all $p\ge 1$, and $t\ge 0$, we have
\begin{align*}
\frac{\dd\mathbb E[\|X^{i,N}_t\|^{2p}]}{\ddt}&= -2p\mathbb E\Big[\|X^{i,N}_t\|^{2(p-1)}X^{i,N}_t\cdot\Big( \U(X^{i,N}_t)+ \frac 1N\sum_{j=1}^N \F(X^{i,N}_t-X^{j,N}_t)\Big)\Big]\\
&\quad+\frac{\sigma^2}{2} 4p(p-1)\mathbb E\Big[\|X^{i,N}_t\|^{2(p-2)}\big(X^{i,N}_t\cdot \Gamma\Gamma^*(X^{i,N}_t)X^{i,N}_t\big)\Big].
\end{align*}
The $X^{i,N}$ being exchangeable, the index $i$ can be arbitrarily chosen between $1$ and $N$. This exchangeability together with \eqref{Hyp-F2} brings
\begin{align}\label{Monotony}
&\frac 1N\sum_{j=1}^N\mathbb E\Big[\|X^{i,N}_t\|^{2(p-1)}X^{i,N}_t\cdot \F(X^{i,N}_t-X^{j,N}_t)\Big]=\frac 1{N^2}\sum_{i,j=1}^N\mathbb E\Big[\|X^{i,N}_t\|^{2(p-1)}X^{i,N}_t\cdot \F(X^{i,N}_t-X^{j,N}_t)\Big]\nonumber\\
&=\frac 1{2N^2}\sum_{i,j=1}^N\mathbb E\Big[\|X^{i,N}_t\|^{2(p-1)}X^{i,N}_t\cdot \F(X^{i,N}_t-X^{j,N}_t)\Big]+\frac 1{2N^2}\sum_{i,j=1}^N\mathbb E\Big[\|X^{j,N}_t\|^{2(p-1)}X^{j,N}_t\cdot \F(X^{j,N}_t-X^{i,N}_t)\Big]\nonumber\\
&=\frac 1{2N^2}\sum_{i,j=1}^N\mathbb E\Bigg[\Big(\|X^{i,N}_t\|^{2(p-1)}X^{i,N}_t-\|X^{j,N}_t\|^{2(p-1)}X^{j,N}_t\Big)\cdot \F(X^{i,N}_t-X^{j,N}_t)\Bigg]\ge 0.
\end{align}
Hence, using further the boundedness of $\Gamma$, we have
\begin{align*}
\frac{\dd\mathbb E[\|X^{i,N}_t\|^{2p}]}{\ddt}\le -2p\mathbb E\Big[\|X^{i,N}_t\|^{2(p-1)} X^{i,N}_t\cdot \U(X^{i,N}_t)\Big]+\frac{\sigma^2}{2}(4p(p-1))\big(\Lambda_+\big)^2d\mathbb E\Big[\|X^{i,N}_t\|^{2(p-1)}\Big].
\end{align*}
Invoking \eqref{Hyp-U3-bis},
\begin{align*}
	-X^{i,N}_t\cdot\U(X^{i,N}_t)&=-X^{i,N}_t\cdot \U(X^{i,N}_t)\Big(\mathds{1}_{\{\|X^{i,N}_t\|\le \Rd'\}}+\mathds{1}_{\{\|X^{i,N}_t\|> \Rd'\}}\Big)\\
	&\le \Big(\cl(\Rd')\Rd'+\|\U(0)\|\Big)\mathds{1}_{\{\|X^{i,N}_t\|\le \Rd'\}}-\cd(\Rd')\|X^{i,N}_t\|^2\mathds{1}_{\{\|X^{i,N}_t\|>\Rd'\}}\\
	&\le 	\Big(\cl(\Rd')\Rd'+\|\U(0)\|+\cd(\Rd')\Rd'^2\Big)\mathds{1}_{\{\|X^{i,N}_t\|\le \Rd'\}}-\cd(\Rd')\|X^{i,N}_t\|^2 
\end{align*}
and so, for $C(\Rd'):=\cl(\Rd')\Rd'+\|\U(0)\|+\cd(\Rd')\Rd'^2$, 
\begin{align*}
	\frac{\dd\mathbb E[\|X^{i,N}_t\|^{2p}]}{\ddt}\le  -2p\cd(\Rd')\mathbb E\Big[\|X^{i,N}_t\|^{2p}\Big]+\Big(C(\Rd')+\frac{\sigma^2}{2}\Big(2p+4p(p-1)\Big)\big(\Lambda_+\big)^2d\Big)\mathbb E\Big[\|X^{i,N}_t\|^{2(p-1)}\Big].
\end{align*}
We recover here the same inequality as in the intermediate estimate \eqref{Moment_SS_Iteration} in the proof of Proposition \ref{prop:WellposedMV}. This finally gives Lemma \ref{lem:Moment_particle}.

 \section{One-dimensional case and related exact first-collision estimates}\label{sec:1DCase}
 As we briefly mentioned at the beginning of the paper,  in the one-dimensional case, the exact first collision-times, respectively for \eqref{MV1}-\eqref{MV2} and \eqref{MFSP1}-\eqref{MFSP2},
 $$
 C(\sigma):=\inf\left\{t\ge 0\,:\,X_t=Y_t\right\}\quad\mbox{ and } \quad C^i_N(\sigma):=\inf\left\{t\ge 0\,:\,X^{i,N}_t=Y^{i,N}_t\right\}\,,
 $$
are well-defined and can be studied along the exact collision-locations:
 $$
 L(\sigma):=X_{C(\sigma)}(=Y_{C(\sigma)})\quad\mbox{ and } \quad L^i_N(\sigma):=X^{i,N}_{C^{i}_{N}(\sigma)}\Big(=Y^{i,N}_{C^{i}_{N}(\sigma)}\Big)\,.
 $$
(Although, here, collision-location naturally reduces to a single-component site instead of a two-components one, for the sake of convenience, we keep the notation $L(\sigma)$.)

In spite of the dimension reduction, establishing the zero-noise limits of $C(\sigma)$ and $L(\sigma)$ carry the same technical issues discussed in Section \ref{sec:mainresults} (with a re-interpretation of the collision-times,  similarly to \eqref{KeyInterpretation}, and coupling argument 
 a coupling argument relating $(X,Y)$ and
	$(X^{i,N},Y^{i,N})$~--~with the {\it linearized} diffusion $(x^\sigma,y^\sigma)$ defined in \eqref{Linearized-MV1}~--~\eqref{Linearized-MV2}. However the proof arguments exhibited in Sections~\ref{sec:LinearCase}, \ref{sec:SelfStabilizingCase} and
	\ref{sec:ParticleCase} simplify significantly. Coupling lemmas are notably less significant and the design of suitable enlargements of collision set introduced
	in Section~\ref{subsec:Linear-collisionA} are not necessary for \emph{a priori} spotting the first collision-location.
	
	\noindent
	Let us focus on the case of the self-stabilizing diffusions and the derivation of the equivalent of Theorem \ref{thm:main1}. Additionally, and without loss of generality, let us assume that the two asymptotically stable points of $\U$ given by $(\mathbf A)$-$(ii)$ are ordering as $\lambda_1\le \lambda_2$. By~$(\mathbf A)$-$(v)$,
	$x_1<x_2$, and, necessarily,  $C(\sigma)$ simply reduces to the first time~$t$ where~$X_t\ge Y_t$. Therefore, we have
	\begin{equation}\label{buffer_collision1d}
		C(\sigma)=\inf_{z\in\mathbb R}C_{z,z}(\sigma),\qquad C_{z_1,z_2}(\sigma):=\inf\left\{t\ge 0\,\,:\,\,X_t\ge z_1,\,Y_t\le z_2\right\}\,,z_1,z_2\in\mathbb R\,.
	\end{equation}
In the reciprocal case,
	$\lambda_1>\lambda_2$, a similar  interpretation holds, the ordering between $X$ and~$Y$ being simply reversed. As it will become obvious in a few lines (with the proof of Theorem \ref{victoire2} below), only the situation  ``$z_1>\lambda_1$ and $z_2<\lambda_2$'' is relevant (namely collision occurs within the interval $[\lambda_1,\lambda_2]$), the alternatives, ``$z_1\leq\lambda_1$'' or ``$z_2\geq\lambda_2$'', having (intuitively and rigorously) no particular interest.
	
	By analogy with Section \ref{sec:LinearCase}, define the domains $\mathcal{D}_{z_1}^1:=[z_1,+\infty)$ and
	$\mathcal{D}_{z_2}^2:=(-\infty,z_2]$, so that $C_{z_1,z_2}(\sigma)$ writes as
	\[
	C_{z_1,z_2}(\sigma)=\inf\left\{t\geq0\,:\,(X_t,Y_t)\notin\mathbb R^2\setminus\big(\mathcal{D}_{z_1}^1\times\mathcal{D}_{z_2}^2\big)=\left(\big(-\infty,z_1\big)\times\big(-\infty,z_2\big]\right)\cup\left(\big[z_1,\infty\big)\times\big(z_2,\infty\big)\right)\right\}\,.
	\]
	For $i=1,2$, the domain $\mathbb R\setminus\mathcal{D}_{z_i}^i$ is stable by $-\W_{\lambda_i}$ and Freidlin and Wentzell's estimate (Theorem~\ref{thm:KramersDZ}) then applies for the exit-time
	\[
	c_{z_1,z_2}(\sigma):=\inf\left\{t\geq0\,:\,(x_t^\sigma,y_t^\sigma)\notin\big(\mathbb R\times\mathbb
	R\big)\setminus\big(\mathcal{D}_{z_1}^1\times\mathcal{D}_{z_2}^2\big)\right\}\,,
	\]
	for
	\begin{equation}\label{Linear1D}
		x_t^\sigma=x_1-\int_0^t \W_{\lambda_1}(x_s^\sigma)\,\dds+\sigma \int_0^t \Gamma(x_s^\sigma)\,\dd B_s,\quad y_t^\sigma=x_1-\int_0^t \W_{\lambda_2}(y_s^\sigma)\,\dds+\sigma \int_0^t \Gamma(y_s^\sigma)\,\dd\tilde B_s,\qquad t\geq0.
	\end{equation}
	and with the exit-cost related to $c_{z_1,z_2}(\sigma)$:
	\[
	\widetilde{H}_0(z_1,z_2):=\inf_{-\infty<T_1\leq T_2<\infty}\inf_{\Phi}\Big\{I(T_1,T_2,\Phi)\;:\;\phi\in \mathcal A^{(\lambda_1,\lambda_2),(z_1,z_2)}([T_1,T_2])\Big\}. 
	\]
Note that with $\widetilde H_0(\lambda,\lambda)$ we recover $H_0$ defined in \eqref{collisioncost}.
	Following the proof arguments of Proposition \ref{torche}, the $\sigma\downarrow 0$-asymptotic of $c_{z_1,z_2}(\sigma)$ and $(x^{\sigma}_{c_{z_1,z_2}(\sigma)},y^\sigma_{c_{z_1,z_2}(\sigma)})$ transfers to~$C_{z_1,z_2}(\sigma)$ and $(X_{C_{z_1,z_2}(\sigma)},Y_{C_{z_1,z_2}(\sigma)})$,
	yielding to the following result.
	\begin{lem}
		\label{prison} For any $z_1\geq\lambda_1$ and $z_2\leq\lambda_2$ and for any $\delta>0$, it holds:
		\begin{equation}\label{alberto2}
			\lim_{\sigma\rightarrow 0}
			\mathbb P\left(
			\exp\Big[
			\frac{2}{\sigma^2}(\widetilde H_0(z_1,z_2)-\delta)\Big]
			<C_{z_1,z_2}(\sigma)<
			\exp\Big[
			\frac{2}{\sigma^2}(\widetilde H_0(z_1,z_2)+\delta) \Big]
			\right)=1\,,
		\end{equation}
		and
		\[
		\lim_{\sigma\rightarrow 0}\mathbb P\left(\max\bigg(|X_{C_{z_1,z_2}(\sigma)}-z_1|,|Y_{C_{z_1,z_2}(\sigma)}-z_2|\bigg)\le
		\delta\right)=1\,.
		\]
	\end{lem}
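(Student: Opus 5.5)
The plan is to prove Lemma \ref{prison} in two stages. First I establish the Kramers-type law for the linearized pair $(x^\sigma,y^\sigma)$ of \eqref{Linear1D} and its hitting time $c_{z_1,z_2}(\sigma)$. Then I transfer it to $(X,Y)$ through the coupling of Proposition \ref{label_bizarre2}, following the proof of Proposition \ref{torche}.

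For the linear stage, I set $\mathcal O:=\mathbb R^2\setminus(\mathcal D^1_{z_1}\times\mathcal D^2_{z_2})$. This set is positively invariant for $(-\W_{\lambda_1},-\W_{\lambda_2})$. Indeed, by the synchronization \eqref{Hyp:Synchro} in dimension one, $\W_{\lambda_1}$ is increasing with unique zero $\lambda_1\le z_1$, so $\W_{\lambda_1}(z_1)\ge 0$ and the flow of $-\W_{\lambda_1}$ cannot enter $[z_1,\infty)$ from below. Symmetrically, the flow of $-\W_{\lambda_2}$ cannot enter $(-\infty,z_2]$ from above.

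If $z_1=\lambda_1$ or $z_2=\lambda_2$, I replace $z_i$ by $z_i\pm\eta$ and use continuity in $\eta$, exactly as with $\rho$ in Lemmas \ref{dale} and \ref{dale-bis}. I also localize to a large bounded box, using Step 3.2 of Proposition \ref{label_bizarre2} so that escape at infinity costs more than $\widetilde H_0(z_1,z_2)+1$.

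Theorem \ref{thm:KramersDZ} then gives the law with exit cost $\inf_{\partial\mathcal O}V$. I identify this infimum with $\widetilde H_0(z_1,z_2)$ using the monotonicity argument of case $(a)$ in Section \ref{subsec:Linear-collisionA}. Since the action decouples into the two one-dimensional quasi-potentials $V_1(x)+V_2(y)$, with $V_1$ nondecreasing on $[\lambda_1,\infty)$ and $V_2$ nonincreasing on $(-\infty,\lambda_2]$, the infimum over $\partial\mathcal O\cap\{x\ge z_1,y\le z_2\}$ is attained exactly at the corner $(z_1,z_2)$. I expect strict monotonicity there, because $V_1'=2\W_{\lambda_1}/A>0$ on $(\lambda_1,\infty)$. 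The exit-location part of Theorem \ref{thm:KramersDZ} then places $(x^\sigma,y^\sigma)$ near $(z_1,z_2)$ at time $c_{z_1,z_2}(\sigma)$.

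For the transfer stage, I fix $\xi>0$ and let $T_\xi$ be as in Proposition \ref{label_bizarre2}, with $H>\widetilde H_0(z_1,z_2)+1$. I restart the linear processes at $T_\xi$ from $(X_{T_\xi},Y_{T_\xi})$; by the Markov property, the linear result applies uniformly over the starting points in the basin. On the event where $|X-x^\sigma_{T_\xi,\cdot}|,|Y-y^\sigma_{T_\xi,\cdot}|<\xi$ on $[T_\xi,e^{2H/\sigma^2}]$, I get the sandwich
\[
c_{z_1+\xi,z_2-\xi}\ \ge\ C_{z_1,z_2}\ \ge\ c_{z_1-\xi,z_2+\xi}.
\]
The event $\{C_{z_1,z_2}<T_\xi\}$ is negligible, since by the LDP \eqref{LDP} the pair stays near the collision-free deterministic orbits on $[0,T_\xi]$. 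Continuity of $\widetilde H_0$ in $(z_1,z_2)$, which follows from the $\mathcal C^1$ drifts, then yields \eqref{alberto2}. The location claim follows because at time $C_{z_1,z_2}$ either $X=z_1$ or $Y=z_2$ by continuity of paths. The other coordinate lies within $\xi+\delta$ of its target, by the linear exit-location result applied to the enlarged or shrunk corners.

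The main obstacle is the identification of the exit cost with the corner value, together with the exit-location concentration at the corner. It needs a strict-monotonicity argument for $V_1,V_2$, and care with the corner geometry, where $\partial\mathcal O$ is not smooth, in applying Theorem \ref{thm:KramersDZ}.
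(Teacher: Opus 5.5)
Your route is the paper's: stability of $\mathbb R^2\setminus(\mathcal D^1_{z_1}\times\mathcal D^2_{z_2})$ under $(-\W_{\lambda_1},-\W_{\lambda_2})$, Theorem~\ref{thm:KramersDZ} for the linearised time $c_{z_1,z_2}(\sigma)$, and transfer through Proposition~\ref{label_bizarre2} as in Proposition~\ref{torche}. Your treatment of the time law is sound and more explicit than the paper's. You split $V(x,y)=V_1(x)+V_2(y)$ with strict monotonicity to identify the cost and the unique minimising corner, localise to a box, and use the sandwich $c_{z_1+\xi,z_2-\xi}\ge C_{z_1,z_2}\ge c_{z_1-\xi,z_2+\xi}$. (With the paper's $\frac14$ normalisation $V_1'=\W_{\lambda_1}/A$, but only the sign matters.)

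The step that fails as written is the location claim. The linear exit-location result at the shifted corners describes $(x^\sigma,y^\sigma)$ only at the times $c_{z_1\pm\xi,z_2\mp\xi}(\sigma)$, which merely bracket $C_{z_1,z_2}(\sigma)$. After $c_{z_1-\xi,z_2+\xi}(\sigma)$ the pair typically relaxes back towards $(\lambda_1,\lambda_2)$ and can make many further excursions before $C_{z_1,z_2}(\sigma)$. So nothing in that result constrains the free coordinate at time $C_{z_1,z_2}(\sigma)$. What works is an exit-cost comparison, as in Proposition~\ref{lacollision} or Step b of Theorem~\ref{victoire2}. Write $C=C_{z_1,z_2}(\sigma)$ and suppose, say, $X_C=z_1$ but $Y_C<z_2-\delta$. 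Then on the coupling event the linearised pair lies at time $C$ in $[z_1-\xi,\infty)\times(-\infty,z_2-\delta+\xi]$, hence $c_{z_1-\xi,z_2-\delta+\xi}(\sigma)\le C$. Since $V_2$ is strictly decreasing on $(-\infty,\lambda_2]$, the cost of that quadrant exceeds $\widetilde H_0(z_1,z_2)$ by some $\gamma(\delta)>0$ once $\xi$ is small. Its lower tail then contradicts the upper tail of $C$ you have already proved. The case $Y_C=z_2$, $X_C>z_1+\delta$ is symmetric, using the strict increase of $V_1$ on $[\lambda_1,\infty)$.

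A smaller point concerns the negligibility of $\{C_{z_1,z_2}(\sigma)<T_\xi\}$. Collision-freeness gives $\phi_t(x_2)-\phi_t(x_1)\ge 2\varepsilon_0$. That keeps the orbit pair away from the quadrant $[z_1,\infty)\times(-\infty,z_2]$ only when $z_2-z_1<2\varepsilon_0$; in general you need the orbit pair to avoid this closed quadrant. Without such a restriction the lemma itself is false. Take $\lambda_1<z_1\le x_1$ and $x_2\le z_2<\lambda_2$, which is compatible with the basins: then $C_{z_1,z_2}(\sigma)=0$ while $\widetilde H_0(z_1,z_2)>0$. The paper leaves this hypothesis implicit too, and Theorem~\ref{victoire2} only uses $z_2-z_1$ small, so simply add it.
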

	\noindent
	From this preparatory lemma, we can derive the analogue of Theorem \ref{thm:main1}.
	\begin{thm}
		\label{victoire2} Let $\underline H_0$ and $\mathcal M_0$ denote respectively the minimum and minimizer set of $H_0(\lambda)=\widetilde H_0(\lambda,\lambda)$.
		 Then,
		for any~$\delta>0$,
		\begin{equation}
			\label{eq:victoire2}
			\begin{aligned}
				&\lim_{\sigma\to0}\PP\left(\exp\left[\frac{2}{\sigma^2}\left(\underline H_0-\delta\right)\right]<C(\sigma)<\exp\left[\frac{2}{\sigma^2}\left(
				\underline H_0+\delta\right)\right]\right)=1,
			\end{aligned}
		\end{equation}
		and
		\begin{equation}\label{eq:victoirebis2}
			\lim_{\sigma\to0}\PP\left(\mathrm{dist}\big(X_{C(\sigma)},\mathcal M_0\big)\le \delta\right)=1.
		\end{equation}
\end{thm}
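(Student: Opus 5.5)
We prove Theorem \ref{victoire2} by mimicking the proof of Proposition \ref{lacollision}. The representation \eqref{buffer_collision1d}, $C(\sigma)=\inf_{z}C_{z,z}(\sigma)$, replaces the barycentre trick, and Lemma \ref{prison} replaces Lemma \ref{dale-bis}. Throughout we take $\lambda_1<\lambda_2$ and write $H_0(z)=\widetilde H_0(z,z)$.

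\textbf{Preliminary reductions.} First, $\widetilde H_0$ separates as $\widetilde H_0(z_1,z_2)=V_1(z_1)+V_2(z_2)$, where $V_i$ is the one-dimensional quasi-potential of $-\W_{\lambda_i}$ started at $\lambda_i$. Hence $\widetilde H_0$ is continuous; $V_1$ is nondecreasing on $[\lambda_1,\infty)$ and $V_2$ is nonincreasing on $(-\infty,\lambda_2]$. This monotonicity comes from the one-dimensional structure: any path to $z_1>\lambda_1$ passes through every point of $[\lambda_1,z_1]$.

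Next, $H_0(z)\ge V_2(\lambda_1)>0$ for $z\le\lambda_1$ and $H_0(z)\ge V_1(\lambda_2)$ for $z\ge \lambda_2$. Moreover $H_0\to\infty$ at $\pm\infty$, by the Step~3.2 bound of Proposition \ref{label_bizarre2}. So $\mathcal M_0$ is compact, and we will check that it lies in $(\lambda_1,\lambda_2)$. This holds because $H_0(\lambda_1)=V_2(\lambda_1)$, while moving slightly right of $\lambda_1$ decreases $V_2$ at first order but increases $V_1$ only at second order ($V_1$ vanishes quadratically at its minimum). The case $z=\lambda_i$ is borderline for Lemma \ref{prison}, so we keep all $z$ in a compact subset of $(\lambda_1,\lambda_2)$ or handle the endpoints through the monotonicity above.

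\textbf{Upper tail of \eqref{eq:victoire2}.} Fix $z_0\in\mathcal M_0$. Then $C(\sigma)\le C_{z_0,z_0}(\sigma)$, and the upper tail of \eqref{alberto2} gives the claim directly.

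\textbf{Lower tail and \eqref{eq:victoirebis2}.} We first localise. Let $\tilde\tau_R$ be the exit time of $X$ from $(-\infty,\lambda_1+R)$ and of $Y$ from $(\lambda_2-R,\infty)$, with $R$ large enough that $V_1(\lambda_1+R),V_2(\lambda_2-R)>\underline H_0+1$. By Proposition \ref{label_bizarre2} combined with Theorem \ref{thm:KramersDZ} (as in the $I_1$ step of Proposition \ref{lacollision}), $\tilde\tau_R> e^{2(\underline H_0+1/2)/\sigma^2}$ with probability tending to one. Together with the upper tail, this shows $L(\sigma)$ lies in a compact interval $K$ with high probability.

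Next we cover. Cover $K$ by finitely many intervals $[a_\ell,a_\ell+r]$. On the event $\{X_{C(\sigma)}=Y_{C(\sigma)}\in[a_\ell,a_\ell+r]\}$ we have $X_{C(\sigma)}\ge a_\ell$ and $Y_{C(\sigma)}\le a_\ell+r$, so $C(\sigma)\ge C_{a_\ell,a_\ell+r}(\sigma)$. By continuity of $\widetilde H_0$, choose $r$ so small that $\widetilde H_0(a_\ell,a_\ell+r)\ge H_0(a_\ell)-\delta/2$. Then the lower tail of \eqref{alberto2} gives
\begin{equation*}
\mathbb P\big(C(\sigma)\le e^{2(\underline H_0-\delta)/\sigma^2}\big)\to0 .
\end{equation*}
Intervals on which $H_0\ge \underline H_0+3\eta$ (those at distance at least $\delta$ from $\mathcal M_0$) are ruled out in the same way, now using the upper tail of $C(\sigma)$ at level $\underline H_0+\eta$; this is the argument for $I_2$ in Proposition \ref{lacollision}. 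This yields \eqref{eq:victoirebis2}.

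For intervals meeting $(-\infty,\lambda_1]$ or $[\lambda_2,\infty)$, Lemma \ref{prison} does not apply as stated. There we bound $C(\sigma)$ from below by the one-marginal hitting time of $Y$ into $(-\infty,a_\ell+r]$. Its cost is $V_2(a_\ell+r)\ge V_2(\lambda_1+r)$, which exceeds $\underline H_0$ once the strict interiority of $\mathcal M_0$ is established.

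\textbf{Main obstacle.} We expect the delicate point to be this justification that collisions near or outside the wells are strictly costlier. Concretely, it is the strict inequality $\underline H_0<\min(V_2(\lambda_1),V_1(\lambda_2))$, needed so that the endpoint cases $z_i=\lambda_i$ of Lemma \ref{prison} never matter. We would prove it by the first-order/second-order comparison above, using $\W_{\lambda_2}(\lambda_1)\neq0$ and nondegeneracy of $\Gamma$.
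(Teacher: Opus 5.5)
Your proposal is correct and follows essentially the paper's route. Both arguments rest on the representation $C(\sigma)=\inf_z C_{z,z}(\sigma)$ combined with Lemma~\ref{prison}, take the upper tail from a single minimizer, and use a finite covering of a compact interval for the lower tail and for the persistence of $X_{C(\sigma)}$ near $\mathcal M_0$; the one difference in order is that you obtain the lower tail directly from the covering, as in Proposition~\ref{lacollision}, whereas the paper proves the location result first and deduces the lower tail from it.

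Two of your precautions are real improvements on the written proof.
\begin{itemize}
\item For collisions outside $[\lambda_1,\lambda_2]$, the one-marginal hitting time of $Y$ (resp.\ $X$) gives the correct event inclusion. The paper's inclusion into $\{C_{\lambda_k,\lambda_k}(\sigma)\le\cdot\}$ fails when $X$ stays below $\lambda_1$ and $Y$ comes down to meet it, although the cost $\widetilde H_0(\lambda_1,\lambda_1)=V_2(\lambda_1)$ is the same.
\item Your first-order comparison gives the strict inequality $\underline H_0<\min(H_0(\lambda_1),H_0(\lambda_2))$, which the paper's Step~a needs. The paper infers it only from monotonicity, and that alone gives $\mathcal M_0\subset[\lambda_1,\lambda_2]$ rather than the open interval.
\end{itemize}
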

	\begin{proof}[Proof of Theorem \ref{victoire2}]
		Let us first establish the upper-tail estimate in \eqref{eq:victoire2}, starting with the inequality, for any $\lambda_0\in\mathcal M_0$,
		\begin{equation*}
			\PP\left(C(\sigma)\geq\exp\Big[\frac{2}{\sigma^2}\left(H_0(\lambda_0)+\delta\right)\Big]\right)\le
			\PP\left(C_{\lambda_0,\lambda_0}(\sigma)\geq\exp\Big[\frac{2}{\sigma^2}\left(H_0(\lambda_0)+\delta\right)\Big]\right)\,,
		\end{equation*}
		with $C_{z_1,z_2}(\sigma)$ being defined as in \eqref{buffer_collision1d} and so $C(\sigma)\le C_{\lambda_0,\lambda_0}(\sigma)$.
		 
\noindent{}Given the ordering $\lambda_1\le \lambda_2$,  $z\mapsto \widetilde{H}_0(z,z)=H_0(z)$ is decreasing on $(-\infty,\lambda_1]$, increasing on
		$[\lambda_2,+\infty)$, and thus the minimizer of the function $H_0$ is necessarily achieved in the interval $(\lambda_1,\lambda_2)$. Moreover, as~$x_1<x_2$, we know that the
		first time the diffusion~$(X,Y)$ reaches a point $(\lambda_0',\lambda_0')$ with $|\lambda_0'-\lambda_0|<\rho$ necessarily occurs before the time $(X,Y)$ enters the
		region~$[\lambda_0+\rho,\infty)\times(-\infty,\lambda_0-\rho]$ for any $\rho>0$ sufficiently small. Therefore,
		\begin{align*}
			\PP\left(C_{\lambda_0',\lambda_0'}(\sigma)\geq\exp\Big[\frac{2}{\sigma^2}\left(H_0(\lambda_0)+\delta\right)\Big]\right)\le
			\PP\left(C_{\lambda_0+\rho,\lambda_0-\rho}(\sigma)\ge\exp\Big[\frac{2}{\sigma^2}\left(H_0(\lambda_0)+\delta\right)\Big]\right)\,.
		\end{align*}
		As $\widetilde{H}_0$ is continuous, $\widetilde{H}_0(\lambda_0+\rho,\lambda_0-\rho)$ converges to $\widetilde{H}_0(\lambda_0,\lambda_0)=H_0(\lambda_0)$ as $\rho\downarrow 0$, and taking $\rho$ sufficiently
		small so that~$\widetilde{H}_0(\lambda_0+\rho,\lambda_0-\rho)\le H_0(\lambda_0)+\frac{\delta}{2}$, we get, by \eqref{alberto2} in Lemma~\ref{prison}, 
		\begin{equation}\label{proof_victoire2_1}
			\lim_{\sigma\to0}\PP\left(C(\sigma)\geq\exp\Big[\frac{2}{\sigma^2}\left(\underline H_0+\delta\right)\Big]\right)\le\lim_{\sigma\to0}\PP\left(C_{\lambda_0+\rho,\lambda_0-\rho}(\sigma)\ge\exp\Big[\frac{2}{\sigma^2}\left(\underline H_0+\delta\right)\Big]\right)=0\,.
		\end{equation}
				For the collision-location estimate \eqref{eq:victoirebis2}, it is enough to check that {\bf Step a}: the collision does not persist outside~$[\lambda_1,\lambda_2]$  and {\bf Step b}: derive, from a compactness argument, that the collision necessarily occurs 
		within~a vicinity of $[\lambda_1,\lambda_2]\cap \mathcal M_0$, i.e. for $\rho$ very small, 
			\begin{equation}\label{Collision1D-bis}
			\lim_{\sigma\rightarrow
				0}\mathbb{P}\Big(\forall \lambda_0\in\mathcal M_0,\,|L_{C(\sigma)}-\lambda_0|>\rho\big)=0.
		\end{equation}
		
		\noindent{}{\bf Step a.} Start with the following inequality:
		\begin{align*}
			\PP\left(X_{C(\sigma)}\notin[\lambda_1,\lambda_2]\right)&\leq\PP\left(X_{C(\sigma)}\notin[\lambda_1,\lambda_2],C(\sigma)\leq\exp\Big[\frac{2}{\sigma^2}\left(\underline H_0+\delta\right)\Big]\right)\\
			&\quad+\PP\left(C(\sigma)\geq\exp\Big[\frac{2}{\sigma^2}\left(\underline H_0+\delta\right)\Big]\right)\,,
		\end{align*}
		for some $\delta>0$. Using \eqref{proof_victoire2_1}, the second upper-bound in the r.h.s. tends to $0$ as $\sigma\downarrow 0$. For the remainder, we have
		\begin{align*}
			&\PP\left(X_{C(\sigma)}\notin[\lambda_1,\lambda_2],C(\sigma)\leq\exp\Big[\frac{2}{\sigma^2}\left(\underline H_0+\delta\right)\Big]\right)\\
			&\leq\PP\left(C_{\lambda_1,\lambda_1}(\sigma)\leq\exp\Big[\frac{2}{\sigma^2}\left(\underline H_0+\delta\right)\Big]\right)+\PP\left(C_{\lambda_2,\lambda_2}(\sigma)\leq\exp\Big[\frac{2}{\sigma^2}\left(\underline H_0+\delta\right)\Big]\right)\,.
		\end{align*}
		Choosing $\delta$ small enough so that
		$\min\left\{\widetilde{H}_0(\lambda_1,\lambda_1);\widetilde{H}_0(\lambda_2,\lambda_2)\right\}>\underline H_0+\delta$, Freidlin-Wentzell estimate in Lemma \ref{prison} immediately yields
			\[
		\lim_{\sigma\rightarrow
			0}\PP\left(C_{\lambda_k,\lambda_k}(\sigma)\leq\exp\Big[\frac{2}{\sigma^2}\left(\underline H_0+\delta\right)\Big]\right)\le \lim_{\sigma\rightarrow
			0}\PP\left(C_{\lambda_k,\lambda_k}(\sigma)<\exp\Big[\frac{2}{\sigma^2}\widetilde  H_0(\lambda_k,\lambda_k)\Big]\right)=0,\qquad k=1,2.
		\]
This further immediately gives $\lim_{\sigma\to0}\PP\left(L_{C(\sigma)}\notin[\lambda_1,\lambda_2]\right)=0$.
		
		\noindent{}{\bf Step b}. Given {\bf Step a} and, for $\lambda^-_0:=\min \mathcal M_0$, $\lambda^+_0;=\max \mathcal M_0$, it remains to show that $L_{C(\sigma)}$ does not persist
		in the region~$[\lambda_1,\lambda_2]\setminus (\lambda^-_0-\rho,\lambda^+_0+\rho)=[\lambda_1,\lambda^-_0-\rho]\cup[\lambda^+_0+\rho,\lambda_2]$. It is
		further sufficient to check this assertion for the one interval part, say $[\lambda^+_0+\rho,\lambda_2]$,
		the reasoning for the remainder being similar. As $[\lambda^+_0+\rho,\lambda_2]$ is compact, we can embed it into the finite union $\cup_{k=1}^L(\eta_k-r,\eta_k+r)$, where $r>0$ will be chosen - sufficiently small - later on. For each subinterval $(\eta_k-r,\eta_k+r)$, observe that
		\begin{align*}
			&\PP\left(L_{C(\sigma)}\in[\eta_k-r;\eta_k+r]\right)\le\PP\left(C(\sigma)\geq\exp\Big[\frac{2}{\sigma^2}\left(\underline H_0+\delta\right)\Big]\right)\\
			&\quad+\PP\left(L_{C(\sigma)}\in[\eta_k-r;\eta_k+r],C(\sigma)\le\exp\Big[\frac{2}{\sigma^2}\left(\underline H_0+\delta\right)\Big]\right)=:I_1(\sigma)+I_2(\sigma)\,.
		\end{align*}
		According to \eqref{proof_victoire2_1},
		\begin{equation*}
			\lim_{\sigma\rightarrow 0}I_1(\sigma)=\lim_{\sigma\rightarrow
				0}\PP\left(C(\sigma)\geq\exp\Big[\frac{2}{\sigma^2}\left(\underline H_0+\delta\right)\Big]\right)=0\,.
		\end{equation*}
		For $I_2(\sigma)$, we have:
		\begin{align*}
			&\PP\left(L_{C(\sigma)}\in[\eta_k-r;\eta_k+r],C(\sigma)\leq\exp\Big[\frac{2}{\sigma^2}\left(\underline H_0+\delta\right)\Big]\right)\leq\PP\left(C_{\eta_k-r,\eta_k+r}(\sigma)\leq\exp\Big[\frac{2}{\sigma^2}\left(\underline H_0+\delta\right)\Big]\right)\,.
		\end{align*}
		Since $\widetilde{H}_0(\eta_k-r,\eta_k+r)>H_0(\lambda^+_0)=\underline H_0$, by taking $\delta$ sufficiently small,
		\begin{align*}
			&\PP\left(C_{\eta_k-r,\eta_k+r}(\sigma)\leq\exp\Big[\frac{2}{\sigma^2}\left(\underline H_0+\delta\right)\Big]\right)\leq\PP\left(C_{\eta_k-r,\eta_k+r}(\sigma)\leq\exp\Big[\frac{2}{\sigma^2}\left(\widetilde{H}_0(\eta_k-r,\eta_k+r)-\delta\right)\Big]\right)\,.
		\end{align*}
		Applying Lemma \ref{prison}, the above tends to $0$ as $\sigma\downarrow 0$ and it follows that $\lim_{\sigma\rightarrow 0}I_2(\sigma)=0$. Coming back to the embedding of $(\lambda^+_0+\rho,\lambda_2)$, it follows that:
		\begin{equation*}
		\lim_{\sigma\to0}\PP\left(L_{C(\sigma)}\in[\lambda^+_0+\rho,\lambda_2]\right)\le \sum_{k=1}^L	\lim_{\sigma\to0}\PP\left(L_{C(\sigma)}\in[\eta_k-r,\eta_k+r]\right)=0\,.
		\end{equation*}
		As such,
		\begin{equation*}
			\lim_{\sigma\to0}\PP\left(L_{C(\sigma)}\in[\lambda^+_0+\rho,\lambda_2]\right)=0\,,
		\end{equation*}
		which eventually (again by applying the same reasoning to $[\lambda_1,\lambda^-_0-\rho]$) yields \eqref{Collision1D-bis}.	
		
		\noindent
		We eventually complete the proof with the lower-tail estimate in \eqref{eq:victoire2}. For $\rho>0$, given the
		event~$\{\mathrm{dist}(L_{C(\sigma)},\mathcal M_0)\leq\rho\}$, remark that, for any $y$ lying in the interval $(-\rho,\rho)$, the following inclusion holds:
		\begin{equation*}
			\Big\{C(\sigma)\leq\exp\Big[\frac{2}{\sigma^2}\left(H_0(\lambda_0)-\delta\right)\Big]\Big\}\subset\Big\{C_{\lambda_0+y,\lambda_0+y}(\sigma)\leq\exp\Big[\frac{2}{\sigma^2}\left(\underline H_0-\delta\right)\Big]\Big\}\,.
		\end{equation*}
		In particular, for any $\rho>0$, and any $\lambda_0\in\mathcal M_0$,
		\begin{align*}
			&\PP\left(C(\sigma)\leq\exp\Big[\frac{2}{\sigma^2}\left(H_0(\lambda_0)-\delta\right)\Big]\right)
			\leq\PP\left(| X_{C(\sigma)}-\lambda_0|\geq\rho\right)\\
			&\quad
			+\PP\left(\inf_{y\in[-\rho,\rho]}C_{\lambda_0+y,\lambda_0+y}(\sigma)\leq\exp\Big[\frac{2}{\sigma^2}\left(H_0(\lambda_0)-\delta\right)
			\Big]\right)\,.
		\end{align*}
		According to \eqref{Collision1D-bis}, the first upper-bound vanishes as $\sigma\downarrow 0$, and the second upper-bound can be estimated by
		\begin{align*}
			&\PP\left(\inf_{y\in[-\rho,\rho]}C_{\lambda_0+y,\lambda_0+y}(\sigma)\leq\exp\Big[\frac{2}{\sigma^2}\left(H_0(\lambda_0)-\delta\right)\Big]\right) \leq\PP\left(C_{\lambda_0-\rho,\lambda_0+\rho}(\sigma)\leq\exp\Big[\frac{2}{\sigma^2}\left(H_0(\lambda_0)-\delta\right)\Big]\right)\,.
		\end{align*}
		As $\lim_{\rho\rightarrow 0}\widetilde{H}_0(\lambda_0-\rho,\lambda_0+\rho)= H_0(\lambda_0)(=\underline H_0)$, $\rho$ can be chosen sufficiently small
		so that~$\widetilde{H}_0(\lambda_0-\rho,\lambda_0+\rho)\geq H_0(\lambda_0)-\frac{\delta}{2}$. Thus, the limit~\eqref{alberto2} ensures that
		\begin{equation*}
			\lim_{\sigma\to0}\PP\left(C_{\lambda_0-\rho,\lambda_0+\rho}(\sigma)\leq\exp\Big[\frac{2}{\sigma^2}\left(H_0(\lambda_0)-\delta\right)\Big]\right)=0\,,
		\end{equation*}
		and so $\lim_{\sigma\to0}\PP\left(C(\sigma)\leq\exp\Big[\frac{2}{\sigma^2}\left(H_0(\lambda_0)-\delta\right)\Big]\right)=0$\,.
	\end{proof}
Lemma \ref{prison} and	the arguments above can be {\it straightforwardly} adapted from the self-interacting diffusions to the particle systems \eqref{MFSP1}-\eqref{MFSP2} yielding the following analogue of Theorem \ref{victoire2}.
\begin{thm}
	\label{victoire3} For $N$ sufficiently large, it holds:
	for any $\delta>0$, $1\le i\le N$,
	\begin{equation}
		\label{eq:victoire3}
		\lim_{\sigma\to0}\PP\left(\exp\left[\frac{2}{\sigma^2}\left(H_0(\lambda_0)-\delta\right)\right]<
		C^i_N(\sigma)<\exp\left[\frac{2}{\sigma^2}\left(H_0(\lambda_0)+\delta\right)\right]\right)=1\,,
	\end{equation}
	and
	\begin{equation*}
		\lim_{\sigma\to0}\PP\left(\max\big(| X^{i,N}_{C^i_N(\sigma)}-\lambda_0|;| Y^{i,N}_{C^i_N(\sigma)}-\lambda_0|\big)\leq\delta\right)=1\,.
	\end{equation*}
\end{thm}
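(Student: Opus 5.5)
The plan is to transfer the proof of Theorem \ref{victoire2} to the particle pair $(X^{i,N},Y^{i,N})$. The only new ingredient needed is the particle coupling, Proposition \ref{label_bizarre9}, which replaces Proposition \ref{label_bizarre2}. We fix $N\ge N_{\varepsilon,\kappa}$ and a coupling threshold $\xi>0$. Proposition \ref{label_bizarre9} then says that, with probability tending to one, $X^{i,N}$ and $Y^{i,N}$ stay within $\xi$ of the linearized processes $x^{i,\sigma}_{T,\cdot}$ and $y^{i,\sigma}_{T,\cdot}$ of \eqref{xi}--\eqref{yi} on the whole window $[T,\exp(2H/\sigma^2)]$. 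We choose $H>\underline H_0+1$. The linearized pair is driven by the contractive fields $-\W_{\lambda_1},-\W_{\lambda_2}$ and, conditionally on the starting point at time $T$, it is exactly of the form \eqref{Linear1D}. Hence Freidlin--Wentzell (Theorem \ref{thm:KramersDZ}) applies to the exit time $c^i_{z_1,z_2}(\sigma)$ from $\mathbb R^2\setminus(\mathcal D^1_{z_1}\times\mathcal D^2_{z_2})$, with cost $\widetilde H_0(z_1,z_2)$.

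\textbf{Step 1 (particle analogue of Lemma \ref{prison}).} Let $C^{i}_{z_1,z_2}(\sigma):=\inf\{t\ge0: X^{i,N}_t\ge z_1,\,Y^{i,N}_t\le z_2\}$. The sandwich argument of Proposition \ref{torche} gives
\[
c^i_{z_1+\xi,z_2-\xi}(\sigma)\ \ge\ C^i_{z_1,z_2}(\sigma)\ \ge\ c^i_{z_1-\xi,z_2+\xi}(\sigma)
\]
on the coupling event, provided $C^i_{z_1,z_2}(\sigma)\ge T$. That proviso holds with probability tending to one by the LDP \eqref{LDP} on $[0,T]$, since $x_1<x_2$ lie in distinct basins. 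The outer times are bounded by $\exp(2H/\sigma^2)$ as long as $\widetilde H_0\le \underline H_0+1$ in the relevant range, so the window is long enough. Continuity of $\widetilde H_0$ then lets $\xi\downarrow0$ and yields \eqref{alberto2} for $C^i_{z_1,z_2}$, for every $z_1\ge\lambda_1$, $z_2\le\lambda_2$ with $\widetilde H_0(z_1,z_2)<H$. For pairs with larger cost we keep only the lower bound, which is all that Steps a and b use.

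\textbf{Step 2 (collision time and location).} In one dimension with $x_1<x_2$, we have $C^i_N(\sigma)=\inf_z C^i_{z,z}(\sigma)$, exactly as in \eqref{buffer_collision1d}. The proof of Theorem \ref{victoire2} then goes through verbatim.
\begin{itemize}
\item Upper tail: use $C^i_N\le C^i_{\lambda_0+\rho,\lambda_0-\rho}$ together with continuity of $\widetilde H_0$.
\item Step a: the monotonicity of $H_0$ outside $[\lambda_1,\lambda_2]$ and the lower tails for $C^i_{\lambda_k,\lambda_k}$ show that the collision does not occur outside $[\lambda_1,\lambda_2]$.
\item Step b: cover the compact set $[\lambda_1,\lambda_2]\setminus(\mathcal M_0)_\rho$ by finitely many intervals. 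On each one, compare with $C^i_{\eta_k-r,\eta_k+r}$, whose cost exceeds $\underline H_0$, and conclude that the collision location concentrates near $\mathcal M_0$.
\item Lower tail: on $\{|X^{i,N}_{C^i_N}-\lambda_0|<\rho\}$ we have $C^i_N\ge C^i_{\lambda_0-\rho,\lambda_0+\rho}$.
\end{itemize}
Because $X^{i,N}=Y^{i,N}$ at the collision time, the location statement for $Y^{i,N}$ is the same as the one for $X^{i,N}$. Here $\lambda_0\in\mathcal M_0$, which is a singleton under synchronization in the examples.

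\textbf{Main obstacle.} The delicate point is that Proposition \ref{label_bizarre9} is stated for $\varepsilon<\varepsilon_c$ and with $N$ depending on $(\varepsilon,\kappa)$, whereas here no radius is involved. I would first check that its proof never really uses $\varepsilon$, only $H$ and $\kappa$; in particular the choice of $N$ in Step 5.2.4 depends on $H,r,c'$ alone. With that, "$N$ large enough" is fixed once $H>\underline H_0+1$ and a $\kappa$ small enough for the coupling are chosen, uniformly over the countably many pairs $(z_1,z_2)$ used in the covering arguments. A second check is that the initial time $T$ of the coupling is deterministic and independent of $\sigma$, so that the Markov restart at $T$ legitimately reduces Step 1 to Theorem \ref{thm:KramersDZ}.
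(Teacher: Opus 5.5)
Your route is the paper's own. Proposition~\ref{label_bizarre9} transfers Lemma~\ref{prison} to the particle pair; your sandwich $c^i_{z_1+\xi,z_2-\xi}\ge C^i_{z_1,z_2}\ge c^i_{z_1-\xi,z_2+\xi}$ is the natural way to fill in the paper's one-line ``owing to''. The proof of Theorem~\ref{victoire2} is then rerun with $C^i_N$ in place of $C$. Two of your steps do not work as stated, and the paper's sketch has the same two problems.

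\textbf{Uniformity of $N$ in $\delta$.} Your resolution of the ``main obstacle'' is wrong: $N$ cannot be fixed once and for all.
\begin{itemize}
\item To get \eqref{alberto2} up to $\delta$, you need a coupling precision $\xi=\xi(\delta)$ set by the modulus of continuity of $\widetilde H_0$.
\item In the proof of Proposition~\ref{label_bizarre9} that precision is of order $\kappa$: Step 4.2 needs $\eta(R)\kappa/\theta<\xi$.
\item Step 5.2.5 then forces $c'=O(\kappa^2)$, and Step 5.2.4 needs $N\ge 4H/(\Lambda_-\theta c' r^2)$.
\end{itemize}
So $N$ blows up as $\delta\downarrow 0$. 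What your argument actually proves is: for every $\delta>0$ there is $N_\delta$ such that both limits hold for all $N\ge N_\delta$.

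This is not an artefact of the proof. Because $\F(0)=0$, the tagged particle only feels $\frac{N-1}{N}\F(\cdot-\lambda_1)$ when the others sit at $\lambda_1$; this is the $j=i$ term in Step 4.1. Take the paper's example $u(x)=x^4/4-x^2/2$, $\F(x)=\alpha x$, $\Gamma=1$. Then $\mathbf{\Upsilon_N}$ is the gradient of $\sum_j u(x^j)+\frac{\alpha}{4N}\sum_{j,k}(x^j-x^k)^2$. The configuration $x^i=0$, $x^j=-1$ ($j\neq i$) costs only $\frac14+\frac{\alpha(N-1)}{2N}$ to reach from $(-1,\dots,-1)$, instead of $\frac14+\frac{\alpha}{2}$. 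With the mirror configuration for $\mathbf Y^N$, the fixed-$N$ collision cost is at most $\underline H_0-\alpha/N$. Freidlin--Wentzell for the $2N$-dimensional system then makes the lower tail in \eqref{eq:victoire3} fail for $\delta<\alpha/N$. The statement must therefore be read with $N$ depending on $\delta$.

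\textbf{Step a.} Since $X^{i,N}_t\ge z\ge Y^{i,N}_t$ forces $X^{i,N}_t\ge Y^{i,N}_t$, one always has $C^i_{z,z}(\sigma)\ge C^i_N(\sigma)$. A lower tail for $C^i_{\lambda_k,\lambda_k}$ therefore says nothing about where the collision happens. Argue instead as follows:
\begin{itemize}
\item On $\{X^{i,N}_{C^i_N}<\lambda_1\}$, the process $Y^{i,N}$ must have hit $\lambda_1$ before $C^i_N$.
\item By the coupling and Theorem~\ref{thm:KramersDZ} applied to $y^{i,\sigma}$ on the stable set $(\lambda_1,\infty)$, this hitting time has cost $H_0(\lambda_1)$.
\item $H_0(\lambda_1)>\underline H_0$: in one dimension $H_0'(\lambda_1)=\W_{\lambda_2}(\lambda_1)/A(\lambda_1)<0$.
\item The case $X^{i,N}_{C^i_N}>\lambda_2$ is symmetric, with $X^{i,N}$ hitting $\lambda_2$.
\end{itemize}
Your Step b and lower-tail bullets are fine as written, because there $z_1<z_2$ and the inclusion goes the right way.
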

The proof of the above readily follows the main steps of Theorem \ref{victoire2} : owing to Proposition~\ref{label_bizarre9}, Lemma
\ref{prison} still holds true for
\[
C^{i}_{N,z_1,z_2}(\sigma):=\inf\{t\ge 0\,:\,(X^{i,N}_t,Y^{i,N}_t)\notin (\mathbb R\times\mathbb R)\setminus \mathcal D_{z_1}\times
\mathcal D_{z_2}\}
\]
in place of $C_{z_1,z_2}(\sigma)$. From this, since the proof arguments of Theorem \ref{victoire2} essentially rely on the
regularity of $H_0$, one can replicate each argument replacing $C(\sigma)$ by $C^{i}_{N}(\sigma)$.	

Finally, let us point out that an analogue to Theorem \ref{victoire2} for the linear system \eqref{Linear1D} can also, naturally, be established for $(x^\sigma,y^\sigma)$.
\end{document}